\title{Taking Model-Complete Cores}
\author{Manuel Bodirsky, Bertalan Bodor, Paolo Marimon}
\DeclareMathOperator{\AGL}{AGL}
\DeclareMathOperator{\PGL}{PGL}
\DeclareMathOperator{\AGammaL}{A \Gamma L}
\DeclareMathOperator{\PGammaL}{P \Gamma L}
\newcommand{\includeCroppedPdf}[2][]{\begingroup%
    \edef\temp@mdfivesum{\pdf@filemdfivesum{#2.pdf}}%
    \ifcsstrequal{#2mdfivesum}{temp@mdfivesum}{}{%
        \immediate\write18{pdfcrop #2 #2-crop.pdf}}%
        \immediate\write\@auxout{\string\expandafter\string\gdef\string\csname\space #2mdfivesum\string\endcsname{\temp@mdfivesum}}%
    \includegraphics[#1]{#2-crop.pdf}\endgroup}
\theoremstyle{definition}
\newtheorem{theorem}{Theorem}[section]
\newtheorem{lemma}[theorem]{Lemma}
\newtheorem{corollary}[theorem]{Corollary}
\newtheorem{definition}[theorem]{Definition}
\newtheorem{notation}[theorem]{Notation}
\newtheorem{example}[theorem]{Example}
\newtheorem{proposition}[theorem]{Proposition}
\newtheorem{fact}[theorem]{Fact}
\newtheorem{conjecture}[theorem]{Conjecture}
\theoremstyle{remark}
\newtheorem{remark}[theorem]{Remark}
\newtheorem{question}[theorem]{Question}
\numberwithin{equation}{section}
\newcommand\red[1]{\textcolor{red}{#1}}
\newcommand\blue[1]{\textcolor{blue}{#1}}
\newcommand\green[1]{\textcolor{green!50!black}{#1}}
\DeclareMathOperator{\I}{\operatorname{I}}
\DeclareMathOperator{\MI}{\operatorname{MI}}
\DeclareMathOperator{\M}{\operatorname{M}}
\newcommand{\RM}{\mathrm{MR}}
\newcommand{\dM}{\mathrm{dM}}
\newcommand{\mZ}{\mathbb Z}
\newcommand{\aut}{\operatorname{Aut}}
\newcommand{\en}{\operatorname{End}}
\newcommand{\eend}{\operatorname{End}}
\newcommand{\sym}{\operatorname{Sym}}
\newcommand{\id}{\operatorname{id}}
\newcommand{\dom}{\operatorname{Dom}}
\newcommand{\typ}{\operatorname{tp}}
\newcommand{\csp}{\operatorname{CSP}}
\newcommand{\atoms}{({\mathbb Q};=)}
\newcommand{\order}{(\mathbb{Q};<)}
\newcommand{\upairs}{{{\mathbb Q}\choose 2}}
\newcommand{\john}{\mathfrak{J}}
\newcommand\johnord[1]{\mathcal{J}^{<}(#1)}
\newcommand\johnneq[1]{\mathcal{J}^{\neq}(#1)}
\newcommand{\tp}{\operatorname{tp}}
\newcommand{\betw}{\operatorname{Betw}}
\newcommand{\fa}{\mathfrak{A}}
\newcommand{\fb}{\mathfrak{B}}
\newcommand{\fc}{\mathfrak{C}}
\newcommand{\fx}{\mathfrak{X}}
\newcommand{\fy}{\mathfrak{Y}}
\newcommand{\fz}{\mathfrak{Z}}
\newcommand{\fd}{\mathfrak{D}}
\newcommand{\fe}{\mathfrak{E}}
\newcommand{\bA}{\mathfrak{A}}
\newcommand{\bB}{\mathfrak{B}}
\newcommand{\bC}{\mathfrak{C}}
\newcommand{\bF}{\mathfrak{F}}
\newcommand{\cc}{\mathbb{S}}
\newcommand{\same}{\equiv}
\newcommand\ignore[1]{}
\newcommand{\age}{\operatorname{Age}}
\newcommand{\Aut}{\operatorname{Aut}}
\newcommand{\Sym}{\operatorname{Sym}}
\newcommand{\End}{\operatorname{End}}
\newcommand\qup[1]{\mathbb{Q}_\nearrow^{#1}}
\newcommand\qneq[1]{\mathbb{Q}^{(#1)}}
\newcommand{\shuff}{\oplus}
\newcommand{\rotate}{\circlearrowright}
\newcommand{\kernel}{\operatorname{Ker}}
\DeclareMathOperator{\Th}{Th}
\newcommand{\pau}[1]{\textcolor{blue}{#1}}
\begin{document}

\begin{abstract} 
A first-order theory $T$ is a model-complete core  theory if every first-order formula is equivalent modulo $T$ to an existential positive formula; a
core  companion of a theory $T$ is a model-complete core  theory $S$ such that every model of $T$ maps homomorphically to a model of $S$ and vice-versa. Whilst core  companions may not exist in general, if they exist, they are unique. Moreover, $\omega$-categorical theories always have a core companion, which is also $\omega$-categorical. 
We show that many model-theoretic properties, such as stability, $\mathrm{NIP}$, simplicity, and $\mathrm{NSOP}_k$ for ${k\in\mathbb{N}_{>0}}$, are preserved by moving to the core companion of a complete theory. On the other hand, we show that the classes of theories of structures interpretable 
over $({\mathbb N};=)$ and over $({\mathbb Q};<)$
are both \emph{not} closed under taking core companions. 
The first class is contained in the class of theories of $\omega$-stable first-order reducts of finitely homogeneous relational structures, which was studied by Lachlan in the 80's. We conjecture the two classes to be equal. To support our conjecture we prove that all structures in Lachlan's class are trace definable in $(\mathbb{N}; =)$, confirming a conjecture of Walsberg.
\end{abstract}

\maketitle

\tableofcontents

\ignore{
\section*{Notes}

Colors for changes:
\begin{itemize}
\item \red{red}: Manuel
\item \blue{blue}: Paolo
\item \green{green}: Bertalan
\end{itemize} 
If some colored text is approved by another author, the color can be removed. 
}

\section{Introduction}
A theory $T$ has quantifier elimination if every first-order formula is equivalent to a quantifier-free formula modulo $T$. Model-completeness is a natural weakening of quantifier elimination: instead of requiring that every formula is equivalent to a quantifier-free formula, we ask
that they are equivalent to existential formulas.
Some theories $T$ might not be model-complete, but might have a \emph{companion} which is model-complete: $S$ is called a \emph{companion} of $T$ if $S$ and $T$ 
have 
the same universal consequences
(equivalently, every model of $T$ embeds in a model of $S$ and vice-versa).
A \emph{model companion of $T$} is a companion $S$ of $T$ which is model complete. 
If $T$ has a model companion $S$, then $S$ is unique up to equivalence of theories (see, e.g.,~\cite{HodgesLong}). 

There is also a positive version of the  concept of a model companion, inspired by the theory of graph homomorphisms: 
we say that a theory $T$ is a \emph{core theory} if every existential formula is equivalent to an existential positive formula modulo $T$. A theory $S$ is the \emph{core companion} of $T$ if $S$ and $T$ have the same universal negative consequences (equivalently, every model of $T$ maps homomorphically to a model of $S$ and vice-versa), and $S$ is a model-complete core theory.
Again, if $T$ has a core companion $S$, then $S$ is unique up to equivalence of theories~\cite{BodHilsMartin-Journal}.

In this article,
we are often especially interested in model companions and core companions  of $\omega$-categorical theories. A theory is \emph{$\omega$-categorical} if it has a unique countable model up to isomorphism, and a structure is called $\omega$-categorical if its first-order theory is. Examples of structures with $\omega$-categorical theories are all finite structures, $(\mathbb{N}; =)$, $(\mathbb{Q}; {<})$, the Rado graph, and {all} infinite vector spaces over finite fields. By Saracino's theorem, every $\omega$-categorical theory has a model companion $S$, and $S$ is $\omega$-categorical~\cite{Saracino}. 
If $\bA$ is the 
countable model of an $\omega$-categorical theory $T$,  
then $T$ is model-complete if and only if 
the automorphism group $\Aut(\bA)$ is dense in the monoid of elementary self-embeddings of $\bA$ with respect to the topology of pointwise convergence (a consequence of the theorem of Engeler, Svenonius, and Ryll-Nardzewski; see~\cite{HodgesLong,BodJunker}). Every $\omega$-categorical theory also has a model-complete core
companion~\cite{Cores-journal,BodHilsMartin-Journal}. If $\bA$ is the countable model of an $\omega$-categorical theory $T$,  then $T$ is a model-complete core theory if and only if $\Aut(\bA)$ is dense in the endomorphism monoid $\End(\bA)$. Moreover, two $\omega$-categorical theories have the same universal negative consequences if and only if their countable models $\bA$ and $\bB$ are \emph{homomorphically equivalent}, i.e.,
there exists a homomorphism from $\bA$ to $\bB$ and vice versa. 
In particular, every finite structure $\bB$ is homomorphically equivalent to a finite structure $\bC$ all of whose endomorphisms are automorphisms, and which is called the \emph{core} of $\bB$ in the theory of graph homomorphisms, database theory, and constraint satisfaction (see, e.g.,~\cite{HNBook}). 

Typically, the model-companion and core companion of $T$ are \emph{simpler} than $T$. The simplification for the model companion is usually modest. 
For example, the first-order theory of the countable dense linear order with a start point but no endpoints, i.e., the first-order theory of $({\mathbb Q}_{\geq 0};<)$, is not model-complete, and its model companion is the first-order theory of $({\mathbb Q};<)$; it is simpler in the sense that for each $n \geq 1$, it has strictly fewer types over the empty set. For the core companion the model-theoretic simplification can be more radical. For example, the first-order theory $T_R$ of the Rado graph $R$ is model-complete (it even has quantifier elimination), but not a core theory, since $R$ contains infinite cliques $K_{\omega}$, and endomorphisms $e \colon R \to K_\omega$ which are not embeddings. 
The core companion of $T_R$ is the theory of $K_{\omega}$, which is $\omega$-stable (Definition~\ref{def:superstab}), while $T_R$ even has the independence property (IP) (Definition~\ref{def:NIP}).
In the following, we will say that a structure is model-complete (or a model-complete core) if its theory is model-complete (or has a model-complete core theory). 
A structure $\bC$ is a \emph{companion} (\emph{model companion}, 
\emph{core companion}) of a structure $\bB$ if the first-order theory of $\bC$ is a companion (model companion, 
core companion) of the first-order theory of $\bB$.

In some applications of model theory, we are only interested in the existential or existential positive theory of a given structure $\bB$. For instance, 
if we want to study the decidability of the existential theory of $\bB$ (which is for instance undecidable for $({\mathbb N};+,*,1)$~\cite{MatiyasevichDoklady} and not known to be decidable for $\bB = ({\mathbb Q};+,*,1)$~\cite{Anscombe2024}), then obviously any companion of 
$\bB$ has the same existential theory. Another example  
is the study of the computational complexity of $\csp(\bB)$, the \emph{constraint satisfaction problem} of $\bB$, which can be seen as the problem of deciding the primitive positive theory of $\bB$. If two structures satisfy the same universal negative sentences, or if they are even homomorphically equivalent, then they have the same CSP. 
In these applications, 
the fact that the model companion or core companion of $T$ is structurally simpler than $T$ is the reason why these concepts are so useful: 
for instance, instead of studying the complexity of $\csp(\bB)$, it might be simpler to study the complexity of the $\csp$ of the core companion of $\bB$. 

If we want to apply this method systematically to study $\csp(\bB)$ for structures $\bB$ from some class of structures $\mathcal C$, we ideally need 
that ${\mathcal C}$ satisfies two conditions:
\begin{itemize}
    \item every structure in ${\mathcal C}$ has a core companion, and 
    \item the core companion of ${\mathcal C}$ is again in the class ${\mathcal C}$. 
\end{itemize}
If these two conditions are satisfied, then we have reduced our task to the model-complete cores in ${\mathcal C}$, which is often simpler. 
It should be mentioned that the task to determine the model-complete core can come with considerable combinatorial problems: already if $\bB$ is a finite structure, the question whether $\bB$ is a core is coNP-complete~\cite{cores}. Also the following question remains open. 

\begin{question}[Question (5)  in~\cite{Book}]
 \label{quest:fin-hom}   
 Does there exist a finitely homogenizable structure (for the definition, see Section~\ref{sect:prelims}) whose model-complete core is not finitely homogenizable?
\end{question}

\subsection*{Contributions}
In this article, we show that many classes of interest in model theory, such as $\omega$-stability, stability, $\mathrm{NIP}$, $\mathrm{NSOP}_k$ for $k\in \mathbb{N}_{>0}$,
and simplicity (in the model theoretic sense) are closed under taking model-complete cores; we develop general methods for proving this. 
In particular, we build on two recent general accounts of model-theoretic properties: firstly, building on work on patterns and straight definitions~\cite{saharon2000not, bailetti2024walk}, we show preservation under taking the core companion of any model-theoretic property definable by omitting a \emph{semi-positive} straight pattern.
Moreover, we show that the core companion of a theory $T$ is interpretable in a weak sense (i.e., \emph{trace definable}~\cite{walsberg2026tracedefinabilityipreservation}) in $T$. These and other techniques allow us to show that most known model theoretic properties preserved by interpretations, and even some properties not preserved by interpretations (such as monadic stability and monadic $\mathrm{NIP}$), are preserved by the core companion of a theory.




However, some other properties, such as total categoricity, are not closed under taking model-complete cores. 
One of the main results of the present paper is the resolution of an open question from~\cite{bodirsky2021permutation} (Section 10): 
we show that the class of structures that are (first-order) interpretable in
a countably infinite structure with the empty signature (short: \emph{interpretable {in} equality}) is \emph{not} closed under taking model-complete cores. This is surprising, because the class is tame from a model-theoretic perspective, and at first sight appears to be quite robust. 
It is the central class of structures studied in an active area of theoretical computer science which studies automata theory with atoms and orbit-finite computation (\cite{DBLP:conf/lics/BojanczykKLT13,KlinLOT14-short,DBLP:journals/corr/BojanczykKL14,BojanczykTorunczyk18,OrbitFinDim,Orbit-finite-LP}). 

Interestingly, the model-complete core of our counterexample has an interpretation in $({\mathbb Q};<)$.
In fact, every structure interpretable {in} equality is $\omega$-stable, 
and hence its model-complete core is $\omega$-stable as well. 
Moreover, they are reducts of homogeneous structures with finite relational signature, and all $\omega$-stable reducts of homogeneous structures with finite relational signature are interpretable {in} $({\mathbb Q};<)$ (Theorem~\ref{lachlan_extra}). The class of $\omega$-stable structures which are reducts of homogeneous structures in a finite relational language, which we shall call \emph{Lachlan's class} ({see Section~\ref{sect:lachlan}}) was studied in depth by  Lachlan~\cite{LachlanSurvey, LachlanIndiscernible, CherlinLachlan} and other model theorists in the 1980s and 90s~\cite{HrushovskiTotallyCategorical, macpherson1991interpreting}. We conjecture that Lachlan's class equals the class of structures that are interdefinable with the model-complete core of a structure interpretable in equality.


To support our conjecture, we establish the weaker conjecture of Walsberg~\cite{walsberg2026tracedefinabilityipreservation, Walsbergshort, Walsberglong} which states that all structures in Lachlan's class are trace definable in $(\mathbb{Q}; =)$. 

We also investigate the classes of model-complete cores and model companions of structures interpretable in $(\mathbb{Q}; <)$.
In particular, we show that the class of structures interpretable in $({\mathbb Q};<)$ is not closed under taking {model companions}. {We note that} here we use a proof technique which is {quite} different from the one for our analogous result for structures interpretable in equality mentioned earlier: {in the case of equality our proof is more group-theoretic in nature, whereas in the case of $({\mathbb Q};<)$ our proof uses a direct model-theoretical argument about definable orders.}
A concrete counterexample is the so-called \emph{generic permutation}, which is the \emph{model companion}
of a structure with a two-dimensional  interpretation {in} $({\mathbb Q};<)$, but which is itself not interpretable {in} $({\mathbb Q};<)$.
The generic permutation plays an important role in the recent systematic study of NIP structures that are homogeneous in a binary signature~\cite{SimonRankOne,rosy,guingona2015common}.

The typical strategy to prove that a structure $\fa$ is not interpretable in a structure $\fb$ is by exhibiting a model-theoretic property which is preserved by interpretations, and to prove that this property holds in $\fb$, but not in $\fa$. Alternative strategies in the $\omega$-categorical context (see, for example,~\cite[Theorem A.21]{Walsbergshort}) rely on type counting arguments, with $\fb$ having too fast growth-rate in its number of types to be interpretable in $\fa$. Neither of these strategies can work to prove that a structure $\fa$ does not interpret its model-complete core  $\fb$: essentially all model theoretic properties which are preserved under interpretations that hold for $\fa$ will hold for $\fb$ (Section~\ref{sec:preserved}), and $\fb$ only has a slower growth-rate in its number of types than $\fa$. Indeed, proving non-interpretability for structures which are very similar from a model theoretic perspective is a major challenge. To overcome it, we develop a toolkit of techniques and ideas 
to prove non-interpretability. We believe this to be another major contribution of this paper.

The closure of the class of structures with a first-order interpretation in $({\mathbb Q};<)$ under taking model-complete cores and reducts is quite interesting: 
it is quite robust (e.g., closed under interpretations)  and contained in the class of first-order reducts of NIP homogeneous structures with a finite relational signature of maximal arity two. We do not know an example that would separate these two classes. 

\section{Preliminaries}
\label{sect:prelims}
	We write $[n]$ for the set $\{1,\dots,n\}$. We refer the reader to~\cite{HodgesLong} for 
   an introduction to first-order logic (which for us always includes equality) and
    proofs of the various model-theoretic facts mentioned below.
   
   All the structures in this article are assumed to be \emph{relational}, i.e., they have a signature $\tau$ which consists of relation symbols only. 
    If $R \in \tau$ is a relation symbol of arity $k$, and $\bA$ is a $\tau$-structure, we write $R^{\bA} \subseteq A^k$ for the respective relation of $\bA$, and we write $\dom(\bA)$ for the domain of $\bA$ (usually, $\dom(\bA)$ will also be denoted by $A$).

    A structure $\bB$ obtained from $\bA$ by dropping some of the relations is called a \emph{reduct} of $\bA$, and conversely $\bB$ is called an \emph{expansion} of $\bA$. 
    An expansion $\bB$ of $\bA$ is called a \emph{first-order expansion} if all relations of $\bB$ are first-order definable in $\bA$. We say that a structure $\fb$ is a \emph{first-order reduct} of a structure $\fa$ if 
it is a reduct of a first-order expansion of $\fa$. Two structures are called \emph{interdefinable} if they are first-order reducts of each other.
If $\fa$ is a structure and $B \subseteq A$, then we write $\fa|_B $ for the structure  with domain $B$ and all the relations of the form $R \cap B^n$ where $n \in {\mathbb N}$ and $R$ is definable (without parameters) in $\fa$.

    A structure $\fb$ is called \emph{finitely bounded} if there are finitely many finite structures $\bF_1,\dots,\bF_n$ such that a finite structure $\fa$  embeds into $\fb$ if and only if it does not embed $\bF_i$ for {any} $i \in [n]$. 

    A structure $\bA$ is called \emph{homogeneous} if every isomorphism between finite substructures of $\bA$ can be extended to an automorphism of $\bA$. (Some authors use \emph{ultrahomogeneous} to prevent confusion with other notions of homogeneity; in our context, however, there is no such danger of confusion). We call a structure \emph{finitely homogeneous} if it is homogeneous and has a finite relational signature, and we call it \emph{finitely homogenizable} if it is interdefinable with a finitely homogeneous structure. We say that a finitely homogeneous structure is \emph{binary} if all of its relations have arity $\leq 2$.
    Finitely homogeneous structures are \emph{$\omega$-categorical}, i.e., their first-order theory has at most one countable model up to isomorphism~(see, e.g.,~\cite[Theorem 7.4.1]{HodgesLong}). 
    

    It is often useful to study $\omega$-categorical structures with notions from permutation group theory. 
    If $G$ is a permutation group on a set $A$, then
    an \emph{orbit of $G$ on $A^k$} (or an \emph{orbit of $k$-tuples}) is a set of the form
    $\{(\alpha(a_1),\dots,\alpha(a_k)) \mid \alpha \in G\}$ for some $(a_1,\dots,a_k) \in A^k$. 
   In an $\omega$-categorical structure $\fa$, the first-order definable relations of arity $k \in {\mathbb N}$ are precisely the finite unions of \emph{orbits of $k$-tuples} of $\Aut(\fa)$~\cite[Corollary 7.3.5]{HodgesLong}.\\

   We shall use the standard convention of denoting tuples of variables by overlined letters at the end of the alphabet $\bar{x},\bar{y},\bar{z},\dots$ and tuples of elements by overlined letters at the beginning of the alphabet $\bar{a},\bar{b},\bar{c}\dots$.

   Sometimes it will be best to think in terms of theories rather than structures. Given a first-order theory $T$ in the signature $\tau$, a model $\fa$  of $T$, and $C\subseteq\fa$, let $\tau\cup\{C\}$ be the expansion of the {signature} $\tau$ by constant symbols naming each element of $C$. A \emph{type} (or \emph{$\tau$-type} if $\tau$ is not clear from the context)  in the variables $\bar{x}$ over $C$ is a maximally consistent set $p$ of $\tau\cup\{C\}$-formulas with free variables belonging to $\bar{x}$. We write $S_{\bar{x}}(C)$ for the set of all types in the variable $\bar{x}$ over $C$ and we just write $S_1(C)$ when $\bar{x}$ is a single variable. Given a $k$-tuple $(a_1, \dots, a_k):=\bar{a}\in A^k$, its \emph{type over $C$} is the set of all $\tau\cup\{C\}$-formulas satisfied by $\bar{a}$ in the variables $(x_1, \dots, x_k)$ (where for each $1\leq i\leq k$, $x_i$ substitutes $a_i$). We say that $\bar{a}$ \emph{realizes the type} $p\in S_{\bar{x}}(C)$ if $p$ is the type of $\bar{a}$ over $C$. We say that a $\tau\cup\{C\}$-formula $\varphi(\bar{x})$ \emph{isolates} a type $p\in S_{\bar{x}}(C)$ if $\varphi\in p$ and for every formula $\psi\in p$ and for every model $\fa$ of $T$ we have $\fa\models \forall \bar{x}(\varphi (\bar{x})\rightarrow \psi(\bar{x}))$.
   If $\fa$ is $\omega$-categorical, then all types in finitely many variables over finitely many parameters are isolated by a single formula. In particular, all these types are
   realized in $\fa$ (this property is known as being $\omega$-saturated~\cite[Exercise 7.2.11]{HodgesLong}). Moreover, two $k$-tuples of $\fa$ have the same type (over $\emptyset$) if and only if they are in the same orbit of $\Aut(\fa)$~\cite[Corollary 7.3.3]{HodgesLong}.
   

\subsection{Positive theories and core companions}
In Section~\ref{sec:preserved}, we will be looking at properties preserved when taking the core companion  of a theory. In particular, we will study properties preserved amongst theories with the same universal negative consequences. This naturally leads us to the study of existential positive formulas (which are preserved under homomorphisms) and more broadly of positive model theory. This branch of model theory has recently received substantial attention (see, for example,~\cite{ben2003positive, PosModT, kamsma2023bilinear, BodHilsMartin-Journal}), and the way positive model theoretic properties are defined is a fundamental inspiration to our method to prove that traditional model theoretic properties are preserved when moving to the core companion. In this paper we will adopt the notation of~\cite{kamsma2025positivelogicintroductionmodel} and we direct the reader to this introductory text and~\cite[Chapter 2]{Book} for proofs of the facts we mention below. 

\begin{definition}
An \emph{existential positive} formula $\phi(\bar{y})$ is a formula of the form $\exists \bar{x} \; \psi(\bar{x};\bar{y})$, where $\psi(\bar{x};\bar{y})$ is quantifier-free and contains no negations.
\end{definition}
\begin{definition}
    A \emph{positive theory} is a set of \emph{$h$-inductive sentences}, i.e.,  sentences of the form 
    \[\forall \bar{x}(\phi(\bar{x})\rightarrow  \psi(\bar{x})),\]
    where $\phi(\bar{x})$ and $\psi(\bar{x})$ are existential positive. An \emph{$h$-universal sentence} (sometimes known as a universal negative sentence) is a sentence of the form 
    \[\forall \bar{x} (\phi(\bar{x})\rightarrow\bot),\]
    where $\phi(\bar{x})$ is positive quantifier-free. 
\end{definition}

\begin{definition} Let $T$ be a positive theory and let $\phi(\bar{x})$ be an existential positive formula. We say that an existential positive formula $\psi(\bar{x})$ is \emph{an obstruction of $\phi(\bar{x})$} if  
\[T\vdash \forall \bar{x} ((\phi(\bar{x})\wedge\psi(\bar{x}))\rightarrow \bot).\]
\end{definition}

\begin{definition} A model $\fa$ of a positive theory $T$ is called \emph{positively closed} if every homomorphism $f \colon \fa\to \fb$, where $\fb\vDash T$, is an \emph{immersion}, i.e., is such that for every existential positive formula $\phi(\bar{x})$ and every tuple $\bar{a}$ from $A$,
\[\fa\vDash\phi(\bar{a}) \text{ if and only if } \fb\vDash\phi(f(\bar{a})).\]
For any model $\fa$ of a positive theory $T$ there is a positively closed model $\fb\vDash T$ such that $\fa$ maps homomorphically to $\fb$~\cite[Lemma 1.20]{ben2003positive}. 
\end{definition}

For the following fact see, for example,~\cite[Proposition 2.1.14]{bodirsky2021complexity}:

\begin{fact}\label{fact:hconsequences} Let $S$ and $T$ be first-order theories. The following are equivalent:
\begin{itemize}
    \item every $h$-universal consequence of $T$ is also an $h$-universal consequence of $S$; and
    \item every model of $S$ maps homomorphically to a model of $T$.
\end{itemize}
\end{fact}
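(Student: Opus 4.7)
The plan is to prove the two directions separately using standard positive-logic techniques: the implication from the second item to the first is a preservation argument, and the converse uses the positive diagram combined with compactness.

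For the direction from the second item to the first, I would take an $h$-universal consequence $\sigma=\forall\bar{x}(\phi(\bar{x})\to\bot)$ of $T$, with $\phi$ positive quantifier-free, and argue by contradiction: if some $\fa\vDash S$ satisfies $\phi(\bar{a})$, I choose a homomorphism $h\colon\fa\to\fb$ with $\fb\vDash T$ using the second item. Since positive quantifier-free formulas (built from atomic formulas by $\wedge$ and $\vee$) are preserved under homomorphisms, $\fb\vDash\phi(h(\bar{a}))$, which contradicts $\fb\vDash\sigma$. This direction is routine.

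For the direction from the first item to the second, I would fix a model $\fa\vDash S$ and work with the \emph{positive diagram} $\Delta^+(\fa)$, i.e., the set of all atomic $\tau\cup\{A\}$-sentences satisfied by $\fa$, where $A=\dom(\fa)$ is treated as a set of new constants. It suffices to show that $T\cup\Delta^+(\fa)$ is consistent: any model $\fb$ of it, viewed as a $\tau$-structure, yields a homomorphism $\fa\to\fb$ by sending each $a\in A$ to the interpretation of the corresponding constant.

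The main step is deducing the consistency of $T\cup\Delta^+(\fa)$ from the first item. If it were inconsistent, compactness would produce a finite conjunction $\phi(\bar{a})$ of atomic formulas from $\Delta^+(\fa)$ with $T\vdash\neg\phi(\bar{a})$. Since the constants $\bar{a}$ do not occur in $T$, universally quantifying yields $T\vdash\forall\bar{x}(\phi(\bar{x})\to\bot)$, which is an $h$-universal sentence in $\tau$ because $\phi$ is a finite conjunction of atomic formulas and hence positive quantifier-free. By hypothesis, $S$ entails this sentence too, contradicting $\fa\vDash S$ together with $\fa\vDash\phi(\bar{a})$. The only subtlety to watch for is ensuring that the formula extracted by compactness really is positive quantifier-free so that its negation fits the shape of an $h$-universal sentence, but this is automatic from the shape of the positive diagram.
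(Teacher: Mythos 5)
Your proof is correct. The paper does not give its own argument for this fact; it cites it as Proposition~2.1.14 in the reference [bodirsky2021complexity]. Your two-direction argument — preservation of positive quantifier-free formulas under homomorphisms for one direction, and consistency of $T\cup\Delta^+(\fa)$ via compactness plus the Constants Lemma for the other — is the standard proof one would find in such a reference, and both steps are carried out correctly (in particular, the subtlety you flag, that the formula extracted by compactness is a finite conjunction of atomic formulas and hence its universal negation has exactly the shape of an $h$-universal sentence, is handled properly, and the observation that the constant map need not be injective is harmless since homomorphisms are not required to be injective).
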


\begin{definition} A first-order theory $T$ is called \emph{model-complete} if every embedding between models of $T$ is elementary. A first-order theory $S$ is a \emph{core theory} if homomorphisms between models of $S$ are embeddings. In particular, we say that $T$ is a \emph{model-complete core} theory if homomorphisms between its models are elementary embeddings.
\end{definition}

By the \emph{positive fragment} of a first-order theory we mean its set of $h$-inductive consequences. Model-complete core theories are axiomatised by their positive fragment~\cite[Proposition 4.17]{BodHilsMartin-Journal}. A positive theory $S$ is called Boolean if all of its models are positively closed~\cite{dobrowolski2022kim} (see~\cite{kamsma2025positivelogicintroductionmodel} for a more detailed discussion). Sometimes this notion is referred to as  ``positively model-complete''~\cite{haykazyan2019spaces}. Note that if $S$ is a model-complete core  theory, then the positive fragment of $S$ is Boolean. Conversely, if a positive theory is Boolean, then it is a model-complete core  theory.

\begin{definition}[\cite{BodHilsMartin-Journal}]  A first-order theory $S$ is the \emph{core companion} of a first-order theory $T$ if:
\begin{itemize}
    \item $S$ is a model-complete core  theory;
    \item $S$ and $T$ have the same $h$-universal consequences.
\end{itemize}
\end{definition}
{It follows from} Fact~\ref{fact:hconsequences} that 
if $S$ and $T$ have the same $h$-universal consequences, {then} every model of $S$ has a homomorphism to a model of $T$ and vice-versa.


\subsection{Interpretations}
Our terminology and notation follows standard conventions in model theory; we refer for instance to Hodges~\cite{HodgesLong} or Tent and Ziegler~\cite{Tent-Ziegler}.

\begin{definition}
    A first-order reduct is called \emph{quantifier-free} if all defining formulas can be chosen to be quantifier-free.
    We say that $\fa$ and $\fb$ are \emph{interdefinable} if they are reducts of one another, and that they are \emph{bidefinable} if $\fb$ is isomorphic to a structure which is interdefinable with $\fa$.
\end{definition}

\begin{remark}
    If at least one of the structures $\fa$ and $\fb$ is countable and $\omega$-categorical, then 
    \begin{itemize}
        \item $\fa$ and $\fb$ are interdefinable if and only if $\aut(\fa)=\aut(\fb)$, and 
        \item $\fa$ and $\fb$ are bidefinable if and only if  $\aut(\fa)$ and $\aut(\fb)$ are isomorphic as permutation groups.
    \end{itemize}
\end{remark}

\begin{definition}
	Let $\fa$ and $\fb$ be structures. We say that $I$ is an \emph{interpretation} of $\fb$ in $\fa$ if there exists some $d\in \omega$ such that $I$ is a partial surjective map from $A^d$ to $B$ such that for every atomic formula $\varphi(x_1,\dots,x_k)$ in the signature of $\fb$ the relation
\[
\{(a_1^1,\dots,a_1^d,\dots,a_k^1,\dots,a_k^d): \fb\models \varphi(I(a_1^1,\dots,a_1^d),\dots,I(a_k^1,\dots,a_k^d))\}
\]
	is definable in $\fa$.
	The number $d$ above is called the \emph{dimension} of the interpretation.
	We say that $\fa$ \emph{interprets} $\fb$, or $\fb$ is \emph{interpretable} in $\fa$ if there exists an interpretation of $\fb$ in $\fa$. For a fixed structure $\fa$,
    we write $\I(\fa)$ for the class of structures which are interpretable in it.
\end{definition}

	Let us consider some interpretation $I$ as in the definition above. By applying the definition for the formula $(x=x)$ we obtain that the domain set of $I$ needs to be definable. Moreover, by applying the definition for the formula $(x=y)$ we also obtain that the kernel of the map $I \colon \dom(I)\rightarrow B$ also needs to be definable in $\fa$. This  essentially means that the domain of $\fb$ can be thought of as a definable quotient of some definable substructure of some power of $\fa$, and every relation of $\fb$ needs to be represented by some set of tuples which is also definable in $\fa$.
    Note that the interpretability relation is transitive, i.e., if $\fb\in \I(\fa)$ and $\fc\in \I(\fb)$ then $\fc\in \I(\fa)$.


\begin{definition}
	An interpretation (of $\fb$ in $\fa$) $I$ is called \emph{full} if every relation $R$ for which $I^{-1}(R)$ is definable in $\fa$ is definable in $\fb$. 
\end{definition}

	It is clear from the definition above a structure $\fb$ is interpretable in $\fa$ if and only if $\fb$ has some expansion which has a full interpretation in $\fa$.

    \begin{remark}\label{rem:action}
        Note that if $I$ is a $d$-dimensional interpretation of $\fb$ in $\fa$, then the following is a well-defined continuous action of $\aut(\fa)$ 
    on $B$: for $\alpha \in \aut(\fa)$ and $b = I(a_1,\dots,a_d)$, 
    then $\alpha \cdot b := I(\alpha(a_1,\dots,a_d))$. This action gives rise to a continuous group homomorphism from $\aut(\fa)$ to $\aut(\fb)$, which we denote by $h_I$. 
    In the case when $\fa$ is $\omega$-categorical it is easy to see that $I$ is full if and only if this homomorphism is surjective.
    \end{remark}

	A special case of a structure with a full interpretation is the full power of $\fa$.
	
\begin{definition}
	The \emph{$d$-th full power} of $\fa$, denoted by $\fa^{[[d]]}$, is defined to be the structure whose domain is $A^d$, and whose relations are

\[
\{((a_1^1,\dots,a_1^d),\dots,(a_k^1,\dots,a_k^d))\in (A^d)^k: \fa\models \varphi(a_{i_1}^{j_1},\dots,a_{i_{\ell}}^{j_{\ell}})\}
\]

	where $1\leq i_1,\dots,i_{\ell}\leq k, 1\leq j_1,\dots,j_{\ell}\leq d$ and $\varphi(x_1,\dots,x_{\ell})$ is an atomic formula in the signature of $\fa$.
\end{definition}

{An $\omega$-categorical structure $\fa$ is called \emph{Ramsey} if the age of 
its expansion by all first-order definable relations is a Ramsey class. We do not define the Ramsey property for classes of finite structures, since we do not need it; all we need to know is that the Ramsey property is in fact a property of the automorphism group of $\fa$, viewed as a topological group; see~\cite{Topo-Dynamics}.} 

\begin{lemma}\label{hom_power}
	If $\fa$ is finitely homogeneous, then so is $\fa^{[[d]]}$. If $\fa$ is additionally finitely bounded/Ramsey, then so is $\fa^{[[d]]}$.
\end{lemma}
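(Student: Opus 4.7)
My plan is to deduce each of the three conclusions about $\fa^{[[d]]}$ from the corresponding property of $\fa$, by exploiting that the signature $\tau'$ of $\fa^{[[d]]}$ is built from atomic formulas of $\fa$ (including equality) applied to coordinate projections of $d$-tuples. In particular, $\tau'$ is finite with maximum arity at most that of $\tau$, and it contains the ``coordinate-coincidence'' binary relations $E_{j,j'}(\bar{a},\bar{b}) \iff a^j = b^{j'}$, one for each $(j,j')\in[d]^2$, coming from the atomic formula $x=y$.

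For homogeneity, given a partial isomorphism $f\colon\{\bar{a}_1,\dots,\bar{a}_m\}\to\{\bar{b}_1,\dots,\bar{b}_m\}$ between finite substructures of $\fa^{[[d]]}$ with $f(\bar{a}_i)=\bar{b}_i$, the key step is to define a partial map $f'\colon A\to A$ by $f'(a_i^j):=b_i^j$. The preservation and reflection of the $E_{j,j'}$ relations by $f$ force $f'$ to be well-defined and injective, while preservation of the remaining relations of $\tau'$ forces $f'$ to be a partial isomorphism of $\fa$. By finite homogeneity of $\fa$, $f'$ extends to some $\sigma\in\aut(\fa)$, and then the coordinate-wise action $\sigma^d$ on $A^d$ is an automorphism of $\fa^{[[d]]}$ extending $f$.

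For finite boundedness, let $\bF_1,\dots,\bF_n$ be the minimal forbidden finite $\fa$-substructures and $M:=\max_i|\bF_i|$. A finite $\tau'$-structure $\mathfrak{S}$ with universe $\{s_1,\dots,s_m\}$ embeds into $\fa^{[[d]]}$ if and only if its ``unfolding'' embeds into $\fa$, where the unfolding has underlying set $(\{s_1,\dots,s_m\}\times[d])/\!\sim$ with $(s_i,j)\sim(s_{i'},j')\iff \mathfrak{S}\models E_{j,j'}(s_i,s_{i'})$, and $\fa$-relations read off from the remaining relations of $\mathfrak{S}$. A witness to the failure of such an embedding uses at most $M$ equivalence classes, and hence comes from a subset of at most $M$ elements of $\mathfrak{S}$. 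So $\fa^{[[d]]}$ is finitely bounded by the finitely many $\tau'$-structures of size at most $M$ whose unfolding is not embeddable in $\fa$, together with the finitely many small local obstructions ensuring that the $E_{j,j'}$ themselves are coherent and compatible with the other relations of $\tau'$.

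For the Ramsey case, the plan is to show that $\sigma\mapsto\sigma^d$ defines a topological group isomorphism $\aut(\fa)\to\aut(\fa^{[[d]]})$. Continuity and injectivity are clear; surjectivity follows because the relations $E_{j,j'}$ force any automorphism of $\fa^{[[d]]}$ to act coordinate-wise by a common permutation of $A$, which the remaining relations of $\tau'$ then force to lie in $\aut(\fa)$. Since Ramseyness of an $\omega$-categorical structure is a topological group invariant of its automorphism group (as recorded just before the lemma), it transfers along this isomorphism. I expect the only step requiring real care is the well-definedness of the map $f'$ in the homogeneity argument; once the coordinate-unfolding correspondence is firmly in hand, finite boundedness and Ramsey follow with little extra work.
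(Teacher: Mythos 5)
Your proposal is correct. For the Ramsey statement it is \emph{identical} to the paper's argument: both show that $\sigma\mapsto\sigma^d$ is a topological group isomorphism $\aut(\fa)\to\aut(\fa^{[[d]]})$ (surjectivity forced by the coordinate-coincidence relations) and invoke that Ramseyness of an $\omega$-categorical structure is a property of its automorphism group as a topological group.

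For homogeneity and finite boundedness, however, the paper does not give an argument at all: it simply cites Proposition 4.2.19 of~\cite{bodirsky2021complexity}. You instead give direct proofs via the ``folding/unfolding'' correspondence between finite substructures of $\fa^{[[d]]}$ and finite substructures of $\fa$. The homogeneity argument (build $f'$ on coordinates, check well-definedness and injectivity via the relations coming from $x=y$, extend to $\sigma\in\aut(\fa)$, then use $\sigma^d$) is clean and complete. The finite boundedness argument is correct in outline but is the one place where your sketch is genuinely hand-wavy: you state that $\mathfrak{S}$ embeds into $\fa^{[[d]]}$ iff its unfolding embeds into $\fa$, but that equivalence only holds once $\mathfrak{S}$ is ``coherent'' (i.e.\ $\sim$ is an equivalence relation, the $\tau$-relations are well-defined on $\sim$-classes, the various copies of a $\tau$-relation indexed by different coordinate choices agree, and distinct elements differ in at least one coordinate). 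You do acknowledge this in the final clause about ``local obstructions''; the missing justification is that every such coherence condition is a universal statement with a bounded number of variables (bounded in terms of $d$ and the maximum arity of $\tau$), so — since $\tau'$ is finite and relational — it corresponds to a finite family of forbidden substructures, and likewise a failure of $U(\mathfrak{S})\hookrightarrow\fa$ is detected on a subset of $\mathfrak{S}$ of size at most $M$. With that one paragraph added, the finite boundedness proof is complete and self-contained, which is arguably an improvement over the paper's bare citation.
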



\begin{proof}
	The homogeneity and the finite boundedness part follows essentially from Proposition 4.2.19 in~\cite{bodirsky2021complexity}. The Ramsey property follows from the fact that $\aut(\fa)$ and $\aut(\fa^{[[d]]})$ are isomorphic as topological groups (the isomorphism is given by the coordinate-wise action of $\aut(\fa)$).
\end{proof}

\begin{lemma}\label{reduct_mc_core}
    Let $\fa$ and $\fb$ be structures so that $\fb$ has a $d$-dimensional interpretation $I$ in $\fa$. Then $\fb$ has a model-complete core expansion which is homomorphically equivalent to a reduct of the $d$-th full power of $\fa$.
\end{lemma}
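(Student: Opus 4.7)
The plan is to define $\fb^+$ as the expansion of $\fb$ by every relation $R \subseteq B^k$ (for each arity $k$) whose pullback $I^{-1}(R) \subseteq (A^d)^k$ is first-order definable in $\fa$. Writing $D := \dom(I)$, these pullbacks are $\fa$-definable relations supported in $D^k$. Let $\fc$ be the structure on $A^d$ whose signature matches that of $\fb^+$ and in which each symbol $R$ is interpreted as $I^{-1}(R^{\fb^+})$. By construction, every relation of $\fc$ is $\fa$-definable on $A^d$, so $\fc$ is (interdefinable with) a reduct of $\fa^{[[d]]}$, and $\fb^+$ is an expansion of $\fb$.

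Homomorphic equivalence between $\fb^+$ and $\fc$ follows quickly from the arrangement of the pullbacks. Extending $I$ arbitrarily to a total map $h \colon A^d \to B$ gives a homomorphism $h \colon \fc \to \fb^+$: a tuple in $R^{\fc} = I^{-1}(R^{\fb^+})$ already lies in $D^k$, so $h$ agrees with $I$ on it, and $(I(a_1),\dots,I(a_k)) \in R^{\fb^+}$ by definition. Conversely, any section $s \colon B \to D$ of the surjection $I$ is an embedding $\fb^+ \to \fc$, since $(b_1,\dots,b_k) \in R^{\fb^+}$ iff $(s(b_1),\dots,s(b_k)) \in I^{-1}(R^{\fb^+}) = R^{\fc}$.

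The heart of the argument is to verify that $\fb^+$ is a model-complete core, for which I would use the $\omega$-categorical framework implicit in the paper. The interpretation gives, by Remark~\ref{rem:action}, a continuous homomorphism $h_I \colon \Aut(\fa) \to \Aut(\fb^+)$; note that $h_I(\Aut(\fa)) \subseteq \Aut(\fb^+)$ because every relation of $\fb^+$ is $h_I(\Aut(\fa))$-invariant by construction. By Ryll--Nardzewski, $\Aut(\fa)$ has finitely many orbits on $A^{dn}$ for each $n$, hence $h_I(\Aut(\fa))$ has finitely many orbits on $B^n$. For any such orbit $\mathcal{O} \subseteq B^n$, the preimage $I^{-1}(\mathcal{O}) \subseteq D^n$ is $\Aut(\fa)$-invariant and a finite union of $\Aut(\fa)$-orbits on $A^{dn}$, so it is $\fa$-definable; thus $\mathcal{O}$ is itself one of the relations of $\fb^+$. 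Any endomorphism $\phi \in \End(\fb^+)$ preserves each such orbit, so for every finite tuple $\bar b \in B^n$ the tuple $\phi(\bar b)$ lies in the $h_I(\Aut(\fa))$-orbit of $\bar b$, which produces some $\sigma \in \Aut(\fa)$ with $h_I(\sigma)$ matching $\phi$ on $\bar b$. Hence $\phi$ is a pointwise limit of elements of $h_I(\Aut(\fa)) \subseteq \Aut(\fb^+)$, giving $\End(\fb^+) = \overline{\Aut(\fb^+)}$, which is the characterization of model-complete core recalled in the introduction.

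The main obstacle is the last step above: establishing the model-complete core property. This relies crucially on $\omega$-categoricity via Ryll--Nardzewski, both to guarantee that the $h_I(\Aut(\fa))$-orbits on $B^n$ are finite in number and that their preimages in $A^{dn}$ are first-order definable in $\fa$. A secondary nuisance is that ``reduct'' of $\fa^{[[d]]}$ has to be read up to interdefinability, since the pullback relations need not be among the literal atomic-instantiated relations used to define $\fa^{[[d]]}$; this is harmless given the context.
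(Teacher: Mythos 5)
Your proof is correct in substance but takes a different, and in one step more roundabout, route than the paper's.

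The paper's proof is short: it passes to $\fb^*$, the expansion of $\fb$ by all $\fb$-definable relations, notes that $\fb^*$ is a model-complete core (trivially, since every formula is equivalent modulo $\Th(\fb^*)$ to an atomic formula, so every homomorphism between models is elementary), observes that $I$ becomes a pp-interpretation of $\fb^*$ in $\fa^*$, and then cites a lemma from the ``wonderland'' paper for the homomorphic-equivalence-with-a-pp-power step. You instead take $\fb^+$, the expansion of $\fb$ by \emph{all} relations whose $I$-pullback is $\fa$-definable (i.e., the expansion making $I$ a full interpretation). This is a perfectly good choice — arguably cleaner conceptually — and your direct construction of the homomorphisms $h$ and $s$ is a nice, self-contained replacement for the paper's citation.

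Where your proposal is weaker is the verification that $\fb^+$ is a model-complete core. You invoke $\omega$-categoricity and a Ryll–Nardzewski orbit argument to show $\End(\fb^+)=\overline{\Aut(\fb^+)}$. But the lemma as stated does not assume $\omega$-categoricity, and the appeal to orbits is unnecessary: since every atomic relation of $\fb^+$ has $\fa$-definable pullback, the standard translation along $I$ shows that every $\fb^+$-definable relation (including negations and quantified formulas) again has $\fa$-definable pullback, and hence is itself already an atomic relation of $\fb^+$. So $\fb^+$ coincides with its own expansion by all definable relations, every formula is equivalent modulo $\Th(\fb^+)$ to an atomic one, and every homomorphism between models of $\Th(\fb^+)$ is automatically an elementary embedding. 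This is the same elementary argument the paper uses for $\fb^*$, applied to your $\fb^+$, and it avoids both the oligomorphicity hypothesis and the density-of-automorphisms characterization, which only applies to the single countable model rather than to the theory. Your final caveat about ``reduct'' versus first-order reduct of $\fa^{[[d]]}$ is fair; the paper's own proof glosses over the same point and the distinction is immaterial for the intended application.
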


\begin{proof}
	Let $\fa^*$ and $\fb^*$ be the expansions of the structures $\fa$ and $\fb$, respectively, by all first-order definable formulas. Then clearly $\fb^*$ is a model-complete core, and $I$ is a pp-interpretation of $\fb^*$ in $\fa^*$. By Lemma 3.8 in~\cite{wonderland} this implies that $\fb^*$ is homomorphically equivalent to some $d$-th \emph{pp-power} of $\fa^*$, in the sense of~\cite{wonderland}, Definition 3.6, 
    which is clearly a reduct of $(\fa^*)^{[[d]]}$ and thus also of $\fa^{[[d]]}$. 
\end{proof}


	Next we show that interpretations preserve being a reduct of a finitely bounded homogeneous Ramsey structure. In order to do this we need the following Ramsey transfer theorem.
	
\begin{theorem}[\cite{MottetPinskerCores}, Theorem 4]\label{mc_transfer}
	Let $\fa$ be a finitely bounded homogeneous relational Ramsey structure, and let $\fc$ be the model-complete core of some reduct of $\fa$. Then {$\fc$} has a finitely bounded homogeneous Ramsey expansion {with the same signature as $\fa$}.
\end{theorem}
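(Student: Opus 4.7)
\medskip
\noindent\textbf{Proof plan.} The plan is to use the Ramsey property of $\fa$ to produce a canonical homomorphism $f\colon\fa\to\fc$, and then to define the required expansion $\fc^+$ of $\fc$ by pushing forward the type structure of $\fa$ through $f$. Since $\fc$ is the model-complete core of some reduct $\fb$ of $\fa$, there is a homomorphism $h\colon\fb\to\fc$, which I view as a map from $A$ to $C$. The Bodirsky--Pinsker canonisation machinery, powered by the Ramsey property of $\fa$, then produces a \emph{canonical} function $f$ in the topological closure of $\aut(\fc)\cdot h\cdot\aut(\fa)$: for every $k$ and every $\bar{a}\in A^k$, the $\fc$-type of $f(\bar{a})$ depends only on the $\fa$-type of $\bar{a}$. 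Equivalently, $f$ sends orbits of $\aut(\fa)$ on $A^k$ to orbits of $\aut(\fc)$ on $C^k$.

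The expansion $\fc^+$ is defined by introducing, for every $k$ and every orbit $O\subseteq A^k$ of $\aut(\fa)$, a new relation $R_O\coloneqq f(O)\subseteq C^k$; these sets are well defined by canonicity. The automorphism group $\aut(\fc^+)$ contains every $\sigma\in\sym(C)$ for which there exists $\alpha\in\aut(\fa)$ with $\sigma\circ f=f\circ\alpha$, and thus arises as a continuous topological quotient of $\aut(\fa)$. Homogeneity of $\fc^+$ follows from homogeneity of $\fa$: any partial isomorphism between finite substructures of $\fc^+$ lifts through $f$ to a partial isomorphism between finite substructures of $\fa$, which extends to an automorphism of $\fa$ by homogeneity and descends back to $\fc^+$. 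The Ramsey property of $\fc^+$ transfers from $\fa$ via topological dynamics, using that extreme amenability passes to continuous quotients of topological groups.

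The main obstacle is finite boundedness of $\fc^+$. The preimage under $f$ of a finite $\tau^+$-substructure of $\fc^+$ is in general a union of infinitely many $\aut(\fa)$-orbits on tuples in $A$, so it is not immediate that non-embeddability of a finite $\tau^+$-structure into $\fc^+$ is witnessed by a \emph{finite} family of bounds. The resolution I have in mind is that canonicity identifies each $\aut(\fa)$-orbit of finite tuples with one fixed $\fc^+$-relation, so the finitely many bounds of $\fa$ translate (through $f$) into a finite family of forbidden substructures for $\fc^+$ whose sizes are controlled by the sizes of the bounds for $\fa$. Verifying this uniform bound rigorously, and in particular ruling out the emergence of new unbounded families of obstructions at the quotient level, is the crux of the theorem.
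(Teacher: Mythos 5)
This statement is imported in the paper as a citation to Mottet and Pinsker; the paper does not reprove it, so there is no in-text proof to compare against. (Also note a typo in the statement as reproduced: the conclusion should read that $\fc$, not $\fa$, has a finitely bounded homogeneous Ramsey expansion — $\fa$ trivially does by hypothesis. You correctly read it as being about $\fc$.)

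Your sketch correctly identifies the central tool — extracting a canonical function from $\overline{\aut(\fc)\cdot h\cdot\aut(\fa)}$ using the Ramsey property of $\fa$ — and this is indeed in the spirit of the Mottet--Pinsker argument. However, as a proof there are genuine gaps. First, the construction of $\fc^+$ is underspecified: canonicity guarantees that $f(O)$ lies inside a single $\aut(\fc)$-orbit for each $\aut(\fa)$-orbit $O$, but $f(O)$ may be a proper subset of that orbit, and the resulting relations $R_O$ need not be invariant under any reasonable group. You would need to replace $R_O$ by its orbit closure or, as the actual argument does, realize $\fc$ as a canonical range-restriction of $\fa$ and induce the structure from $\fa$ on that range. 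Second, the homogeneity step (``lift a partial isomorphism of $\fc^+$ through $f$ to $\fa$'') relies silently on being able to invert $f$ on finite tuples; since $f$ is merely a homomorphism, this requires an argument (it does hold for canonical functions into a model-complete core, but you do not supply it). Third, and most importantly, you explicitly defer finite boundedness: your final paragraph states that ruling out new unbounded families of obstructions ``is the crux of the theorem'' and offers only an informal heuristic for why it should work. This is precisely where the Mottet--Pinsker proof does its hard work — showing that the canonical retraction controls the age of the induced structure by finitely many bounds of size depending only on those of $\fa$ — and leaving it unresolved means the proof is incomplete exactly at the hardest point.
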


	This theorem combined with Lemma~\ref{hom_power} implies the following.

\begin{lemma}
	Let $\fa$ be a structure which is a reduct of a finitely bounded homogeneous Ramsey structure. Then any structure which is interpretable in $\fa$ has a finitely bounded homogeneous Ramsey expansion. 
\end{lemma}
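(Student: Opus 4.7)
The plan is to chain together Lemma~\ref{reduct_mc_core}, Lemma~\ref{hom_power}, and Theorem~\ref{mc_transfer} in a straightforward way. Let $\fa_0$ denote a finitely bounded homogeneous Ramsey structure such that $\fa$ is a reduct of $\fa_0$, and suppose $\fb$ is interpretable in $\fa$ via an interpretation of some dimension $d$. By the transitivity of interpretability noted just after the definition of an interpretation, $\fb$ is then interpretable in $\fa_0$ via a $d$-dimensional interpretation as well. Note also that $\fa_0$ is $\omega$-categorical (being finitely homogeneous), hence $\fa$ and $\fb$ are $\omega$-categorical too, which legitimises the use of results about uniqueness of model-complete cores in what follows.

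First, I would apply Lemma~\ref{reduct_mc_core} to this interpretation of $\fb$ in $\fa_0$. This produces a model-complete core expansion $\fb^{*}$ of $\fb$ which is homomorphically equivalent to some reduct $\fc$ of the full power $\fa_0^{[[d]]}$. Next, Lemma~\ref{hom_power} tells us that $\fa_0^{[[d]]}$ is itself finitely bounded, homogeneous, and Ramsey. Consequently, $\fc$ is a reduct of a finitely bounded homogeneous Ramsey structure, and $\fb^{*}$, being a model-complete core theory that is homomorphically equivalent to $\fc$, is (interdefinable with) the model-complete core of $\fc$; here we use the uniqueness of model-complete core companions in the $\omega$-categorical setting.

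With these pieces in place, Theorem~\ref{mc_transfer} applied to the pair consisting of the ambient finitely bounded homogeneous Ramsey structure $\fa_0^{[[d]]}$ and its reduct $\fc$, whose model-complete core is $\fb^{*}$, yields a finitely bounded homogeneous Ramsey expansion of $\fb^{*}$. Since $\fb^{*}$ is itself an expansion of $\fb$, this expansion is \emph{a fortiori} a finitely bounded homogeneous Ramsey expansion of $\fb$, which is what we wanted. I do not anticipate a substantial obstacle, since the three ingredients are already in place; the only conceptual step is the identification of $\fb^{*}$ with the model-complete core of $\fc$, needed so that Theorem~\ref{mc_transfer} genuinely applies.
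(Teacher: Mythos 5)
Your proof is correct and follows essentially the same route as the paper's: reduce to the case where $\fa$ is itself finitely bounded homogeneous Ramsey, pass through the full power via Lemma~\ref{reduct_mc_core} and Lemma~\ref{hom_power}, identify the model-complete core expansion $\fb^*$ of $\fb$ with the model-complete core of a reduct of $\fa_0^{[[d]]}$, and invoke Theorem~\ref{mc_transfer}. In fact you are more careful than the paper's own proof, which stops after quoting Lemma~\ref{reduct_mc_core} and leaves the final application of Theorem~\ref{mc_transfer} (and the observation that an expansion of $\fb^*$ is an expansion of $\fb$) implicit.
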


\begin{proof}
	We can assume without loss of generality that $\fa$ itself is a finitely (bounded) homogeneous relational Ramsey structure. Then by Lemma~\ref{hom_power} the same is true for any full power of $\fa$. Let $\fb\in \I(\fa)$. Then $\fb$ has some expansion $\fc$ which is a model-complete core and which is homomorphically equivalent to some reduct of some $\fa^{[[d]]}$ (by Lemma~\ref{reduct_mc_core}). {The statement of the lemma then follows from Theorem~\ref{mc_transfer}.}
\end{proof}

\begin{corollary}\label{reduct_hom}
	All structures with a first-order interpretation in $({\mathbb Q};<)$ 
    are reducts of homogeneous finitely bounded Ramsey structures.
\end{corollary}

	We will present an alternative proof (and also a stronger version) of Corollary~\ref{reduct_hom} in Subsection~\ref{sect:inter_q} (Remark~\ref{rem:ramsey-exp}).

	Note that some authors
    allow parameters in the defining formulas in the definition of an interpretation.     
    However, if the structure $\fa$ interprets its expansions with finitely many constants then this does not make a difference as shown by the following argument.
	We first show the folklore fact that any structure with at least 2 elements interprets all finite structures (cf.~\cite[Lemma 2.4.4]{bodirsky2021complexity}).

\begin{proposition}\label{disjoint_unions}
	Let $\fa$ be any structure with at least two elements. Then $\I(\fa)$ is closed under taking finite disjoint unions.
\end{proposition}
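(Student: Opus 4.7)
The plan is to reduce to the binary case (i.e., closure under disjoint union of two structures) and then iterate to obtain closure under arbitrary finite disjoint unions. So suppose $\fb_1,\fb_2\in\I(\fa)$, with interpretations $I_j \colon A^{d_j}\dashrightarrow B_j$ of dimension $d_j$ ($j=1,2$). I would construct an interpretation of $\fb_1\sqcup\fb_2$ in $\fa$ of dimension $d:=\max(d_1,d_2)+2$.

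The key idea is to use the last two coordinates of a $d$-tuple as a \emph{tag} distinguishing points intended to represent elements of $B_1$ from those representing elements of $B_2$: a tuple $(a_1,\dots,a_d)\in A^d$ represents an element of $B_1$ when $a_{d-1}=a_d$, and an element of $B_2$ when $a_{d-1}\neq a_d$. This is precisely where the hypothesis $|A|\geq 2$ enters, since both the equality pattern and the inequality pattern on the tag coordinates must be realizable in $\fa$. Concretely, define the partial surjection $I \colon A^d\dashrightarrow B_1\sqcup B_2$ by $I(a_1,\dots,a_d)=I_1(a_1,\dots,a_{d_1})$ when $a_{d-1}=a_d$ (and the projection lies in $\dom(I_1)$) and $I(a_1,\dots,a_d)=I_2(a_1,\dots,a_{d_2})$ when $a_{d-1}\neq a_d$ (and the projection lies in $\dom(I_2)$).

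It then remains to verify that $\dom(I)$, the kernel of $I$, and the preimages under $I$ of the atomic relations of $\fb_1\sqcup\fb_2$ are all definable in $\fa$. Each is handled by case-splitting on the tag pattern $a_{d-1}=a_d$ versus $a_{d-1}\neq a_d$ and then invoking the analogous definability properties of $I_1$ and $I_2$. For a relation $R$ of the disjoint union, the natural convention is $R^{\fb_1\sqcup\fb_2}=R^{\fb_1}\cup R^{\fb_2}$ (when $R$ lives in both signatures), and definability of the preimage reduces to the two definability statements for $I_1^{-1}(R^{\fb_1})$ and $I_2^{-1}(R^{\fb_2})$ together with the tag conditions ensuring no ``mixed'' tuples satisfy $R$.

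For the general finite case, I would proceed by a trivial induction: if $\fb_1,\dots,\fb_n\in\I(\fa)$ with $n\geq 3$, apply the binary case to $(\fb_1\sqcup\cdots\sqcup\fb_{n-1})$ and $\fb_n$, using transitivity of interpretability (noted in the remark following the definition of interpretations). I do not anticipate any serious obstacle; the essential point is the classical ``tagging'' trick, and the only subtlety is ensuring the tag is definable without parameters, which is exactly what $|A|\geq 2$ buys us.
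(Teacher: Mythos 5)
Your proposal is correct and is essentially the same as the paper's: the paper also pads the interpretations to a common dimension, appends two extra coordinates, and uses the pattern $a_{d+1}=a_{d+2}$ versus $a_{d+1}\neq a_{d+2}$ as the tag, with the general finite case following by iteration. The only cosmetic difference is that you make the induction step explicit while the paper leaves it implicit.
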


\begin{proof}
	Let $I \colon A^d\rightarrow B$ and $J \colon A^e\rightarrow C$ be interpretations of $\fb$ and $\fc$ in $\fa$, respectively. We can assume without loss of generality that $d=e$ and that $B$ and $C$ are disjoint.
	Let us define the partial map
$$
I\oplus J \colon  A^{d+2}\rightarrow B\cup C, (a_1,\dots,a_{d+2})\mapsto
\begin{cases*}
I(a_1,\dots,a_d) \text{ if $a_{d+1}=a_{d+2}$}\\
J(a_1,\dots,a_d) \text{ if $a_{d+1}\neq a_{d+2}$}
\end{cases*} 
$$
	wherever it is defined. Then it is easy to see that $I\oplus J$ is an interpretation of the disjoint union of $\fb$ and $\fc$ in $\fa$.
\end{proof}

\begin{corollary}\label{inter_finite}
	Every structure with at least 2 elements interprets all finite structures. 
\end{corollary}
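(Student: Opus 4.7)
The plan is to use Proposition~\ref{disjoint_unions} iteratively, starting from the trivial interpretation of a one-element pure set, to obtain, for each positive integer $n$, a partial surjection $I_n \colon A^{d_n} \to \{*_1,\dots,*_n\}$ that is an interpretation in $\fa$ and whose singleton preimages $I_n^{-1}(\{*_i\})$ are all first-order definable in $\fa$ without parameters. Then, given any finite structure $\fd$ of cardinality $n$ (in a finite relational signature), I will use this same $I_n$ to interpret $\fd$ in $\fa$: the point is that any finite relation on $\{*_1,\dots,*_n\}$ automatically has a definable preimage, since it is a finite union of products of the definable singleton preimages.

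More concretely, for the base case $n=1$, the total constant map $A\to\{*_1\}$ is an interpretation of the one-element pure set: its kernel is $A^2$, and the preimage of any atomic relation on a one-point structure is either $\emptyset$ or the full set of tuples, all definable without parameters. For the inductive step, assuming $I_n$ has been constructed with definable singleton preimages, I apply the disjoint-union construction from the proof of Proposition~\ref{disjoint_unions} to $I_n$ and the trivial one-point interpretation (padding dimensions to agree), obtaining a partial surjection $I_{n+1}\colon A^{d_{n+1}} \to \{*_1,\dots,*_{n+1}\}$ whose singleton preimages are intersections of preimages under $I_n$ with the sets defined by $a_{d_n+1}=a_{d_n+2}$ or $a_{d_n+1}\ne a_{d_n+2}$. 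These remain definable in $\fa$ by the inductive hypothesis.

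Having such an $I_n$ in hand, given an arbitrary finite structure $\fd$ with $|D|=n$, I identify $\dom(\fd)$ with $\{*_1,\dots,*_n\}$ and verify that $I_n$ itself is an interpretation of $\fd$ in $\fa$: for any $k$-ary relation $R^{\fd}\subseteq\{*_1,\dots,*_n\}^k$,
\[
I_n^{-1}(R^{\fd}) \;=\; \bigcup_{(*_{i_1},\dots,*_{i_k})\in R^{\fd}} I_n^{-1}(\{*_{i_1}\})\times\cdots\times I_n^{-1}(\{*_{i_k}\}),
\]
a finite union of products of definable sets, hence definable in $\fa$; the kernel of $I_n$ is handled analogously as $\bigcup_i I_n^{-1}(\{*_i\})\times I_n^{-1}(\{*_i\})$, and surjectivity is preserved by the disjoint-union construction at every stage.

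There is no substantive obstacle here; the argument is essentially bookkeeping once Proposition~\ref{disjoint_unions} is established. The only subtlety worth recording is that the induction must track the refined property that each singleton preimage $I_n^{-1}(\{*_i\})$ is definable, rather than merely the validity of the interpretation at each stage, because it is precisely this refinement that allows arbitrary finite relations on the interpreted domain to be pulled back definably in $\fa$.
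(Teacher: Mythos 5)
Your proposal is correct and follows essentially the same route as the paper: both use Proposition~\ref{disjoint_unions} to interpret the disjoint union of $n$ one-element pure sets (equivalently, $(\{*_1,\dots,*_n\};=,*_1,\dots,*_n)$), and then pull back the relations of an arbitrary finite structure, which the paper phrases as ``$\fb$ is a first-order reduct of the expansion of $(B;=)$ by all constants'' and you phrase as the definability of each singleton preimage $I_n^{-1}(\{*_i\})$. Your version is merely more explicit in unwinding the iteration and the reduct step; there is no substantive difference.
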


\begin{proof}
    	Let $\fb$ be a finite structure. 
	Every structure interprets the 1-element pure set. 
    If $\fa$ is a structure with at least two elements, then Proposition~\ref{disjoint_unions}  implies that $\fa$ also interprets the pure set $(B;=)$ expanded by all constants, which is a finite union of the 1-element pure set. Finally, $\fb$ is clearly a first-order reduct of the expansion of $(B;=)$ by all constants. Hence, $\fa$ interprets $\fb$.  
\end{proof}
	
\begin{lemma}\label{q_add_constants}
	The structures $\order$ and $\atoms$ interpret all of their respective expansions by finitely many constants.
\end{lemma}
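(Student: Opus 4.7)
The plan is to construct explicit interpretations in each case. For both $\order$ and $\atoms$, by homogeneity of the base, any two $n$-constant expansions are isomorphic, so it suffices to produce, for each $n \geq 1$, one parameter-free interpretation of a specific $n$-constant expansion.

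For $\atoms = ({\mathbb Q};=)$ the shortest route uses machinery built above. By Corollary~\ref{inter_finite}, $\atoms$ interprets every finite structure, in particular the $n$-element pure set expanded by a constant naming each of its elements; and $\atoms$ trivially interprets itself via the one-dimensional identity. Proposition~\ref{disjoint_unions} combines these into an interpretation of the disjoint union, which as a structure in the signature $\{=, c_1, \ldots, c_n\}$ is a countably infinite pure set with $n$ distinguished elements, hence isomorphic to $({\mathbb Q};=,c_1,\ldots,c_n)$.

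For $\order = ({\mathbb Q};<)$ this route fails, because in a disjoint union the constants would be order-incomparable to each other and to the ${\mathbb Q}$-component, whereas the expansion has $<$ linear on all of ${\mathbb Q}$. Instead, I would give a direct $(n+1)$-dimensional interpretation with domain $D = \{(a_1,\ldots,a_n,x) \in {\mathbb Q}^{n+1} : a_1 < \cdots < a_n\}$, thinking of $(a_1,\ldots,a_n)$ as a ``reference tuple'' standing in for the constants. The $2n+1$ orbits of $\aut({\mathbb Q};<)$ on $D$ split naturally into $n$ ``constant'' orbits (where $x = a_i$ for some $i$) and $n+1$ ``interval'' orbits (where $x$ lies strictly in one of the open intervals determined by $\bar{a}$). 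I would then define a definable equivalence $\sim$ on $D$ that collapses each constant orbit to a single $\sim$-class (representing $c_i$), and within each interval orbit equates two tuples iff their $x$-coordinates agree. An order $<_I$ on $D$ is defined by a case analysis over pairs of orbit types: within an interval orbit by $x < y$, and across orbits by the intended order of intervals and constants in the target expansion.

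The main obstacle is verifying that this construction actually interprets $({\mathbb Q};<,c_1,\ldots,c_n)$. Definability of $\sim$ and $<_I$ in $({\mathbb Q};<)$ is routine, as each is a disjunction over the finitely many orbit types, every piece expressible by atomic order conditions. The quotient $D/\sim$ then has exactly $n$ $\aut$-fixed classes and $n+1$ infinite $\aut$-orbits, matching the orbit structure of the target; confirming that $(D/\sim, <_I)$ is isomorphic to $({\mathbb Q};<,c_1,\ldots,c_n)$ reduces to checking that the $\sim$-classes in each interval orbit, indexed by the $x$-coordinate, form a dense linear order under $<_I$ correctly glued to the constant classes on either side, which follows from the density of ${\mathbb Q}$.
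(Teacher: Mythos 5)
Your treatment of $\atoms$ matches the paper's. For $\order$ your direct $(n+1)$-dimensional construction is correct, but your claim that ``this route fails'' reflects a misconception about what the paper actually does, and it is worth spelling out. You were considering the disjoint union of $\order$ with an $n$-element structure carrying constants, and you are right that such a structure is not isomorphic to $({\mathbb Q};<,c_1,\dots,c_n)$, since the constants end up incomparable to the $\mathbb{Q}$-part. The paper takes a different disjoint union and compares only up to bidefinability: it observes that $\aut(\mathbb{Q};<,c_1,\dots,c_n)$, viewed as an abstract permutation group, is isomorphic to the product $(\aut\order)^{n+1}\times \id_{\{1,\dots,n\}}$ acting on $n+1$ copies of $\mathbb{Q}$ plus $n$ singletons (the constants cut $\mathbb{Q}$ into $n+1$ open intervals, each a dense linear order without endpoints, permuted independently). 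Since for $\omega$-categorical structures having isomorphic automorphism groups as permutation groups means precisely being bidefinable, and since interpretability is invariant under passing to a bidefinable structure, Proposition~\ref{disjoint_unions} then does all the remaining work. So the disjoint-union route does go through; it just does not produce something literally order-isomorphic to the expansion, only something whose automorphism group has the same abstract shape. Your direct construction is a valid alternative: you are, in effect, building by hand the interpretation that the paper gets for free from bidefinability plus Proposition~\ref{disjoint_unions}. The paper's route is shorter and avoids the verification you gesture at (that $(D/\sim, <_I)$ has order type $\mathbb{Q} + 1 + \mathbb{Q} + \cdots + 1 + \mathbb{Q}$ with the distinguished points in the right places), while yours is more elementary and self-contained.
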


\begin{proof}
	Let us observe that $\aut(\mathbb{Q};<,c_1,\dots,c_n)$ is isomorphic to $(\aut\order)^{n+1}\times \id(\{1,\dots,n\})$ as a permutation group. This means that $(\mathbb{Q};<,c_1,\dots,c_n)$ is bidefinable with a disjoint union of $n+1$ copies of $\order$ and $n$ copies of the 1-element pure set. 
	
	Similarly, $\aut(\mathbb{Q};=,c_1,\dots,c_n)=\sym(\mathbb{Q})_{c_1,\dots,c_n}$ and thus $(\mathbb{Q};=,c_1,\dots,c_n)$ is bidefinable with the disjoint union of the infinite pure set and $n$ copies of the 1-element pure set. Thus, both statements of the lemma follow from Proposition~\ref{disjoint_unions}.
\end{proof}

	Note that the lemma above may fail for other classes of $\omega$-categorical structures. In particular, the Rado graph does not interpret its expansion by a constant~\cite{Radowithconstant}.
	We finally note an interesting consequence of Lemma~\ref{q_add_constants} which we will need in our later discussions. 

\begin{lemma}\label{inter_constants}
	Let $\fa$ be any structure which interprets all of its expansions with finitely many constants. Then $\I(\fa)$ is closed under expansions by finitely many constants.
\end{lemma}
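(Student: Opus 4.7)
The plan is to reduce the claim about arbitrary $\fb\in \I(\fa)$ to the hypothesis about constants added to $\fa$ itself, via transitivity of $\I$.

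Fix $\fb\in \I(\fa)$ with a $d$-dimensional interpretation $I \colon A^d\to B$, and pick arbitrary constants $b_1,\dots,b_n\in B$ to be named in the expansion $\fb^{+}:=(\fb,b_1,\dots,b_n)$. The goal is to show $\fb^{+}\in \I(\fa)$. For each $i\in [n]$ choose a preimage $\bar a_i\in A^d$ with $I(\bar a_i)=b_i$, and let $\bar a$ denote the concatenation of all these tuples, a finite sequence of elements of $A$. Let $\fa^{+}$ be the expansion of $\fa$ by constants naming the entries of $\bar a$. By assumption $\fa^{+}\in \I(\fa)$.

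The main step is to check that the same map $I$ is an interpretation of $\fb^{+}$ in $\fa^{+}$. The domain of $I$ and the kernel of $I$ were already definable in $\fa$, hence are definable in the expansion $\fa^{+}$. For an atomic formula in the signature of $\fb$, its $I$-preimage is already definable in $\fa$, so we only need to handle the new unary atomic formulas $x=b_i$: their $I$-preimage is $\{\bar c\in A^d: I(\bar c)=I(\bar a_i)\}$, which is definable in $\fa^{+}$ using the constants in $\bar a_i$ together with the definition of the kernel of $I$. Therefore $\fb^{+}\in \I(\fa^{+})$.

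Since the interpretability relation $\I$ is transitive (as noted after the definition of interpretation in the excerpt), $\fb^{+}\in \I(\fa^{+})$ and $\fa^{+}\in \I(\fa)$ give $\fb^{+}\in \I(\fa)$, completing the proof. There is no genuine obstacle here: the whole argument is bookkeeping, and the only non-routine ingredient is the hypothesis that $\fa$ interprets its constant expansions, which is precisely what lets us absorb the parameters $\bar a_i$ needed to define the preimages of the new constants $b_i$.
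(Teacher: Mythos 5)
Your proof is correct and follows essentially the same route as the paper's: choose preimages under $I$ of the new constants in $\fb$, observe that $I$ remains an interpretation of the expanded $\fb$ in the expansion of $\fa$ by those preimages, then apply the hypothesis and transitivity of interpretability. (As a side note, the paper's own proof cites Corollary~\ref{q_add_constants} where it evidently means to invoke the lemma's hypothesis, as you correctly do.)
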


\begin{proof}
	Let $\fa$ be as in the hypothesis of the lemma, let $I$ be an interpretation of a structure $\fb$ in $\fa$, and let $c_1,\dots,c_n\in B$.
    Let $d_1,\dots,d_m\in A$ such that each $c_i\in B$ is represented by some tuple from $\{d_1,\dots,d_m\}$.
    Then it is easy to see that $I$ is also an interpretation of $(\fb;c_1,\dots,c_n)$ in $(\fa;d_1,\dots,d_m)$. 
{By our assumption,} $\fa$ interprets $(\fa; d_1,\dots,d_m)$, so it also interprets $(\fb;c_1,\dots,c_n)$.
\end{proof}

\begin{corollary}\label{q_add_constants2}
	$\I(\order)$ and $\I(\atoms)$ are closed under expansions by finitely many constants.
\end{corollary}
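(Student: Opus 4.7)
The corollary is meant to be read as an immediate consequence of the two preceding lemmas, so the plan is essentially to verify that the hypotheses of Lemma~\ref{inter_constants} are met for both $\order$ and $\atoms$, and then invoke that lemma.

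First I would note that Lemma~\ref{q_add_constants} is precisely the statement that each of $\order$ and $\atoms$ interprets all of its expansions by finitely many constants. In other words, these are concrete instances of structures $\fa$ satisfying the hypothesis ``\,$\fa$ interprets all of its expansions with finitely many constants" from Lemma~\ref{inter_constants}.

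Then I would apply Lemma~\ref{inter_constants} twice: once with $\fa = \order$, and once with $\fa = \atoms$. Each application yields that $\I(\fa)$ is closed under expansions by finitely many constants, which is exactly the content of the corollary. There is no real obstacle here; the only thing to double-check is that the phrasing of Lemma~\ref{q_add_constants} (``interpret all of their respective expansions by finitely many constants'') matches verbatim the hypothesis of Lemma~\ref{inter_constants}, which it does.

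Given how short this is, I would write it as a one- or two-sentence proof that simply says ``Immediate from Lemma~\ref{inter_constants} applied to $\fa = \order$ and $\fa = \atoms$, whose hypotheses are verified by Lemma~\ref{q_add_constants}.''
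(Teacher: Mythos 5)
Your proposal is correct and is exactly the route the paper intends: Corollary~\ref{q_add_constants2} is stated immediately after Lemma~\ref{inter_constants} precisely so that it follows by instantiating that lemma with $\fa = \order$ and $\fa = \atoms$, with the hypothesis supplied by Lemma~\ref{q_add_constants}. The paper gives no separate proof, treating it as immediate, just as you describe.
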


\begin{notation}
    For a class $\mathcal{C}$ of 
$\omega$-categorical 
structures let us write
        $\M(\mathcal{C})$ for the class of all structures which are interdefinable with 
the model-complete core of some structure in $\mathcal{C}$.

When applying the operators $\I$ and $\M$ to classes of structures, we sometimes omit brackets, i.e., we write 
    $\MI({\mathcal C})$ instead of $\M(\I({\mathcal C}))$. 
    If ${\mathcal C} = \{\bB\}$, we might also write 
    $\I(\bB)$ and $\M(\bB)$ instead of 
    $\I(\{\bB\})$ and $\M(\{\bB\})$.
\end{notation}

The following theorem illustrates that 
the class $\MI({\mathcal C})$ is quite robust. 
\begin{theorem}\label{prop:IMI-MI}
    For any class of $\omega$-categorical structures ${\mathcal C}$ 
    \begin{align*} \I(\MI({\mathcal C})) = \MI({\mathcal C}). 
    \end{align*}
\end{theorem}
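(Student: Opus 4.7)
The plan is to establish the two inclusions separately. The inclusion $\MI(\mathcal{C}) \subseteq \I(\MI(\mathcal{C}))$ is immediate from the one-dimensional identity interpretation.

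For the nontrivial direction, let $\fb \in \I(\MI(\mathcal{C}))$, so $\fb$ has a $d$-dimensional interpretation in some $\fa \in \MI(\mathcal{C})$. Since $\MI(\mathcal{C})$ is defined up to interdefinability, I would assume that $\fa = \M(\fd)$ for some $\fd \in \I(\mathcal{C})$ in the common signature $\tau$ of $\fd$, and fix homomorphisms $h \colon \fd \to \fa$ and $g \colon \fa \to \fd$ witnessing homomorphic equivalence. Applying Lemma~\ref{reduct_mc_core} then yields a model-complete core expansion $\fb'$ of $\fb$ which is homomorphically equivalent to a reduct $\fe$ of $\fa^{[[d]]}$. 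Since $\fb'$ can be taken to be the expansion of $\fb$ by all first-order definable relations and $\fb$ is $\omega$-categorical, $\fb$ is interdefinable with $\fb'$, and hence with $\M(\fe)$. It therefore suffices to find some $\fe'' \in \I(\mathcal{C})$ whose model-complete core is interdefinable with $\M(\fe)$.

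The key step is a syntactic transfer of $\fe$ to the full power $\fd^{[[d]]}$: I would define $\fe''$ to be the reduct of $\fd^{[[d]]}$ with the same relations as $\fe$, identified by their parameterizing atomic $\tau$-formulas and coordinate choices, but evaluated in $\fd^{[[d]]}$ rather than $\fa^{[[d]]}$. Since the coordinate-wise extensions $h^d$ and $g^d$ preserve atomic $\tau$-formulas (because $h$ and $g$ do), they restrict to homomorphisms $\fe'' \to \fe$ and $\fe \to \fe''$, so $\fe$ and $\fe''$ are homomorphically equivalent, giving $\M(\fe) = \M(\fe'')$ up to interdefinability. Since $\fe''$ is a reduct of $\fd^{[[d]]}$, the transitivity of interpretability yields $\fe'' \in \I(\fd^{[[d]]}) \subseteq \I(\fd) \subseteq \I(\mathcal{C})$, so $\fb \in \MI(\mathcal{C})$.

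The main subtlety is the signature identification: it is essential to fix $\fa$ in the common signature $\tau$ with $\fd$, so that the same atomic $\tau$-formulas parameterize relations of both $\fa^{[[d]]}$ and $\fd^{[[d]]}$ and so that $h$ and $g$ are genuine $\tau$-homomorphisms. If the reduct $\fe$ in Lemma~\ref{reduct_mc_core} is viewed as a first-order reduct (as happens via the pp-power construction in its proof), one uses that $\fa$ is a model-complete core to replace each defining formula by an existential-positive equivalent, which is still preserved by $h$ and $g$, and the rest of the argument is unchanged.
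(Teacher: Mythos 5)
Your proof is correct and takes essentially the same route as the paper: both reduce to Lemma~\ref{reduct_mc_core}, use that the model-complete core lets one take the defining formulas of the full-power reduct to be existential positive, and then transport these formulas along the coordinate-wise extensions of the homomorphisms witnessing homomorphic equivalence between the core and the original structure. The only difference is cosmetic (you pass through $\M(\fe) = \M(\fe'')$ whereas the paper directly exhibits a homomorphic equivalence between the target structure and a member of $\I(\fb)$).
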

\begin{proof}
    Suppose that $\fb$ is a structure with an interpretation in $\fa \in {\mathcal C}$, let $\fc$ be the model-complete core of
    $\fb$, and suppose that $\fd$ has a $d$-dimensional first-order interpretation in $\fc$. It suffices to show that $\fd \in \MI({\mathcal C})$. 

    By Lemma~\ref{reduct_mc_core}, $\fd$ has a model-complete core expansion $\fd^*$ which is homomorphically equivalent to a reduct $\fd'$ of the $d$-th full power of $\fc$. We may assume that $\fd = \fd^*$ is a model-complete core; otherwise, apply the argument to the expansion of $\fd$ by all first-order definable relations and then take a reduct.

    Let $\tau$ be the signature of $\fd$. 
    If $R \in \tau$ is a $k$-ary, then there is a formula $\phi_R$ with $dk$ free variables that defines $R$ over $\fc$. Since $\fc$ is a model-complete core, 
    we may assume that the formula $\phi_R$ is existential positive (see, for example,  \cite[Proposition 2.6.13]{bodirsky2021complexity}). 
    Let $\fe$ be the $\tau$-structure with domain $B^d$ and the relations defined by $\phi_R$ for every $R \in \tau$; clearly, 
    $\fe \in \I(\fb)$. 
    By assumption, there are homomorphisms $f \colon \fb \to \fc$ and
    $g \colon \fc \to \fb$. 
    Then the map $(a_1,\dots,a_d) \mapsto (f(a_1),\dots,f(a_d))$ is a homomorphism from $\fe$ to $\fd'$, and the map 
    $(a_1,\dots,a_d) \mapsto (g(a_1),\dots,g(a_d))$ is a homomorphism from $\fd'$ to $\fe$. It follows that $\fe$ is homomorphically equivalent to $\fd'$ and thus homomorphically equivalent to $\fd$. Hence, $\fd \in \M(\I(\fb)) \subseteq \M(\I(\I(\fc))) = \MI(\fc)$. It follows that
    $\I(\MI({\mathcal C})) = \MI({\mathcal C})$. 
\end{proof}

\begin{remark} Note that defining interpretability as a relation between theories rather than structures, the above theorem could also be proven outside of the $\omega$-categorical context. 
\end{remark}

\section{Properties preserved by taking model-complete cores}\label{sec:preserved}
Some properties of structures were already known to be preserved under taking model-complete cores: 
for instance, this is the case for $\omega$-categoricity~\cite{Cores-journal,BodHilsMartin-Journal}, for homogeneity~\cite[Proposition 4.7.7.(1)]{Book}, for the Ramsey property~\cite{Bod-New-Ramsey-classes}, 
and for the property to be a reduct
of a finitely homogeneous Ramsey structure~\cite{MottetPinskerCores}. 
Also, it is known that if $\bB$ is $\omega$-categorical, then for every $n$, the number of orbits of $n$-tuples in the model-complete core of $\bB$ is bounded by the number of orbits of $n$-tuples in $\bB$ (\cite[Proposition 4.7.7.(4)]{Book}; in $\omega$-categorical structures, orbits of $n$-tuples are in 1-to-1 correspondence with $n$-types over the empty set). In fact, the same holds for some other variants of orbit growth functions, such as the \emph{labelled} or \emph{unlabelled growth} of structures (see Definition~\ref{def:lu} for the latter). From this, one can prove that in the $\omega$-categorical setting, model theoretic properties that can be characterised in terms of bounds on certain orbit growth functions are preserved by going to the core companion  of a theory. This is the case, for example, for $\mathrm{NIP}$~\cite{Macpherson-RapidGrowth}, monadic stability~\cite{braunfeld2022monadic,bodor2025labelled}, and cellularity~\cite{bodor2024classification,bodirsky2021permutation}. However, this leads to indirect proofs under the additional assumption of $\omega$-categoricity, and only works for few model theoretic properties. In this section, we will prove that most model theoretic properties are preserved by going to the core companion  by general techniques. In particular, we summarise our results in the following theorem.

\begin{theorem}\label{thm:mc-pres} Let $T$ be a first-order theory. Suppose that $T$ has a core companion  $S$. Let $P$ be any of the properties listed in the first column of Table~\ref{table1}. Suppose that $T$ has $P$. Then, so does $S$. 

\end{theorem}

\def\t{1.1} 

\renewcommand{\arraystretch}{1.4}
\newcommand{\thickhline}{\noalign{\hrule height \t pt}}
\begin{center}
\begin{table}[]
\begin{tabular}{!{\vrule width \t pt}c|c|c!{\vrule width \t pt}}
\thickhline
\rowcolor[HTML]{C0C0C0} 
\textbf{\begin{tabular}[c]{@{}c@{}}Model theoretic property\\ preserved by the core companion\end{tabular}} & \textbf{Reference for definition} & \textbf{\begin{tabular}[c]{@{}c@{}}Techniques yielding \\ preservation result\end{tabular}} \\ \thickhline
stability  &  Definition~\ref{def:stability} &  $\spadesuit, \diamondsuit, \clubsuit$\\ \hline
$\mathrm{NIP}$ & Definition~\ref{def:NIP} & $\spadesuit, \diamondsuit, \clubsuit$\\ \hline
simplicity &~\cite[Definition 2.1]{bailetti2024walk} & $\diamondsuit$\\ \hline
$\mathrm{NTP_2}$ & \cite[Definition 2.1]{bailetti2024walk} & $\diamondsuit$ \\ \hline
${\mathrm{NSOP}_1=\mathrm{NSOP}_2}$ & {\cite[Definition 2.2]{dvzamonja2004maximality}} & $\diamondsuit$\\ \hline
$\mathrm{NSOP}_k$ {for $k\geq 3$} & Definition~\ref{def:NSOPn} & $\heartsuit$, ($\diamondsuit$ for $k= 3$)\\ \hline
$\mathrm{NSOP}$ + $\omega$-categoricity & Definition~\ref{def:nsop} & $\heartsuit$\\ \hline
$\lambda$-stability & Definition~\ref{def:superstab} & $\spadesuit, \heartsuit, \clubsuit$\\ \hline
superstability & Definition~\ref{def:superstab} & $\spadesuit, \heartsuit, \clubsuit$\\ \hline
not being multiply ordered & \cite[Theorem 4]{keisler1976six} & $\spadesuit, \heartsuit$\\ \hline
monadic stability & Definition~\ref{def:monadicNIP} & $\heartsuit$\\ \hline
monadic $\mathrm{NIP}$ & Definition~\ref{def:monadicNIP} & $\heartsuit$ \\ \hline
total transcendentality & Definition~\ref{def:MR} & $\spadesuit, \heartsuit, \clubsuit$ \\ \hline
strong minimality & Remark~\ref{rem:stronglymin}& $\heartsuit, \clubsuit$\\ \hline
$\mathrm{NIP}_k$ & \cite[Definition 2.4]{shelah2007definable} & $\diamondsuit, \clubsuit$\\ \hline
$\mathrm{NFOP}_k$ & \cite[Definition 2.1]{abd2025higher} & $\diamondsuit, \clubsuit$\\ \hline
strong dependence & \cite[Definition 1.2]{shelah2014strongly} & $\clubsuit$\\ \hline
Morley rank $\leq \mu$ & Definition~\ref{def:MR} & $\heartsuit, \clubsuit$\\ \hline 
$\mathrm{dp}$-rank $\leq \mu$ & \cite[Definition 4.12]{simon2015guide} & $\clubsuit$ \\ \hline 
finite 
$U$-rank & ~\cite[Definition 3.16]{pillay1996geometric} & $\clubsuit$ \\ \hline 
finite
$\mathrm{op}$-dimension & \cite[Definition 1.9]{guingona2015common} & $\clubsuit$\\ \hline
near linear Zarankiewicz bounds & \cite[Section 3]{walsberg2026tracedefinabilityiimodeltheoretic} & $\clubsuit$\\ \hline
strong Erd\H{o}s-Hajnal property & \cite[Section 3]{walsberg2026tracedefinabilityiimodeltheoretic} & $\clubsuit$\\ 
\thickhline
\end{tabular}

\vspace{1em}
Above, $k\in{\mathbb{N}_{>0}}$, $\lambda$ and $\mu$ are cardinals with $\lambda\geq\aleph_0$.

\vspace{1em}
{\sc Techniques for preservation results:}
\begin{itemize}
    \item[($\diamondsuit$)] semi-positive straight definitions. Subsection~\ref{sub:patterns};
    \item[($\heartsuit$)] variant of straight definitions. 
    Proposition~\ref{prop:nsopn}, Corollary~\ref{cor:NSOP}, and Subsections~\ref{sub:monadic} and~\ref{sub:finer};
    \item[($\spadesuit$)] type counting argument. Subsection~\ref{sub:typecounting};
    \item[($\clubsuit$)] trace definability. Subsection~\ref{sub:trace}.
\end{itemize}

\vspace{1em}
\caption{We list the various model theoretic properties that we prove to be preserved under core companions in this section. 
Note: we do not define all of these properties, but we give references to the relevant definitions. We favour references including multiple definitions to the original reference where a concept is introduced. In the last column we indicate which methods can be used to show the relevant preservation result.
}\label{table1}
\end{table}
\end{center}

 We develop several methods  to show a model theoretic property is preserved when moving to the
core companion. This results, for example, in three different proofs for why stability and $\mathrm{NIP}$ are preserved by taking the 
core companion: {firstly,} because they are both defined by omitting a (semi-positive) pattern (Subsection~\ref{sub:patterns}), {secondly,} because they can be characterised by bounds on the number of types (Subsection~\ref{sub:typecounting}), and {thirdly,} because they are preserved under trace definability (Subsection~\ref{sub:trace}). However, for example, simplicity can only be shown to be preserved by the methods of Subsection~\ref{sub:patterns} since it cannot be characterised by type counting or trace definability criteria~\cite{Walsbergshort}.

	Note that if we prove that a model-theoretic property is closed under taking model-complete cores, then we obtain in particular that it is closed under taking model companions, because we can apply the result to the expansion by all relations defined by negating atomic formulas; the model-companion of the original structure is then isomorphic to a reduct of the model-complete core of the expanded structure. For some of the properties we mention, such as stability and $\mathrm{NIP}$, it is probably a folklore result that they are preserved by moving to the model companion. This could be proven, for example, by a type counting argument of the form of those in Subsection~\ref{sub:typecounting}. However, our techniques have a higher degree of generality and cover several major model theoretic properties for which type counting arguments would not be available (e.g., simplicity and $\mathrm{NSOP}_k$ for $k\in\mathbb{N}$). 
    
    A notable model-theoretic property that is not covered by our techniques and for which we do not know whether it is preserved by taking core companions is $\mathrm{NSOP}$.
We thank~\cite{day2026notion} for pointing this out, allowing us to correct a mistake in an earlier draft of this paper. Nevertheless, we show that $\mathrm{NSOP}$ is preserved when moving to the core companion under the additional assumption of $\omega$-categoricity in Corollary~\ref{cor:NSOP}.

\subsection{{Notes} on monster models}
	In this section, we assume familiarity with basic terminology from model theory, and refer to the text-book of Tent and Ziegler~\cite{Tent-Ziegler}. For example, we will use notions such as `elementary embedding' without giving a definition; the other sections of this article can be read independently and do not rely on the notions that we use in this section and that are not defined. For facts about monster models in a positive setting we refer the reader to~\cite{kamsma2025positivelogicintroductionmodel}.
\begin{notation}\label{not_monster} Following standard model-theoretic conventions, we will speak of first-order formulas and theories having a given model theoretic property in what is known as a \textit{monster model} $\fa^\star$ for the given first-order theory $T$ (see, for example,~\cite[$\S$ 6.1]{Tent-Ziegler} for the relevant definitions). Such a model is chosen so that, for a sufficiently large infinite cardinal $\kappa$, $\fa^\star$ is $\kappa$-saturated:\footnote{Usually, a monster model is also chosen to be sufficiently strongly homogeneous. We will never make use of this additional property.} types of finite tuples over sets of parameters of cardinality $<\kappa$ are realised in $\fa^\star$. The fact that $\fa^\star$ is $\kappa$-saturated further implies that  it is $\kappa^+$-\textit{universal} in the sense that every model $\fa$ of cardinality $<\kappa^+$ elementarily embeds into $\fa^\star$. By a \textit{small} subset of the monster model $\fa^\star$, we mean a subset of cardinality $<\kappa$. We shall consider all sets of parameters, tuples, and models as being small and living inside the monster model and for a tuple $\bar{a}$, we write $\vDash \phi(\bar{a})$ to denote that $\fa^\star\vDash \phi(\bar{a})$.
\end{notation}

The main use of monster models we  make in this paper is that most model theoretic properties are only witnessed at a sufficient level of saturation\footnote{$\omega$-saturation will be sufficient in all of our uses.} and that working in a monster model spares us from some bookkeeping when moving to small elementary extensions of some small model.\\

Since sometimes we will be working with a positive theory, we will need an equivalent notion of monster model in this context. This is standard practice~\cite{ben2003positive, PosModT}. We need to introduce a couple of notions:

\begin{definition} Let $\fa$ be a positively closed model of the positive theory $T$. Let $B\subseteq \fa$ be a set of parameters and $a$ be a tuple from $A$. The \textit{positive type} of $\bar{a}$ over $B$ is the set of existential positive formulas satisfied by $\bar{a}$ with parameters from $B$:
\[\mathrm{tp}^+(\bar{a}/B)=\{\phi(\bar{x},b)\vert \  \fa\vDash \phi(\bar{a},\bar{b}), \bar{b}\in B^k, k\in\mathbb{N}, \phi(\bar{x},\bar{y}) \text{ is existential positive}\}.\]
\end{definition}

\begin{definition} A first-order theory $T$ has the \textit{joint homomorphism property} (JHC) if for any models $\fa, \fb$ there is some model $\fc$ such that both $\fa$ and $\fb$ map homomorphically to $\fc$. The joint homomorphism property can be thought of as the positive analogue of completeness for a theory. Any positive theory $T$ can be extended to a positive theory $T'$ with the joint homomorphism property by adding to it the set of $h$-inductive sentences true in one of its positively closed models. 
\end{definition}

\begin{notation} From now on, by a first-order theory, we will always refer to a complete first-order theory. Similarly, every positive theory we consider has the joint homomorphism property. Note that the core companion  of a complete theory also has the joint homomorphism property. 
\end{notation}

\begin{notation} When a positive theory $T$ has the joint homomorphism property, we can build for arbitrarily large infinite cardinals $\kappa$ a positive monster model $\fa^{(\star, +)}$. This is a positively closed model which is positively $\kappa$-saturated (in the sense that positive types over $\leq\kappa$-many parameters are realised) for sufficiently large $\kappa$. 
Again such a monster model will be positively $\kappa^+$-universal in the sense that any positively closed model of cardinality $\leq \kappa$ maps homomorphically to $\fa^{(\star, +)}$. 
\end{notation}

\begin{remark}\label{rem:samemonster}
    Note that every model of a model-complete core theory $S$ is positively closed. This is because every homomorphism between models of $S$ is an elementary embedding and so an immersion. Moreover, any complete model-complete core theory has the joint homomorphism property. For this reason any monster model $\fa^\star$ for $S$ as a standard first-order theory is also a positive monster model for $S$ as a positive theory (i.e., for the positive theory given by the set of $h$-inductive consequences of $S$, which as remarked above axiomatises $S$). When working with a monster model $\fb^\star$ of a complete theory $T$ and a monster model $\fa^\star$ of its core companion  $S$, we choose these so that there is a homomorphism $f \colon \fa^\star\to\fb^\star$. This is always achievable using the fact that $S$ and $T$ have the same $h$-universal consequences and by choosing $\fb^\star$ to be sufficiently saturated. 
\end{remark}

\subsection{Model theoretic properties omitting patterns}\label{sub:patterns}

Model theory has developed several notions of tameness for a first-order theory $T$. Most of these properties are defined in terms of formulas not coding some forbidden configuration (which then ensures they
are preserved under interpretation). In this section, we shall work in a general framework to describe the shape of a model theoretic property defined in terms of some omitted pattern. Such a framework was originally suggested by Shelah~\cite[Definition 5.17]{saharon2000not} and mentioned in~\cite{kruckman2024new}. We will follow the presentation given by Bailetti~\cite{bailetti2024walk}, who considerably elaborates on it. 
In this subsection, we prove that all model theoretic properties defined in terms of formulas omitting a semi-positive straight pattern (in the sense of Definitions~\ref{def:patternedprop} and~\ref{def:semipositive}) are preserved when going to the core companion  of a theory.
This includes many mainstream model theoretic properties appearing on the model theoretic map of the universe available on \href{www.forkinganddividing.com}{forkinganddividing.com} (i.e., stability, $\mathrm{NIP}$, 
simplicity, $\mathrm{NTP_2}$).

Given the wide range of model theoretic properties 
that are 
preserved by going to the core companion  of a theory and the fact that we adopt the general framework of~\cite{bailetti2024walk} to talk about model theoretic properties, we will only formally introduce two of these properties. 
We have chosen 
\emph{stability} (Definition~\ref{def:stability} below), and $\mathrm{NIP}$ (Definition~\ref{def:NIP}). These are two of the most studied model-theoretic properties~\cite{pillay1996geometric, simon2015guide}. We refer the reader to~\cite{bailetti2024walk} for the definitions of the other model theoretic properties we discuss in this section.

\begin{notation} When writing a $\tau$-formula as $\phi(\bar{x};\bar{y})$, we mean that we consider the variables of $\phi$ as partitioned into two tuples of variables $\bar{x}$ and $\bar{y}$.
\end{notation}

\begin{definition}\label{def:stability} Let $T$ be a first-order theory. We say that a $\tau$-formula $\phi(\bar{x};\bar{y})$ has the \emph{order property} $\mathrm{OP}$ if there are\footnote{Recall from Notation~\ref{not_monster} that parameters are taken to live in a monster model $\fa^\star$ of $T$. We will keep this notation throughout this section. Later, in the context of positive theories, we will take the parameters from a positive monster model $\fa^{(\star, +)}$.} $(\bar{a}_i \bar{b}_i)_{i\in\mathbb{N}}$ such that
\[\vDash \phi(\bar{a}_i, \bar{b}_j) \text{ if and only if } i<j\;.\]
We say that a theory $T$ is \emph{stable}, or $\mathrm{NOP}$, if no $\tau$-formula has the order property. Note that from our definition the theory of a finite structure is stable.
\end{definition}

	The following structures have stable theories: $(\mathbb{N};=)$, the Johnson graph (see Definition~\ref{def:Johnson}),  an equivalence relation with infinitely many infinite classes,  infinite vector spaces over finite fields, and algebraically closed fields of fixed characteristic.

\begin{definition}\label{def:NIP}
  Let $T$ be a first-order theory. We say that a $\tau$-formula $\phi(\bar{x};\bar{y})$ has the \emph{independence property} $\mathrm{IP}$ if there are $(\bar{a}_i)_{i\in\mathbb{N}}$ and $(\bar{b}_S)_{S\subseteq\mathbb{N}}$ such that 
    \[\vDash \phi(\bar{a}_i; \bar{b}_S) \text{ if and only if } i\in S.\]
    We say that $T$ is \emph{dependent}, or $\mathrm{NIP}$ if no $\tau$-formula has the independence property.
\end{definition}

	It is easy to see that stability implies $\mathrm{NIP}$ for a first-order theory. Some structures whose theory is $\mathrm{NIP}$ (and not stable) are  $(\mathbb{Q};<)$, the dense local order $S(2)$ (see Notation~\ref{not:S2}), and real closed fields.

As one can see, stability is defined in terms of formulas not coding a copy of the half-graph; that is, the {bipartite graph with bipartition $(\mathbb{N}\times\{1\},\mathbb{N}\times\{2\})$} and where ${((i,1),(j,2))}$ forms an edge if and only if $i<j$. Similarly, $\mathrm{NIP}$ is defined in terms of formulas not coding some bipartite graph.
Indeed, a common feature of several model-theoretic tameness properties is that they can be defined in terms of $\tau$-formulas in a theory not coding some forbidden configuration. For this reason, most model theoretic tameness properties are known by an acronym starting with an ``N'' to indicate the word ``not'' and end with the letter ``P'' to indicate the word ``property''. Below, we list some of the main classes of model-theoretically tame theories, other than stable and $\mathrm{NIP}$ theories:
\begin{itemize}
    \item $\mathrm{NSOP}$ theories, where $\mathrm{SOP}$ stands for the \emph{strict order property} (Definition~\ref{def:nsop});
    \item \emph{simple theories}, also known as $\mathrm{NTP}$ theories, where $\mathrm{TP}$ is the \emph{tree property}. More specifically, the tree property $\mathrm{TP}$ is defined in terms of having the $k$-tree property $k\textnormal{-}\mathrm{TP}$ for some $k\in\mathbb{N}$. For more on simple theories, the reader may consult~\cite{wagner2000simple, kim2013simplicity};
    \item $\mathrm{NTP}_2$ theories, where $\mathrm{TP}_2$ is the tree property of the second kind. Again, $\mathrm{TP}_2$ is defined in terms of having the property $k\textnormal{-}\mathrm{TP}_2$ for some $k\in\mathbb{N}$; 
    \item $\mathrm{NSOP}_n$ theories for $n\geq 1$, where $\mathrm{SOP}_n$ is the \emph{$n$-th strong order property} (Definition~\ref{def:NSOPn}).
\end{itemize}
All of the above properties emerged from the work of Shelah~\cite{shelah1990classification, shelah1980simple, shelah1995toward} and Shelah and D{\v{z}}amonja for $\mathrm{NSOP}_1$ and $\mathrm{NSOP}_2$~\cite{dvzamonja2004maximality}.\footnote{It was recently shown in~\cite{mutchnik2025textup} that $\mathrm{NSOP}_1$ and $\mathrm{NSOP}_2$ are the same properties for a first-order theory.} Rather than defining each of these properties individually, following~\cite{bailetti2024walk}, we will define the abstract notion of being a patterned property and point out that most model theoretic properties fall under this notion. 

\begin{definition}\label{def:patternedprop} Let $n\in\mathbb{N}$. An ${n}$\emph{-pattern} $(\mathcal{C}, \mathcal{I})$ consists of two sets $\mathcal{C}, \mathcal{I}\subseteq(\mathcal{P}(n)\times\mathcal{P}(n))\setminus\{(\emptyset,\emptyset)\} $. We call a pair $(A^+, A^-)\in\mathcal{C}$ a \emph{consistency condition} and a pair $(Z^+, Z^-)\in\mathcal{I}$ an \emph{inconsistency condition}.
\end{definition}
\begin{definition} Let $T$ be a first-order theory. Let $\phi(\bar{x};\bar{y})$ be a $\tau$-formula partitioned into the tuples of variables $\bar{x}$ and $\bar{y}$. Let $(\mathcal{C}, \mathcal{I})$ be an $n$-pattern. We say that $\phi(\bar{x};\bar{y})$ \emph{exhibits} $(\mathcal{C}, \mathcal{I})$ if there is a sequence of tuples $(\bar{b}_i)_{i<n}$ (living in the monster model) such that 
\begin{enumerate}
    \item for all $A=(A^+, A^-)\in\mathcal{C}$, the partial type
    \[\{\phi(\bar{x};\bar{b}_i)\mid i\in A^+\}\cup\{\neg\phi(\bar{x};\bar{b}_j)\mid j\in A^-\}\]
    is consistent; and
    \item for all $Z=(Z^+, Z^-)\in\mathcal{I}$, the partial type
    \[\{\phi(\bar{x};\bar{b}_i)\mid i\in Z^+\}\cup\{\neg\phi(\bar{x};\bar{b}_j)\mid j\in Z^-\}\]
    is inconsistent.
\end{enumerate}
 By a \emph{straight pattern},
 we mean a set
 \begin{equation}\label{eq:straightpattern}
    \mathcal{P}=\{(\mathcal{C}_n, \mathcal{I}_n) \mid n\in\mathbb{N}\}\;, 
 \end{equation}
    where for each $n\in\mathbb{N}$, $(\mathcal{C}_n, \mathcal{I}_n)$ is an $f(n)$-pattern, where $f$ is some function $f \colon \mathbb{N}\to\mathbb{N}$. We say that a formula $\phi(\bar{x};\bar{y})$ \emph{exhibits the straight pattern} $\mathcal{P}$ if it exhibits $(\mathcal{C}_n, \mathcal{I}_n)$ for all $n\in\mathbb{N}$. We say that a $\tau$-theory $T$ has $N\mathcal{P}$ if no $\tau$-formula $\phi(\bar{x};\bar{y})$ has $\mathcal{P}$. 
\end{definition}

\begin{definition}\label{def:semipositive}
	We say that a straight pattern $\mathcal{P}$ as in~\ref{eq:straightpattern} is \emph{semi-positive} if for each $n\in\mathbb{N}$ and $(Z_n^+, Z_n^{-})\in \mathcal{I}_n$, $Z_n^{-}=\emptyset$. We say that $\mathcal{P}$ is \emph{positive} if it is semi-positive and we also have that for each $(A_n^+, A_n^{-})\in \mathcal{C}_n$, $A_n^{-}=\emptyset$. We say that a model theoretic property $P$ of a formula is \emph{straightly definable} if there is a straight pattern $\mathcal{P}$ such that a given formula $\phi$ has P if and only if it exhibits $\mathcal{P}$.
This notion can be extended to properties of theories in the obvious way. We similarly define positive straight definability and semi-positive straight definability.
\end{definition}

\begin{example} It is easy to observe by compactness that $\mathrm{OP}$ is 
semi-positively straightly definable. In fact, the following holds~\cite{bailetti2024walk}: a $\tau$-formula $\phi(\bar{x};\bar{y})$ has $\mathrm{OP}$ if and only if it exhibits the straight pattern $\mathcal{OP}:=\{(\mathcal{C}_n, \mathcal{I}_n)\mid \ n\in\mathbb{N}\}$ where $(\mathcal{C}_n, \mathcal{I}_n)$ is the $n$-pattern with
\[\mathcal{C}_n =\{(\{i,i +1,\dots,n -1\},\{0,\dots,i-1\}) \mid i < n\}, \text{ and } I_n =\emptyset\;.\]
\end{example}
More generally, most model theoretic properties are straightly definable. Below, we record the fact that a large number of model theoretic properties are positively straightly definable. The majority of the properties below can be seen more straightforwardly to be semi-positively straightly definable (cf.~\cite[Proposition 3.3]{bailetti2024walk}):

\begin{fact}[\cite{bailetti2024walk, day2025results, shelah2008more}] Each of the properties 
\[\mathrm{XP}\in\{\mathrm{OP}, \mathrm{IP}, k\textnormal{-}\mathrm{TP}, k\textnormal{-}\mathrm{TP}_2, \mathrm{SOP}_1, \mathrm{SOP}_2, \mathrm{SOP}_3\} \]
is positively straightly definable. 
\end{fact}
Whilst $\mathrm{SOP}_n$ for $n\geq 4$ and $\mathrm{SOP}$ are straightly definable~\cite{bailetti2024walk, day2026notion}, these properties are not positively straightly definable~\cite{bailetti2024walk}.

We now want to define a way to uniformly convert a semi-positive straight pattern $\mathcal{P}$ into what {we} shall call its \emph{positivisation} $\mathcal{P}^+$ in the sense that
\begin{itemize}
    \item $\mathcal{P}^+$ makes sense as a property of an existential positive formula in a positive theory, and  
    \item in a first-order theory where every formula is equivalent to an existential positive formula, $\mathcal{P}^+$ is equivalent to $\mathcal{P}$. 
\end{itemize}
    Fortunately, there is already some literature in positive model theory dedicated to precisely this problem~\cite{PosModT}.
    For example, the standard positivised version of the order property (and hence of stability) is as follows.

\begin{example} Let $T$ be a positive theory with the joint homomorphism property. We say that an existential positive formula $\phi(\bar{x};\bar{y})$ has OP$^+$ if there is a sequence $(\bar{a}_i\bar{b}_i)_{i\in\mathbb{N}}$ (from a positive monster model of $T$) and an obstruction $\psi(\bar{x};\bar{y})$ of $\phi(\bar{x};\bar{y})$ such that 
\begin{align*} & \vDash \phi(\bar{a}_i; \bar{b}_j)  \text{ for } i< j \\
\text{ and } \quad & \vDash \psi(\bar{a}_i; \bar{b}_j)  \text{ for } j\leq i.
\end{align*}
\end{example}

\begin{definition}\label{def:positivisation}
    Let $\mathcal{P}:=\{(\mathcal{C}_n, \mathcal{I}_n) \mid n\in\mathbb{N}\}$ be a semi-positive straight pattern. Let  $Q\subseteq \mathbb{N}$ be 
    \[Q:=\{|Z^+| : (Z^+, \emptyset)\in\mathcal{I}_n \text{ for some }n\in\mathbb{N}\}\;.\]
    Let $T$ be a positive theory with the joint homomorphism property. We say that an existential positive formula $\phi(\bar{x};\bar{y})$ exhibits the \emph{positivisation $\mathcal{P}^+$ of $\mathcal{P}$} if there is an obstruction $\psi(\bar{x};\bar{y})$ of $\phi$ and for each $q\in Q$, an obstruction $\theta_{q}(\bar{y}_1, \dots, \bar{y_q})$ of $\exists x\bigwedge_{i=1}^q\phi(\bar{x}; \bar{y}_i)$ such that for every $n\in\mathbb{N}$ there is a sequence $(\bar{b}_i)_{i<f(n)}$ of {tuples} such that
\begin{enumerate}
    \item for every $A=(A^+, A^-)\in\mathcal{C}_n$, the partial type
    \[\{\phi(\bar{x};\bar{b}_i)\mid i\in A^+\}\cup\{\psi(\bar{x};\bar{b}_j)\mid j\in A^-\}\]
    is consistent; and
    \item for every $Z=(Z^+, \emptyset)\in\mathcal{I}_n$,
    where $Z^+=\{i_1, \dots, i_q\}$ has cardinality $q$,
    \[\vDash\theta_{q}(\bar{b}_{i_1}, \dots, \bar{b}_{i_q}).\]
\end{enumerate}
    We say that $T$ has $N\mathcal{P}^+$ if no existential positive formula in $T$ exhibits $\mathcal{P}^+$.
 \end{definition}

 \begin{remark} Whilst this is not strictly necessary for our purposes, our definition of the positivisation of a property is compatible with the current literature in positive model theory. In particular, for $\mathcal{P}\in\{\mathrm{OP}, \mathrm{IP}, k\textnormal{-}\mathrm{TP}, k\textnormal{-}\mathrm{TP}_2, \mathrm{SOP}_1, \mathrm{SOP}_2, \mathrm{SOP}_3\}$, we have that $\mathcal{P}^+$ as defined in Definition~\ref{def:positivisation} agrees with the definition of the corresponding model theoretic property in the standard positive model theory literature~\cite{PosModT}.
 \end{remark}

Note that when we say that a first-order complete theory has a positive model-theoretic property, we mean that its positive fragment has that model-theoretic property.

\begin{lemma}\label{negpres} Suppose that every model of $T$ has a homomorphism to a model of $S$. If an existential positive formula $\psi(\bar{x})$ is an obstruction of an existential positive formula $\phi(\bar{x})$ in $S$, then $\psi(\bar{x})$ is an obstruction of $\phi(\bar{x})$ in $T$. 
\end{lemma}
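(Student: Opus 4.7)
The plan is to proceed by contraposition: I will show that if $\psi(\bar{x})$ is \emph{not} an obstruction of $\phi(\bar{x})$ in $T$, then it is also not an obstruction in $S$. So suppose $T \not\vdash \forall \bar{x} ((\phi(\bar{x}) \wedge \psi(\bar{x})) \rightarrow \bot)$. Then there is a model $\fa \vDash T$ and a tuple $\bar{a}$ from $A$ such that $\fa \vDash \phi(\bar{a}) \wedge \psi(\bar{a})$.

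Next, I will invoke the hypothesis that every model of $T$ maps homomorphically to a model of $S$: this produces a homomorphism $f \colon \fa \to \fb$ for some $\fb \vDash S$. The key (standard) fact I will use is that existential positive formulas are preserved under homomorphisms: if $\fa \vDash \theta(\bar{a})$ for $\theta$ existential positive, then $\fb \vDash \theta(f(\bar{a}))$. Applying this to both $\phi$ and $\psi$, I obtain $\fb \vDash \phi(f(\bar{a})) \wedge \psi(f(\bar{a}))$. This witnesses that $\psi$ is not an obstruction of $\phi$ in $S$, contradicting the hypothesis and completing the proof.

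I expect no real obstacle here; the lemma is essentially a direct unpacking of the definition of obstruction, together with the preservation of existential positive formulas under homomorphisms. The only point that deserves a brief mention in the write-up is why the preservation holds, which follows by a straightforward induction on the structure of the existential positive formula (witnesses for existential quantifiers are transported along $f$, and atomic formulas are preserved by definition of homomorphism).
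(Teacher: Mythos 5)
Your proof is correct and follows essentially the same contrapositive argument as the paper's: produce a model of $T$ realizing $\phi\wedge\psi$, push it along a homomorphism to a model of $S$, and invoke preservation of existential positive formulas. The only cosmetic difference is that the paper combines $\phi\wedge\psi$ into a single existential positive formula before applying preservation, whereas you apply preservation to $\phi$ and $\psi$ separately — both are fine.
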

\begin{proof}
    Suppose by contrapositive $\psi$ is not an obstruction of $\phi$ in $T$. Then  there is a model $\fa$ of $T$ and $a\in A$ such that 
\[\fa\vDash \phi(\bar{a})\wedge\psi(\bar{a}).\]
By assumption, there is a homomorphism $f$ from $\fa$ to a model $\fb$ of $S$. Now, by moving the existential quantifier to the front, $\phi(\bar{x})\wedge\psi(\bar{x})$ is equivalent to an existential positive formula and so its truth for a tuple is preserved by homomorphisms. In particular, 
\[\fb\vDash \phi(f(\bar{a}))\wedge\psi(f(\bar{a})).\]
Thus, $\psi$ is not an obstruction of $\phi$ in $S$. This proves the contrapositive. 
\end{proof}

\begin{lemma}\label{pospres} Suppose that $S$ and $T$ are positive theories with the joint homomorphism property. Suppose that $S$ and $T$ have the same $h$-universal consequences. Let $\mathcal{P}$ be a semi-positive straight pattern. Then
$S$ has $\mathcal{P}^+$ if and only if $T$ has $\mathcal{P}^+$.
\end{lemma}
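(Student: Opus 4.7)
The plan is to prove both implications symmetrically, so it suffices to show that if $T$ has $\mathcal{P}^+$ then so does $S$. Assume $T$ exhibits $\mathcal{P}^+$ via an existential positive formula $\phi(\bar{x};\bar{y})$, an obstruction $\psi(\bar{x};\bar{y})$ of $\phi$, obstructions $\theta_{(q,r)}$ for each $(q,r)\in Q$, and for each $n\in\mathbb{N}$ a witnessing sequence $(\bar{b}_i^n)_{i<f(n)}$ in a positive monster $\fa^{(\star,+)}_T$ of $T$. I will show that the same formulas $\phi$, $\psi$, and $\theta_{(q,r)}$ together with suitably transferred sequences witness $\mathcal{P}^+$ in $S$.

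The first step is to observe that obstructions transfer between $S$ and $T$. By Fact~\ref{fact:hconsequences}, the assumption that $S$ and $T$ have the same $h$-universal consequences means every model of $T$ maps homomorphically to a model of $S$ and vice versa. Applying Lemma~\ref{negpres} in both directions (swapping the roles of $S$ and $T$ for the second application), an existential positive formula is an obstruction of another in $T$ if and only if it is an obstruction in $S$. Hence $\psi$ is an obstruction of $\phi$ in $S$, and each $\theta_{(q,r)}$ is an obstruction of the corresponding conjunction in $S$.

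The second step is to transfer the witnessing sequences. Fix $n$ and consider $(\bar{b}_i^n)_{i<f(n)}$. Since $\fa^{(\star,+)}_T$ is a model of $T$, it admits a homomorphism to some positively closed model of $S$; by choosing the positive monster $\fa^{(\star,+)}_S$ to be sufficiently universal relative to the cardinality of $\fa^{(\star,+)}_T$, this homomorphism composes with a further immersion into $\fa^{(\star,+)}_S$ to yield a homomorphism $h\colon\fa^{(\star,+)}_T\to\fa^{(\star,+)}_S$. Set $\bar{b}'_i := h(\bar{b}_i^n)$. To verify condition (1) of Definition~\ref{def:positivisation}, I note that consistency of the partial type $\{\phi(\bar{x};\bar{b}_i^n)\mid i\in A^+\}\cup\{\psi(\bar{x};\bar{b}_j^n)\mid j\in A^-\}$ in $\fa^{(\star,+)}_T$ is, by positive saturation, witnessed by a realising tuple $\bar{a}$; since $h$ preserves existential positive formulas, $h(\bar{a})$ realises the corresponding partial type over the $\bar{b}'_i$ in $\fa^{(\star,+)}_S$, so this type is consistent. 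For condition (2), each instance $\theta_{(q,r)}(\bar{b}_{i_1}^n,\dots,\bar{b}_{j_r}^n)$ holding in $\fa^{(\star,+)}_T$ is existential positive, hence preserved by $h$ to hold for the corresponding tuple of $\bar{b}'_i$ in $\fa^{(\star,+)}_S$.

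The only delicate point is arranging a homomorphism between the two positive monsters, which requires choosing $\fa^{(\star,+)}_S$ sufficiently universal; this is standard positive model-theoretic bookkeeping. Otherwise, the argument is a clean combination of two ingredients already available in the paper: preservation of existential positive formulas under homomorphisms (which transports both the consistency witnesses for condition (1) and the obstruction-satisfaction witnesses for condition (2)), and the symmetric transfer of the obstruction relation itself granted by Lemma~\ref{negpres} applied in both directions.
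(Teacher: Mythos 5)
Your proof is correct and follows essentially the same route as the paper's: transfer the fixed formulas $\phi$, $\psi$, $\theta_{(q,r)}$ across (their obstruction status transfers by Lemma~\ref{negpres}), push the witnessing tuples through a homomorphism, and use that existential positive formulas are preserved to carry over both the realizations for condition (1) and the $\theta_{(q,r)}$-satisfactions for condition (2). The only cosmetic difference is that the paper first confines the finitely many witnesses for a fixed $n$ to a small model of $S$ and then takes a homomorphism into a small model of $T$, thereby sidestepping the cardinality bookkeeping you address by enlarging the target monster; both are standard and equivalent.
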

\begin{proof}
    This is a direct consequence of Lemma \ref{negpres} and Definition~\ref{def:positivisation}. Let $\phi(\bar{x};\bar{y})$ exhibit $\mathcal{P}^+$ in $S$. Let $\psi(\bar{x};\bar{y})$, and  $\{\theta_q(\bar{y}_1, \dots, \bar{y}_q) \mid q\in Q\}$ be as in Definition~\ref{def:positivisation} and $f$ the function from the definition of the straight pattern ${\mathcal P}$. Fix $n\in\mathbb{N}$. Then, there are $(\bar{b}_i)_{i<f(n)}$ witnessing $(\mathcal{C}_n, \mathcal{I}_n)$ as described in Definition~\ref{def:positivisation}. In particular, for each $(A^+, A^-)\in\mathcal{C}_n$, there is $\bar{a}_{(A^+, A^-)}$ living in the positive monster model $\fa^{(\star, +)}$ for $S$ such that
\[
 \bigwedge_{i\in A^+} \phi(\bar{a}_{(A^+, A^-)}; \bar{b}_i)\wedge \bigwedge_{j\in A^-} \psi(\bar{a}_{(A^+, A^-)}; \bar{b}_j)
\]
and also, for each $(Z^+, \emptyset)\in\mathcal{I}_n$, 
\[ \vDash\theta_{q}(\bar{b}_{i_1}, \dots, \bar{b}_{i_q}),\]
where $Z^+=\{i_1, \dots, i_q\}$. We can take the $(\bar{b}_i)_{i<f(n)}$  and the $\bar{a}_{(A^+, A^-)}$ for $(A^+, A^-)\in\mathcal{C}_n$ to live inside a small model $\fa\models S$.  Let $f \colon \fa\to\fb$ be a homomorphism from $\fa$ to a small model $\fb\vDash T$. Now, since all of $\phi, \psi$, and $\{\theta_{q}(\bar{y}_1, \dots, \bar{y}_{q}) \mid q\in Q\}$,  are existential positive, and so preserved by homomorphisms, we still have that the sequence $(f(\bar{b}_i))_{i<f(n)}$ from $\fb$ witnesses the fact that $\phi$ exhibits $(\mathcal{C}_n, \mathcal{I}_n)$ with respect to the same  $\phi, \psi$,  and $\{\theta_{q}(\bar{y}_1, \dots, \bar{y}_q) \mid {q}\in Q\}$. In particular, $\phi(\bar{x};\bar{y})$ exhibits $\mathcal{P}^+$ for $T$.
\end{proof}
The following is a consequence of the above lemma and the definition of core companion.

\begin{corollary}\label{corepres} Suppose that $S$ is the core companion  of $T$ and let $\mathcal{P}$ be a semi-positive straight pattern. Then $S$ has $\mathcal{P}^+$ if and only if $T$ has $\mathcal{P}^+$.
\end{corollary}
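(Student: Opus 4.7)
My plan is to observe that Corollary~\ref{corepres} is essentially an immediate consequence of Lemma~\ref{pospres} once one unwinds the definitions. Concretely, let $S^+$ and $T^+$ denote the positive fragments of $S$ and $T$, i.e., their respective sets of $h$-inductive consequences. By the convention fixed just above the corollary (``when we say that a first-order complete theory has a positive model-theoretic property, we mean that its positive fragment has that model-theoretic property''), the statement ``$S$ has $\mathcal{P}^+$'' is by definition the statement ``$S^+$ has $\mathcal{P}^+$'', and likewise for $T$. Thus it suffices to show that $S^+$ has $\mathcal{P}^+$ if and only if $T^+$ has $\mathcal{P}^+$.

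Next I would verify the hypotheses of Lemma~\ref{pospres} applied to $S^+$ and $T^+$. First, both positive theories have the joint homomorphism property: for $T$ this is our standing assumption (as $T$ was declared complete and hence its positive fragment has JHC), and for the core companion $S$ this was explicitly noted in the Notation fixed right after the JHC definition (the core companion of a complete theory has JHC). Second, by the very definition of the core companion, $S$ and $T$ have the same $h$-universal consequences; since $h$-universal sentences are exactly the relevant ``universal negative'' sentences and depend only on the positive fragment, it follows that $S^+$ and $T^+$ have the same $h$-universal consequences as well.

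With both hypotheses in place, Lemma~\ref{pospres} applied to $S^+$ and $T^+$ yields that $S^+$ has $\mathcal{P}^+$ if and only if $T^+$ has $\mathcal{P}^+$, which by the convention above is exactly the conclusion of the corollary. There is no real obstacle here; the only thing to be careful about is the (purely definitional) move between speaking of a first-order theory having a positive property and speaking of its positive fragment having that property, and the observation that $h$-universality is a property of the positive fragment. Accordingly, the proof will be a two- or three-sentence invocation of Lemma~\ref{pospres} together with this definitional unwinding.
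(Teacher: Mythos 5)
Your proposal is correct and matches the paper's own (one-line) argument, which simply invokes Lemma~\ref{pospres} together with the definition of core companion. The definitional unwinding you spell out (passing to positive fragments, checking JHC, and noting that ``same $h$-universal consequences'' is part of the definition of core companion) is exactly what the paper leaves implicit.
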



\begin{lemma}\label{normiffpos} Let $S$ be a model-complete core theory and let $\mathcal{P}$ be a semi-positive straight pattern. Then 
an {existential positive} formula $\phi(\bar{x};\bar{y})$  has $\mathcal{P}$ if and only if it has $\mathcal{P}^+$.
\end{lemma}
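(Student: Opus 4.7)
The plan is to exploit the defining property of a model-complete core theory $S$: modulo $S$, every first-order formula is equivalent to an existential positive formula. In particular, there exists an existential positive formula $\psi^*(\bar{x};\bar{y})$ equivalent to $\neg\phi(\bar{x};\bar{y})$; and for each $(q,r)\in Q$, setting $\alpha^*_{(q,r)}(\bar{y}_1,\dots,\bar{y}_{q+r}) := \exists\bar{x}\bigl(\bigwedge_{i=1}^q\phi(\bar{x};\bar{y}_i)\wedge\bigwedge_{i=1}^r\psi^*(\bar{x};\bar{y}_{q+i})\bigr)$, there is an existential positive formula $\theta^*_{(q,r)}$ equivalent to $\neg\alpha^*_{(q,r)}$. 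One checks that $\psi^*$ is an obstruction of $\phi$ (as $\phi\wedge\neg\phi\to\bot$) and each $\theta^*_{(q,r)}$ is an obstruction of $\alpha^*_{(q,r)}$.

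For the forward direction $\mathcal{P}\Rightarrow\mathcal{P}^+$, I would take the obstructions witnessing $\mathcal{P}^+$ to be the canonical choices $\psi:=\psi^*$ and $\theta_{(q,r)}:=\theta^*_{(q,r)}$. Given any tuples $(\bar{b}_i)_{i<f(n)}$ exhibiting $\mathcal{P}$ for a fixed $n$, the same tuples exhibit $\mathcal{P}^+$: the consistency conditions coincide via the equivalence $\psi^*\Leftrightarrow\neg\phi$, and $\theta^*_{(q,r)}(\bar{b})$ holds exactly when $\alpha^*_{(q,r)}(\bar{b})$ fails, which is precisely the $\mathcal{P}$ inconsistency condition that $\{\phi(\bar{x};\bar{b}_i)\mid i\in Z^+\}\cup\{\neg\phi(\bar{x};\bar{b}_j)\mid j\in Z^-\}$ is inconsistent.

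For the reverse direction $\mathcal{P}^+\Rightarrow\mathcal{P}$, suppose $\phi$ exhibits $\mathcal{P}^+$ via some obstruction $\psi$, obstructions $\theta_{(q,r)}$, and tuples $(\bar{b}_i)_{i<f(n)}$ for each $n$. The consistency conditions transfer immediately: since $\psi$ is an obstruction of $\phi$ we have $\psi\to\neg\phi$ modulo $S$, so any $\bar{a}$ witnessing $\phi$ on $A^+$ and $\psi$ on $A^-$ also witnesses the $\mathcal{P}$ consistency condition. The main obstacle I anticipate is handling the inconsistency conditions: from $\theta_{(q,r)}(\bar{b})$ we obtain that $\{\phi,\psi\}$ is inconsistent on $\bar{b}$, but the $\mathcal{P}$ inconsistency condition $\{\phi,\neg\phi\}$ is strictly stronger (since $\psi\to\neg\phi$ means a witness of $\phi\wedge\neg\phi$ need not be a witness of $\phi\wedge\psi$), and so does not follow immediately from the same tuples.

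To resolve the obstacle, I would exploit that $\mathcal{P}^+$ existentially quantifies the choice of $\psi$ and the $\theta_{(q,r)}$. The plan is to replace the given $\psi$ by the canonical $\psi^*$ and each $\theta_{(q,r)}$ by $\theta^*_{(q,r)}$ from the first paragraph; with these substitutions, $\mathcal{P}^+$ reduces verbatim to $\mathcal{P}$, so it suffices to show that if $\phi$ exhibits $\mathcal{P}^+$ for \emph{any} choice of obstructions it also does so for $(\psi^*,\theta^*_{(q,r)})$. Since the $\mathcal{P}^+$-witnesses $(\bar{b}_i)$ already carry the consistency data for $\psi^*$ (again using $\psi\to\psi^*$), the task reduces to producing tuples for which the $\theta^*_{(q,r)}$-inconsistency conditions hold; I would carry this out by a careful model-theoretic analysis in the monster model of $S$, using that in a model-complete core the relevant EP formulas are complete for the behaviour of tuples.
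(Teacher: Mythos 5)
Your forward direction ($\mathcal{P}\Rightarrow\mathcal{P}^+$) is precisely the paper's argument: since $S$ is a model-complete core theory, take $\psi^*$ to be an existential positive formula equivalent to $\neg\phi$ modulo $S$ and, for each $(q,r)\in Q$, an existential positive $\theta^*_{(q,r)}$ equivalent to $\neg\exists\bar{x}\bigl(\bigwedge_{i\leq q}\phi(\bar{x};\bar{y}_i)\wedge\bigwedge_{i\leq r}\psi^*(\bar{x};\bar{y}_{q+i})\bigr)$; these are obstructions in the technical sense, and the tuples witnessing $\mathcal{P}$ witness $\mathcal{P}^+$ verbatim.

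The problem is the backward direction. You are right to be uneasy: Definition~\ref{def:positivisation} only demands \emph{some} obstruction $\psi$ of $\phi$, and an obstruction can be strictly stronger than $\neg\phi$, so $\theta_{(q,r)}(\bar{b})$ only yields inconsistency of $\{\phi(\bar{x};\bar{b}_i)\mid i\in Z^+\}\cup\{\psi(\bar{x};\bar{b}_j)\mid j\in Z^-\}$, which is genuinely weaker than inconsistency of $\{\phi(\bar{x};\bar{b}_i)\mid i\in Z^+\}\cup\{\neg\phi(\bar{x};\bar{b}_j)\mid j\in Z^-\}$ when $\psi\not\equiv\neg\phi$. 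The paper asserts this direction is trivial ``by Definition~\ref{def:positivisation}'' and gives no further argument, so your doubt is a fair one to flag; but your proposed repair does not close it. You announce a plan to replace $(\psi,\theta_{(q,r)})$ by the canonical $(\psi^*,\theta^*_{(q,r)})$ and then defer the crucial step — producing tuples for which the $\theta^*_{(q,r)}$-conditions hold — to ``a careful model-theoretic analysis in the monster model of $S$.'' That is not a proof step: no such analysis is carried out, and there is no evident reason why exhibiting $\mathcal{P}^+$ with an arbitrary obstruction should force exhibiting it with the maximal one (indeed, if that implication were routine, the entire lemma would be immediate). As written, your treatment of the right-to-left implication is incomplete: you have correctly located the pressure point, but you have substituted a promissory note for the argument.
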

\begin{proof} This is just a consequence of the fact that every formula in a model-complete core theory is equivalent to an existential positive one (see, for example,  \cite[Proposition 2.6.13]{bodirsky2021complexity}). Suppose some formula $\phi(\bar{x};\bar{y})$ exhibits $\mathcal{P}$. Then, $\phi(\bar{x};\bar{y})$, $\neg\phi(\bar{x};\bar{y})$, and for each
$q\in Q$, 
\[\neg\exists \bar{x}\bigwedge_{i=1}^q\phi(\bar{x}; \bar{y}_i)\]
are equivalent to existential positive formulas modulo $S$, meaning that $\phi(\bar{x};\bar{y})$ also witnesses having $\mathcal{P}^+$. 
The right to left implication is trivial by Definition~\ref{def:positivisation}, but, crucially, also the reason for which we require $\mathcal{P}$ to be semi-positive rather than arbitrary straight pattern. Indeed, fix $n\in\mathbb{N}$ and suppose that there is a sequence $(\bar{b}_i)_{i<f(n)}$ and formulas $\psi$ and {$\theta_q$ for $q\in Q$} as in Definition~\ref{def:positivisation}. Given $A=(A^+, A^-)\in\mathcal{C}_n$, if
\[\{\phi(\bar{x};\bar{b}_i)\mid i\in A^+\}\cup\{\psi(\bar{x};\bar{b}_j)\mid j\in A^-\}\]
is consistent, then so is 
\[\{\phi(\bar{x};\bar{b}_i)\mid i\in A^+\}\cup\{\neg\phi(\bar{x};\bar{b}_j)\mid j\in A^-\}\]
since $\psi$ is an obstruction to $\phi$. For $Z=(Z^{+}, \emptyset)\in\mathcal{I}_n$, with $Z^+=\{i_1, \dots, i_q\}$, we know that $\vDash \theta_q(\bar{b}_{i_1}, \dots, \bar{b}_{i_q})$. Hence, since $\theta_q(\bar{y}_1, \dots, \bar{y}_q)$ is an obstruction to $\exists x\bigwedge_{i=1}^q\phi(\bar{x}; \bar{y}_i)$, 
 \[\{\phi(\bar{x};\bar{b}_i)\mid i\in Z^+\}\]
    is inconsistent. This concludes the argument that exhibiting $\mathcal{P}^+$ implies exhibiting $\mathcal{P}$.
\end{proof}

\begin{theorem}\label{thm:inheritance} Suppose that $S$ is the core companion  of $T$ and let $\mathcal{P}$ be a semi-positive straight pattern. If $T$ has $\mathrm{N}\mathcal{P}$, then so does $S$.
\end{theorem}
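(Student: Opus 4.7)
The plan is to prove the contrapositive: assuming the core companion $S$ exhibits $\mathcal{P}$, derive that $T$ also exhibits $\mathcal{P}$. The argument simply chains together Lemma~\ref{normiffpos} applied inside $S$, Corollary~\ref{corepres} transferring the positivised property between $S$ and $T$, and finally the general implication $\mathcal{P}^+ \Rightarrow \mathcal{P}$ encoded in Definition~\ref{def:positivisation}.

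First, I would fix a formula $\phi(\bar x;\bar y)$ exhibiting $\mathcal{P}$ in $S$. Since $S$ is a model-complete core theory, every first-order formula is equivalent modulo $S$ to an existential positive one, so I may take $\phi$ to be existential positive without loss of generality. Applying Lemma~\ref{normiffpos} inside $S$ then yields that $\phi$ also exhibits $\mathcal{P}^+$ in $S$; the key point here is that the model-completeness of $S$ allows one to select the obstruction $\psi$ as an existential positive reformulation of $\neg\phi$, and likewise to pick each $\theta_{(q,r)}$ as an existential positive reformulation of the corresponding negated existential statement, so that the $\mathcal{P}^+$ conditions match exactly the original $\mathcal{P}$ conditions witnessed by $\phi$.

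Next, Corollary~\ref{corepres} transfers $\mathcal{P}^+$ from $S$ to $T$: since the two theories have the same $h$-universal consequences and both enjoy the joint homomorphism property, witnesses for $\mathcal{P}^+$ in $S$ push through to witnesses in $T$ (one maps them via a homomorphism into a suitably saturated positive monster of $T$, as in Remark~\ref{rem:samemonster}, using that existential positive formulas are preserved and that the obstruction relations are $h$-universal consequences and hence shared). Thus $\phi$ exhibits $\mathcal{P}^+$ in $T$.

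Finally, I would invoke the right-to-left direction of Lemma~\ref{normiffpos}, which only depends on Definition~\ref{def:positivisation} and the fact that any obstruction $\psi$ of $\phi$ satisfies $\psi \vDash \neg\phi$: realizers of the positivised consistency conditions are automatically realizers of the original consistency conditions, and the satisfied obstructions $\theta_{(q,r)}$ at the inconsistency positions force the inconsistencies required by $\mathcal{P}$. Hence $\phi$ exhibits $\mathcal{P}$ in $T$, establishing the contrapositive. I expect the only point requiring vigilance to be the careful bookkeeping of which direction of the $\mathcal{P} / \mathcal{P}^+$ correspondence is used and in which theory, but once Lemma~\ref{normiffpos} and Corollary~\ref{corepres} are in hand the statement is a direct three-step composition.
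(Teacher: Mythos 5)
Your proof is correct and follows essentially the same three-step decomposition as the paper: pass from $\mathcal{P}$ in $S$ to $\mathcal{P}^+$ in $S$ via Lemma~\ref{normiffpos}, transfer $\mathcal{P}^+$ to $T$ via Corollary~\ref{corepres} (the paper cites Lemma~\ref{pospres} directly, which is the same content), and conclude $\mathcal{P}$ in $T$ from the general implication $\mathcal{P}^+\Rightarrow\mathcal{P}$. The only small caveat is your phrasing of the last step as ``the right-to-left direction of Lemma~\ref{normiffpos}'' --- that lemma is stated only for model-complete core theories, so strictly it does not apply to $T$ --- but you correctly note that this direction is a consequence of Definition~\ref{def:positivisation} alone, which is precisely what the paper invokes.
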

\begin{proof} We prove the contrapositive. Suppose that $S$ has $\mathcal{P}$. By Lemma~\ref{normiffpos}, $S$ has $\mathcal{P}^+$. Now, by Lemma \ref{pospres}, since $S$ and $T$ have the same $h$-universal consequences by definition, $T$ has $\mathcal{P}^+$. Hence, $T$ has $\mathcal{P}$, since, by definition, $\mathcal{P}^+$ implies $\mathcal{P}$.
\end{proof}

Hence, the core companion  preserves any model-theoretic property that is defined in terms of formulas not exhibiting a given semi-positive straight pattern. We note that whilst the properties $\mathrm{NSOP}_k$ for $k\geq 4$ are
not known to be semi-positively straightly definable (and are known to not be positively straightly definable~\cite{bailetti2024walk}) these are still amenable to the core ideas of our proof.

\begin{definition}\label{def:NSOPn} Let $\phi(\bar{x},\bar{y})$ be a formula with $|\bar{x}|=|\bar{y}|$. Let $n\geq 3$. We say that $\phi(\bar{x},\bar{y})$ has the \emph{$n$-strong order property} $\mathrm{SOP}_n$ if
\[\{\phi(\bar{x}_1; \bar{x}_2), \dots,\phi(\bar{x}_{n-1}; \bar{x}_{n}), \phi(\bar{x}_n; \bar{x}_1)\} \]
is inconsistent, and there is a sequence $(\bar{a}_i)_{i\in\mathbb{N}}$ such that $\phi(\bar{a}_i; \bar{a}_j)$ holds for all $i<j$ in $\mathbb{N}$. We say that  a theory $T$ is $\mathrm{NSOP}_n$ if none of its formulas has $\mathrm{SOP}_n$.
\end{definition}

\begin{proposition}\label{prop:nsopn} Let $S$ be the core companion  of $T$. Fix $n\geq 3$. If $T$ is $\mathrm{NSOP}_n$, then so is $S$.
\end{proposition}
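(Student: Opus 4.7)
The plan is to adapt the argument behind Theorem~\ref{thm:inheritance} while noting that $\mathrm{SOP}_n$ does not quite fit the patterned-property framework of Subsection~\ref{sub:patterns} — because its inconsistency clause reuses each tuple $\bar{x}_k$ simultaneously as an $\bar{x}$- and a $\bar{y}$-instance of $\phi$ — yet the same ``positivise then transfer'' strategy still applies. I will argue by contrapositive: assume $S$ has $\mathrm{SOP}_n$, witnessed by some formula $\phi(\bar{x};\bar{y})$ with $|\bar{x}|=|\bar{y}|$, and show that the same $\phi$ witnesses $\mathrm{SOP}_n$ for $T$.

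First I would exploit that $S$ is a model-complete core theory to replace $\phi$ by an equivalent existential positive formula modulo $S$ (the fact used in the proof of Lemma~\ref{normiffpos}), so henceforth $\phi$ may be assumed existential positive. Next I split Definition~\ref{def:NSOPn} into its two clauses and transfer each separately. For the chain clause, in a monster model of $S$ there is a sequence $(\bar{a}_i)_{i\in\mathbb{N}}$ with $\vDash\phi(\bar{a}_i;\bar{a}_j)$ for all $i<j$; I choose a small model $\fa\vDash S$ containing the whole sequence (by downward L\"owenheim--Skolem) and a homomorphism $f\colon\fa\to\fb$ to some $\fb\vDash T$, which exists by Fact~\ref{fact:hconsequences}. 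Since $\phi$ is existential positive it is preserved by $f$, so $(f(\bar{a}_i))_{i\in\mathbb{N}}$ provides a chain for $\phi$ in (a sufficiently saturated elementary extension of) $\fb$.

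For the inconsistency clause, the cyclic conjunction $\bigwedge_{k=1}^{n}\phi(\bar{x}_k;\bar{x}_{k+1\bmod n})$ is itself existential positive; pulling all existential quantifiers to the front rewrites it as $\exists \bar{u}\,\chi(\bar{u})$ with $\chi$ positive quantifier-free, so its $S$-inconsistency is literally the $h$-universal sentence $\forall \bar{u}(\chi\to\bot)$. Because $S$ and $T$ share $h$-universal consequences, this formula is also $T$-inconsistent, and combining the two clauses yields $\mathrm{SOP}_n$ for $T$. I do not expect a genuine obstacle; the only point requiring care is precisely the observation that for existential positive $\phi$ the ``set-theoretic'' inconsistency demanded by Definition~\ref{def:NSOPn} is an honest $h$-universal sentence, which is why the transfer via core companionship works directly, without the detour through the full positivisation machinery of Definition~\ref{def:positivisation}.
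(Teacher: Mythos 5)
Your proof is correct and follows essentially the same route as the paper's: argue by contrapositive, take $\phi$ existential positive, push the chain through a homomorphism between models of $S$ and $T$, and transfer the inconsistency of the cycle from $S$ to $T$. The only cosmetic difference is in the last step: you observe directly that the inconsistency is an $h$-universal consequence and cite Fact~\ref{fact:hconsequences}, whereas the paper packages the same transfer by naming an existential positive obstruction $\theta$ of the cycle formula and appealing to Lemma~\ref{negpres} — two formulations of the same fact.
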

\begin{proof} As always, we prove the statement by contrapositive. Suppose that $S$ witnesses $\mathrm{SOP}_n$ with the formula $\phi(\bar{x},\bar{y})$ (which may be chosen to be existential positive) and the sequence $(\bar{a}_i)_{i\in\mathbb{N}}$. Let 
$\theta$ be the existential positive formula equivalent to $\neg\delta$, 
where
\[\delta:=\exists \bar{x}_1, \dots \bar{x}_n \left(\bigwedge_{1\leq i<n} \phi(\bar{x}_i; \bar{x}_{i+1})\wedge\phi(\bar{x}_n; \bar{x}_1)\right)\;.\]
Let $f \colon\fa^\star\to\fb^\star$ be a homomorphism from a monster model $\fa^\star$ of $S$ to a monster model $\fb^\star$ of $T$. Then, since $\theta$ is still an obstruction of $\delta$ in $T$ and since both are existential positive formulas (modulo a tautology for $\delta$), we can deduce that $\delta$ is still false in $T$. Moreover, for any $i<j\in\mathbb{N}$, $\phi(f(\bar{a}_i); f(\bar{a}_{j}))$ still holds since $\phi$ is an existential positive formula.
\end{proof}

	Next we show that for $\omega$-categorical theories $\mathrm{NSOP}$ is also preserved by taking core companions. We do not know whether the same holds in general. Below, we give the definition of the strict order property of~\cite{shelah1995toward}. 




\begin{definition}\label{def:nsop}
	A formula $\phi(\bar{x};\bar{y})$ with $|\bar{x}|=|\bar{y}|$ has the \emph{strict order property} $\mathrm{SOP}$ if it defines a strict partial order with an infinite chain
    on $|\bar{x}|$-tuples. 
  A theory is $\mathrm{NSOP}$ if it has no $\mathrm{SOP}$ formula.
\end{definition}


\begin{lemma}\label{lem:NSOPomega}
	Let $T$ be an $\omega$-categorical theory. Then {a formula $\phi(\bar{x};\bar{y})$ in $T$ with $|\bar{x}|=|\bar{y}|$ has $\mathrm{SOP}$ if and only if} 
    $\phi(\bar{x};\bar{y})$ 
has $\mathrm{SOP}_n$ for all $n\geq 3$.
\end{lemma}

\begin{proof}
	{The forward implication is trivial.} For the other direction, for ${k\coloneqq}|\bar{x}|=|\bar{y}|$, let us consider the directed graph $G$ on $k$-tuples from the monster model $\fa^\star$ of $T$ whose directed edge relation is defined by the formula $\phi(\bar{x};\bar{y})$. Note that $\phi$ can be chosen so that the directed graph $G$ has no loops and is antisymmetric. For $n\in \mathbb{N}_{>0}$ 
    let us consider the binary relation $E_n(\bar{x};\bar{y})$ on $k$-tuples expressing that there is a path of length $n$ from $\bar{x}$ to $\bar{y}$ in $G$. By our assumption we know that each relation $E_n$ is irreflexive and has infinite chains. On the other hand, by $\omega$-categoricity we know that there are only finitely many $2k$-ary relations definable in $T$. This means that $E_i=E_j$ for some $i<j$, and then it is easy to see that in fact $E_{i+\ell}=E_{j+\ell}$ for all $\ell\in \mathbb{N}$. This implies that $R\coloneqq \bigcup_{i\leq n< j}E_n=\bigcup_{i\leq n}E_n$ is definable and transitive.
Moreover, it is clear that $R$ 
has infinite chains; and it is irreflexive since the {relations $E_n$ are}. This shows that $R$ witnesses that {$\phi(\bar{x};\bar{y})$} has $\mathrm{SOP}$.
\end{proof}

\begin{corollary}\label{cor:NSOP} 
	Let $T$ be $\omega$-categorical and $\mathrm{NSOP}$. Then its core companion $S$ is also $\mathrm{NSOP}$.
\end{corollary}
\begin{proof}
	We prove the contrapositive. Suppose that $S$ has the strict order property. Let $\phi(\bar{x};\bar{y})$ be an existential positive formula witnessing this. Clearly, $\phi(\bar{x};\bar{y})$ also has $\mathrm{SOP}_n$ for all $n\geq 3$ in $S$. Thus, by the proof of Proposition~\ref{prop:nsopn}, $\phi(\bar{x}; \bar{y})$ has $\mathrm{SOP}_n$ for all $n\geq 3$ also in $T$. From  Lemma~\ref{lem:NSOPomega}, we can deduce that $\phi$ has the $\mathrm{SOP}$, and so $T$ has the $\mathrm{SOP}$.
\end{proof}

\begin{remark} As pointed out in~\cite{day2026notion}, there is a straightforward generalisation of straight definability and semi-positive straight definability for higher arity model theoretic properties such as $\mathrm{NIP}_k$~\cite{shelah2007definable,chernikov2019n} or $\mathrm{NFOP}_k$~\cite{terry2021higher,abd2025higher}. {Both of these properties} can be seen as being semi-positively straightly definable~\cite[Proposition 3.3]{day2025results}. For this reason, we include $(\diamondsuit)$ in the relevant rows of Table~\ref{table1}. 
\end{remark}

\subsection{Model theoretic properties defined through type-counting}\label{sub:typecounting}
Certain model theoretic properties such as $\omega$-stability, superstability, and stability have natural definitions in terms of bounding the number of types over sets of parameters of given cardinalities. With further set theoretic assumptions (or by considering types over finite sets of parameters), also $\mathrm{NIP}$ can be characterised in terms of type counting~\cite[Theorem 4.6]{shelah1971stability}. 
We will see below that given a theory $T$ and its core companion  $S$, the supremum of the number of types that can be attained over a set of parameters of size $\lambda$ can only decrease when moving from $T$ to $S$. Hence, model theoretic properties which can be defined in terms of type counting are preserved by going to the core companion. 

\begin{lemma}\label{lem:typecounting} Let $S$ and $T$ be first-order theories in a {signature} $\tau$ such that $S$ is the core companion  of $T$. Let $\fa^\star$ and $\fb^\star$ be, respectively, monster models for $S$ and $T$ with a homomorphism $f \colon \fa^\star\to \fb^\star$. Let $C$ be a small subset of $\fa^\star$. Let $S_1(C)$ be the space of $1$-types of $S$ over $C$. Then, 
\[|S_1(C)|\leq |S_1(f(C))|\;.\]
\end{lemma}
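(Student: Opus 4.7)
The plan is to build an injection $\pi \colon S_1(C) \to S_1(f(C))$ by sending each type $p(x) \in S_1(C)$ to a completion in $T$ of its ``existential positive translate'' under $f$. Concretely, since $S$ is a model-complete core theory, every formula is equivalent modulo $S$ to an existential positive one, so $p$ is determined by its existential positive part
\[
p^+(x) := \{\phi(x;\bar c) \in p : \phi \text{ existential positive}, \ \bar c \text{ a tuple from } C\}.
\]
Given $p \in S_1(C)$, form the partial type $p^+_f(x) := \{\phi(x; f(\bar c)) : \phi(x;\bar c) \in p^+\}$ with parameters in $f(C)$, and let $\pi(p)$ be any completion of $p^+_f$ to a full type in $S_1(f(C))$ with respect to $T$.

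First I would verify that $p^+_f$ is consistent in $T$, so that the completion $\pi(p)$ actually exists. Since $\fa^\star$ is sufficiently saturated, $p$ is realized by some element $a \in \fa^\star$. As $\phi$ is existential positive and $f \colon \fa^\star \to \fb^\star$ is a homomorphism, the truth of $\phi(a;\bar c)$ in $\fa^\star$ transfers to the truth of $\phi(f(a);f(\bar c))$ in $\fb^\star$. Thus $f(a)$ realizes $p^+_f$ in $\fb^\star$, and by saturation of $\fb^\star$ the partial type $p^+_f$ extends to some $\pi(p) \in S_1(f(C))$.

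The heart of the argument is injectivity. Suppose $p \neq q$ in $S_1(C)$. Since $p$ and $q$ are determined by their existential positive parts, there is an existential positive formula $\phi(x;\bar c)$ with, say, $\phi \in p$ and $\neg \phi \in q$. As $S$ is a model-complete core, $\neg \phi(x;\bar c)$ is equivalent modulo $S$ to some existential positive formula $\psi(x;\bar c)$; in particular $\psi \in q^+$, and $\phi \wedge \psi$ is inconsistent modulo $S$, i.e. $\psi$ is an obstruction of $\phi$ in $S$. By Lemma~\ref{negpres}, $\psi$ remains an obstruction of $\phi$ in $T$. Hence $\phi(x;f(\bar c)) \wedge \psi(x;f(\bar c))$ is inconsistent with $T$. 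But $\phi(x;f(\bar c)) \in p^+_f \subseteq \pi(p)$ and $\psi(x;f(\bar c)) \in q^+_f \subseteq \pi(q)$, so $\pi(p) \neq \pi(q)$, establishing injectivity and the desired inequality.

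The main conceptual obstacle is that $\pi$ is not canonically well-defined as a function on types, since the completion of $p^+_f$ in $T$ is typically not unique. The fix, built into the plan above, is to fix any such completion: well-definedness in the sense of existence is a routine preservation argument, while the whole injectivity argument only cares that $\pi(p)$ extends $p^+_f$, and this is exactly what Lemma~\ref{negpres} was set up to deliver.
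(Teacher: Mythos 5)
Your proof is correct and takes essentially the same route as the paper: both rest on the fact that in a model-complete core $S$ a type is determined by its existential positive part, that $f$ preserves existential positive formulas, and that obstructions in $S$ remain obstructions in $T$ (Lemma~\ref{negpres}). The only difference is presentational — the paper takes realizations $a_i$ and argues the types $\operatorname{tp}(f(a_i)/f(C))$ are pairwise distinct (leaving the obstruction step implicit), whereas you package the same idea as an explicit injection $\pi$ and spell out the obstruction argument that makes the injectivity go through.
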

\begin{proof} Let $\mu\coloneqq |S_1(C)|$. Let $(a_i)_{i< \mu}$ be a set of elements of $\fa^\star$, each realising a distinct type in $S_1(C)$. For each $i<\mu$, let $\mathrm{tp}^+(a_i/C)$ be the restriction of $\mathrm{tp}(a_i/C)$ to existentially positive formulas. Since $S$ is a model-complete core theory, every formula is equivalent modulo $S$ to an existential positive formula. In particular,  writing $S_1^+(C)$ for the space of types over $C$ restricted to existential positive formulas, the ``forgetful'' map $S_1(C)\mapsto S^+_1(C)$ restricting each type is a bijection.
By Lemma~\ref{negpres} this implies that the positive types $(\mathrm{tp}^+(f(a_i)/f(C))\vert i< \mu)$ are pairwise distinct. In particular, the types $(\mathrm{tp}(f(a_i)/f(C))\vert i< \mu)$ are pairwise distinct, yielding the desired statement.
\end{proof}

It is well-known that our earlier definition of stability coincides with the one below~\cite{shelah1990classification}:

\begin{definition}\label{def:superstab} Let $\lambda$ be an infinite cardinal. We say that a first-order theory $T$ is $\lambda$\textit{-stable} if whenever $C$ is a set of parameters such that $|C|\leq \lambda$, then $|S_1(C)|\leq \lambda$. We say that $T$ is \textit{stable} if it is $\lambda$-stable for some infinite cardinal $\lambda$. We say that $T$ is \textit{superstable} if it is $\lambda$-stable for all $\lambda\geq 2^{|T|}$.
\end{definition}

The following is a direct consequence of the above definitions and Lemma~\ref{lem:typecounting}:
\begin{corollary}\label{cor:supstabpres} Let $T$ be a first-order theory and $S$ be its core companion. Let $\lambda$ be an infinite cardinal. Then, if $T$ is $\lambda$-stable, then so is $S$. In particular, if $T$ is superstable/stable, then so is $S$.
\end{corollary}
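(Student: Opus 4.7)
The plan is to derive the corollary essentially as an immediate bookkeeping consequence of Lemma~\ref{lem:typecounting} combined with Definition~\ref{def:superstab}. No new technical content should be needed; the only thing to check is that the cardinality arithmetic lines up correctly on both sides of the homomorphism $f\colon\fa^\star\to\fb^\star$ fixed in Remark~\ref{rem:samemonster}.

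First I would fix an infinite cardinal $\lambda$ and assume that $T$ is $\lambda$-stable. To verify that $S$ is $\lambda$-stable, I would pick an arbitrary small set of parameters $C\subseteq\fa^\star$ with $|C|\leq\lambda$, and argue that $|S_1(C)|\leq\lambda$, where $S_1(C)$ is computed in the theory $S$. The image $f(C)\subseteq\fb^\star$ satisfies $|f(C)|\leq |C|\leq\lambda$, so by the $\lambda$-stability of $T$ the $T$-type space $S_1(f(C))$ has cardinality at most $\lambda$. Then Lemma~\ref{lem:typecounting} gives $|S_1(C)|\leq |S_1(f(C))|\leq\lambda$, which is exactly what is needed.

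From the $\lambda$-stable case, the stability and superstability statements follow purely from the definitions: if $T$ is stable, then it is $\lambda$-stable for some $\lambda$, and by the argument above $S$ is $\lambda$-stable for the same $\lambda$, hence stable; if $T$ is superstable, then $T$ is $\lambda$-stable for every $\lambda\geq 2^{|T|}$, and since $S$ and $T$ have the same signature $\tau$ (so $|S|=|T|$), the same bound works for $S$.

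I do not anticipate any real obstacle; the only subtle point to be explicit about is that Lemma~\ref{lem:typecounting} was stated under the convention from Remark~\ref{rem:samemonster} that the monster models $\fa^\star$ of $S$ and $\fb^\star$ of $T$ are chosen so that a homomorphism $f\colon\fa^\star\to\fb^\star$ exists, so invoking it here is legitimate. The statement should therefore read as a one-line deduction.
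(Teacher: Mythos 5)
Your proof is correct and is precisely what the paper means when it states that the corollary ``is a direct consequence of the above definitions and Lemma~\ref{lem:typecounting}''; you have simply made the short bookkeeping explicit. No further comment is needed.
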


\begin{remark} As mentioned earlier, $\mathrm{NIP}$ can also be characterized in terms of type counting under additional set theoretic assumptions, or by type-counting over finite sets~\cite{shelah1971stability}.

Hence, Lemma~\ref{lem:typecounting} also yields a different proof that $\mathrm{NIP}$ is preserved when moving to the core companion  of a theory. Consider the \textit{stability function} $f_T \colon \mathrm{Card}\to\mathrm{Card}$, of a complete first-order theory $T$, where $f_T$ is given by
    \[\lambda\mapsto\sup\{|S_1(\fa)|\ \vert \ \fa\models T, |\fa|= \lambda\}\;.\]
Lemma~\ref{lem:typecounting} yields that if $S$ is the core companion  of $T$, then $f_S(\lambda)\leq f_T(\lambda)$ for all cardinals $\lambda$. In the more specific context of complete countable first-order theories, there are only six possible behaviours for $f_T$~\cite{keisler1976six}:
\[\lambda, \lambda+2^{\aleph_0}, \lambda^{\aleph_0}, \mathrm{ded}(\lambda), (\mathrm{ded}(\lambda))^{\aleph_0}, 2^\lambda\;.\]
These are all witnessed (in a suitable model of set theory). Indeed, we have that:
\begin{itemize}
    \item $f_T(\lambda)>\lambda$ for some $\lambda$ $\Rightarrow$ $T$ is not $\omega$-stable $\Rightarrow$  $f_T(\lambda)\geq\lambda+2^{\aleph_0}$ for some $\lambda$;
    \item $f_T(\lambda)>\lambda+2^{\aleph_0}$ for some $\lambda$ $\Rightarrow$ $T$ is not superstable $\Rightarrow$  $f_T(\lambda)\geq\lambda^{\aleph_0}$ for some $\lambda$;
    \item $f_T(\lambda)>\lambda^{\aleph_0}$ for some $\lambda$ $\Rightarrow$ $T$ has the order property (i.e., is not stable) $\Rightarrow$  $f_T(\lambda)\geq\mathrm{ded}(\lambda)$ for some $\lambda$;
    \item $f_T(\lambda)>\mathrm{ded}(\lambda)$ for some $\lambda$ $\Rightarrow$ $T$ is multiply ordered (see~\cite[Theorem 4]{keisler1976six} for the definition) $\Rightarrow$  $f_T(\lambda)\geq(\mathrm{ded}(\lambda))^{\aleph_0}$ for some $\lambda$;
    \item $f_T(\lambda)>(\mathrm{ded}(\lambda))^{\aleph_0}$ for some $\lambda$ $\Rightarrow$ $T$ has the independence property (i.e., $\mathrm{IP}$) $\Rightarrow$  $f_T(\lambda)\geq 2^\lambda$ for some $\lambda$.
\end{itemize}
Hence, from Lemma~\ref{lem:typecounting}, the properties of Definition~\ref{def:superstab}, not being multiply ordered, and $\mathrm{NIP}$, are preserved when going to the core companion  of a theory. We also note that superstability and not being multiply ordered can be equivalently defined through forbidding certain sets of formulas from witnessing a particular configuration (see~\cite{keisler1976six, shelah1971stability}). Whilst they are not patterned properties (because the omitted pattern may use more than one formula), they may be shown to be preserved when going to the core companion  by a proof analogous to that of Theorem~\ref{thm:inheritance} (or Lemma~\ref{lem:MR}, which also involves multiple formulas in the omitted configurations).
\end{remark}

\subsection{Monadic model theoretic properties}\label{sub:monadic}
In this subsection, we prove that the two main monadic model theoretic properties of monadic stability and monadic $\mathrm{NIP}$ are preserved by going to the core companion. Such properties are of interest in model theory~\cite{baldwin1985second}, permutation group theory (in the $\omega$-categorical context)~\cite{braunfeld2022monadic, bodor2024classification}, and structural graph theory (in their generalisation to hereditary classes)~\cite{dreier2024first}.

\begin{definition}\label{def:monadicNIP} Let $\mathcal{P}$ be a straight pattern. We say that a first-order theory $T$ is monadically $\mathrm{N}\mathcal{P}$ if all expansions of 
$T$ by unary predicates are still $\mathrm{N}\mathcal{P}$.  
\end{definition}

	The two main monadic properties are monadic stability and monadic $\mathrm{NIP}$. We note that other classes (such as monadically $\mathrm{NSOP}$ or monadically simple) just collapse to monadic stability~\cite{monadicallyNSOP}. 

The definition of monadic $\mathrm{NIP}$  \textit{prima facie} does not seem amenable to being shown to be preserved by going to the core companion  of a theory by the same methods we used for straightly definable properties. Indeed, unlike the other properties we discuss in Section~\ref{sec:preserved}, neither monadic $\mathrm{NIP}$ nor monadic stability are preserved by interpretations (or even bi-interpretations). However, we will make use of an equivalent characterisation of monadic $\mathrm{NIP}$ which has a similar flavour to definitions in terms of omitting a straight pattern, and which is thus amenable to our techniques. This is the characterisation of monadic $\mathrm{NIP}$ in terms of not admitting a pre-coding configuration~\cite{braunfeld2021characterizations}. The definition we give below is not the original (of~\cite[Definition 3.10]{braunfeld2021characterizations}), but a simplified variant which is proven to be equivalent to the original in Lemma 1.7 of~\cite{braunfeld2024corrigenda}:

\begin{definition} A \textit{pre-coding configuration} consists of a formula $\phi(\bar{x},\bar{y},z)$ 
with $|z|=1$, and a sequence $\mathcal{I}:=(\bar{d}_i)_{ i\in\mathbb{Z}}$ such that for all $s<0<t$ from $\mathbb{Z}$, there is $c_{(s,t)}$ such that
\begin{align}
   & \vDash \phi(\bar{d}_s,\bar{d}_t, c_{(s,t)});\\
    \text{ for all } v>t, & \vDash \neg\phi(\bar{d}_s, \bar{d}_v,c_{(s,t)}); \text{ and}\\
    \text{for all } u<s, & \vDash\neg\phi(\bar{d}_u, \bar{d}_t, c_{(s,t)})\;.
\end{align}
\end{definition}

\begin{fact}[Theorem 1.1 \& Proposition 2.11 in~\cite{braunfeld2021characterizations}]\label{fact:monnipequivalent} The following are equivalent for a first-order theory $T$:
\begin{enumerate}[(i)]
    \item $T$ is monadically $\mathrm{NIP}$;
    \item $T$ does not admit a pre-coding configuration.
\end{enumerate}
\end{fact}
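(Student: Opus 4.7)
The plan is to prove each direction of the equivalence separately.

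For the direction (ii) $\Rightarrow \neg$(i), given a pre-coding configuration $(\phi(\bar{x}, \bar{y}, z), (\bar{d}_i)_{i\in\mathbb{Z}}, (c_{(s,t)})_{s<0<t})$, I would expand the language by a single unary predicate $U$ and show that the formula $\psi(\bar{x}, \bar{y}) := \exists z\,(U(z) \wedge \phi(\bar{x}, \bar{y}, z))$ witnesses the independence property in the monadic expansion. For an arbitrary subset $F$ of pairs $(s,t)$ with $s<0<t$, interpret $U$ as $U^F := \{c_{(s,t)} : (s,t) \in F\}$. If $(s,t) \in F$, then $c_{(s,t)}$ itself witnesses $\psi(\bar{d}_s, \bar{d}_t)$; if $(s,t) \notin F$ then any candidate witness $c_{(s',t')} \in U^F$ is excluded by the pre-coding asymmetry conditions, namely $\neg \phi(\bar{d}_u, \bar{d}_t, c_{(s,t)})$ for $u < s$ and $\neg \phi(\bar{d}_s, \bar{d}_v, c_{(s,t)})$ for $v > t$, once the pairs in $F$ are chosen so that accidental coincidences cannot occur. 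Varying $F$ through an appropriate family of subsets then yields IP in the monadic expansion.

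For the harder direction $\neg$(i) $\Rightarrow$ (ii), suppose $T$ admits a monadic expansion $T^+ = T \cup \{U_1, \ldots, U_k\}$ in which some formula $\phi^+(\bar{x}, \bar{y})$ has IP, witnessed by $(\bar{a}_i)_{i\in\mathbb{N}}$ and $(\bar{b}_S)_{S \subseteq \mathbb{N}}$. I would proceed as follows. First, using a Ramsey-theoretic extraction (passing to an ordered Ramsey expansion if needed) applied in a monster model of $T^+$, pass to an indiscernible sequence that still witnesses IP, and re-index it by $\mathbb{Z}$ via compactness. Second, for each pair $s < 0 < t$, take the parameter $\bar{b}_{\{(s,t)\}}$ which, in the IP pattern, distinguishes precisely the pair $(s,t)$; this gives the candidate witness $c_{(s,t)}$. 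Third, internalize the unary predicates: by indiscernibility only finitely many $(U_1, \ldots, U_k)$-type patterns appear along the sequence, so one can attempt to replace each occurrence of $U_\ell$ in $\phi^+$ by a condition on a single auxiliary element $z$, producing a formula $\phi(\bar{x}, \bar{y}, z)$ in the language of $T$. Finally, verify that $(\phi, (\bar{d}_i := \bar{a}_i), (c_{(s,t)} := \bar{b}_{\{(s,t)\}}))$ satisfies the required pre-coding asymmetry conditions.

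The main obstacle is the internalization step. The formula $\phi^+$ uses unary predicates not available in $T$, but the pre-coding configuration allows only a single auxiliary variable $z$ with $|z| = 1$ in the formula $\phi$. The key insight, and the substantive content of the Braunfeld--Laskowski theorem, is that one need not simultaneously encode all unary-predicate values via $z$; rather, by indiscernibility together with the assumption that the IP pattern is non-trivial, the combinatorics of unary colorings along the extracted sequence collapses into finitely many cases, and the coloring information relevant to each witness $c_{(s,t)}$ can be encoded either by the choice of that witness or by a single auxiliary element drawn from a uniform orbit. Executing this reduction rigorously requires delicate Ramsey-theoretic and compactness arguments, and is why this direction is significantly more involved than its converse.
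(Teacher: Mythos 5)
This result is not proved in the paper; it appears as a Fact cited from Theorem~1.1 and Proposition~2.11 of~\cite{braunfeld2021characterizations}, together with Lemma~1.7 of~\cite{braunfeld2024corrigenda} for the simplified form of ``pre-coding configuration'' that the paper actually uses. So there is no in-paper proof to compare your attempt against; what follows is an assessment of the sketch on its own terms.

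The first direction (which you label (ii)$\Rightarrow\neg$(i), though it is really $\neg$(ii)$\Rightarrow\neg$(i), i.e.\ (i)$\Rightarrow$(ii)) has a genuine gap. The asymmetry clauses of a pre-coding configuration forbid $\phi(\bar d_u,\bar d_t,c_{(s,t)})$ only for $u<s$ and forbid $\phi(\bar d_s,\bar d_v,c_{(s,t)})$ only for $v>t$; they say nothing about $\phi(\bar d_u,\bar d_t,c_{(s,t)})$ for $u>s$, about $\phi(\bar d_s,\bar d_v,c_{(s,t)})$ for $v<t$, or about $\phi(\bar d_u,\bar d_v,c_{(s,t)})$ when both arguments differ from $(s,t)$. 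Consequently, with $U^F=\{c_{(s,t)}:(s,t)\in F\}$ and $\psi(\bar x,\bar y)=\exists z\,(U(z)\wedge\phi(\bar x,\bar y,z))$, a witness $c_{(s',t')}\in U^F$ sitting in one of the uncontrolled quadrants can satisfy $\phi(\bar d_s,\bar d_t,c_{(s',t')})$ even when $(s,t)\notin F$. The remark that $F$ should be chosen ``so that accidental coincidences cannot occur'' is where all the work lies, not a footnote: nothing in the definition as given guarantees such an $F$ exists, and extracting the extra uniformity needed (an indiscernibility statement in an expanded language, and control of $\phi$'s behaviour over all order-types of $(s,t,s',t')$) is precisely the content of the cited result. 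Your sketch does not supply it.

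The second direction is, as you yourself acknowledge, a plan rather than a proof. The ``internalization'' step---replacing an arbitrary finite monadic colouring and an arbitrary IP-witnessing formula by a single auxiliary variable $z$ with $|z|=1$ satisfying the specific asymmetries of a pre-coding configuration---is the heart of the Braunfeld--Laskowski argument and cannot be waved through as ``delicate Ramsey-theoretic and compactness arguments'' without saying which extraction lemmas are doing the work.
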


\begin{fact}[{\cite{baldwin1985second}}]\label{fact:monadicallystable} Let $T$ be a first-order theory. The following are equivalent:
\begin{enumerate}[(i)]
    \item $T$ is monadically stable;
    \item $T$ is stable and monadically $\mathrm{NIP}$.
\end{enumerate}
\end{fact}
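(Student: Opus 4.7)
The forward direction follows immediately from the fact that stability implies $\mathrm{NIP}$: if $T$ is monadically stable, then $T$ itself (viewed as the trivial monadic expansion) is stable, and every monadic expansion is stable hence $\mathrm{NIP}$, so $T$ is monadically $\mathrm{NIP}$.

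For the backward direction, I would argue the contrapositive. Assume $T$ is stable but some monadic expansion $T^\sharp$ of $T$ by unary predicates $P_1, \dots, P_k$ is unstable, witnessed by an $\mathcal{L}(T^\sharp)$-formula $\phi(\bar{x}, \bar{y})$ and a sequence $(\bar{a}_i, \bar{b}_i)_{i \in \omega}$ with $\vDash \phi(\bar{a}_i, \bar{b}_j)$ iff $i < j$. The goal is to produce a pre-coding configuration in some unary expansion of $T$, contradicting monadic $\mathrm{NIP}$ via Fact~\ref{fact:monnipequivalent}. A first naive attempt would be to apply Ramsey on the finitely many possible $P$-colourings of the tuple coordinates, reducing $\phi$ to a pure $\mathcal{L}(T)$-formula with the order property on a subsequence and hence contradicting stability of $T$ directly. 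This is too optimistic, because $\phi$ may contain quantified variables to which the $P_i$'s are applied, so the $P$-information witnessing OP need not be concentrated on the free-variable coordinates of the $\bar{a}_i$ and $\bar{b}_j$.

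The heart of the argument must therefore handle this quantifier complexity. The plan is to invoke the machinery of stable theories -- definability of types, indiscernibility, and Shelah's finite equivalence relation theorem -- to locate the hidden witnesses inside $\phi$ whose $P$-values encode the ordering. After passing to a subsequence of $(\bar{a}_i, \bar{b}_i)$ that is sufficiently indiscernible over the $\mathcal{L}(T)$-reduct, these witnesses should be selectable by Skolem-type functions definable in the stable theory $T$ modulo small amounts of $P$-information. I would then introduce a fresh auxiliary unary predicate that, together with a single element $c_{(s,t)}$, selects the pair $(s,t)$, and exhibit a formula $\psi(\bar{x}, \bar{y}, z)$ in this expanded monadic language witnessing a pre-coding configuration indexed by $\mathbb{Z}$. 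The principal delicate step is compressing the pair-selection data into a single element $c_{(s,t)}$ rather than a tuple, as required by the definition of a pre-coding configuration; this compression is the technical core of Baldwin and Shelah's original argument~\cite{baldwin1985second} and is where the interplay between stability of $T$ and the monadic expansion is most critically exploited.
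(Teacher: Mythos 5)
The paper states this result as a \emph{Fact} cited from~\cite{baldwin1985second} and gives no proof of it, so there is no in-paper argument to compare against; the authors treat it as a black box. Your forward direction is correct and routine. The backward direction, however, is not a proof but an outline that explicitly punts the decisive step to the source: you write that the compression of the pair-selection data into a single element $c_{(s,t)}$ ``is the technical core of Baldwin and Shelah's original argument,'' and you never carry it out. The preceding claims are likewise only asserted --- that after passing to an indiscernible subsequence the quantified witnesses inside $\phi$ can be ``selected by Skolem-type functions definable in the stable theory $T$ modulo small amounts of $P$-information'' is exactly the kind of statement that needs to be proved, not appealed to. Stable theories do not in general eliminate quantifiers or admit definable Skolem functions, so the assertion is nontrivial and in the stated generality it is not clear what it even means precisely.

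A secondary point: you route through the pre-coding-configuration characterisation of monadic $\mathrm{NIP}$ (Fact~\ref{fact:monnipequivalent}), which is a much later result of Braunfeld--Laskowski, rather than proving directly that some monadic expansion of $T$ has $\mathrm{IP}$, as Baldwin and Shelah do. That is a legitimate alternative strategy, but you should then be explicit about where the pre-coding configuration lives: the Braunfeld--Laskowski criterion applies to $T$ itself, whereas your witnesses $(\bar a_i,\bar b_i)$ and the colouring data live in the expansion $T^\sharp$. To conclude that $T$ is not monadically $\mathrm{NIP}$ from a pre-coding configuration found in a monadic expansion, you also need the (true, but unstated) observation that monadic $\mathrm{NIP}$ passes to and from monadic expansions. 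As written, the argument has the right shape but contains a genuine gap at precisely the step you identify as ``the heart of the argument.''
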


\begin{proposition}\label{nip_stab} Let $S$ be the core companion  of $T$. Then 
\begin{enumerate}[(i)]
    \item if $T$ is monadically $\mathrm{NIP}$, then so is $S$;
    \item if $T$ is monadically stable, then so is $S$. 
\end{enumerate}
\end{proposition}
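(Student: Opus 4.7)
The plan is to prove (i) by contrapositive using the characterization of monadic NIP via pre-coding configurations (Fact~\ref{fact:monnipequivalent}), and then to derive (ii) by combining (i) with the preservation of stability (Corollary~\ref{cor:supstabpres}) through Fact~\ref{fact:monadicallystable}. The general scheme mirrors that of Theorem~\ref{thm:inheritance}: in $S$, convert every occurrence of $\neg\phi$ into an existential positive obstruction $\psi$; push the whole configuration along a homomorphism $f \colon \fa^\star\to\fb^\star$ between monster models of $S$ and $T$ (available by Remark~\ref{rem:samemonster}); and then use Lemma~\ref{negpres} to turn the surviving $\psi$-clauses back into $\neg\phi$-clauses in $T$.

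For (i), I will assume that $S$ admits a pre-coding configuration in the monster model $\fa^\star$, witnessed by a formula $\phi(\bar{x},\bar{y},z)$, a sequence $(\bar{d}_i)_{i\in\mathbb{Z}}$, and elements $c_{(s,t)}$ for $s<0<t$. Because $S$ is a model-complete core theory, $\neg\phi$ is equivalent modulo $S$ to an existential positive formula $\psi(\bar{x},\bar{y},z)$, which is in particular an obstruction of $\phi$ in $S$. Replacing each $\neg\phi$-clause in the configuration by the corresponding $\psi$-clause, every clause of the configuration is witnessed in $\fa^\star$ by existential positive formulas. Since existential positive formulas are preserved under homomorphisms, pushing the configuration forward under $f$ makes the images $(f(\bar{d}_i))_{i\in\mathbb{Z}}$ together with the witnesses $f(c_{(s,t)})$ satisfy the same $\phi$- and $\psi$-clauses in $\fb^\star$. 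By Lemma~\ref{negpres}, $\psi$ remains an obstruction of $\phi$ in $T$, so each surviving $\psi$-clause forces the corresponding $\neg\phi$-clause. Thus $\phi$, the image sequence, and the image witnesses form a pre-coding configuration in $T$, and by Fact~\ref{fact:monnipequivalent} this shows that $T$ is not monadically NIP.

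For (ii), I will invoke Fact~\ref{fact:monadicallystable}, according to which monadic stability is equivalent to the conjunction of stability and monadic NIP. If $T$ is monadically stable, then $T$ is stable and monadically NIP; by Corollary~\ref{cor:supstabpres} $S$ is stable, and by part (i) $S$ is monadically NIP, so $S$ is monadically stable.

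The main technical point is the transfer of the negated clauses via the obstruction relation and Lemma~\ref{negpres}; everything else reduces to routine homomorphism-preservation of existential positive formulas. A minor sanity check that I expect to be automatic is that the image configuration does not collapse in a damaging way: any collision of the form $f(\bar{d}_v)=f(\bar{d}_t)$ with $v>t$ would simultaneously force $\phi$ and $\psi$ to hold on the same arguments in $\fb^\star$, contradicting the obstruction relation just established in $T$. So no such collision can occur and the pre-coding configuration transferred to $T$ is genuine.
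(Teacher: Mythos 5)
Your proposal is correct and follows essentially the same approach as the paper: contrapositive via Fact~\ref{fact:monnipequivalent}, replacing $\neg\phi$ by an existential positive obstruction $\psi$ (the paper also explicitly takes $\phi$ itself existential positive, which you implicitly use), pushing the configuration along $f \colon \fa^\star\to\fb^\star$, and observing that $\psi$ remains an obstruction of $\phi$ in $T$; part (ii) is derived identically from Fact~\ref{fact:monadicallystable} and stability preservation. The extra sanity check on collisions is a harmless addition not present in the paper.
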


\begin{proof} The second statement follows from the first by Fact~\ref{fact:monadicallystable} and the fact that stability is preserved by taking the core companion  (Theorem~\ref{thm:inheritance}). We prove the first statement by contrapositive. Suppose that $S$ is not monadically $\mathrm{NIP}$. Then, it admits a pre-coding configuration  given by a formula $\phi(\bar{x},\bar{y},z)$, a sequence $(\bar{d}_i)_{i\in\mathbb{Z}}$, and $(c_{(s,t)}\vert s<0<t\in\mathbb{Z})$. Since $S$ is a model-complete core theory, we may take $\phi$ to be an existential positive formula. We also take $\psi(\bar{x},y,z)$ to be an existential positive formula which is equivalent modulo $S$ to $\neg\phi(\bar{x},\bar{y},z)$ (and so is an obstruction of $\phi(\bar{x},\bar{y},z)$). Take a homomorphism $f \colon \fa^\star\to\fb^\star$ where $\fb^\star$ is a monster model for $T$. Since existential positive formulas are preserved by homomorphisms and the fact that $\psi$ is an obstruction of $\phi$ is also true in $T$, we still have that $(f(\bar{d}_i))_{i\in\mathbb{Z}}$ and $(f(c_{(s,t)})\vert s<0<t\in\mathbb{Z})$ witness that $\phi(\bar{x},\bar{y},z)$ has a pre-coding configuration. In particular, $T$ is not monadically $\mathrm{NIP}$ by Fact~\ref{fact:monnipequivalent}.
\end{proof}

	Proposition~\ref{nip_stab} can also be phrased in the following way in the $\omega$-categorical setting.
	
\begin{corollary}\label{nip_stab_omega}
	Let $\fa$ be a countable $\omega$-categorical structure, and let $\fb$ be the model-complete core of $\fa$. Then
\begin{enumerate}[(i)]
    \item\label{it:monnip} if $\fa$ is monadically $\mathrm{NIP}$, then so is $\fb$;
    \item\label{it:monstab} if $\fa$ is monadically stable, then so is $\fb$. 
\end{enumerate}
\end{corollary}

	We remark that the second claim of Corollary~\ref{nip_stab_omega} also follows from an orbit growth argument, as it already has been observed in~\cite{bodor2024classification}. This uses a characterization of $\omega$-categorical monadically stable structures in terms of unlabelled growth.
	
\begin{definition}~\label{def:lu}
	Let $\fa$ be an $\omega$-categorical structure. We denote by $u_n(\fa)$ the number of orbits of the natural action of $\aut(\fa)$ on $n$-element subsets of $\fa$. The sequence $(u_n(\fa))_{n\in \mathbb{N}}$ is called the \emph{unlabelled growth} of $\fa$.
\end{definition}

\begin{theorem}[\cite{braunfeld2022monadic}, Theorem 1.1]\label{hereditarily_cellular}
	Let $\fa$ be a countable $\omega$-categorical structure. Then $\fa$ is monadically stable if and only if $\fa$ is stable and for all $c>1$ we have $u_n(\fa)<c^n$ if $n$ is large enough.
\end{theorem}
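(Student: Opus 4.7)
The statement is a biconditional characterization, so I would split the proof into the two directions, with the forward direction being relatively straightforward and the backward direction requiring substantial structural work.

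For the forward direction, suppose $\fa$ is monadically stable. That $\fa$ itself is stable is immediate since $\fa$ is (trivially) a monadic expansion of itself. For the growth bound, I would argue by contrapositive: assume $u_n(\fa) \geq c^n$ for some fixed $c > 1$ and infinitely many $n$. The plan is to use the abundance of $\aut(\fa)$-orbits on $n$-subsets to code binary sequences. Concretely, for such an $n$, pick representatives $S_1, \dots, S_m$ (with $m \geq c^n$) of distinct orbits. By pigeonhole on these representatives and on the restrictions $\fa|_{S_i}$, one can arrange a scenario where adding a single unary predicate $P$ marking a chosen $n$-subset produces an expansion in which one can define, via a suitable formula $\phi(\bar{x}; \bar{y})$, the order property: different orbit representatives can be distinguished by the predicate $P$, and their behaviour under $\phi$ can be arranged to encode the half-graph. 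This shows $\fa$ is not monadically stable, a contradiction.

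For the backward direction, suppose $\fa$ is stable with subexponential unlabelled growth, and assume for contradiction that $\fa$ is not monadically stable. Then there is a finite monadic expansion $\fa^+ = (\fa, P_1, \dots, P_k)$ witnessing the order property via a formula $\phi(\bar x;\bar y)$ and a sequence $(\bar a_i, \bar b_i)_{i \in \mathbb{N}}$. Since $\fa$ is stable and $\omega$-categorical, the instability must originate from the coloring by the $P_j$. The plan is to extract from this pattern a family of $\aut(\fa)$-inequivalent $n$-subsets whose size grows exponentially in $n$. Roughly: for each $n$, the $2^n$ choices of coloring patterns on a fixed $n$-set by $P_1, \dots, P_k$ must, together with the order property witness, produce many pairwise non-isomorphic configurations in $\fa^+$; passing back to $\fa$ (forgetting the predicates), one would like to conclude that this forces at least $c^n$ many orbits of $n$-subsets for some $c > 1$, contradicting the growth hypothesis.

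The main obstacle is this backward direction, and in particular the step that converts order-property witnesses in a monadic expansion into exponentially many orbits of finite subsets of $\fa$ itself. This is delicate because the unstable formula and its witnessing sequence live in $\fa^+$, not in $\fa$, so the inequivalences used to distinguish subsets are a priori only visible after adding the predicates $P_j$. One needs a careful argument, presumably using $\omega$-categoricity and a compactness/coding argument applied to the fixed finite coloring, to certify that the orbits distinguished in $\fa^+$ restrict to a genuinely exponentially large family of orbits in $\fa$. I expect this is where the connection to hereditary cellularity (as developed in the reference) enters: one likely needs to show that any $\omega$-categorical structure whose monadic expansions all avoid the order property must admit a cellular-type decomposition, which in turn forces polynomial unlabelled growth. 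A clean argument might go through Fact~\ref{fact:monnipequivalent}: pre-coding configurations in monadic expansions of $\fa$, combined with stability of $\fa$, would be analyzed to produce exponentially many $\aut(\fa)$-orbits on finite subsets.
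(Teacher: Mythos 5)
This theorem is cited from \cite{braunfeld2022monadic} and is not proven in the present paper, so there is no in-paper proof to compare against; I will evaluate your sketch on its own merits against the argument in the cited source.

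The central idea that your sketch circles around but never pins down is \emph{hereditary cellularity}: Braunfeld proves that for $\omega$-categorical structures, monadic stability is equivalent to admitting a hereditarily cellular decomposition (a bounded-depth tree of partitions into cells that interact only via equality-type constraints), and it is \emph{this} structural object that mediates both directions of the growth-rate equivalence. Without introducing that decomposition, neither direction of your argument closes. In the forward direction you describe the growth bound as ``relatively straightforward'' and then try to run a contrapositive coding argument — given $u_n(\fa)\geq c^n$, mark one $n$-set with a unary predicate and hope a pigeonhole argument produces a formula with the order property. This does not work as stated: a single unary predicate names one $n$-set, but having many orbits of $n$-subsets does not by itself give you a \emph{single} formula $\phi(\bar x;\bar y)$ and a coherent witnessing sequence $(\bar a_i\bar b_i)_i$ for the order property; you would have to explain how to assemble the diverse orbit data into one definable comparison, and there is no visible mechanism for that here. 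The actual proof does not go this way at all — it goes through mon.\ stable $\Rightarrow$ hereditarily cellular $\Rightarrow$ growth at most polynomial-in-$n$ times a fixed constant (hence subexponential), and the cellular decomposition is what makes the orbit counting possible.

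In the backward direction you correctly identify the difficulty and gesture at cellularity at the very end, but the sketch offers no argument that forgetting the predicates $P_j$ from a monadic expansion with the order property yields exponentially many $\Aut(\fa)$-orbits on $n$-subsets. The real argument establishes the contrapositive structurally: if a stable $\omega$-categorical structure is \emph{not} hereditarily cellular, one exhibits (using Lachlan's theory of $\omega$-categorical $\omega$-stable structures and finite covers) a definable configuration that directly forces $u_n(\fa)\geq c^n$ for some $c>1$ and all large $n$ — exponentially many genuinely $\Aut(\fa)$-inequivalent $n$-subsets are produced inside $\fa$ itself, with no monadic predicates involved. That step is precisely the ``delicate'' step you flag, and your sketch does not contain a plan for it beyond expectation. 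The suggestion to route through Fact~\ref{fact:monnipequivalent} (pre-coding configurations) is also misplaced: that fact characterizes monadic $\mathrm{NIP}$, not monadic stability, and the relevant characterization here (Fact~\ref{fact:monadicallystable}) already reduces monadic stability to stability plus monadic $\mathrm{NIP}$ but does not by itself supply the growth-rate bound. So the proposal identifies the shape of the problem but is missing the key structural lemma on both sides.
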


	We know that if $\fa$ is $\omega$-categorical and $\fb$ is its model-complete core then $u_n(\fb)\leq u_n(\fa)$ for all $n\in \mathbb{N}$ (see for instance~\cite{bodor2022csp}, Lemma 2.2.69). Therefore, Theorem~\ref{hereditarily_cellular} indeed implies that $\omega$-categorical monadically stable structures are closed under taking model-complete cores. We mention that a different orbit growth characterization (using labelled growth) of $\omega$-categorical monadically stable structures is given in~\cite{bodor2025labelled} which also can be used to show item~\ref{it:monstab} of Corollary~\ref{nip_stab_omega}.
	
	For $\omega$-categorical monadically $\mathrm{NIP}$ structures no characterization is known in terms of orbit growth functions, however the following has been conjectured in~\cite{braunfeld2022monadic} (Conjecture 2).
	
\begin{conjecture}\label{conj:sam}
	Let $\fa$ be a countable $\omega$-categorical structure. Then $\fa$ is monadically $\mathrm{NIP}$ if and only if $u_n(\fa)<c^n$ for some $c\in \mathbb{R}$.
\end{conjecture}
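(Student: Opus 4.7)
The plan is to split the biconditional into two implications and treat them separately. The converse direction---that $u_n(\fa)<c^n$ for some $c$ implies monadic $\mathrm{NIP}$---is the one I would attack first, via the contrapositive. Assuming $\fa$ is not monadically $\mathrm{NIP}$, Fact~\ref{fact:monnipequivalent} gives a formula $\phi(\bar{x},\bar{y},z)$ and sequences $(\bar{d}_i)_{i\in\mathbb{Z}}$ and $(c_{(s,t)})_{s<0<t}$ witnessing a pre-coding configuration. The defining feature of such a configuration is that $\phi(\bar{d}_u,\bar{d}_v,c_{(s,t)})$ holds precisely when $u=s$ and $v=t$ (for $u<0<v$ in the relevant range); in particular the $c$-parameter is uniquely ``labelled'' by its cut in the $\bar{d}$-sequence. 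Considering $n$-element subsets of the form $\{\bar{d}_{-k},\dots,\bar{d}_{-1},\bar{d}_1,\dots,\bar{d}_k\}\cup X$, where $X$ ranges over subsets of $\{c_{(s,t)}:-k\le s<0<t\le k\}$ of a fixed size, and observing that $X$ can be recovered from the isomorphism type of the induced substructure via $\phi$, one obtains $2^{\Theta(n^2)}$ orbits of $n$-subsets. This is super-exponential, contradicting any bound $u_n(\fa)<c^n$. A small technical step is to Ramsey-thin the $\bar{d}_i$'s beforehand so that no nontrivial automorphism of the induced finite set can permute the $\bar{d}$'s and accidentally change the apparent cut data; for this the assumed monadic setting gives enough room to expand by unary predicates naming initial segments.

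The forward direction---monadic $\mathrm{NIP}$ implies $u_n(\fa)<c^n$ for some $c$---is the real content of Conjecture~\ref{conj:sam}, and I would not expect to settle it here. The natural strategy is to aim for a structural decomposition theorem for countable $\omega$-categorical monadically $\mathrm{NIP}$ structures, in analogy with Theorem~\ref{hereditarily_cellular} in the stable case. Concretely, one would hope to exhibit any such $\fa$ as a tree-like first-order amalgam of structures interpretable over $({\mathbb Q};<)$, since for those structures $u_n$ is at most exponential via a dimension argument: every orbit of an $n$-subset in a $d$-dimensional interpretation is determined by an $nd$-tuple in $({\mathbb Q};<)$, and $({\mathbb Q};<)$ has polynomial orbit growth on $k$-subsets. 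Given such a decomposition the bound $u_n(\fa)<c^n$ would follow by combining the per-piece exponential bound with a linear bound on the tree depth.

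The main obstacle is precisely this structural side: no such decomposition theorem is known, and producing one is essentially equivalent to resolving the conjecture. A more modest intermediate target would be to verify the conjecture for finitely homogeneous monadically $\mathrm{NIP}$ structures in a binary signature, where the Ramsey-theoretic amalgamation methods in the style of~\cite{SimonRankOne} are available; here one could try to directly convert super-exponential growth into a pre-coding configuration by a pigeonhole argument over $n$-subset isomorphism types combined with a Ramsey thinning to stabilise the types of the witnessing parameters, thereby reducing the problem back to the already-handled converse direction.
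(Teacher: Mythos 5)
This statement is labelled \emph{Conjecture}~\ref{conj:sam} in the paper because it genuinely is one: it is Conjecture~2 of Braunfeld and Laskowski in~\cite{braunfeld2022monadic}, and the present paper does not prove it --- it only cites it (and observes that a positive answer would give an alternative route to part~(\ref{it:nip}) of Corollary~\ref{nip_stab_omega}). So there is no ``paper's own proof'' against which to compare. Your proposal honestly concedes this: you explicitly state that the forward direction (monadic NIP $\Rightarrow$ at most exponential unlabelled growth) is the real content and that you do not expect to settle it, and the ``proof'' of that direction is only a high-level speculative plan resting on a decomposition theorem for $\omega$-categorical monadically NIP structures that does not currently exist. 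That is the central gap, and it is the same gap that makes this a conjecture in the first place.

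The sketch of the backward direction (no pre-coding configuration implied by $u_n(\fa)<c^n$, via contrapositive) has two concrete problems even as a sketch. First, the counting: taking $2k$ of the $\bar d_i$'s (each a tuple of fixed length $\ell$) together with $m$ of the $c_{(s,t)}$'s gives an $n$-set with $n=2k\ell+m$, and at most $\binom{k^2}{m}$ isomorphism types. Maximising the binomial at $m\approx k^2/2$ gives $n\approx k^2/2$ and only $2^{\Theta(n)}$ types --- \emph{not} the $2^{\Theta(n^2)}$ you claim, and merely exponential, which does not contradict $u_n<c^n$. To get a super-exponential bound one must keep $m=O(k)$, giving $\binom{k^2}{k}\approx k^k\approx n^{\Theta(n)}$; the arithmetic in the proposal should be redone along these lines. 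Second, and more substantively, the ``Ramsey-thin and add unary predicates naming initial segments'' fix does not work for the quantity you need to bound: expanding $\fa$ by unary predicates shrinks the automorphism group and so can only \emph{increase} $u_n$; it tells you nothing about $u_n(\fa)$ itself. The real obstruction --- that an abstract automorphism of the finite induced substructure may permute the (indiscernible) $\bar d_i$'s, and consequently identify sets $X\neq X'$ yielding the same orbit --- has to be handled inside $\fa$, for instance by exploiting that $\phi$ together with the $c$'s rigidifies the relevant initial segment of the $\bar d$-sequence, rather than by passing to an expansion.
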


	Note that given Conjecture~\ref{conj:sam} the same argument as above would also imply the first item of Corollary~\ref{nip_stab_omega}.

\subsection{Finer features in an $\omega$-stable setting}\label{sub:finer}
In later sections we will be studying various classes of structures which are $\omega$-stable (such as structures interpretable in $(\mathbb{Q}; =)$, or Lachlan's class). We have already shown that $\omega$-stability is preserved when going to the model-complete core. The finer structure of definable sets in $\omega$-stable theories can be studied through the notion of Morley rank. Indeed, all $\omega$-stable theories are totally transcendental (in the sense that Morley rank is defined for all formulas), and for countable theories $\omega$-stability and total transcendence are the same property~\cite[Theorem 5.2.6]{Tent-Ziegler}. In this subsection we prove that Morley rank behaves well under the operation of going to the core companion. We also prove that the core companion of a strongly minimal theory is still strongly minimal.

\begin{lemma}\label{lem:algebraicity} Let $S$ be the core companion  of $T$. Let $\phi(\bar{x};\bar{y})$ be an existential positive formula and $a$ be a tuple of arity $|\bar{y}|$ from the monster model $\fa^\star$ of $S$. Let $f \colon \fa^\star\to\fb^\star$ be a homomorphism from $\fa^\star$ to a monster model $\fb^\star$ of $T$. If $\phi(\bar{x};\bar{a})$ has infinitely many realisations in $\fa^\star$, so does $\phi(\bar{x};f(\bar{a}))$ in $\fb^\star$.
\end{lemma}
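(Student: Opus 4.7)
The plan is to prove the lemma by lifting an infinite family of realisations in $\fa^\star$ to an infinite family of realisations in $\fb^\star$ via $f$. First I would fix an infinite set $\{\bar{c}_i : i \in I\}$ of pairwise distinct tuples in $\fa^\star$ with $\fa^\star \models \phi(\bar{c}_i; \bar{a})$ for each $i$. Because $\phi$ is existential positive and $f$ is a homomorphism, preservation of existential positive formulas immediately gives $\fb^\star \models \phi(f(\bar{c}_i); f(\bar{a}))$ for every $i \in I$. The only non-trivial part is therefore to ensure that the images $\{f(\bar{c}_i) : i \in I\}$ remain pairwise distinct, since homomorphisms could \emph{a priori} collapse tuples.

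The key idea is that inequality is preserved by $f$. Since $S$ is a model-complete core, every first-order formula --- in particular $x \neq y$ --- is equivalent modulo $S$ to an existential positive formula $\chi(x,y)$. By construction, $\chi$ is an obstruction of the formula $x = y$ in $S$. I would then invoke Lemma~\ref{negpres}, which applies because $S$ is the core companion of $T$ and hence every model of $T$ admits a homomorphism to a model of $S$: the lemma guarantees that $\chi(x,y)$ is also an obstruction of $x = y$ in $T$. Consequently, in $\fb^\star$ we have the implication $\chi(u,v) \to u \neq v$.

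To finish, for any $i \neq j$ in $I$ I would pick a coordinate $k$ where $\bar{c}_i$ and $\bar{c}_j$ differ, so that $\fa^\star \models \chi(\bar{c}_{i,k}, \bar{c}_{j,k})$. Because $\chi$ is existential positive, applying $f$ yields $\fb^\star \models \chi(f(\bar{c}_{i,k}), f(\bar{c}_{j,k}))$, and the previous paragraph gives $f(\bar{c}_{i,k}) \neq f(\bar{c}_{j,k})$, hence $f(\bar{c}_i) \neq f(\bar{c}_j)$. This exhibits an infinite set of realisations of $\phi(\bar{x};f(\bar{a}))$ in $\fb^\star$.

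The only delicate point I foresee is making sure that inequality really is witnessed by an existential positive formula whose obstruction role transfers between the two theories; this is exactly what being a model-complete core, together with Lemma~\ref{negpres}, provides. Everything else amounts to standard preservation of existential positive formulas under homomorphisms, so I do not anticipate any further obstacles.
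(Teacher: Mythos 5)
Your proof is correct and follows essentially the same route as the paper's: the paper likewise lifts an infinite family of realisations through $f$ and shows distinctness is preserved because inequality is equivalent mod $S$ to an existential positive formula that remains an obstruction of equality in $T$. The only cosmetic difference is that the paper applies this to the tuple inequality $\bar{x}_1 \neq \bar{x}_2$ directly, while you reduce to a single-coordinate inequality $x \neq y$ and pick a differing coordinate; both work.
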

\begin{proof} As usual, we prove this by contrapositive. Let $(\phi(\bar{b}_i, \bar{a}))_{i\in\mathbb{N}}$ be distinct realisations of $\phi(\bar{x},\bar{a})$ in $\fa^\star$. Since $S$ is a model-complete core theory, $\bar{x}_1\neq \bar{x}_2$ (where $|\bar{x}_1|=|\bar{x}_2|=|\bar{x}|$) is equivalent to an existential positive formula $\psi(\bar{x}_1,\bar{x}_2)$. In particular, $\psi(\bar{b}_i, \bar{b}_j)$ holds for all $i\neq j$ in $\fa^\star$ and so we have that $\psi(f(\bar{b}_i), f(\bar{b}_j))$ holds in $\fb^\star$ for all $i\neq j$. Since this is still an obstruction of $\bar{x}_1=\bar{x}_2$ in $T$, $\phi(\bar{x},\bar{a})$ has infinitely many realisations in $\fb^\star$.
\end{proof}

\begin{definition}\label{def:MR} Let $T$ be a first-order theory. Let $\phi(\bar{x};\bar{y})$ be a $\tau$-formula. Let $b$ be a tuple of arity $|\bar{y}|$. We define the Morley rank of $\phi(\bar{x};\bar{b})$ by induction on ordinals as follows: 
\begin{itemize}
    \item $\RM(\phi(\bar{x};\bar{b}))\geq 0$ if $\phi$ is consistent;
    \item $\RM(\phi(\bar{x};\bar{b}))\geq{\alpha}+1$ if there are $(\phi_i(\bar{x}))_{i\in\mathbb{N}}$ which imply $\phi$ and which are pairwise inconsistent and such that $\RM(\phi_i)\geq\alpha$ for all $i\in\mathbb{N}$;
    \item $\RM(\phi(\bar{x};\bar{b}))\geq \lambda$ for $\lambda$ a limit ordinal if $\RM(\phi)\geq\gamma$ for all $\gamma<\lambda$.
\end{itemize}
    If $\phi$ is inconsistent, we define $\RM(\phi)=-\infty$. If $\RM(\phi)>\alpha$ for all ordinals $\alpha$, we set $\RM(\phi)=\infty$. Otherwise, there is some $\alpha$ such that $\RM(\phi)\geq\alpha$ but $\RM(\phi)\not\geq\alpha+1$ and we set $\RM(\phi)=\alpha$.  When working with multiple theories, we may write $\RM_T$ to denote the Morley rank in the theory $T$. We say that a theory $T$ is \emph{totally transcendental} if all consistent formulas (with parameters from the monster model) have ordinal Morley rank. 
\end{definition}

It is clear how to define the positive version of Morley rank:
\begin{definition} Let $S$ be a positive theory with the joint homomorphism property. Let $\phi(\bar{x};\bar{y})$ be an existential positive $\tau$-formula. Let $b$ be a tuple of arity $|\bar{y}|$ (from the positive monster model of $S$). We say
\begin{itemize}
    \item $\RM^+(\phi(\bar{x};\bar{b}))\geq 0$ if $\phi$ does not imply $\bot$;
    \item $\RM^+(\phi(\bar{x};\bar{b}))\geq\alpha+1$ if there are existential positive $(\phi_i(\bar{x}))_{i\in\mathbb{N}}$ which imply $\phi$ such that for each $i<j\in\mathbb{N}$, $\phi_j$ is an obstruction of $\phi_i$ and such that $\RM^+(\phi_i)\geq\alpha$ for all $i\in\mathbb{N}$;
    \item $\RM^+(\phi(\bar{x};\bar{b}))\geq \lambda$ for $\lambda$ a limit ordinal if $\RM^+(\phi)\geq\gamma$ for all $\gamma<\lambda$.
\end{itemize}
    Then, we define the positive Morley rank of a formula exactly as in Definition~\ref{def:MR}. 
\end{definition}

The lemma below could also be deduced from Lemma~\ref{lem:tracedef} and~\cite[Proposition 7.36]{Walsberglong}.

\begin{lemma}\label{lem:MR} Let $S$ be the core companion  of $T$. Let $\phi(\bar{x};\bar{y})$ be an existential positive formula and $a$ be a tuple of arity $|\bar{y}|$ from the monster model $\fa^\star$ of $S$. Let $f \colon \fa^\star\to\fb^\star$ be a homomorphism from $\fa^\star$ to a monster model $\fb^\star$ of $T$. Then, the Morley rank of $\phi(\bar{x};\bar{a})$ in $S$ is smaller or equal to the Morley rank of $\phi(\bar{x};f(\bar{a}))$ in $T$. In particular, if $T$ is totally transcendental, so is $S$.
\end{lemma}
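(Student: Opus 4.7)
The plan is to prove by transfinite induction on $\alpha$ the statement: for every existential positive formula $\phi(\bar{x};\bar{y})$ and every tuple $\bar{a}$ in $\fa^\star$ of arity $|\bar{y}|$, if $\RM_S(\phi(\bar{x};\bar{a}))\geq\alpha$, then $\RM_T(\phi(\bar{x};f(\bar{a})))\geq\alpha$. The base case $\alpha=0$ amounts to transferring consistency: if $\bar{b}$ witnesses $\phi(\bar{b};\bar{a})$ in $\fa^\star$, then existential positivity yields $\phi(f(\bar{b});f(\bar{a}))$ in $\fb^\star$, exactly as in Lemma~\ref{lem:algebraicity}. The limit case is immediate from the induction hypothesis.

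The crux is the successor step. Assume $\RM_S(\phi(\bar{x};\bar{a}))\geq\beta+1$; since $S$ is a model-complete core, every first-order formula over $S$ is equivalent to an existential positive one, so I would choose witnesses to be existential positive formulas $\phi_i(\bar{x};\bar{c}_i)$ for $i\in\mathbb{N}$, each implying $\phi(\bar{x};\bar{a})$ in $S$, pairwise inconsistent, and each of $\RM_S\geq\beta$. I would then set $\phi_i'(\bar{x};\bar{y},\bar{z}):=\phi(\bar{x};\bar{y})\wedge\phi_i(\bar{x};\bar{z})$, which (after pulling the existential quantifiers to the front) is again existential positive. Since $\phi_i(\bar{x};\bar{c}_i)$ implies $\phi(\bar{x};\bar{a})$ in $S$, the formula $\phi_i'(\bar{x};\bar{a},\bar{c}_i)$ is $S$-equivalent to $\phi_i(\bar{x};\bar{c}_i)$, so retains $\RM_S\geq\beta$. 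Applying the induction hypothesis to $\phi_i'$ with parameters $(\bar{a},\bar{c}_i)$ gives $\RM_T(\phi_i'(\bar{x};f(\bar{a}),f(\bar{c}_i)))\geq\beta$, and by construction each such formula implies $\phi(\bar{x};f(\bar{a}))$ in $T$.

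The main obstacle is transferring the pairwise inconsistency of the $\phi_i$ from $S$ to $T$, since inconsistency is not positively preserved by the homomorphism $f$. Here I would use the trick underlying Lemma~\ref{negpres}: in the model-complete core $S$, the first-order formula $\neg\exists\bar{x}(\phi_i(\bar{x};\bar{y}_1)\wedge\phi_j(\bar{x};\bar{y}_2))$ is equivalent to some existential positive formula $\theta_{ij}(\bar{y}_1,\bar{y}_2)$, so $\theta_{ij}$ is an obstruction of $\exists\bar{x}(\phi_i\wedge\phi_j)$ in $S$ and, by Lemma~\ref{negpres}, also in $T$. The pairwise inconsistency in $\fa^\star$ yields $\theta_{ij}(\bar{c}_i,\bar{c}_j)$; since $\theta_{ij}$ is existential positive, $\theta_{ij}(f(\bar{c}_i),f(\bar{c}_j))$ holds in $\fb^\star$, and the obstruction property forces $\phi_i(\bar{x};f(\bar{c}_i))\wedge\phi_j(\bar{x};f(\bar{c}_j))$ to be inconsistent in $T$; a fortiori the $\phi_i'$ are pairwise inconsistent. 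This establishes $\RM_T(\phi(\bar{x};f(\bar{a})))\geq\beta+1$ and completes the induction.

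For the totally transcendental consequence, if every formula has ordinal Morley rank in $T$, then the inequality just proved forces every existential positive formula over $S$ with parameters to have ordinal Morley rank in $S$; as every formula over $S$ is equivalent to an existential positive one, $S$ is totally transcendental.
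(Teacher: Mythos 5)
Your proof is correct and follows essentially the same route as the paper's: transfinite induction on the ordinal, using the fact that over a model-complete core every first-order formula (and hence every formula appearing in a Morley-rank witness) can be taken existential positive, and transferring through the homomorphism $f$. You are more explicit than the paper about two points that the published proof glosses over: (a) that each $\phi_i$ should imply $\phi$ \emph{in $T$} after applying $f$ --- you solve this cleanly by passing to $\phi_i'=\phi\wedge\phi_i$, which makes the implication a tautology rather than a theory-dependent fact; and (b) that pairwise inconsistency survives the passage to $T$, which you justify by turning the negated conjunction into an existential positive obstruction $\theta_{ij}$ and invoking Lemma~\ref{negpres}. Both of these correspond to what the paper implicitly packages into its notion of positive Morley rank $\RM^+$ (where ``obstruction'' builds in exactly the $\theta_{ij}$ mechanism), so the spirit is identical, but your version spells out the bookkeeping that makes the induction hypothesis actually applicable.
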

\begin{proof} In $S$ every formula is equivalent to an existential positive formula. Thus, every formula is assigned both a Morley rank and a positive Morley rank and the two coincide.

We shall prove by induction on ordinals that if $\RM(\phi(\bar{x},\bar{a}))\geq\alpha$ in $S$, then $\RM(\phi(\bar{x},f(\bar{a})))\geq\alpha$ in $T$. Since $f$ is a homomorphism from $\fa^\star$ into $\fb^\star$, if $\phi(\bar{x},\bar{a})$ is consistent in $S$, so is $\phi(\bar{x},f(\bar{a}))$ in $T$. Hence, we have dealt with the base case. Now, since the case of limit ordinals is trivial, we only need to deal with the case of $\alpha+1$. Suppose that $\RM(\phi(\bar{x},\bar{a}))\geq\alpha+1$. Since in $S$, $\RM^+$ and $\RM$ agree, there are existential positive formulas $(\phi_i(\bar{x};\bar{a}_i))_{i\in\mathbb{N}}$ which imply $\phi(\bar{x};\bar{a})$ and which are pairwise inconsistent.  Note that the formulas $\phi_i'(\bar{x}; {\bar{a}\bar{a}_i}):=\phi_i(\bar{x};\bar{a}_i)\wedge \phi(\bar{x};\bar{a})$ also witness $\RM(\phi(\bar{x},\bar{a}))\geq\alpha+1$, and we still have that $\phi_i'(\bar{x}; f({\bar{a}\bar{a}_i}))$ imply $\phi(\bar{x}; f(\bar{a}))$ in $T$. Moreover, by Lemma~\ref{negpres} we know that the formulas $\phi_i'(\bar{x}; f({\bar{a}\bar{a}_i}))$  are also pairwise inconsistent in $T$, and thus,
by the inductive hypothesis, they witness the fact that $\RM(\phi(\bar{x};f(\bar{a})))\geq \alpha+1$, completing the proof. Finally, suppose by contrapositive $S$ is not totally transcendental.
Then, some existential positive formula $\phi(\bar{x};\bar{a})$ is assigned Morley rank $\infty$. But then, taking an appropriate homomorphism $f \colon \fa^\star\to\fb^\star$, we get that $\phi(\bar{x},f(\bar{a}))$ also has Morley rank $\infty$, meaning that $T$ is not totally transcendental either. 
\end{proof}

\begin{remark}\label{rem:stronglymin} Given a formula $\phi(\bar{x};\bar{b})$ with $\RM(\phi(\bar{x};\bar{b}))=\alpha$, where $\alpha$ is an ordinal, let the \emph{Morley degree} of $\phi(\bar{x};\bar{b})$, $\dM(\phi(\bar{x};\bar{b}))$, be the largest number $n\in\mathbb{N}$ such that there are $\phi_1(\bar{x}), \dots, \phi_n(\bar{x})$ implying $\phi$, pairwise inconsistent and with $\RM(\phi_i(\bar{x}))=\alpha$ (such number exists; cf.~\cite[Lemma 2.6.4]{Tent-Ziegler}). We say that a first-order theory $T$ is \emph{strongly minimal} if every definable subset (with parameters) of its monster model $\fa^\star$ is either finite or cofinite. In particular, a complete theory $T$ is strongly minimal if and only if it is a theory of a finite structure or $(\RM_T(x=x), \dM_T({x=x}))=(1,1)$. 
If a positive theory is not a model-complete core theory, there may not be a reasonable notion of positive Morley degree of an existential positive formula of ordinal positive Morley rank. Moreover, let $S$ be the core companion  of $T$ and let $\fa^\star$ and $\fb^\star$ be monster models of the respective theories with a homomorphism $f \colon \fa^\star\to\fb^\star$. Let $\phi(\bar{x};\bar{b})$  be an existential positive formula with $\bar{b}$ being parameters from $\fa^\star$. Then, it is possible both that $\dM(\phi(\bar{x};\bar{b}))<\dM(\phi(\bar{x};f(\bar{b})))$ and that $\dM(\phi(\bar{x};\bar{b}))>\dM(\phi(\bar{x};f(\bar{b})))$. 

For the first case, let $T$ be the theory of an equivalence relation $E_1$ with two classes {together with the disequality relation}. Its core companion  $S$ is the theory of an equivalence relation $E_1$ with a single class {together with the disequality relation}. In particular, $\dM_T(x=x)=2$ and $\dM_S(x=x)=1$. For the second case, let $T'$ be the expansion of $T$ by a relation $E_1^\neg$ for the negation of $E_1$ and by an equivalence relation $E_2$ which agrees with $E_1$ on one of its classes, and which refines the other class into infinitely many classes. The core companion  $S'$ of $T'$ is the theory where the equivalence relation $E_2$ coincides with $E_1$, which still partitions the domain in two classes and is still negated by $E_1^\neg$. Now, $(\RM_{S'}(x=x),\dM_{S'}(x=x))=(1,2)$. But $(\RM_{T'}(x=x),\dM_{T'}(x=x))=(2,1)$.
\end{remark}

	In spite of the fact that the Morley degree of a theory does not necessarily behave well when moving to the core companion, strong minimality is still preserved.

\begin{corollary} 
	Let $S$ be the core companion  of $T$.  If $T$ is strongly minimal, then so is $S$.
\end{corollary}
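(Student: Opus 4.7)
The plan is to use the equivalent characterization of strong minimality noted in Remark~\ref{rem:stronglymin}: a theory is strongly minimal if and only if every definable subset of its monster model (with parameters) is finite or cofinite. So I will fix an arbitrary definable subset of $\fa^\star$ and show that it is either finite or cofinite, under the hypothesis that the analogous fact already holds in $\fb^\star$.

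Let $D = \phi(\fa^\star;\bar{a})$ be such a subset, where, since $S$ is a model-complete core, $\phi(x;\bar{y})$ can be chosen to be existential positive. Fix the homomorphism $f \colon \fa^\star \to \fb^\star$ of Remark~\ref{rem:samemonster}. If $D$ is finite we are done, so assume $D$ is infinite. Lemma~\ref{lem:algebraicity} then implies that $\phi(\fb^\star;f(\bar{a}))$ is infinite in $\fb^\star$, and strong minimality of $T$ forces its complement $\fb^\star \setminus \phi(\fb^\star;f(\bar{a}))$ to be finite. It remains to pull this finiteness back to $\fa^\star$. Because $S$ is a model-complete core, $\neg\phi(x;\bar{y})$ is equivalent modulo $S$ to some existential positive formula $\psi(x;\bar{y})$, which is therefore an obstruction of $\phi$ in $S$. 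By Lemma~\ref{negpres}, $\psi$ remains an obstruction of $\phi$ in $T$, so $\psi(\fb^\star;f(\bar{a})) \subseteq \fb^\star \setminus \phi(\fb^\star;f(\bar{a}))$, and the right-hand side is finite. The contrapositive of Lemma~\ref{lem:algebraicity} applied to $\psi$ at parameters $\bar{a}$ then gives that $\psi(\fa^\star;\bar{a})$ is finite in $\fa^\star$; since $\psi(x;\bar{a})$ defines $\fa^\star \setminus D$ in $\fa^\star$, this shows that $D$ is cofinite, as required. For the Morley-rank/degree formulation of strong minimality, Lemma~\ref{lem:MR} applied to the parameter-free formula $x=x$ already gives $\RM_S(x=x) \leq \RM_T(x=x) = 1$, and the argument above shows $\dM_S(x=x) \leq 1$.

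The main obstacle is that the argument is genuinely two-sided: Lemma~\ref{lem:algebraicity} pushes infiniteness from $\fa^\star$ to $\fb^\star$, but to transport the finiteness of the complement back to $\fa^\star$ one must control a negative statement along a one-way homomorphism. The device that rescues this is the same one used throughout Section~\ref{sec:preserved}: the complement of an existential positive formula in a model-complete core is again existential positive, so it becomes an obstruction in $S$, and Lemma~\ref{negpres} carries that obstruction into $T$, after which the contrapositive of Lemma~\ref{lem:algebraicity} can be invoked. This is consistent with the warning of Remark~\ref{rem:stronglymin} that Morley degree can in general increase when passing to the core companion: what is specifically preserved here is the ``rank $\leq 1$ plus degree $\leq 1$'' configuration, because this is exactly the rigid pattern that obstruction transfer handles.
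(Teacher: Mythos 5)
Your proof is correct and relies on the same two ingredients as the paper's proof, namely Lemma~\ref{lem:algebraicity} for transporting infiniteness of existential positive formulas along the homomorphism $f$, and preservation of obstructions (Lemma~\ref{negpres}) for keeping jointly inconsistent definable sets disjoint in $T$, but the bookkeeping is organised differently. The paper argues by contrapositive directly in terms of Morley degree: if $S$ fails to be strongly minimal, it has two jointly inconsistent non-algebraic existential positive formulas $\phi_1,\phi_2$; pushing both forward along $f$ (infiniteness by Lemma~\ref{lem:algebraicity}, joint inconsistency because the obstruction transfers) reproduces the same configuration in $\fb^\star$, so $T$ is not strongly minimal either. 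Your argument is direct, using the \emph{finite or cofinite} formulation: push $\phi$ forward to get it infinite in $\fb^\star$, invoke strong minimality of $T$ to conclude the complement there is finite, and then pull this finiteness back to $\fa^\star$ via the contrapositive of Lemma~\ref{lem:algebraicity} applied to the existential positive complement $\psi$ of $\phi$. The paper's forward-only version avoids the backward step (the contradiction lives entirely in $\fb^\star$), whereas yours is slightly longer but makes the recovery of cofiniteness on the $S$-side explicit; content-wise they are equivalent. One caveat shared by both arguments is the tacit assumption that $\RM_S(x=x)\geq 1$, i.e.\ that the core companion has infinite models: this is not automatic in general (the core companion of $\Th(\mathbb{N};=)$ is the one-point structure), but since the paper's proof makes the same unstated assumption, this is not a defect of your proposal relative to the paper.
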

\begin{proof} By the previous lemma, 
\[\RM_S(x=x)\leq \RM_T(x=x)\leq 1\;.\]
If $\RM_S(x=x)=0$ then $S$ is finite, so we are done. Thus, we can assume that $\RM_T(x=x)={\RM_S(x=x)}=1$. If $S$ is not strongly minimal, this is witnessed by two existential positive formulas $\phi_1(x,\bar{a}_1)$ and $\phi_2(x,\bar{a}_2)$ which are jointly inconsistent and both of Morley rank $\geq 1$. By Lemma~\ref{lem:algebraicity}, since $\phi_1$ and $\phi_2$ are non-algebraic (i.e., have infinitely many realisations) in $\fa^\star$, $\phi_1(x,f(\bar{a}_1))$ and $\phi_2(x,f(\bar{a}_2))$ are non-algebraic in $\fb^\star$. Hence, they both have Morley rank $\geq 1$ and by Lemma~\ref{negpres} they are still jointly inconsistent, yielding that $T$ is not strongly minimal. 
\end{proof}

\subsection{Trace definability of the core companion}\label{sub:trace}
In later sections, we will see that the {classes} of structures interpretable in $(\mathbb{N}, =)$ or in $(\mathbb{Q}, <)$ are not closed under taking core companions. In particular, it is possible that no model of a theory $T$ interprets a model of the core companion  $S$ {of $T$}. Nevertheless, in this subsection, we show that the core companion $S$ of a theory $T$ is interpretable in a weak sense in $T$, through the notion of \emph{trace definability}~\cite{Walsbergshort, Walsberglong}. Trace definability is a weak notion of interpretation that recently received attention in model theory.
We show that if a complete theory $T$ has a {model-complete} core companion $S$, then $T$ trace defines $S$. Several model theoretic properties can be defined in terms of not trace defining certain theories. Since trace definability is a transitive relation, showing that a theory trace defines its core companion  implies that all properties which can be defined in terms of not trace defining a given theory (or class of theories) are preserved when moving to the core companion.

\begin{definition} Let $\fa$ and $\fb$ be first-order structures in signatures $\sigma$ and $\tau$ respectively. A \emph{trace definition} of $\fb$ in $\fa$ is an injective map $\mathfrak{t}:\fb\to\fa^m$ for some $m\in\mathbb{N}$ such that for any $\tau\cup\{B\}$-formula $\phi(\overline{x})$ with $|\overline{x}|=n$, there is a $\sigma\cup\{A\}$-formula $\psi(\overline{y})$ with $|\overline{y}|=nm$ such that for any tuple $\overline{a}\in B^n$, we have that
\[\fb\vDash \phi(\overline{a})\Leftrightarrow \fa\vDash\psi(\mathfrak{t}(\overline{a})).\]
We say that $\fa$ \emph{trace defines} $\fb$ through the map $\mathfrak{t}$. We say that a complete theory $T$ trace defines another complete theory $S$ if some model of $S$ is trace definable in some model of $T$. We say that two theories (or two structures) are \emph{trace equivalent} if they trace define each other.
\end{definition}

It is easy to see that trace definability is a transitive relation and that if $\fa$ interprets $\fb$, then it also trace defines $\fb$~\cite{Walsbergshort}. In particular, we can think of trace definability as a weak 
form 
of interpretation. When a theory $T$ trace defines another theory $S$, several properties are 
transferred 
from $T$ to $S$. We list below some of the main properties preserved under trace definability. We refer the reader to~\cite{Walsbergshort} for the various definitions. Several of these properties are used to analyse the finer structure of stable and $\mathrm{NIP}$ theories~\cite{shelah2014strongly, SimonRankOne, aschenbrenner2016vapnik, basit2021zarankiewicz}.

\begin{fact}[{\cite{Walsbergshort, walsberg2026tracedefinabilityipreservation,walsberg2026tracedefinabilityiimodeltheoretic}}]\label{fact:tracedefpreserved} Let $k\in\mathbb{N}_{>0}$ and $\mu$ be a cardinal. Suppose that $T$ trace defines $S$. If $T$ has one of the following properties, then $S$ also does: stability, $\mathrm{NIP}$, $\mathrm{NIP}_k$, $\mathrm{NFOP}_k$, being totally transcendental, superstability, strong dependence, having finite $U$-rank, having Morley rank $\leq\mu$, having $\mathrm{dp}$-rank $\leq\mu$, having finite $\mathrm{op}$-dimension, linear $\mathrm{VC}$-density bounds, having the strong {Erd\H{o}s}-Hajnal 
property, having near linear Zarankiewicz bounds. 
\end{fact}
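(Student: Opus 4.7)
The plan is to prove each preservation statement by a uniform contrapositive strategy: given a configuration in $S$ witnessing that some property fails, I would pull it back along the trace definition $\mathfrak{t}\colon B \to A^m$ to produce a configuration in $T$ witnessing failure of the same property. The defining feature of a trace definition is precisely that each $\tau$-formula $\phi(\bar{x})$, viewed with parameters from $B$, has a corresponding $\sigma$-formula $\psi(\bar{y})$ such that $\fb\vDash \phi(\bar{a})$ if and only if $\fa\vDash \psi(\mathfrak{t}(\bar{a}))$. Thus any model-theoretic property witnessed in $S$ by a formula together with an array of tuples will be witnessed in $T$ by the corresponding formula and the $\mathfrak{t}$-image of the tuples.

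For the pattern-based properties (stability, $\mathrm{NIP}$, $k$-$\mathrm{NIP}$, strong dependence), this gives an essentially immediate argument: forbidden configurations are precisely of the form ``there exist a formula and an array of tuples satisfying a prescribed pattern of truth values'', and both the formula and the tuples transfer across $\mathfrak{t}$ in a truth-preserving way, exactly as in the proof of Theorem~\ref{thm:inheritance}. For properties defined by rank bounds (finite Morley rank, finite $U$-rank, $\mathrm{dp}$-rank, $\mathrm{op}$-rank, superstability), I would argue by induction on the rank, imitating the proof of Lemma~\ref{lem:MR}. The base step reduces to preservation of consistency, which is immediate from the trace-definition property; the inductive step uses that a witnessing tree or array of pairwise inconsistent formulas in $S$ transfers to an analogous structure in $T$ via $\mathfrak{t}$. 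This also yields preservation of total transcendentality as the limit case.

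The main obstacle is the combinatorial properties at the end of the list: linear $\mathrm{VC}$-density bounds, the strong Erd\H{o}s--Hajnal property, and near-linear Zarankiewicz bounds. These are not purely pattern-based but rather asymptotic bounds on combinatorial quantities associated to uniformly definable families. Here one must track how these quantities change when a parameter tuple of length $n$ is replaced by one of length $nm$ via $\mathfrak{t}$: the correspondence $\phi \mapsto \psi$ increases arity by a factor of $m$, so one must check that the corresponding $\mathrm{VC}$-density, Zarankiewicz, or Erd\H{o}s--Hajnal bounds are stable under a fixed constant-factor rescaling of the arity. Each such property does transfer, but only after verifying the relevant asymptotic bookkeeping. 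For these I would invoke the detailed estimates carried out in~\cite{Walsbergshort} rather than redo the combinatorial arguments from scratch.
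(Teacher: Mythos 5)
The paper does not prove this statement; it is cited from Walsberg's work, so you are in effect offering an independent proof attempt. Your argument for the pattern-based properties (stability, $\mathrm{NIP}$, $k$-$\mathrm{NIP}$) is essentially correct: a trace definition sends the witnessing array $(\bar{a}_i,\bar{b}_j)$ and the formula $\phi$ to $(\mathfrak{t}(\bar{a}_i),\mathfrak{t}(\bar{b}_j))$ and the corresponding formula $\psi$, with all truth values preserved, and compactness handles the passage through finite approximations and monster models.

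However, the rank-based part of your proposal contains a genuine gap. You suggest transferring the inductive definition of Morley rank via $\mathfrak{t}$: given pairwise inconsistent formulas $(\phi_i)_{i\in\mathbb{N}}$ below $\phi$ each of rank $\geq\beta$ in $S$, take their traces $(\psi_i)_{i\in\mathbb{N}}$ in $T$. But inconsistency of $\phi_i\wedge\phi_j$ in $\fb$ only tells you that $\psi_i\wedge\psi_j$ has no realization \emph{on the image of $\mathfrak{t}$}, which need not be a definable subset of $A^{m|\bar{x}|}$. It does not give inconsistency of $\psi_i\wedge\psi_j$ in $\fa$. This is precisely the point where trace definability is weaker than interpretability: the trace of an existential formula $\exists\bar{x}\,\chi(\bar{x},\bar{b})$ is not $\exists\bar{y}'\,(\text{trace of }\chi)(\bar{y}',\mathfrak{t}(\bar{b}))$, because the quantifier on the $\fa$-side ranges over all of $A^{m|\bar{x}|}$ rather than the (non-definable) image of $\mathfrak{t}$. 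The naive fix of replacing $\psi_i$ by $\psi_i\wedge\bigwedge_{j<i}\neg\psi_j$ restores pairwise inconsistency but destroys the inductive rank bound. The same obstruction affects your argument for finite $U$-rank, $\mathrm{dp}$-rank, and $\mathrm{op}$-rank as presented; Walsberg's actual arguments for these properties use different characterisations (combinatorial configurations or type-counting) that do transfer. For superstability and total transcendentality a type-counting argument along the lines of Lemma~\ref{lem:typecounting} does work, since distinct $1$-types over $C$ in $\fb$ pull back to distinct $m$-types over $\mathfrak{t}(C)$ in $\fa$, but you do not mention this route. Finally, you explicitly defer the three combinatorial bounds to Walsberg, which is reasonable, but it means the rank-based block is the only genuinely new material, and that is where the argument breaks.
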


\begin{lemma}\label{lem:tracedef} Let $T$ be a first-order theory and $S$ be its core companion. Let $\fb$ be a model of $S$ and $\mathfrak{t} \colon \fb\to\fa$ be a homomorphism from $\fb$ into some model $\fa$ of $T$. Then $\mathfrak{t}$ is a trace definition.
In particular, $T$ trace defines $S$.
\end{lemma}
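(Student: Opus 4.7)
The plan is to exploit the fact that, in the model-complete core theory $S$, every formula (and in particular the negation of every formula) is equivalent to an existential positive formula, combined with two elementary transfer principles: existential positive formulas are preserved by homomorphisms, and $h$-universal consequences are shared by $S$ and $T$. Once these are properly assembled, the homomorphism $\mathfrak{t}$ will witness trace definability with dimension $m=1$.

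More concretely, I will fix an arbitrary $\tau \cup \{B\}$-formula, writing it as $\phi_0(\bar{x},\bar{c})$ with $\phi_0$ a $\tau$-formula without parameters and $\bar{c}$ a tuple from $B$. Since $S$ is a model-complete core theory, both $\phi_0(\bar{x},\bar{z})$ and $\neg \phi_0(\bar{x},\bar{z})$ are equivalent modulo $S$ to existential positive $\tau$-formulas $\phi_0^+(\bar{x},\bar{z})$ and $\psi_0^+(\bar{x},\bar{z})$, respectively (cf.\ the reference~\cite[Proposition 2.6.13]{bodirsky2021complexity} already invoked in the excerpt). The key observation is then that the sentence
\[
\forall \bar{x}\,\bar{z}\, \bigl(\phi_0^+(\bar{x},\bar{z}) \wedge \psi_0^+(\bar{x},\bar{z}) \to \bot \bigr)
\]
is an $h$-universal consequence of $S$, and hence of $T$ as well by the definition of core companion. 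Thus $\phi_0^+$ and $\psi_0^+$ are mutually exclusive in every model of $T$.

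I will then set $\psi(\bar{y}) := \phi_0^+(\bar{y},\mathfrak{t}(\bar{c}))$, a $\tau \cup \{A\}$-formula, and check both implications for any tuple $\bar{b}$ from $B$. If $\fb \vDash \phi_0(\bar{b},\bar{c})$, then $\fb \vDash \phi_0^+(\bar{b},\bar{c})$, and preservation of existential positive formulas under the homomorphism $\mathfrak{t}$ gives $\fa \vDash \phi_0^+(\mathfrak{t}(\bar{b}),\mathfrak{t}(\bar{c}))$. Conversely, if $\fb \not\vDash \phi_0(\bar{b},\bar{c})$, then $\fb \vDash \psi_0^+(\bar{b},\bar{c})$, hence $\fa \vDash \psi_0^+(\mathfrak{t}(\bar{b}),\mathfrak{t}(\bar{c}))$, which by the mutual exclusion above forces $\fa \not\vDash \phi_0^+(\mathfrak{t}(\bar{b}),\mathfrak{t}(\bar{c}))$. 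This establishes that $\mathfrak{t}$ is a trace definition.

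For the final ``in particular'' clause, I only need to exhibit some model of $S$ that maps homomorphically to some model of $T$; Fact~\ref{fact:hconsequences} provides this immediately, since $S$ and $T$ share the same $h$-universal consequences. The only mild subtlety in the argument is to notice that the obstruction witnessing $\phi_0^+ \wedge \psi_0^+ \equiv \bot$ is itself an $h$-universal sentence, so it transfers from $S$ to $T$; this is what lets the reverse implication go through and is really the heart of why trace definability (rather than just preservation of existential positive formulas) holds here.
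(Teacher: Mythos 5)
Your proof is correct and follows essentially the same route as the paper: exploit that both a formula and its negation are equivalent modulo $S$ to existential positive formulas, then use preservation of existential positive formulas under $\mathfrak{t}$ for one direction and transfer of the $h$-universal obstruction from $S$ to $T$ (the paper packages this as Lemma~\ref{negpres}) for the converse. The paper additionally records that $\mathfrak{t}$ is injective, but this is a remark rather than a step the argument requires, so your omission of it is harmless.
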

\begin{proof} 
{Let $\tau$ be the signature of $S$ and $T$.}
Notice that since $S$ is a model-complete core theory, $\mathfrak{t}$ is injective. In fact, there is some existential positive formula $\psi(x,y)$ which is equivalent to 
$x\neq y$ in $S$. Then $\psi(x,y)$ is still an obstruction to $x=y$ in $T$, and since  $\mathfrak{t}$ preserves $\psi$, this implies that $\mathfrak{t}$ is injective. Now, consider an existential positive $\tau$-formula $\phi(\overline{x}, \overline{y})$. 
{Since $S$ is a model-complete core theory, 
$\neg\phi(\overline{x}, \overline{y})$ is equivalent to an
existential positive formula 
$\theta(\overline{x}, \overline{y})$ modulo $S$.}
Take $\overline{b}\in B^{|\overline{y}|}$. Notice that for $\overline{a}\in B^{|\overline{x}|}$,
\[\fb\vDash \phi(\overline{a}, \overline{b})\Rightarrow \fa\vDash\phi(\mathfrak{t}(\overline{a}), \mathfrak{t}(\overline{b})), \] 
and 
\[\fb\vDash\neg\phi(\overline{a}, \overline{b})\Leftrightarrow \fb\vDash\theta(\overline{a}, \overline{b})\Rightarrow\fa\vDash\theta(\mathfrak{t}(\overline{a}), \mathfrak{t}(\overline{b}))\Rightarrow \fa\vDash\neg\phi(\mathfrak{t}(\overline{a}), \mathfrak{t}(\overline{b})),\]
where the last implication holds since $\theta$ is still an obstruction to $\phi$ in $T$. In particular, from the above implications we obtain that $\fb$ is trace defined in $\fa$, as desired.
\end{proof}

Putting together Fact~\ref{fact:tracedefpreserved} and Lemma~\ref{lem:tracedef}, we get:

\begin{corollary}\label{cor:trace}Let $k\in\mathbb{N}_{>0}$ and $\mu$ be a cardinal. Suppose that $S$ is the core companion of the theory $T$. If $T$ has one of the following properties, then $S$ also does: stability, $\mathrm{NIP}$, $\mathrm{NIP}_k$, $\mathrm{NFOP}_k$, being totally transcendental, superstability, strong dependence, having finite $U$-rank, having Morley rank $\leq \mu$, having $\mathrm{dp}$-rank $\leq \mu$, having finite $\mathrm{op}$-dimension, having linear $\mathrm{VC}$-density bounds, having the strong {Erd\H{o}s}-Hajnal property, having near linear Zarankiewicz bounds. 
\end{corollary}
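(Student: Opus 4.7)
The plan is to deduce the corollary as an immediate consequence of the two ingredients assembled just above its statement: Lemma~\ref{lem:tracedef} supplies the trace definition, and Fact~\ref{fact:tracedefpreserved} transports each listed property across any trace definition. Concretely, I would first invoke Lemma~\ref{lem:tracedef} to produce a model $\fb$ of $S$ together with a homomorphism $\mathfrak{t}\colon\fb\to\fa$ into some model $\fa$ of $T$, and argue that this $\mathfrak{t}$ is a trace definition. The key point, already contained in the lemma, is that obstructions in $S$ remain obstructions in $T$ (because $S$ and $T$ have the same $h$-universal consequences), and since $S$ is a model-complete core theory, the negation of every existential positive formula is equivalent modulo $S$ to an existential positive formula, which is preserved by $\mathfrak{t}$ and whose obstruction-status persists in $T$.

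With $T$ trace defining $S$ in hand, each of the items on the list (stability, $\mathrm{NIP}$, $k$-$\mathrm{NIP}$, total transcendentality, superstability, strong dependence, finite $U$-rank, finite Morley rank, finite $\mathrm{dp}$-rank, finite $\mathrm{op}$-rank, linear $\mathrm{VC}$-density bounds, the strong Erd\H{o}s--Hajnal property, near linear Zarankiewicz bounds) transfers from $T$ to $S$ by the corresponding clause of Fact~\ref{fact:tracedefpreserved}. Since trace definability is transitive and the fact is stated uniformly, the argument proceeds property-by-property without any additional bookkeeping.

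There is essentially no obstacle remaining: the conceptual work has been done in Lemma~\ref{lem:tracedef}, and the transfer step is purely invocational. The only mild point worth flagging, which I would mention in passing, is that our standing convention (after Notation following Remark~\ref{rem:samemonster}) restricts attention to complete theories, so that ``$T$ trace defines $S$'' in the sense of the definition of trace definability between complete theories applies verbatim. After that remark, the proof is simply: apply Lemma~\ref{lem:tracedef} to obtain a trace definition of a model of $S$ inside a model of $T$, then apply Fact~\ref{fact:tracedefpreserved} to each property in the list.
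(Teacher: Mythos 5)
Your proposal is correct and matches the paper's argument exactly: the corollary is stated immediately after Lemma~\ref{lem:tracedef} and Fact~\ref{fact:tracedefpreserved} precisely so that it follows by combining the two, which is what you do.
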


\begin{remark}
    Whilst trace definitions preserve most model-theoretic properties inside the $\mathrm{NIP}$ world (or the $\mathrm{NIP}_k$ world for $k>1$), they do not preserve most properties outside of this domain: any structure with $\mathrm{IP}_k$ (for example, the random $k$-hypergraph) trace defines all homogeneous $k$-ary structures. In particular, simplicity, $\mathrm{NSOP}$, and $\mathrm{NSOP}_k$ for $k\geq 1$ are not preserved by trace definitions.
\end{remark}

\section{Properties not preserved by taking model-complete cores}\label{sec:notpreserved}
The previous section might leave the impression that virtually every meaningful model-theoretic property
is preserved by taking model-complete cores. However, this is not the case; we start with an instructive example that shows that total categoricity is not preserved by taking model-complete cores (Theorem~\ref{thm:total_mc}). We do not know whether total categoricity is preserved by the model companion.

Other classes of structures that are not preserved by taking model-complete cores are the class of all structures interpretable in  equality (Section~\ref{sect:i_n}), and the class of all structures interpretable in  $({\mathbb Q};<)$ (Section~\ref{sect:inter_q}). The latter is not even preserved by taking model companions.

\subsection{Total categoricity}
A theory is called \emph{totally categorical} if it has up to isomorphism precisely one model of each infinite cardinality.
A structure is called \emph{totally categorical} if its first-order theory is. 
	It is well known that totally categorical structures are not closed under taking reducts. An example for this is the digraph $\fa$ with infinitely many disjoint directed edges, which is totally categorical. However, let us consider the first-order reduct $\fa$ of this structure which only has two unary predicates, one for all sources and one for all targets. The theory of this structure has 3 non-isomorphic models of size $\omega_1$, because we can pick the size of the unary relations to be $\omega$ or $\omega_1$ as long as at least one of the relations has size $\omega_1$. This shows that $\fa$ is not totally categorical. 

    We now construct a totally categorical structure whose model-complete core is not
    totally categorical.
    
\begin{theorem}\label{thm:total_mc}
    The class of totally categorical structures is not closed under taking core companions.
\end{theorem}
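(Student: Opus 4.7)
The plan is to show that the digraph $\fa$ of infinitely many disjoint directed edges, which the preceding discussion has just noted to be totally categorical, is itself the required counterexample: its model-complete core is a two-element structure, and thus its theory fails the definition given at the start of this section, which requires precisely one model in each infinite cardinality (and hence in particular requires the existence of infinite models).

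The key step is to identify the core. Let $\fc = (\{a, b\}; \{(a, b)\})$ be the two-element structure consisting of a single directed edge from $a$ to $b$. The map sending every source of $\fa$ to $a$ and every target to $b$ is clearly an $E$-preserving homomorphism $\fa \to \fc$; conversely, sending $a, b$ to the endpoints of any fixed edge of $\fa$ is a homomorphism $\fc \to \fa$. Hence $\fa$ and $\fc$ are homomorphically equivalent. Moreover $\fc$ is a model-complete core, since no non-identity self-map of $\{a, b\}$ preserves the edge $(a, b)$: the swap would force $(b, a)$ into the edge relation, and any collapse to a single point would force a loop. The only endomorphism of $\fc$ is therefore the identity, which is trivially an automorphism, and by the uniqueness of the core companion for $\omega$-categorical theories, $\fc$ is the model-complete core of $\fa$.

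Finally, the complete first-order theory of $\fc$ has this two-element structure as its unique model (its axioms pin down the domain to two elements and the relation to a single directed edge), so it admits no infinite models at all. Under the definition ``precisely one model of each infinite cardinality'' recalled at the start of this section, the theory of $\fc$ is therefore not totally categorical, completing the proof. The main potential obstacle is a looser interpretation of totally categorical under which theories without infinite models would count as vacuously totally categorical; under such a convention the above example would not suffice and a more elaborate construction would be required, producing an infinite model-complete core in which two definable infinite sets have independently varying sizes in some uncountable cardinality.
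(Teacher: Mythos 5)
Your argument is defensible but hinges on a definitional subtlety that the paper deliberately sidesteps, and you yourself flag this at the end. The core of your example is finite, so the failure of total categoricity is only due to the absence of infinite models --- not to the presence of non-isomorphic models of some infinite cardinality. Whether a theory with no infinite models counts as ``not totally categorical'' or as ``vacuously totally categorical'' depends on the convention: the paper's phrasing ``precisely one model of each infinite cardinality'' supports your reading, but the analogous definition of $\omega$-categoricity earlier in the same section is phrased with ``at most one countable model,'' which explicitly allows finite structures. Using the latter style of convention, your example collapses. Many model theorists would simply say total categoricity is undefined for theories without infinite models, in which case your example is again inadmissible.

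The paper's own proof avoids all of this by constructing a totally categorical structure (three copies of $\mathbb{N}$ glued by two bijections and a ``spider'' vertex) whose model-complete core is \emph{infinite}: it is interdefinable with the disjoint union of two infinite pure sets and a point, and the failure of total categoricity is then witnessed in a robust, convention-independent way by exhibiting three pairwise non-isomorphic models of cardinality $\omega_1$. Your last sentence correctly anticipates exactly this shape of construction --- an infinite core containing two definable infinite sets whose sizes can vary independently in uncountable cardinalities --- so you have identified the right repair; you just did not carry it out. The verification that the single directed edge is the model-complete core of the digraph of disjoint edges is correct, and the homomorphic equivalence argument is fine. The only real issue is the reliance on the degenerate finite-core case.
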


\begin{proof}
    Let $A := {\mathbb N} \times \{0,1,2\}$. Let $\fa$ be the structure with domain $A$ and whose relations are defined as follows (also see Figure~\ref{fig:spider}). 
\begin{align*}
U_i & \coloneqq {\mathbb N} \times \{i\} \text{ for } i \in \{0,1,{2}\} \\
N & \coloneqq \{(u_1,u_2) \in  (U_1)^2\cup (U_2)^2 \mid u_1 \neq u_2\} \\
R & \coloneqq \{((n,0),(n,1)): n\in {\mathbb N}\}\cup \{((n,0),(n,2)): n\in {\mathbb N}\}\\
& \quad \cup \{((0,0),(n,1)): n\in {\mathbb N}\} \cup \{((0,0),(n,2)): n\in {\mathbb N}\}. 
\end{align*}
    
    Let $\fb$ be any model of the theory of $\fa$. Then, in $\fb$, $(U_0,U_1,U_2)$ is a partition of $B$ and $R$ is a union of a bijection from $U_0$ to $U_1$, a bijection from $U_0$ to $U_2$ and $\{v\}\times (U_1\cup U_2)$ for some unique vertex $v\in U_0$. In particular $|U_0|=|U_1|=|U_2|$. It is easy to see that if $|B|=\kappa$ then $\fb$ is isomorphic to the structure whose domain is $\kappa\times \{0,1,2\}$ and whose relations are defined the same way as for $\fa$. In particular $|B|$ determines $\fb$ up to isomorphism, i.e., $\Th(\fb) = \Th(\fa)$ is totally categorical.

    Now, let $C={\mathbb N}\times \{1,2\} \cup \{(0,0)\}$ and $\fc=\fa|_C$. Any endomorphism of $\fc$ is a union of $\id(\{(0,0)\})$ and self-injections of $U_1$ and $U_2$ which clearly can be approximated by automorphisms of $\fc$. Thus, $\fc$ is a model-complete core. Let us now observe that the map $A\rightarrow C$ which fixes every element in $U_1\cup U_2$ and maps every element of $U_0$ to $\{(0,0)\}$ is a homomorphism from $\fa$ to $\fc$. Therefore, $\fc$ is the model-complete core of $\fa$. Note, however, that $\fc$ is  interdefinable with the disjoint union of two infinite sets and a one-element structure. Then the same argument as above shows that $\Th(\fc)$ has 3 models of size $\omega_1$, and thus it is not totally categorical.
\end{proof}
\begin{figure}
\begin{center}
\includegraphics[scale=.6]{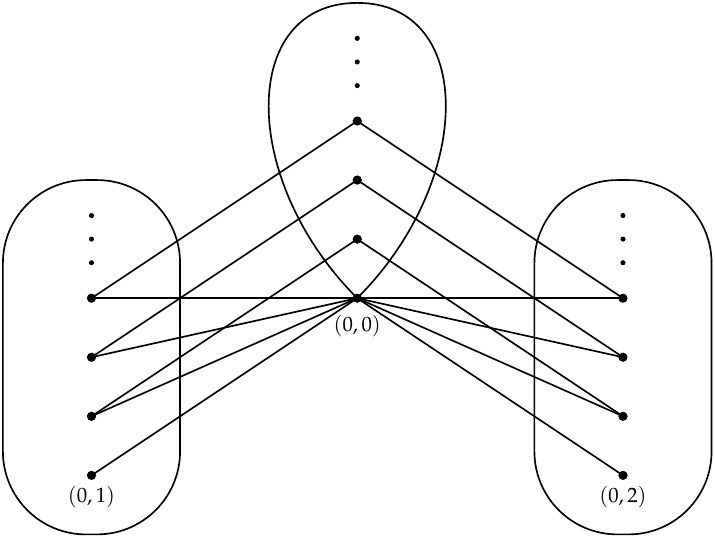}
\end{center}
\caption{An illustration of the structure from the proof of Theorem~\ref{thm:total_mc}.}
\label{fig:spider}
\end{figure}

\subsection{Structures interpretable in  equality}
\label{sect:i_n}
	In this section, we show that the class $\I(\atoms)$ is not closed under taking model-complete cores.
    We write ${\mathbb Z}_n$ for ${\mathbb Z}/n{\mathbb Z}$ and its elements will be identified with $\{0,\dots,n-1\}$ in the usual way.
    

\begin{definition}\label{def:counterexpl}
	Let $\fx$ denote the following structure. The domain of $\fx$ is $$X :=\{(a,b,m): a,b\in {\mathbb Q}, m\in \mZ_4, a\neq b\}.$$
    The relations of $\fx$ are
\begin{itemize}
\item $R:= \big \{((a,b,m),(a,b,m+1)) : a,b \in {\mathbb Q}, m \in \mZ_4 \big \}$,
\item $E \subseteq X^2$, the binary relation given by
\begin{align*}
& \{((a,b,2m),(a,c,2n)): n,m \in \mZ_4, b\neq c\} \\
\cup \;& \{((a,b,2m),(c,a,2n+1)): n,m \in \mZ_4, b\neq c\}  \\
\cup \; & \{((b,a,2m+1),(a,c,2n)): n,m \in \mZ_4 , b\neq c\} \\ 
 \cup \; & \{((b,a,2m+1),(c,a,2n+1)): n,m \in \mZ_4, b\neq c \}.
\end{align*}
\item $N \subseteq X^2$, the binary relation given by
$$N :=\{((a,b,m),(c,d,n)) : n,m \in \mZ_4, \{a,b\}\cap \{c,d\}=\emptyset\}.$$
\end{itemize}
\end{definition}

\begin{figure}
    \begin{center}
    \includegraphics[scale=.8]{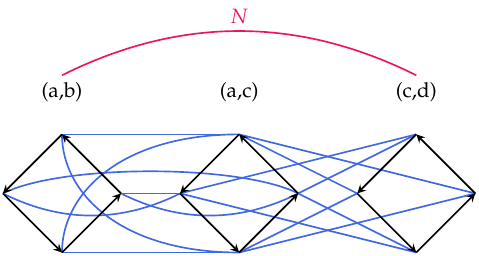}
    \end{center}
    \caption{An illustration of a finite substructure of the structure $\fx$ from Definition~\ref{def:counterexpl}.}
    \label{fig:Y}
\end{figure}

See Figure~\ref{fig:Y}.
For $m,n \in {\mathbb N}$, we write $m \equiv_k n$ if $m = n \mod k$.
We use the notation $(a,b)_0 :=a$ and $(a,b)_1:=b$ and consider the indices modulo 2. 
Then one can check that in $\fx$, for all $n,m \in \mZ_4$
$$((a,b,m),(c,d,n))\in E \text{ if and only if }
(a,b)_m \equiv_2 (c,d)_n \text{ and } |\{a,b\} \cap \{c,d\}|=1.$$

\begin{lemma}\label{interpretable}
	The structure $\fx$ is interpretable in $\atoms$.
\end{lemma}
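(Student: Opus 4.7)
The plan is to give an explicit $4$-dimensional interpretation $I$ of $\fx$ in $\atoms$, in which the finite piece $m \in \mZ_4$ is encoded by the equality pattern of two auxiliary coordinates relative to the first two. The guiding observation is that, since the definable relations of $\atoms$ are precisely the finite Boolean combinations of equalities among coordinates, we have access to four pairwise incompatible ``equality types'' built from two auxiliary variables and two fixed distinct rationals $x_1, x_2$, and these four types correspond bijectively to the four elements of $\mZ_4$.

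Concretely, I would define $I$ to be the partial map from $\mathbb{Q}^4$ to $X$ whose domain is
\[\dom(I) := \{(x_1,x_2,x_3,x_4) \in \mathbb{Q}^4 : x_1 \neq x_2 \text{ and } x_3, x_4 \in \{x_1,x_2\}\},\]
and whose values, for $x_1 \neq x_2$, are
\begin{align*}
I(x_1,x_2,x_1,x_1) &= (x_1,x_2,0), &
I(x_1,x_2,x_1,x_2) &= (x_1,x_2,1), \\
I(x_1,x_2,x_2,x_1) &= (x_1,x_2,2), &
I(x_1,x_2,x_2,x_2) &= (x_1,x_2,3).
\end{align*}
Since the four equality patterns are pairwise incompatible whenever $x_1 \neq x_2$, the map $I$ is well-defined; it is clearly a bijection from $\dom(I)$ onto $X$, and $\dom(I)$ is definable in $\atoms$ by the disjunction of the four equality types listed above.

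It then remains to verify that the preimage under $I$ of each atomic relation of $\fx$ is first-order definable in $\atoms$. For equality this is immediate, since $I$ is a bijection and so the preimage is equality of $4$-tuples. For $R$, the preimage is the disjunction over $m \in \mZ_4$ of conjunctions asserting that the first two coordinates of the input $8$-tuple match and that the last two coordinates exhibit the patterns coding $m$ and $m+1$ respectively; this is clearly definable. For $N$, the condition $\{a,b\}\cap\{c,d\}=\emptyset$ translates directly to $\bigwedge_{i,j\in\{1,2\}} x_i \neq y_j$. The only step requiring sustained attention is $E$: one performs a finite case analysis over the $16$ pairs $(m,n) \in \mZ_4 \times \mZ_4$, checking that each of the four defining clauses of $E$ in Definition~\ref{def:counterexpl} translates into a Boolean combination of (in)equalities among the eight coordinates of the input tuple. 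This is mechanical once the coding is fixed; I would write out a single representative case and remark that the others are analogous. No step is a genuine obstacle, since every relation under consideration is, at bottom, a finite Boolean combination of coordinate equalities and hence first-order definable in $\atoms$.
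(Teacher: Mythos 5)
Your proof is correct and proceeds in the same spirit as the paper's, but with a more explicit, self-contained encoding. The paper modularizes: it invokes Corollary~\ref{inter_finite} to get an abstract interpretation $\iota$ of the finite structure $(\{0,1,2,3\};0,1,2,3)$ in $\atoms$, and then builds the interpretation $I$ of $\fx$ as $(a,b,\bar m)\mapsto(a,b,\iota(\bar m))$, so the dimension and the kernel of $I$ depend on the chosen $\iota$. You instead hard-code a $4$-dimensional interpretation in which $\mZ_4$ is encoded by the equality pattern of $(x_3,x_4)$ relative to $(x_1,x_2)$; the constraint $x_3,x_4\in\{x_1,x_2\}$ with $x_1\neq x_2$ gives exactly four mutually exclusive types, so the map is a bijection onto $X$ and the kernel is literally equality of $4$-tuples, slightly simplifying the bookkeeping. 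Your remark that the definability of $I^{-1}(E)$ is a ``mechanical case analysis over the $16$ pairs $(m,n)$'' is accurate and not a gap: each defining clause of $E$ depends only on the parities of $m,n$ (readable from whether $x_4=x_1$ or $x_4=x_2$, resp.\ for $y_4$) and on which of $x_1,x_2$ equal which of $y_1,y_2$, all of which are Boolean combinations of coordinate equalities. The trade-off is that the paper's version is shorter to write and reuses a lemma that is needed elsewhere, while yours avoids the quotient and fixes an explicit dimension; both are equally valid.
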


\begin{proof}
	By Corollary~\ref{inter_finite} we know that $\atoms$ interprets the structure $(\{0,1,2,3\};0,1,2,3)$. Let $\iota$ be an interpretation witnessing this. Then we define
\begin{itemize}
\item $X':={\{(a,b)\in\mathbb{Q}^2}: a\neq b\}\times \dom(\iota)$,
\item $R':=\{((a,b,\bar{m}),(c,d,\bar{n}))\in (X')^2: a=c \wedge b=d \wedge \iota(\bar{n}) \equiv_4 \iota(\bar{m})+1\}$,
\item $E' \subseteq (X')^2$, the relation containing all tuples $(a,b,\bar{m},c,d,\bar{n})$ that satisfy 
\begin{align*}
&(a=c \wedge b\neq d \wedge \iota(\bar{m})\equiv_2 0\wedge \iota(\bar{n})\equiv_2 0)  \\
\vee \; &(a=d\wedge b\neq c \wedge \iota(\bar{m}) \equiv_2 0 \wedge \iota(\bar{n}) \equiv_2 1) \\
\vee \; &(b=c\wedge a\neq d \wedge \iota(\bar{m}) \equiv_2 1 \wedge \iota(\bar{n})\equiv_2 0) \\
\vee \; & (b=d\wedge a\neq c \wedge \iota(\bar{m}) \equiv_2 1 \wedge \iota(\bar{n}) \equiv_2 1)\},
\end{align*}
\item $N':=\{((a,b,\bar{m}),(c,d,\bar{n}))\in ({X}')^2: \neg (a=c\vee a=d\vee b=c \vee b=d)\}$.
\end{itemize}

	Let us define $I \colon (a,b,\bar{m})\mapsto (a,b,\iota(\bar{m}))$. Then we have $I^{-1}(R)=R'$, $I^{-1}(E)=E'$, $I^{-1}(N)=N'$, and these relations are clearly definable in $\atoms$. Finally, we have $\dom(I)=X'$ and $$I^{-1}(=)=\{((a,b,{\bar{m}}),(c,d,{\bar{n}})): a=c\wedge b=d \wedge \iota(\bar{m})=\iota(\bar{n})\}$$ which are also definable in $\atoms$. Therefore, the map $I$ gives rise to an interpretation of $\fx$ in  $\atoms$.
\end{proof}

	From the lemma above, it follows that the structure $\fx$ from Definition~\ref{def:counterexpl} is $\omega$-categorical, and hence has a model-complete core. We claim that the model-complete core of $\fx$ does not have a first-order interpretation in  $\atoms$. In fact, we will show (in Corollary~\ref{mc_core}) that the model-complete core of $\fx$ is isomorphic to the substructure $\fy$ of $\fx$ induced on the set 
\begin{align}
    Y & :=\{(a,b,m): a,b\in \mathbb{Q}, m\in \mZ_4, a< b\}. & \label{eq:B}
\end{align}

\begin{remark}\label{z_inter_q}
	Note that the same argument as in Lemma~\ref{interpretable} shows that $\fy$ is interpretable in $\order$.
\end{remark}

\begin{lemma}\label{hom_equiv}
	The structures $\fx$ and $\fy$ are homomorphically equivalent.
\end{lemma}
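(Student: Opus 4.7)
The plan is to exhibit homomorphisms in both directions explicitly. The homomorphism from $\fy$ to $\fx$ is free: since $Y \subseteq X$ and $\fy$ is by construction the substructure of $\fx$ induced on $Y$, the inclusion map is already a homomorphism, indeed an embedding.

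For the more interesting direction, I would define a map $h \colon X \to Y$ by
\[h(a,b,m) = \begin{cases}(a,b,m) & \text{if } a < b, \\ (b,a,m+1) & \text{if } a > b.\end{cases}\]
The intuition is as follows. Under the ``active element'' description given immediately after Definition~\ref{def:counterexpl}, the tuple $(a,b,m)$ carries an active element $(a,b)_{m \bmod 2}$, and swapping the first two coordinates inverts the parity of the active position. Shifting $m$ by $1$ compensates for this inversion, so $h$ preserves the active element while simultaneously enforcing the ordering constraint $a<b$ required of elements of $Y$.

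It then remains to verify that $h$ preserves each of the three relations $R$, $E$, $N$. For $R$, in the case $a>b$ the edge $((a,b,m),(a,b,m+1))$ is sent to $((b,a,m+1),(b,a,m+2))$, which again lies in $R$ since the relation $R$ is a $4$-cycle on each fibre $\{(u,v)\}\times \mZ_4$; the case $a<b$ is trivial. For $E$, using the characterization in terms of equal active elements and $|\{a,b\} \cap \{c,d\}|=1$: the unordered pairs $\{a,b\}$ and $\{c,d\}$ are preserved by $h$, and a direct parity computation $(b,a)_{(m+1) \bmod 2} = (a,b)_{m \bmod 2}$ shows that the active element is preserved as well. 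The relation $N$ depends only on $\{a,b\}$ and $\{c,d\}$ being disjoint, which is immediate from the preservation of unordered pairs.

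There is no substantial obstacle beyond finding the correct formula for $h$; once the ``swap and shift'' rule is spotted, each verification reduces to a small case analysis on parities modulo $2$ and $4$. Together with the inclusion $\fy \hookrightarrow \fx$, this establishes the homomorphic equivalence of $\fx$ and $\fy$.
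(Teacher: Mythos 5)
Your proposal is correct and essentially identical to the paper's proof: you define the same ``swap and shift'' map $h$ and verify $R$, $E$, $N$ using the same active-element description and the same case analysis. No differences to note.
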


\begin{proof}
	Since $\fy$ is a substructure of $\fx$, it is enough to show that there exists a homomorphism from $\fx$ to $\fy$. We define this homomorphism as follows: 
\begin{itemize}
\item if $a<b$, then $h((a,b,m)):=(a,b,m)$; 
\item if $a>b$, then $h((a,b,m)):=(b,a,m+1)$.
\end{itemize}
	It is clear that the image of $h$ is {$Y$}. Now we show that $h$ preserves the relations $R,E,N$.

	Let us assume that $((a,b,m),(c,d,n))\in R$. Then $a=c$, $b=d$, and ${n=m+1} \mod 4$. In this case, if $a<b$, then $h$ fixes $(a,b,m)$ and $(c,d,n)=(a,b,m+1)$, and if $b>a$, then 
\begin{align*}
(h((a,b,m)),h((c,d,n))) & =(h((a,b,m)),h((a,b,m+1))) \\
& = ((b,a,m+1),(b,a,m+2))\in R.
\end{align*}
	By definition, $((a,b,m),(c,d,n))\in N$ iff $\{a,b\}\cap \{c,d\}=\emptyset$, so it is clear from the definition of $h$ that $h$ preserves $N$.

	Now let us assume that $((a,b,m),(c,d,n))\in E$. Then $(a,b)_m=(c,d)_n\eqqcolon u$. If $a<b$ then $(a,b)_m=u$, and if $a>b$ then $(b,a)_{m+1}=(a,b)_m=u$. Similarly if $c<d$ then $(c,d)_n=u$, and if ${c>d}$ then $(d,c)_{n+1}=(c,d)_n=u$. Therefore if $h((a,b,m))=(a',b',m')$ and $h((c,d,n))=(c',d',n')$ then we have $u=(a',b')_{m'}=(c',d')_{n'}$ and thus $$(h((a,b,m)),h((c,d,n)))=((a',b',m'),(c',d',n'))\in E.$$ Therefore, $h$ preserves the relation $E$.
\end{proof}

\subsubsection{The Johnson graph}
To prove that $\fy$ (see~\eqref{eq:B}) is a model-complete core and not interpretable in $\atoms$, we need to revisit basic facts about the so-called Johnson graph (and its finite covers).

\begin{definition}\label{def:Johnson}
	The \emph{Johnson graph} (denoted by $\john$) is the graph whose vertex set is $\upairs$ (that is, the set of $2$-element subsets of ${\mathbb Q}$), and where two (distinct) vertices $x,y\in \upairs$ are adjacent, i.e., connected by an edge, if and only if $x\cap y$ is nonempty.
	We use the notation $E$ for the set of edges, and $N$ for the set of nonedges of $\john$, i.e., 
    \begin{align*}
        E & :=\{(x,y\in \upairs^2: |x\cap y|=1\} \\
        \text{ and } \quad N & :=\{(x,y\in \upairs^2: |x\cap y|=0\}.
        \end{align*}
	If $\alpha$ is a permutation of ${\mathbb Q}$, then the natural action of $\alpha$ on $\upairs$ is denoted  by $\tilde{\alpha}$. 
\end{definition}

	We chose the domain set of $\john$ to be $\upairs$, but clearly we could use any countable set instead of $\mathbb Q$. 

\begin{remark}\label{rem:Jaut}
	It is well known and easy to see that 
    $\aut(\john)=\{\tilde{\alpha}: \alpha\in \sym(\mathbb {Q})\}$ (see, e.g.,~\cite{bodirsky2025structures}). 
\end{remark}

\begin{notation}
	We denote by $P_a$ the set $\big \{\{a,b\}: b\in {\mathbb{Q}} \setminus \{a\} \big \}$.
\end{notation}

\begin{lemma}\label{stars}
	Let $a\in {\mathbb Q}$, and let $e\in \eend(\john)$. Then there exists a unique $b\in {\mathbb Q}$ such that $e(P_a)\subseteq P_b$.
\end{lemma}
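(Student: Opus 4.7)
The plan is to reduce the claim to a classification of infinite cliques in $\john$. The starting observation is that any endomorphism of a loopless graph is injective on every clique: if $u \neq v$ lie in a clique, then $(u,v) \in E$, so $(e(u),e(v)) \in E$, and the irreflexivity of $E$ forces $e(u) \neq e(v)$. Now $P_a$ is an infinite clique, because any two distinct elements $\{a,b\}, \{a,c\} \in P_a$ share the point $a$; hence $e(P_a)$ is an infinite clique of $\john$.

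The heart of the argument is then the claim that every infinite clique $C$ of $\john$ is contained in a unique $P_b$. I would first classify 3-cliques. Given three distinct vertices $v_1, v_2, v_3$ forming a clique in $\john$, one checks (by a short case split on whether two of them already share one element that also lies in the third) that either the three 2-sets share a common element $b$, so that $v_1, v_2, v_3 \in P_b$ (the ``book'' type), or they have the form $\{b,c\}, \{c,d\}, \{b,d\}$ for pairwise distinct $b,c,d$ (the ``triangle'' type). A further short check shows that no vertex outside $\{\{b,c\}, \{c,d\}, \{b,d\}\}$ is adjacent to all three of them: a candidate 2-set must share a point with each, and the constraints leave no option outside the three listed vertices. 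Hence, as soon as $C$ has a fourth vertex, the triangle-type configuration is impossible for any three vertices of $C$, so every three vertices of $C$ share a common element.

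It remains to show that a single $b$ works for the whole of $C$. Picking three vertices giving element $b$, any further vertex $v \in C$ is adjacent to $v_1, v_2, v_3 \in P_b$, which share only $b$ while having pairwise distinct second elements; since $v$ is a 2-set, it must contain $b$, so $v \in P_b$. Uniqueness of $b$ is immediate: for $b \neq b'$ we have $P_b \cap P_{b'} = \{\{b,b'\}\}$, which has only one element and cannot contain an infinite clique. The main obstacle is simply the case analysis pinning down the two shapes of triangles and ruling out the triangle type in the presence of a fourth clique vertex; the crucial input powering the reduction is the injectivity of $e$ on $P_a$, which uses nothing more than $\john$ being loopless.
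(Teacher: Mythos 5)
Your proof is correct and follows essentially the same route as the paper: both reduce to the fact that four pairwise-adjacent vertices of $\john$ must share a common element, where you spell out the case analysis (classifying 3-cliques as ``book'' or ``triangle'' and showing a triangle cannot extend to a 4-clique) that the paper compresses into ``it is easy to verify.'' Your preliminary observation that $e$ is injective on cliques, hence $e(P_a)$ is an infinite clique, is the same implicit input the paper uses when it asserts the four images are pairwise adjacent.
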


\begin{proof}
	Let $c_1,c_2,c_3,c_4\in {\mathbb Q}\setminus \{a\}$ be distinct. Then the elements $e(\{a,c_1\})$, $e(\{a,c_2\})$, $e(\{a,c_3\})$, and $e(\{a,c_4\})$ are pairwise adjacent in $\john$. It is easy to verify that this is only possible if there exists a number $b$ which is contained in $e(\{a,c_i\})$ for all $i \in \{1,2,3,4\}$, that is, $e(\{a,c_i\})\in P_b$. If we choose any $c_4'\in {\mathbb Q}
    \setminus \{a,c_1,c_2,c_3\}$, then the same argument shows that $e(\{a,c_4'\})\in P_b$.
    This implies that $e$ maps $P_a$ into $P_b$. The uniqueness of $b$ is {then} obvious.
\end{proof}

	The Johnson graph is not a core, because it is homomorphically equivalent to an infinite clique. However, it becomes a model-complete core if we expand it by the relation $N$.

\begin{lemma}\label{core1}
	The structure $\big (\upairs;E,N \big)$ is a model-complete core.
\end{lemma}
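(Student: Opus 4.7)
The plan is to show every endomorphism $e$ of $(\upairs;E,N)$ is of the form $\tilde\alpha$ for some injection $\alpha\colon\mathbb{Q}\to\mathbb{Q}$, and then to approximate such maps pointwise by automorphisms $\tilde\beta$ coming from permutations $\beta\in\sym(\mathbb{Q})$. Combined with Remark~\ref{rem:Jaut}, this yields density of $\aut$ in $\en$ and hence the model-complete core property.

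First, I would use Lemma~\ref{stars} to define a map $\alpha\colon\mathbb{Q}\to\mathbb{Q}$ by letting $\alpha(a)$ be the unique element $b\in\mathbb{Q}$ with $e(P_a)\subseteq P_b$. The key step is to show $\alpha$ is injective, and this is where I use that $e$ preserves $N$ (the $E$-only case would just give the Johnson graph, which is \emph{not} a core). Suppose for contradiction that $\alpha(a)=\alpha(c)=b$ for distinct $a,c\in\mathbb{Q}$. Since $\mathbb{Q}$ is infinite, pick $d_1,d_2\in\mathbb{Q}$ so that $\{a,d_1\}$ and $\{c,d_2\}$ are disjoint, hence $(\{a,d_1\},\{c,d_2\})\in N$. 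Then $e(\{a,d_1\})\in P_b$ and $e(\{c,d_2\})\in P_b$ both contain $b$, so they share an element and cannot lie in $N$, a contradiction.

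Second, once $\alpha$ is known to be injective, for any $\{a,c\}\in\upairs$ we have $\alpha(a)\neq\alpha(c)$ and $e(\{a,c\})\in P_{\alpha(a)}\cap P_{\alpha(c)}=\{\{\alpha(a),\alpha(c)\}\}$, which forces $e=\tilde\alpha$. Thus every endomorphism of $(\upairs;E,N)$ is induced by an injection of $\mathbb{Q}$ into itself.

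Finally, to establish density of $\aut(\upairs;E,N)$ in $\en(\upairs;E,N)$, fix any finite subset $F\subseteq\upairs$ and let $F^*:=\bigcup_{x\in F}x$, a finite subset of $\mathbb{Q}$. The restriction $\alpha|_{F^*}$ is a finite partial injection of $\mathbb{Q}$, and since $\mathbb{Q}\setminus F^*$ and $\mathbb{Q}\setminus \alpha(F^*)$ are both countably infinite, a standard back-and-forth argument extends $\alpha|_{F^*}$ to a permutation $\beta\in\sym(\mathbb{Q})$. Then $\tilde\beta\in\aut(\upairs;E,N)$ agrees with $e=\tilde\alpha$ on all of $F$, as desired. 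The only genuine content is the injectivity argument in the first step; everything else is a routine consequence of Lemma~\ref{stars} and the homogeneity of $(\mathbb{Q};=)$.
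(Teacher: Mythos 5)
Your proof is correct and takes essentially the same approach as the paper's: invoking Lemma~\ref{stars} to extract a map $\alpha$ on $\mathbb{Q}$, using $N$-preservation to show $\alpha$ is injective, and extending the finite restriction of $\alpha$ to a permutation to get a matching automorphism. The only cosmetic difference is that you phrase the injectivity step as a contradiction and explicitly note $e=\tilde\alpha$ before the density argument, whereas the paper recovers $e(\{a,b\})=\{\alpha(a),\alpha(b)\}$ inside that final step.
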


\begin{proof}
	We show that $\eend(\upairs;E,N)=\overline{\aut(\upairs;E,N)}=\overline{\aut(\john)}$. Let $e$ be an endomorphism of $(\upairs;E,N)$. Then $e$ is also an endomorphism of $\john$. By Lemma \ref{stars}, 
    there exists a map $\alpha \colon {\mathbb Q} \rightarrow {\mathbb Q}$ such that $P_a$ is mapped to $P_{\alpha(a)}$. We claim that $\alpha$ is injective. Arbitrarily choose distinct $a,b,c,d\in {\mathbb Q}$. 
    Then, {$(\{a,c\},\{b,d\})\in N$, and thus $(e(\{a,c\}),e(\{b,d\}))\in N$,}  that is, $e(\{a,c\})\cap e(\{b,d\})=\emptyset$. By definition, $e(\{a,c\})\in P_{\alpha(a)}$ and $e(\{b,d\})\in P_{\alpha(b)}$, that is, $\alpha(a)\in e(\{a,c\})$ and $\alpha(b)\in e(\{b,d\})$. It follows that $\alpha(a)\neq \alpha(b)$, which shows that $\alpha$ is injective. 
    
    Let $F$ be a finite subset of $\upairs$ and let $F':=\cup F$. Then $F'$ is a finite subset of ${\mathbb Q}$. Since $\alpha$ is injective, it follows that there exists a permutation $\beta$ of ${\mathbb Q}$ so that $\beta|_{F'}=\alpha|_{F'}$. Then $\tilde{\beta}\in \aut(\john)$. We claim that $\tilde{\beta}|_{F}=e|_{F}$. Let $\{a,b\}\in F$. Then $a,b\in F'$, and by definition $\alpha(a),\alpha(b)\in e(\{a,{b}\})$. Since $\alpha$ is injective,  
    $$e(\{a,b\})=\{\alpha(a),\alpha(b)\}=\{\beta(a),\beta(b)\}=\tilde{\beta}(\{a,b\}).$$ This proves that $\eend(\upairs;E,N)=\overline{\aut(\john)}$. Therefore, $(\upairs;E,N)$ is a model-complete core.
\end{proof}

\subsubsection{Finite covers of the Johnson graph}

Let $\fa$ and $\fb$ be structures with domains $A$ and $B$
and let $\pi \colon A \to B$ be a function. 
The \emph{kernel} of $\pi$ is the equivalence relation $\{(a_1,a_2) \mid \pi(a_1) = \pi(a_2)\} \subseteq A^2$. 
If $\pi$ is surjective and the kernel of $\pi$ is preserved by $\en(\fa)$, we write $\mu_\pi$ for the mapping from 
$\en(\fa)$ to $B \to B$ defined as follows.
If $e \in \en(\fa)$ and $b \in B$, we choose $a \in \pi^{-1}(b)$ (which we can since $\pi$ is surjective). Then 
$\mu_\pi(e)(b) := \pi(e(a))$ is well-defined since the kernel of $\pi$ is preserved by $e$. 
\begin{definition}\label{def:cover}
    Let $\fa$ and $\fb$ be structures with domains $A$ and $B$.
    A \emph{finite covering map $\pi \colon \fa \to \fb$} is a 
    surjective function $\pi \colon A \to B$ such that 
\begin{itemize}
    \item the preimage of each $w \in B$ under $\pi$ is finite,
    \item  the kernel of $\pi$ is preserved by $\en(\fa)$, and
    \item the image of $\aut(\fa)$ under $\mu_\pi$ equals $\aut(\fb)$. 
\end{itemize}
\end{definition}

\begin{remark}
    Note that finite covering maps $\pi \colon \fa \to \fb$ are usually defined using a variant of $\mu_\pi$, namely the version where it is only defined on $\aut(\fa)$ (see, for example,~\cite{EvansIvanovMacpherson,bodirsky2021permutation}).
    For us it will be practical to apply $\mu_\pi$ also to endomorphisms. Note that the assumption in Definition~\ref{def:cover} that the kernel of $\pi$ should be preserved by $\en(\fa)$ rather than $\aut(\fa)$ is not a severe restriction, 
because if we replace $\fa$ and $\fb$ with their expansions by all first-order definable relations, and the kernel of $\pi$ being preserved by $\aut(\fa)$, then it is also preserved by $\en(\fa)$.
\end{remark}

\begin{lemma}\label{fin_cover}
    Let $\pi \colon Y \rightarrow \upairs$ be given by  $(a,b,m)\mapsto \{a,b\}$. Then $\pi \colon \fy \to \john$ is a finite covering map. 
    Moreover, 
    every $e \in \en(\fy)$ preserves the relation 
    $D:=\{((a,b,m),(a,b,n)) \mid a,b \in {\mathbb Q}, a<b, m,n \in \mZ_4\}$
    and $\mu_{\pi}(e)$ is an endomorphism of $(\upairs;E,N)$.
\end{lemma}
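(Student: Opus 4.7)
The plan is to verify the three axioms of Definition~\ref{def:cover} for $\pi$, plus the two additional claims about $\en(\fy)$. Surjectivity and finite fibers are immediate: $\pi^{-1}(\{a,b\}) = \{(a,b,m) \mid m \in \mZ_4\}$ whenever $a<b$.

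The crucial preliminary observation is that \emph{the fibers of $\pi$ coincide with the connected components of the digraph $(Y;R)$}: restricted to a single fiber, $R$ forms a directed $4$-cycle $(a,b,0)\to(a,b,1)\to(a,b,2)\to(a,b,3)\to(a,b,0)$, and $R$ contains no edges between distinct fibers (it preserves the first two coordinates). Since every $e\in\en(\fy)$ preserves $R$, it preserves $R$-reachability and hence sends each fiber into a single fiber. This establishes simultaneously that the kernel of $\pi$ is preserved by $\en(\fy)$ and that every endomorphism preserves $D$ (which, as a binary relation, coincides with the kernel of $\pi$).

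Next I would show $\mu_\pi(e) \in \en(\upairs;E,N)$ for every $e\in\en(\fy)$. Given distinct $\{a,b\},\{c,d\}\in\upairs$ with $|\{a,b\}\cap\{c,d\}|=1$, I can choose indices $m,n\in\mZ_4$ such that $(a,b)_m = (c,d)_n$ is the common element; then $((a,b,m),(c,d,n)) \in E$ in $\fy$ by the definition of $E$. Preservation of $E$ by $e$, together with the defining form of $E$, forces the $\pi$-images of the outputs to lie in distinct fibers and intersect in exactly one point, hence to form an edge in $\john$. For $N$, if $\{a,b\}\cap\{c,d\}=\emptyset$ then $((a,b,0),(c,d,0))\in N$, and preservation of $N$ gives the analogous statement for the images' fibers.

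The final and most involved step is to prove $\mu_\pi(\aut(\fy)) = \aut(\john)$. The inclusion $\subseteq$ follows by applying the previous step to both $\sigma$ and $\sigma^{-1}$ together with $\mu_\pi(\sigma)\circ\mu_\pi(\sigma^{-1}) = \id$. For $\supseteq$, since $\aut(\john)=\{\tilde\alpha \mid \alpha\in\sym({\mathbb Q})\}$ by Remark~\ref{rem:Jaut}, I would lift each $\alpha \in \sym({\mathbb Q})$ to a map $\hat\alpha\colon Y\to Y$ defined by
\[
\hat{\alpha}(a,b,m) :=
\begin{cases}
(\alpha(a),\alpha(b),m) & \text{if } \alpha(a)<\alpha(b), \\
(\alpha(b),\alpha(a),m+1) & \text{if } \alpha(a)>\alpha(b),
\end{cases}
\]
which visibly satisfies $\pi\circ\hat\alpha = \tilde\alpha\circ\pi$ and is a bijection of $Y$. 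The main obstacle is verifying that $\hat\alpha$ preserves $R$, $E$, and $N$. The invariant that makes the case analysis tractable is that if $\hat\alpha(a,b,m)=(a',b',m')$, then $\alpha((a,b)_m) = (a',b')_{m'}$ in both cases; this guarantees that the position-parity encoding of the common element in the definition of $E$ is transported correctly by $\hat\alpha$, regardless of whether $\alpha$ reverses the order of a given pair, while the preservation of $R$ and $N$ reduces to bookkeeping modulo $4$ and the fact that $\alpha$ is injective on ${\mathbb Q}$. Once this is verified, $\mu_\pi(\hat\alpha) = \tilde\alpha$, completing the proof.
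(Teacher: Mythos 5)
Your proposal is correct and mirrors the paper's proof step by step: the same verification of finite fibers, the same endomorphism argument via the characterization of $E$ in terms of the shared element, and the same lift $\hat\alpha$ (called $\gamma$ in the paper) with the same invariant $\alpha((a,b)_m)=(a',b')_{m'}$. The only cosmetic difference is in showing kernel preservation: the paper exhibits an explicit existential positive formula $x=y \vee R(x,y) \vee R(y,x) \vee \exists z(R(x,z)\wedge R(z,y))$ defining $\pi(x)=\pi(y)$, whereas you argue via $R$-reachability and connected components, which is the same observation.
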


\begin{proof}
    Clearly, the preimage of every element $\{a,b\} \in \upairs$ under $\pi$ is finite. We first show that the kernel of $\pi$ is preserved by $\en(\fy)$ by proving that it is existentially positively definable in $\fy$: 
    we have $\pi(x) = \pi(y)$ if and only if 
    $$x=y \vee R(x,y) \vee R(y,x) \vee \exists z \big (R(x,z) \wedge R(z,y) \big ).$$
    Next, we show that $\mu_{\pi}(e)$ preserves the relations $E$ and $N$ for every $e\in \en(\fy)$.

	Let $(\{a,b\},\{c,d\})\in E$, that is, $|\{a,b\}\cap \{c,d\}|=1$. We may assume without loss of generality that $a<b$ and $c<d$. By definition, $((a,b,m),(c,d,n))\in E$ for some $m,n\in \{0,1\}$. Then $(e((a,b,m)),e((c,d,n)))\in E$, which implies that $$|\mu_{\pi}(e)(\{a,b\})\cap \mu_{\pi}(e)(\{c,d\})| = |\pi(e(a,b,m)) \cap \pi(e(c,d,n))| =1,$$
    that is,  $(\mu_{\pi}(e)(\{a,b\}),\mu_{\pi}(e)(\{c,d\}))\in E$.

	Let $(\{a,b\},\{c,d\})\in N$, that is, $\{a,b\}\cap \{c,d\}=\emptyset$. We may again assume that $a<b$ and $c<d$. By definition, $((a,b,0),(c,d,0))\in N$, and thus $(e((a,b,0)),e((c,d,0)))\in N$. This implies that $\mu_{\pi}(e)(\{a,b\})\cap \mu_{\pi}(e)(\{c,d\})=\emptyset$, that is, $(\mu_{\pi}(e)(\{a,b\}),\mu_{\pi}(e)(\{c,d\}))\in N$.
    
	Now we show that $\pi$ is a finite covering map. A map $\gamma \colon Y \to Y$ is an automorphism of $\fy$ if and only if $\gamma$ is bijective and both $\gamma$ and $\gamma^{-1}$ are 
    endomorphisms of $\fy$. In this case, $\mu_{\pi}(\gamma)$ is also bijective, and it follows from the second part of the lemma, which we have already shown above, that both $\mu_{\pi}(\gamma)$ and $(\mu_{\pi}(\gamma))^{-1}=\mu_{\pi}(\gamma^{-1})$ are endomorphisms of $(\upairs,E,N)$. Therefore,  $\mu_{\pi}(\gamma)\in \aut(\upairs,E,N)=\aut(\john)$. 
    
    Conversely, if $\gamma'$ is an automorphism of $\john$ then it is of the form $\gamma'=\tilde{\alpha}$ for some $\alpha\in \sym({\mathbb Q})$ (Remark~\ref{rem:Jaut}). Consider the function $\gamma$ which maps $(a,b,m)$ to $(\alpha(a),\alpha(b),m)$ if $\alpha(a)<\alpha(b)$ and to $(\alpha(b),\alpha(a),m+1)$ if $\alpha(b)<\alpha(a)$. It is clear that $\gamma$ preserves $D$, and its action on $\upairs$ is $\gamma'$. We claim that $\gamma$ is an automorphism of $\fy$. The only nontrivial thing we have to check is that $\gamma$ preserves the relation $E$. Let $((a,b,m),(c,d,n))\in E$. Then $(a,b)_m=(c,d)_n$. Let $u:=\alpha((a,b)_m)=((c,d)_n)$. If $\alpha(a)<\alpha(b)$ then $u=(\alpha(a),\alpha(b))_m$, and if $\alpha(b)<\alpha(a)$ then $u=(\alpha(b),\alpha(a))_{m+1}$. Similarly, if $\alpha(c)<\alpha(d)$ then $u=(\alpha(c),\alpha(d))_n$, and if $\alpha(d)<\alpha(c)$ then $u=(\alpha(d),\alpha(c))_{n+1}$. By checking all four cases one can verify that $(\gamma(a,b,m),\gamma(c,d,n))\in E$. This implies that the action of $\aut(\fy)$ on $\upairs$ is exactly the automorphism group of $\john$, and concludes the proof that $\pi$ is a finite covering map.
\end{proof}

\subsubsection{Verifying that $\fy$ is a model-complete core}
    For $S\subseteq \upairs$, let  $\rotate_S$ be the set of permutation of $Y$ which maps $(a,b,m)$ to $(a,b,m+1)$ if $\{a,b\}\in S$ and $a<b$, and fixes every other element of $Y$. It is easy to see that $\mu_{\pi}(\rotate_S)=\id(\upairs)$ for all $S\subseteq \upairs$. Let $K$ be the group generated by the permutations $\rotate_S$. Then every element of $K$ can be written uniquely as a composition $\rotate_{S_1}\rotate_{S_2}^2\rotate_{S_3}^3$ for some disjoint subsets $S_1,S_2,S_3$ of $\upairs$. Let $\iota \colon K \to ({\mathbb Z}_4)^{\omega}$ be defined as follows. Suppose that $g\in K$ is of the form $g=\,\rotate_{S_1}\rotate_{S_2}^2\rotate_{S_3}^3$
    such that $S_1,S_2,S_3 \subseteq \upairs$ are disjoint. 
    Let $\iota(g) \colon {{\mathbb Q}\choose 2} \to \{0,1,2,3\}$ be the function which maps every pair $\{a,b\}$ to $i$ if $\{a,b\}\in S_i$ for $i \in \mZ_4$, and to $0$ otherwise. Then $\iota$ is well defined and a group isomorphism from $K$ to $({\mathbb Z}_4)^{\omega}$. In particular, $K$ is abelian. We define $K_0:=\{\rotate_S^2:S\subseteq \upairs\}$. Then $\iota(K_0)$ is isomorphic to $({\mathbb Z}_2)^{\omega}$. In particular, $K_0$ is a subgroup of $K$.

\begin{lemma}\label{local_flip}
	Let $H$ be a subset of ${\mathbb Q}$ of size at least 3 and let $e\in \eend(\fy)$ be such that $\mu_{\pi}(e)$ fixes every element in ${H\choose 2}$. Then there exists $S\subseteq \upairs$ such that $$e|_{\pi^{-1}({H\choose 2})}=(\rotate_S^2)|_{\pi^{-1}({H\choose 2})}.$$
\end{lemma}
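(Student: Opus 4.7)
My plan is to show that for each pair $\{a,b\} \in \binom{H}{2}$, the endomorphism $e$ acts on the fiber $\pi^{-1}(\{a,b\})$ as a rotation by some $s_{\{a,b\}} \in \mZ_4$, and then to argue that this shift must always be even. The set $S$ will then be defined as $\{\{a,b\} \in \binom{H}{2} : s_{\{a,b\}} = 2\}$, extended arbitrarily outside $\binom{H}{2}$; we will get $e|_{\pi^{-1}(\binom{H}{2})} = \rotate_S^2|_{\pi^{-1}(\binom{H}{2})}$ by construction.

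First, I will observe that since $\mu_\pi(e)$ fixes $\{a,b\}$, the endomorphism $e$ maps the fiber $\pi^{-1}(\{a,b\})$ into itself, so $e(a,b,0) = (a,b,s_{\{a,b\}})$ for some $s_{\{a,b\}} \in \mZ_4$ (for the unique representative with $a < b$). Then, using that $R$ restricted to this fiber is a directed $4$-cycle and that $e$ preserves $R$, a short induction on $m$ yields $e(a,b,m) = (a,b,m + s_{\{a,b\}})$ for every $m \in \mZ_4$.

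The key step is showing $s_{\{a,b\}} \in \{0,2\}$, which uses $|H| \ge 3$ to choose some $c \in H \setminus \{a,b\}$ and then exploit the $E$-relation between the two fibers. Recall that $((x_1,x_2,m),(x_1',x_2',n)) \in E$ iff $|\{x_1,x_2\} \cap \{x_1',x_2'\}| = 1$ and $(x_1,x_2)_m = (x_1',x_2')_n$; in particular, the index $m$ on the pair $\{a,b\}$ must pick out the unique element shared with the other pair. Up to a small case analysis on the ordering of $a,b,c$, I can always find representatives $y \in \pi^{-1}(\{a,b\})$ and $y' \in \pi^{-1}(\{a,c\})$ (or $\pi^{-1}(\{b,c\})$) with $E(y,y')$ holding and such that $y$ selects the coordinate equal to the shared element. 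Applying $e$, which preserves both $E$ and the kernel of $\pi$, gives $E(e(y), e(y'))$, where $e(y)$ sits in the same fiber but is rotated by $s_{\{a,b\}}$. Since the pair of $e(y)$ is unchanged and the shared element is $a$ (resp.\ $b$), the rotated index must again pick out that same coordinate, forcing $s_{\{a,b\}}$ to be even. The main obstacle is precisely the bookkeeping in this case analysis, since the orderings $a < b$, $a < c$, $b < c$ interact with the definition of the $E$-edges through the convention $(x_1,x_2)_m$; however, in every case the shared element of two overlapping pairs forces the parity condition on $s_{\{a,b\}}$.

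Finally, I will set $S := \{\{a,b\} \in \binom{H}{2} : s_{\{a,b\}} = 2\}$, padded arbitrarily (say, to be disjoint from $\upairs \setminus \binom{H}{2}$). By construction, $\rotate_S^2$ acts on $(a,b,m) \in \pi^{-1}(\binom{H}{2})$ (with $a < b$) as identity if $s_{\{a,b\}} = 0$ and as $m \mapsto m+2$ if $s_{\{a,b\}} = 2$, which coincides with $e$ by steps one and two; this gives the claimed equality of restrictions.
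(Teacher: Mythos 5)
Your proposal follows essentially the same route as the paper's proof: first use preservation of $R$ to show $e$ acts as a rotation $s_{\{a,b\}}$ on each fiber over $\binom{H}{2}$, then pick $c \in H \setminus \{a,b\}$ and use preservation of the $E$-edge to the neighboring fiber to force $s_{\{a,b\}}$ to be even, and finally set $S$ to be the pairs with $s_{\{a,b\}}=2$. The only cosmetic difference is that the paper phrases the parity step as a contradiction (assuming $S_1 \neq \emptyset$ or $S_3 \neq \emptyset$) and splits explicitly into the cases $a<c$ and $c<a$, whereas you state the parity argument directly; the underlying reasoning is the same.
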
 

\begin{proof}
	Let $\{a,b\}\in {H\choose 2}$ be such that $a<b$. Since $\mu_{\pi}(e)$ preserves $\{a,b\}$, it follows that $e$ maps $(a,b,0)$ to $(a,b,m)$ for some $m \in \mZ_4$. Since $e$ preserves the relation $R$, it follows that $e((a,b,n))=(a,b,m+n)$ for all $n \in \mZ_4$. 
	
	
	Let $S_i$, for $i \in \{1,2,3\}$, be the set of those subsets $\{a,b\}$ {of $H$} with $a<b$ for which $(a,b,0) \in Y$ is mapped by $e$ to $(a,b,i) \in Y$. Let $g:=\,\rotate_{S_1}\rotate_{S_2}^2\rotate_{S_3}^3$. Then the above argument shows that $e$ and $g$ have the same action on ${\pi^{-1}({H\choose 2})}$. 

	We show that in fact $S_1=S_3=\emptyset$. Let us assume on the contrary that $S_1$ is nonempty. Let $\{a,b\}\in S_1, a<b$. Since $|H|\geq 3$ there exists some $c\in H$ which is different from $a$ and $b$. We distinguish two cases.
    \begin{itemize}
        \item $a<c$. In this case, $((a,b,0),(a,c,0))\in E$. We have $$(e((a,b,0)),e((a,c,0)))=((a,b,1),(a,c,m)) \in E$$ for some $m\in \mZ_4$. Then $b=(a,c)_m$ (the index is considered modulo $2$), and in particular $b\in \{a,c\}$, which is a contradiction.
        \item $c<a$. In this case, $((a,b,0),(c,a,1))\in E$. We have $$(e((a,b,0)),e((c,a,1)))=((a,b,1),(c,a,m)) \in E$$ for some $m\in \mZ_4$. Again, $b\in \{a,c\}$, a contradiction.
    \end{itemize}
    If $S_3$ is nonempty, we analogously reach a contradiction. 
	Therefore, $S_1=S_3=\emptyset$. By choosing $S=S_2$ we obtain that $e$ and $\rotate_S^2$ agree on $\pi^{-1}({H\choose 2})$.
\end{proof}

\begin{corollary}\label{cor:kernel}
	$\mu_{\pi}^{-1}(\{\id_{\upairs}\}) = K_0 \leq \aut({\fy})$.
\end{corollary}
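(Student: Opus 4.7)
The claim is essentially an immediate consequence of Lemma~\ref{local_flip} applied in the extreme case $H = {\mathbb Q}$, together with a routine verification that the elements of $K_0$ are automorphisms of $\fy$. My plan splits into three short steps.

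First, I would verify $K_0 \leq \aut(\fy)$. For any $S \subseteq \upairs$, the map $\rotate_S^2$ is a permutation of $Y$ which sends $(a,b,m)$ to $(a,b,m+2)$ if $\{a,b\}\in S$ and fixes it otherwise; it is self-inverse since $m+2+2 \equiv m \pmod 4$. It remains to check that $\rotate_S^2$ preserves $R$, $E$, and $N$. For $R$, any two $R$-related elements share the same underlying pair $\{a,b\}$, so either both are shifted by $2$ or both are fixed, which preserves $R$. For $N$, the relation depends only on the underlying pairs, which are unchanged. For $E$, recall that $((a,b,m),(c,d,n)) \in E$ iff $(a,b)_m = (c,d)_n$ and $|\{a,b\}\cap\{c,d\}|=1$; since the indices appearing are modulo $2$, one has $(a,b)_{m+2} = (a,b)_m$, so shifting any coordinate by $2$ leaves $E$ invariant.

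Second, I would verify the inclusion $K_0 \subseteq \mu_\pi^{-1}(\{\id_{\upairs}\})$. For any $S \subseteq \upairs$ and any $(a,b,m) \in Y$, $\rotate_S^2$ changes only the third coordinate, so $\pi(\rotate_S^2(a,b,m)) = \{a,b\} = \pi(a,b,m)$, showing $\mu_\pi(\rotate_S^2) = \id_{\upairs}$.

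Third, for the reverse inclusion $\mu_\pi^{-1}(\{\id_{\upairs}\}) \subseteq K_0$, I would apply Lemma~\ref{local_flip} with $H := {\mathbb Q}$. Indeed, if $e \in \en(\fy)$ satisfies $\mu_\pi(e) = \id_{\upairs}$, then $\mu_\pi(e)$ fixes every element of $\binom{{\mathbb Q}}{2} = \upairs$, and since $|{\mathbb Q}| \geq 3$ the lemma yields $S \subseteq \upairs$ with $e|_{\pi^{-1}(\upairs)} = \rotate_S^2|_{\pi^{-1}(\upairs)}$. As $\pi^{-1}(\upairs) = Y$, this gives $e = \rotate_S^2 \in K_0$.

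I do not anticipate a genuine obstacle here: the first two steps are elementary verifications using the definitions of the relations of $\fy$ and of the action of $\rotate_S^2$, while the third step is a direct invocation of Lemma~\ref{local_flip} with the maximal choice of $H$. The only subtlety worth flagging is making explicit the use of the equality $\pi^{-1}(\upairs) = Y$, which is what allows the local statement of Lemma~\ref{local_flip} to upgrade to a global one.
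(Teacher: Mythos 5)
Your proof is correct and takes essentially the same approach as the paper: the reverse inclusion $\mu_\pi^{-1}(\{\id_{\upairs}\}) \subseteq K_0$ is obtained, exactly as in the paper, by applying Lemma~\ref{local_flip} to $H = \mathbb{Q}$. The paper's proof is terser and leaves the verification $K_0 \leq \aut(\fy)$ implicit; your explicit check in the first step is a reasonable addition, and you rightly note that the shift by $2$ (rather than $1$) is what makes the generators preserve $E$ -- indeed $\rotate_S$ itself fails to be an automorphism of $\fy$.
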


\begin{proof}
	If $g\in K_0$ then it is clear that $\mu_{\pi}(g) =\id_{\upairs}$. 
	Conversely, if $\mu_\pi(g) = \id_{\upairs}$ then by applying Lemma \ref{local_flip} to $H=\mathbb Q$ we obtain that $g\in K_0$.
\end{proof}

\begin{lemma}\label{core2}
	The structure $\fy$ is a model-complete core.
\end{lemma}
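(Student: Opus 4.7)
The plan is to show that $\aut(\fy)$ is dense in $\en(\fy)$ with respect to pointwise convergence, i.e., that any endomorphism $e \in \en(\fy)$ agrees on any prescribed finite set $F \subseteq Y$ with some automorphism. The strategy is to use the finite covering map $\pi \colon \fy \to \john$ from Lemma~\ref{fin_cover} together with the facts that $(\upairs; E, N)$ is already a model-complete core (Lemma~\ref{core1}) and that the kernel of the induced map $\mu_\pi$ on endomorphisms consists of automorphisms of $\fy$ (Corollary~\ref{cor:kernel}).

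First, given $e \in \en(\fy)$ and a finite $F \subseteq Y$, let $H''$ be any finite subset of $\mathbb{Q}$ of size at least $3$ that contains $\bigcup \pi(F)$, and set $H := \binom{H''}{2} \subseteq \upairs$. By Lemma~\ref{fin_cover}, $\mu_\pi(e)$ is an endomorphism of $(\upairs; E, N)$. Since this latter structure is a model-complete core (Lemma~\ref{core1}), there exists $\tilde{\alpha} \in \aut(\john)$ agreeing with $\mu_\pi(e)$ on the finite set $H$. The construction in the proof of Lemma~\ref{fin_cover} (the explicit lift $(a,b,m) \mapsto (\alpha(a),\alpha(b),m)$ or $(\alpha(b),\alpha(a),m+1)$ depending on the order) produces an automorphism $\gamma_0 \in \aut(\fy)$ with $\mu_\pi(\gamma_0) = \tilde{\alpha}$.

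Next, consider the endomorphism $e' := \gamma_0^{-1} \circ e \in \en(\fy)$. By construction, $\mu_\pi(e')$ acts as the identity on every element of $H$. Since $|H''| \geq 3$, Lemma~\ref{local_flip} applies and yields a subset $S \subseteq \upairs$ such that $e'$ agrees with $\rotate_S^2$ on $\pi^{-1}(H)$. By Corollary~\ref{cor:kernel}, $\rotate_S^2 \in K_0 \leq \aut(\fy)$. Therefore $\gamma := \gamma_0 \circ \rotate_S^2$ is an automorphism of $\fy$, and by design it agrees with $e$ on $\pi^{-1}(H) \supseteq F$.

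The main work has already been done in Lemmas~\ref{core1},~\ref{fin_cover}, and~\ref{local_flip} and in Corollary~\ref{cor:kernel}; the present proof is just their assembly. The only point requiring minor care is making sure the finite set $H \subseteq \upairs$ is chosen large enough so that Lemma~\ref{local_flip} can be invoked (in particular, the base set $H''$ must have at least three elements) and so that $\pi^{-1}(H)$ contains the given finite set $F$. Everything else follows mechanically.
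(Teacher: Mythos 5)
Your proof is correct and follows essentially the same route as the paper's: use Lemma~\ref{core1} and the covering map to find an automorphism $\gamma_0$ of $\fy$ whose $\mu_\pi$-image locally agrees with $\mu_\pi(e)$, then apply Lemma~\ref{local_flip} to correct $\gamma_0^{-1}e$ by an element of $K_0$. Your extra care in choosing $H''$ to contain $\bigcup\pi(F)$ and in pointing to the explicit lift from the proof of Lemma~\ref{fin_cover} just makes explicit what the paper states more tersely.
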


\begin{proof}
    Let $e\in \eend(\fy)$, and let $F$ be a finite subset of $Y$. Then we have to show that there exists $\alpha \in \Aut(\fy)$ such that $e$ and $\alpha$ agree on $F$. We can choose a finite subset $H$ of ${\mathbb Q}$ of size at least 3 such that $F\subseteq \pi^{-1}({H\choose 2})$. By Lemma~\ref{fin_cover} it follows that $\mu_{\pi}(e)$ is an endomorphism of $(\upairs;E,N)$. By Lemma \ref{core1}, the structure $(\upairs;E,N)$ is a model-complete core. Therefore, there exists an automorphism $\gamma'$ of $\john$ such that $\gamma'$ and $\mu_{\pi}(e)$ agree on ${H\choose 2}$. Since $\pi$ is a finite covering map, it follows that there exists an automorphism $\gamma$ of $\fy$ such that $\mu_{\pi}(\gamma)=\gamma'$. Then ${\gamma^{-1}e}$ is an endomorphism of $\fy$ so that $\mu_{\pi}({\gamma^{-1}e})$ fixes every element in ${H\choose 2}$. Then by Lemma \ref{local_flip} there exists a permutation $h\in K_0$ such that ${\gamma^{-1}e}$ and $h$ agree on ${\pi^{-1}}({H\choose 2})\supseteq F$. Then $e$ and ${\gamma h}\in \aut(\fy)$ agree on $F$.
\end{proof}

\begin{corollary}\label{mc_core}
	$\fy$ is the model-complete core of $\fx$.
\end{corollary}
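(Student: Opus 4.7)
The plan is to combine the two preceding lemmas with the uniqueness of the model-complete core. By Lemma~\ref{hom_equiv} we already know that $\fx$ and $\fy$ are homomorphically equivalent, and by Lemma~\ref{core2} we know that $\fy$ is itself a model-complete core, so the only thing I need is to invoke the uniqueness of the model-complete core in the $\omega$-categorical setting, which was recalled in the introduction.

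First, I would verify that $\fy$ is $\omega$-categorical: by Remark~\ref{z_inter_q}, $\fy$ is interpretable in $\order$, and since $\order$ is $\omega$-categorical and interpretations preserve $\omega$-categoricity, $\fy$ is $\omega$-categorical. Similarly, $\fx$ is $\omega$-categorical by Lemma~\ref{interpretable}. Hence both structures are countable $\omega$-categorical, and in particular the notion of model-complete core as an $\omega$-categorical structure unique up to isomorphism applies.

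Next, the conclusion: an $\omega$-categorical structure $\fc$ is the model-complete core of $\fx$ precisely when $\fc$ is homomorphically equivalent to $\fx$ and $\fc$ is itself a model-complete core. Lemma~\ref{hom_equiv} gives the first condition and Lemma~\ref{core2} gives the second, so $\fy$ satisfies both and is therefore the model-complete core of $\fx$.

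There is no real obstacle here beyond being careful that we are entitled to speak of \emph{the} model-complete core: this requires the $\omega$-categorical uniqueness result cited in the introduction (via~\cite{Cores-journal,BodHilsMartin-Journal}). The substantive work has already been done in Lemmas~\ref{hom_equiv} and~\ref{core2}; the corollary itself is a one-line consequence once these are in hand.
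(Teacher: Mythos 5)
Your proposal is correct and matches the paper's proof, which likewise cites Lemmas~\ref{hom_equiv} and~\ref{core2} directly; the extra remarks on $\omega$-categoricity and uniqueness are a welcome bit of diligence but not a different argument.
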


\begin{proof}
Follows from Lemma~\ref{hom_equiv} and Lemma~\ref{core2}.
\end{proof}

\subsubsection{Verifying that $\fy$ does not have an interpretation in  equality}

An \emph{involution} (on a set $B$) is a permutation $\alpha$ of $B$ such that $\alpha^2 = \id_B$.

\begin{lemma}\label{invol}
	The set of involutions of $\aut(\fy)$ is $K_0$. In particular, they form an abelian subgroup of $\aut(\fy)$.
\end{lemma}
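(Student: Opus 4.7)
The plan is to verify both inclusions between $K_0$ and the set of involutions of $\aut(\fy)$, and then note that the abelian-group claim is free. The inclusion $K_0 \subseteq \{\text{involutions}\}$ is immediate: each generator $\rotate_S^2$ satisfies $(\rotate_S^2)^2 = \rotate_S^4 = \id$, and $K_0$ was already identified (via $\iota$) with an elementary abelian $2$-group $({\mathbb Z}_2)^{\omega}$, so every element of $K_0$ has order dividing $2$. This same identification settles the abelianness statement once the two inclusions are established.

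For the non-trivial direction, let $\alpha \in \aut(\fy)$ be an involution. By Corollary~\ref{cor:kernel} it suffices to prove $\mu_\pi(\alpha) = \id_{\upairs}$. By Lemma~\ref{fin_cover}, $\mu_\pi(\alpha) \in \aut(\john)$, and by Remark~\ref{rem:Jaut}, $\mu_\pi(\alpha) = \tilde\beta$ for some $\beta \in \sym(\mathbb Q)$. Since $\widetilde{\beta^2} = \tilde\beta^2 = \id$, intersecting $\{\beta^2(a),\beta^2(b_i)\} = \{a,b_i\}$ over several distinct $b_i \neq a$ forces $\beta^2(a)=a$; thus $\beta$ is itself an involution. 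I will derive a contradiction from the assumption $\beta \neq \id$.

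Suppose $\beta(p) = q$ with $p < q$. Since $\tilde\beta(\{p,q\}) = \{p,q\}$, the automorphism $\alpha$ permutes the four-element fibre $\pi^{-1}(\{p,q\}) = \{(p,q,0),\dots,(p,q,3)\}$, preserving the directed $R$-cycle. The automorphism group of a directed $4$-cycle is cyclic of order $4$, so $\alpha$ acts on this fibre as $(p,q,m) \mapsto (p,q,m+k)$ for some $k \in \mZ_4$. The condition $\alpha^2 = \id$ forces $k \in \{0,2\}$, and in either case the ``mark'' $(p,q)_k$ of $\alpha((p,q,0))$ equals $p$. Pick any $c \in \mathbb Q \setminus \{p,q\}$ and let $w$ be the unique element of $\pi^{-1}(\{p,c\})$ of mark $p$, namely $(p,c,0)$ if $p<c$ and $(c,p,1)$ if $c<p$. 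In both cases $((p,q,0),w) \in E$ by the definition of $E$, so $(\alpha((p,q,0)),\alpha(w)) \in E$, and the mark of $\alpha(w)$ equals the mark of $\alpha((p,q,0))$, which is $p$. But $\pi(\alpha(w)) = \{q, \beta(c)\}$ and the mark lies in this pair, forcing $p \in \{q,\beta(c)\}$; since $p \neq q$ this gives $\beta(c) = p$. As $c$ ranges over the infinitely many elements of $\mathbb Q \setminus \{p,q\}$ this contradicts injectivity of $\beta$.

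Hence $\beta = \id$, so $\mu_\pi(\alpha) = \id$ and $\alpha \in K_0$ by Corollary~\ref{cor:kernel}. The main point is the fibre argument: once one recognizes that $\alpha$ must rotate each fibre by $0$ or $2$, and that the mark is preserved along $E$-edges, the injectivity of $\beta$ immediately rules out any non-identity $\beta$. The minor bookkeeping obstacle is the case split on whether $c < p$ or $c > p$ when producing the witness $w$, but both cases are handled uniformly by the mark-preservation observation.
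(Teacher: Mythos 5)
Your proof is correct, and it uses the same underlying mechanism as the paper's (the interaction between the $\mZ_4$-rotation on a fibre and the ``mark-preservation'' imposed by $E$), but it runs the logic in the opposite direction. The paper fixes one $c$, uses the $E$-relation to force the rotation on the fibre over $\{a,b\}$ to be odd, and then observes that an odd rotation cannot square to the identity, contradicting the involution hypothesis. You instead use the involution hypothesis first to force the rotation to be $0$ or $2$ (so the mark of $\alpha((p,q,0))$ is $p$), and then feed that mark through an $E$-edge to deduce $\beta(c)=p$ for every $c\notin\{p,q\}$, contradicting injectivity of $\beta$. Both arguments are sound and of comparable length; your version has the minor advantage of sidestepping the case split on whether $c$ lies above or below $p$ in a slightly cleaner way (via the ``unique element of the fibre of mark $p$''), while the paper's version closes the contradiction with a single $c$ rather than appealing to injectivity. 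One small simplification you could make: from $\beta(c)=p$ for a single $c\notin\{p,q\}$ and $\beta$ an involution you already get $\beta(p)=c\neq q$, contradicting $\beta(p)=q$, so invoking infinitude is unnecessary.
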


\begin{proof}
	It is clear that every element of $K_0$ is an involution. Now let $\gamma$ be an involution of $\fy$. Then either $\mu_{\pi}(\gamma)=\id(\upairs)$ or $\mu_{\pi}(\gamma)$ is {of order 2}. In the former case we know that $\gamma\in K_0$ by Corollary \ref{cor:kernel}. So let us assume that {the latter holds}. We know that the map $\alpha\mapsto \tilde{\alpha}$ defines a group homomorphism between $\sym({\mathbb Q})$ and $\aut(\john)$ (Remark~\ref{rem:Jaut}). This means that there exists some {permutation} $\alpha\in \sym({\mathbb Q})$ of order 2 such that $\tilde{\alpha}=\mu_{\pi}(\gamma)$. {Then we can pick some $a,b\in \mathbb{Q}$ such that $\alpha(a)=b$ and $\alpha(b)=a$. We can assume without loss of generality that $b>a$. Let $c>b$ be arbitrary.} Then $((a,b,0),(a,c,0))\in E$. {Since the permutation $\mu_{\pi}(\gamma)=\tilde{\alpha}$ preserves the set $\{a,b\}$ it follows that}, $\gamma((a,b,0))=(a,b,m)$ for some $m \in {\mathbb Z}_4$. {On the other hand}, $\gamma((a,c,0))=(b,\alpha(c),n)$ or $\gamma((a,c,0))=(\alpha(c),b,n)$ for some $n$ depending on whether $\alpha(c)$ is smaller or bigger than $b$. Since $\gamma\in \aut(\fy)$ it follows that {$((a,b,m),(b,\alpha(c),n))\in E$ or $((a,b,m),(\alpha(c),b,n))\in E$. In either case, 
    $m$ must be odd.} If $\gamma((a,b,0))=(a,b,1)$ then since $\gamma$ preserves the relation $R$ it follows that $\gamma((a,b,1))=(a,b,2)$. Therefore $\gamma^2((a,b,0))=(a,b,2)$, which contradicts our assumption that $\gamma$ is an involution. We arrive at the same contradiction in the case where $\gamma((a,b,0))=(a,b,3)$.
\end{proof}

\begin{lemma}\label{when_inter}
	Let $\fb$ be a structure which is interpretable in  $\atoms$. Then either $\fb$ is finite, or $\fb$ has a subgroup isomorphic to $\sym({\mathbb Q})$.
\end{lemma}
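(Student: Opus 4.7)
The plan is to pass from the interpretation $I \colon \mathbb{Q}^d \to B$ to the induced continuous group homomorphism $h_I \colon \sym(\mathbb{Q}) \to \aut(\fb)$ provided by Remark~\ref{rem:action}, and then to analyze the kernel $K := \ker(h_I)$, which is a normal subgroup of $\sym(\mathbb{Q})$. I will invoke the classical fact that the only normal subgroups of $\sym(\mathbb{Q})$ are $\{1\}$, $\mathrm{Alt}_{\mathrm{fin}}(\mathbb{Q})$, $\mathrm{Sym}_{\mathrm{fin}}(\mathbb{Q})$, and $\sym(\mathbb{Q})$. This immediately gives two cases: either $K = \sym(\mathbb{Q})$, or $K \leq \mathrm{Sym}_{\mathrm{fin}}(\mathbb{Q})$.

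For the first case, I will argue that $\fb$ must be finite. If $h_I$ is trivial, then for every $\alpha \in \sym(\mathbb{Q})$ and every $b = I(\bar{a}) \in B$ we have $I(\alpha \cdot \bar{a}) = \alpha \cdot b = b$, so each fiber of $I$ is a union of $\sym(\mathbb{Q})$-orbits on $\dom(I) \subseteq \mathbb{Q}^d$. Since orbits of $\sym(\mathbb{Q})$ on $\mathbb{Q}^d$ are determined by the equality type of a tuple, there are only finitely many of them, and hence $|B|$ is finite.

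For the second case, the goal is to exhibit a subgroup $G \leq \sym(\mathbb{Q})$ that is isomorphic to $\sym(\mathbb{Q})$ and satisfies $G \cap \mathrm{Sym}_{\mathrm{fin}}(\mathbb{Q}) = \{1\}$; then $h_I|_G$ is injective and $h_I(G) \leq \aut(\fb)$ is the desired copy of $\sym(\mathbb{Q})$. I will construct $G$ by fixing a bijection $\phi \colon \mathbb{Q} \to \mathbb{Q} \times \mathbb{N}$ and letting $G$ be the image under conjugation by $\phi$ of the subgroup of $\sym(\mathbb{Q} \times \mathbb{N})$ acting only on the first coordinate, i.e., elements of the form $(q,n) \mapsto (\beta(q),n)$ for $\beta \in \sym(\mathbb{Q})$. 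Any non-identity element of this subgroup moves some $q \in \mathbb{Q}$, and therefore moves the whole infinite set $\{q\}\times\mathbb{N}$, so it has infinite support; transferring back through $\phi^{-1}$ shows $G \cap \mathrm{Sym}_{\mathrm{fin}}(\mathbb{Q}) = \{1\}$, as required. The only mildly delicate points are invoking the classification of normal subgroups of $\sym(\mathbb{Q})$ and noticing this small trick to produce a copy of $\sym(\mathbb{Q})$ inside itself with trivial finitary intersection; everything else is bookkeeping.
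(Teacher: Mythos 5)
Your proof is correct, but it takes a genuinely different route from the paper's. The paper exploits the \emph{continuity} of $h_I$: the kernel of a continuous homomorphism is a \emph{closed} normal subgroup, and then it invokes the topological version of the Schreier--Ulam theorem (every proper closed normal subgroup of $\sym(\mathbb{Q})$ is trivial) so that only two cases arise — trivial kernel (done immediately) or full kernel (your Case 1 argument, which the paper phrases via the equality-type relation $\same$ on $\mathbb{Q}^d$). You instead use the purely algebraic classification of \emph{all} normal subgroups of $\sym(\mathbb{Q})$, which leaves the extra possibility of a finitary kernel, and then dispose of it by a neat construction: embedding $\sym(\mathbb{Q})$ into itself as the subgroup acting on the first coordinate of $\mathbb{Q}\times\mathbb{N}$ under a fixed bijection, whose nontrivial elements all have infinite support and therefore miss $\mathrm{Sym}_{\mathrm{fin}}(\mathbb{Q})$. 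The paper's proof is shorter and leans on the topological structure that is in any case supplied for free by Remark~\ref{rem:action}; your proof is self-contained at the level of abstract groups, never uses that the homomorphism is continuous, and as a bonus shows that $\aut(\fb)$ contains a copy of $\sym(\mathbb{Q})$ even if $h_I$ fails to be injective — a slightly stronger conclusion in that case. Both arguments are sound.
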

	
\begin{proof}
	Let us fix an interpretation $I$ of $\fb$ in $\atoms$ and let $d$ be its dimension. 
    Let $h_I$ be the continuous group homomorphism $h_I \colon \sym(\mathbb{Q})\rightarrow \aut(\fb)$ from Remark~\ref{rem:action}. The kernel of $h_I$ is a closed normal subgroup of $\sym({\mathbb Q})$. It is a well-known fact that every closed normal subgroup of $\sym({\mathbb Q})$ is trivial~\cite{SchreierUlam}. If the kernel of {$h_I$} equals $\{\id_{{\mathbb Q}}\}$, then $h_I$ is an isomorphism between $\sym({\mathbb Q})$ and a subgroup of $\aut(\fb)$, and we are done. Otherwise, $\kernel(h)=\sym({\mathbb Q})$. For $u,v\in {\mathbb Q}^d$ we write $u\same v$ if the same coordinates of $u$ and $v$ are equal, i.e., $u_i=u_j$ if and only if $v_i=v_j$. Then $\same$ is an equivalence relation with finitely many classes. If $u\same v$ then $\alpha(u)=v$ for some $\alpha\in \sym({\mathbb Q})$. Since $h(\alpha)$ is the identity, we obtain that $I(u) = I(v)$.  This implies that $\fb$ is finite.
\end{proof}

\begin{lemma}\label{not_inter}
	The structure $\fy$ is not interpretable in  $\atoms$.
\end{lemma}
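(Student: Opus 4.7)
The plan is to combine Lemma~\ref{when_inter} with Lemma~\ref{invol}, exploiting the contrast between the abelian structure of involutions in $\aut(\fy)$ and the highly non-abelian behaviour of involutions in $\sym({\mathbb Q})$.

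First I would observe that $\fy$ is clearly infinite. Hence, if $\fy$ were interpretable over $\atoms$, Lemma~\ref{when_inter} would force $\aut(\fy)$ to contain a subgroup $G$ isomorphic to $\sym({\mathbb Q})$. Now inside $\sym({\mathbb Q})$ we can exhibit two involutions that do not commute: for instance the transpositions $\sigma=(a\ b)$ and $\tau=(b\ c)$ for distinct $a,b,c \in {\mathbb Q}$ satisfy $\sigma\tau\neq \tau\sigma$. Transporting these via the group isomorphism to $G\leq \aut(\fy)$, we obtain two non-commuting involutions of $\aut(\fy)$.

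On the other hand, by Lemma~\ref{invol}, every involution of $\aut(\fy)$ lies in $K_0$, which is an abelian group (being isomorphic to $({\mathbb Z}_2)^{\omega}$ via $\iota$). Thus any two involutions of $\aut(\fy)$ must commute, contradicting the existence of non-commuting involutions coming from $G$. This contradiction shows that no such interpretation can exist.

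There is no real obstacle to formalising this: both key ingredients (the abelianness of the involutions, and the group-theoretic consequence of interpretability over $\atoms$) are already established in Lemmas~\ref{invol} and~\ref{when_inter}. The proof thus amounts to a clean two-line application of these lemmas together with the elementary fact that $\sym({\mathbb Q})$ admits non-commuting involutions.
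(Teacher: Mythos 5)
Your proof is correct and follows exactly the same route as the paper: infiniteness of $\fy$, then Lemma~\ref{when_inter} to get a copy of $\sym(\mathbb{Q})$ inside $\aut(\fy)$, which forces non-commuting involutions, contradicting Lemma~\ref{invol}. The only (minor) difference is that you make explicit a pair of non-commuting transpositions, which the paper leaves implicit.
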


\begin{proof}
	Suppose for contradiction that $\fy$ has an interpretation in $\atoms$. Since $\fy$ is infinite, it follows from Lemma~\ref{when_inter} that $\aut(\fy)$ has a subgroup which is isomorphic to $\sym({\mathbb Q})$. In particular, $\aut(\fy)$ contains two involutions that do not commute. This contradicts Lemma \ref{invol}.
\end{proof}

\begin{corollary}\label{not_closed_mc}
	The class of $\I(\atoms)$ is not closed under taking model-complete cores.
\end{corollary}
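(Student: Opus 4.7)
The plan is to simply assemble the counterexample from the pieces already established in this subsection: the structure $\fx$ from Definition~\ref{def:counterexpl} lies in $\I(\atoms)$, while its model-complete core $\fy$ does not. Concretely, I would open the proof by pointing to $\fx$ as the witness and then cite the three key preceding results in order.

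First, I would invoke Lemma~\ref{interpretable} to conclude that $\fx \in \I(\atoms)$, so $\fx$ belongs to the class whose closure under taking model-complete cores is in question. Second, I would apply Corollary~\ref{mc_core}, which identifies $\fy$ as the model-complete core of $\fx$; this uses both the homomorphic equivalence from Lemma~\ref{hom_equiv} and the fact that $\fy$ is itself a model-complete core (Lemma~\ref{core2}). Third, I would appeal to Lemma~\ref{not_inter}, which rules out any interpretation of $\fy$ in $\atoms$ by producing non-commuting involutions in $\aut(\fy)$ that are forbidden by Lemma~\ref{invol}.

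Since $\M(\cdot)$ is defined up to interdefinability with the model-complete core, any structure in $\M(\I(\atoms))$ witnessing the failure of closure must be interdefinable with some $\fy'$ of the above form; however, interdefinability preserves the automorphism group, so no structure interdefinable with $\fy$ can lie in $\I(\atoms)$ either (by the same Lemma~\ref{not_inter} argument, since the obstruction is purely automorphism-group theoretic). This settles the corollary.

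There is essentially no remaining obstacle here: the entire technical difficulty has been absorbed into the preceding lemmas, in particular into the verification that $\fy$ is a model-complete core (which required the finite-cover analysis culminating in Lemma~\ref{local_flip} and Corollary~\ref{cor:kernel}) and into the group-theoretic non-interpretability argument (Lemmas~\ref{invol}, \ref{when_inter}, \ref{not_inter}). The corollary itself is a one-line deduction, and I would write it as such.
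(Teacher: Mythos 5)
Your proposal is correct and follows exactly the same three-step deduction as the paper: cite Lemma~\ref{interpretable} for $\fx\in\I(\atoms)$, Corollary~\ref{mc_core} for $\fy$ being the model-complete core of $\fx$, and Lemma~\ref{not_inter} for $\fy\notin\I(\atoms)$. The extra paragraph on interdefinability preserving automorphism groups is a harmless elaboration but not needed, since the statement only asks for one structure in $\I(\atoms)$ whose model-complete core escapes the class.
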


\begin{proof}
	By Lemma \ref{interpretable} the structure $\fx$ is interpretable in  $\atoms$. By Corollary \ref{mc_core} the model-complete core of $\fx$ is $\fy$, and by Lemma \ref{not_inter} the structure $\fy$ is not interpretable in  $\atoms$.
\end{proof}


    \begin{remark} 
    We mention without proof that both of the structures $\fx$ and $\fy$ in this subsection are finitely homogenizable 
    and thus do not give an answer to Question~\ref{quest:fin-hom} stated in the introduction. 
\end{remark}

\subsection{Structures interpretable {in} $({\mathbb Q};<)$}
\label{sect:inter_q}
In this subsection we show that the class of structures with an interpretation in $({\mathbb Q};<)$ is not closed under taking model companions. 
We start with a general observation about structures in $\I((\mathbb{Q};<))$ (stated in Lemma~\ref{nice_reducts}) which essentially allows us to get rid of taking quotients in the definition of interpretations, i.e., equality is always interpreted as equality of tuples with some additional finite information stored in unary relations.

\begin{notation}\label{not:qup}
	{For $d \in {\mathbb N}$,}  we define $\qup{d}\coloneqq \{(a_1,\dots,a_d)\in \mathbb{Q}^d: a_1<\dots<a_d\}$. 
	Let {$e \in {\mathbb N}$,} $R\in \{=,\neq,<,>\}$ and $\bar{a}=(a_1,\dots,a_d){\in \mathbb{Q}^d},\bar{b}=(b_1,\dots,b_e)\in \mathbb{Q}^e$. For $i,j \in \mathbb{N}$, we write $\bar{a}R_{ij}\bar{b}$ if $i\leq d$, $j\leq e$, and $a_i R b_j$. We write $R_i$ for $R_{ii}$. We denote by $(\bar{a}R\bar{b})$ the set of all coordinates $i$ such that $\bar{a}R_i\bar{b}$ holds.
	For $a,b\in \mathbb{Q}$ we write $[[a,b]]$ for the interval $[\min(a,b), \max(a,b)]$.
	For $A,B\subseteq \mathbb{Q}$ we write $A<B$ if $a<b$ for all $a\in A$ and $b\in B$.
	For a tuple $\bar{a} \in {\mathbb Q}^d$ we denote by $\pi_i(\bar{a})$ its $i$-th coordinate.
    For $d\in \mathbb{N}$ we write $\johnord{d}$ for the structure whose domain is $\qup{d}$ and whose relations are $<_{ij}$ and $=_{ij}$ for $i,j\in [d]$.
    	 
	For a finite sequence $\bar{d}=(d_1,\dots,d_{\ell})\in {\mathbb N}^{\ell}$ 
    we denote by $\johnord{\bar{d}}$ the structure whose domain is $\bigcup_{i\in [\ell]}{\{i\}\times \qup{d_i}}$ and whose relations are  
    \begin{itemize}
        \item for every $i\in [\ell]$ the unary relation $\pi_1^{-1}(i)$, and 
        \item for all $i,j\in \{1,\dots,\max(d_1,\dots,d_{\ell})\}$ the binary relations 
\begin{align*}
<^*_{ij}\, \coloneqq& \bigcup_{1\leq n,m\leq \ell}\{((n,\bar{a}),(m,\bar{b})): \bar{a}<_{ij}\bar{b}\} \\
\text{ and } \quad =^*_{ij}\, \coloneqq& \bigcup_{1\leq n,m\leq \ell}\{((n,\bar{a}),(m,\bar{b})): \bar{a}=_{ij}\bar{b}\}.
\end{align*}
    \end{itemize}
\end{notation}
{Note that by choosing $d_1=\cdots=d_{\ell}=0$, we may obtain every finite structure as a first-order reduct of $\johnord{\bar{d}}$.}

\begin{proposition}\label{johnhom}
	The structure $\johnord{\bar{d}}$ is homogeneous, finitely bounded, has a binary signature, is Ramsey, and NIP.
\end{proposition}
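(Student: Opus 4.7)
My plan is to establish the five properties in turn. The binary-signature claim is immediate from the definition. For homogeneity, I observe that the atomic data carried by an element $(n, \bar{a}) \in \johnord{\bar{d}}$ are its sort $n$ together with its coordinates $\bar{a} \in \qup{d_n}$, and that the relations $<^*_{ij}$ and $=^*_{ij}$ together with the unary sort predicates record precisely the sort of each element and the ordered-equality pattern among all coordinates as points of $(\mathbb{Q};<)$. An isomorphism $\phi$ between two finite substructures therefore induces a well-defined (by $=^*_{ij}$-preservation) and order-preserving (by $<^*_{ij}$-preservation) injection $\sigma$ on the union of their coordinate sets in $\mathbb{Q}$; extending $\sigma$ to $\alpha \in \aut(\mathbb{Q};<)$ and applying $\alpha$ coordinate-wise on each sort yields an automorphism of $\johnord{\bar{d}}$ extending $\phi$. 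For finite boundedness, the forbidden substructures are the bounded-size local obstructions to the existence of such a coordinate realisation: incorrect sort labelling; failure of the mandatory on-diagonal relations $=^*_{ii}(x,x)$ and $<^*_{i,i+1}(x,x)$ at valid indices $i$; violations of the trichotomy at each valid index pair $(i,j)$ (exactly one of $<^*_{ij}(x,y)$, $=^*_{ij}(x,y)$, $<^*_{ji}(y,x)$ must hold); and transitivity failures on three elements with three indices. All such patterns have size at most three.

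For the Ramsey property, the coordinate-wise diagonal action of $\aut(\mathbb{Q};<)$ on $\bigcup_i \{i\} \times \qup{d_i}$ yields a continuous group homomorphism $h \colon \aut(\mathbb{Q};<) \to \aut(\johnord{\bar{d}})$. If all $d_i = 0$ the structure is finite and the claim is trivial, so I assume some $d_i \geq 1$. Injectivity of $h$ is immediate; for surjectivity, given $\psi \in \aut(\johnord{\bar{d}})$ I extract an $\alpha \in \aut(\mathbb{Q};<)$ from the action of $\psi$ on the coordinates of one sort with $d_i \geq 1$, and then use the cross-sort relations $<^*_{ij}$ and $=^*_{ij}$ to verify that $\psi = h(\alpha)$ on every other sort. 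Continuity of $h^{-1}$ follows since $h^{-1}(\psi)$ is determined by the action of $\psi$ on finitely many elements of $\johnord{\bar{d}}$. Hence $h$ is an isomorphism of topological groups, and since the Ramsey property of an $\omega$-categorical structure depends only on its automorphism group as a topological group, and $(\mathbb{Q};<)$ is Ramsey, so is $\johnord{\bar{d}}$.

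For NIP, $\johnord{\bar{d}}$ is first-order interpretable in $(\mathbb{Q};<)$: setting $D = \max_i d_i$ and using auxiliary constants $q_1,\dots,q_\ell$ to tag the sorts together with a further constant for padding, an element $(n,\bar{a})$ can be represented as a suitable $(D+1)$-tuple whose sort-tag appears in a designated position, and all relations of $\johnord{\bar{d}}$ become quantifier-free definable in $(\mathbb{Q};<,q_1,\dots,q_\ell)$; by Lemma~\ref{q_add_constants} the latter structure is interpretable in $(\mathbb{Q};<)$. Since $(\mathbb{Q};<)$ is NIP and NIP is preserved under interpretations, $\johnord{\bar{d}}$ is NIP. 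I expect the main obstacle to be the verification in the Ramsey step that the cross-sort relations $<^*_{ij}$ with $i \neq j$ genuinely pin down a single consistent element of $\aut(\mathbb{Q};<)$ from the action of an arbitrary $\psi \in \aut(\johnord{\bar{d}})$.
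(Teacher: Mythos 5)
Your proof is correct and follows essentially the same route as the paper's: a direct combinatorial argument for homogeneity and finite boundedness (which the paper defers to Theorem 19 of \cite{bodirsky2025structures}), the topological-group-isomorphism argument for the Ramsey property, and interpretability in $(\mathbb{Q};<)$ for NIP. You are somewhat more careful than the paper in spelling out why the natural action map is a topological group isomorphism (injectivity, the degenerate all-$d_i=0$ case, continuity of the inverse), which the paper leaves implicit.
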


\begin{proof}[Proof (sketch)]
	The proof of homogeneity and finite boundedness is essentially the same as the proof of Theorem 19 in~\cite{bodirsky2025structures}. 
    The natural action of $\aut(\mathbb{Q};<)$ on $\dom(\johnord{\bar{d}})$ is continuous and surjective to $\aut(\johnord{\bar{d}})$, {and it is injective unless $d_1=\cdots=d_{\ell}=0$, that is $\dom(\johnord{\bar{d}})$ is finite.} Therefore, {either $\johnord{\bar{d}}$ is finite, or} $\aut(\mathbb{Q}{;<})$ and $\aut(\johnord{\bar{d}})$ are isomorphic as topological groups, and thus $\johnord{\bar{d}}$ is Ramsey.
    NIP is clearly preserved by interpretability as well.
\end{proof}

\begin{lemma}\label{nice_reducts}
    Let $\fa$ be a structure with a $d$-dimensional interpretation $I$ in $({\mathbb Q};<)$. Then there exists some $\ell\in \mathbb{N}$ and $\bar{d}\in {\mathbb N}^{\ell}$ such that $\fa$ is isomorphic to a first-order reduct of $\johnord{\bar{d}}$.
\end{lemma}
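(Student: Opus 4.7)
The strategy is to realise $A$ as a disjoint union of $\aut(\mathbb{Q};<)$-orbits, each equivariantly isomorphic to some $\qup{e_j}$, and then transport the relations of $\fa$ along the resulting bijection, recognising them as first-order definable in $\johnord{\bar{e}}$ by $\omega$-categoricity. The main tool is the continuous group homomorphism $h_I \colon \aut(\mathbb{Q};<) \to \aut(\fa)$ of Remark~\ref{rem:action}, which turns $A$ into an $\aut(\mathbb{Q};<)$-set.

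First, the $h_I$-action of $\aut(\mathbb{Q};<)$ on $A$ has only finitely many orbits $A_1, \dots, A_m$: since the map $I \colon \dom(I) \to A$ is $\aut(\mathbb{Q};<)$-equivariant, each $A_j$ is the image of some $\aut(\mathbb{Q};<)$-orbit of $\dom(I) \subseteq \mathbb{Q}^d$, and $\mathbb{Q}^d$ has only finitely many orbits by $\omega$-categoricity of $(\mathbb{Q};<)$. For each $j$, pick $a_j \in A_j$ and consider $K_j := h_I^{-1}(\aut(\fa)_{a_j})$, which is open in $\aut(\mathbb{Q};<)$ because $h_I$ is continuous and $\aut(\fa)_{a_j}$ is open.

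The key technical ingredient is the classical fact that every open subgroup of $\aut(\mathbb{Q};<)$ is the pointwise stabiliser of some finite subset of $\mathbb{Q}$; I expect this to be the main obstacle to spell out in detail. Granting it, $K_j$ is the pointwise stabiliser of some finite $F_j \subset \mathbb{Q}$, and the assignment $\alpha K_j \mapsto \alpha(F_j)$ (viewing $F_j$ via its increasing enumeration) yields an $\aut(\mathbb{Q};<)$-equivariant bijection $\qup{e_j} \to A_j$ with $e_j := |F_j|$. Setting $\bar{e} := (e_1, \dots, e_m)$ and gluing these over $j$, we obtain an $\aut(\mathbb{Q};<)$-equivariant bijection $\psi \colon \dom(\johnord{\bar{e}}) \to A$.

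To conclude, for any relation $R$ of $\fa$ the pull-back $\psi^{-1}(R)$ is invariant under the natural action of $\aut(\mathbb{Q};<)$ on $\dom(\johnord{\bar{e}})$, since $\psi$ is equivariant and $R$ is preserved by $\aut(\fa) \supseteq h_I(\aut(\mathbb{Q};<))$. As noted in the proof of Proposition~\ref{johnhom}, the natural action surjects onto $\aut(\johnord{\bar{e}})$, so $\psi^{-1}(R)$ is even $\aut(\johnord{\bar{e}})$-invariant, and hence first-order definable by the $\omega$-categoricity of $\johnord{\bar{e}}$. Therefore $\fa$ is isomorphic, via $\psi$, to a first-order reduct of $\johnord{\bar{e}}$.
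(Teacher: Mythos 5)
Your proof is correct, and it takes a genuinely different --- more abstract, group-theoretic --- route than the paper's. You work directly with the $\aut(\mathbb{Q};<)$-set structure of $A$ via $h_I$ and classify the open point-stabilisers, whereas the paper works combinatorially: it decomposes $\dom(I)\subseteq\mathbb{Q}^d$ into its orbits $O_1,\dots,O_\ell$, identifies each $O_i$ with $\qup{d_i}$ via the map $\bar u\mapsto\bar u^*$ listing the distinct entries in increasing order, and then quotients by the equivalence relation induced by $I$, invoking Corollary~25 of~\cite{bodirsky2025structures} to show that any $\aut(\qup{d_i})$-invariant equivalence relation is of the form $\{(\bar u,\bar v):S\subseteq(\bar u=\bar v)\}$ for some $S\subseteq[d_i]$. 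The ``classical fact'' you flag --- that every open subgroup of $\aut(\mathbb{Q};<)$ equals the pointwise stabiliser of a finite set --- is essentially the same fact as the cited corollary, merely repackaged: blocks of $\aut(\mathbb{Q};<)$ acting on $\qup{k}$ correspond to subgroups between $\aut(\mathbb{Q};<)_{(F)}$ and $\aut(\mathbb{Q};<)$, and your statement is the translation of the corollary into the language of subgroups. It does hold, as a consequence of elimination of imaginaries for $(\mathbb{Q};<)$ together with the facts that $\acl$ is trivial and that setwise and pointwise stabilisers of finite sets coincide in $\aut(\mathbb{Q};<)$; so this is not a gap, but since your write-up leans on it without proof or citation, you should either cite it or observe that it is equivalent to the corollary the paper uses. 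Your route has the advantage of bypassing the explicit intermediate bijections $J_i'$ and the quotienting step, directly realising each orbit $A_j$ as $\qup{e_j}$; the paper's route has the advantage of giving explicit formulas, which is in the spirit of the rest of that section.
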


\begin{proof}
	Let $O_1,\dots,O_{\ell}$ be the orbits of the full power $\mathbb{Q}^{[d]}$ which are contained in $\dom(I)$, and pick some elements $\bar{a}^i=(a_1^i,\dots,a_d^i)\in {O_i}$ for $i \in [\ell]$. Observe that for all $i,j\in [\ell]$ either $I(O_i)\cap I(O_j) =  \emptyset$ or $I(O_i)=I(O_j)$. If the latter holds, then by restricting $I$ to $\dom(I)\setminus O_j$ we still get an interpretation of $\fa$. By iterating this process we can assume without loss of generality that the sets $I(O_i)$, for $i\in [\ell]$, are pairwise disjoint.

    For $\bar{u} \in O_i$, let $d_i \in [d]$ be the number of distinct entries of some (equivalently: every) tuple in $O_i$, and write $\bar{u}^*$ for the tuple in $\mathbb{Q}$ which lists all entries of $\bar{u}$ in increasing order, i.e., $\bar{u}^*=(u^*_1,\dots,u_{d_i}^*)$ where 
    $u_1^*<\dots<u_{d_i}^*$ and $\{u_1^*,\dots,u_{d_i}^*\}=\{u_1,\dots,u_d\}$.
	Let $J_i'$ be the function that maps each tuple $\bar{u}\in O_i$ to $(i,\bar{u}^*)$. Note that in this case $J_i'$ defines a bijection from $O_i$ to $\qup{d_i}$. Moreover, $J_i'$ induces an isomorphism between $\aut(\mathbb{Q}^{[d]})|_{O_i}$ and $\aut(\johnord{d_i})$, because both of these groups consist of induced actions of automorphisms of $\mathbb{Q}$. Now let
\[
E_i\coloneqq \{(\bar{u},\bar{v})\in (\qup{d_i})^2: I((J_i')^{-1}(\bar{u}))=I((J_i')^{-1}(\bar{v}))\}.
\]
	Then $E_i$ is an $\aut(\qup{d})$-invariant equivalence relation on $\qup{d_i}$. By Corollary 25 in~\cite{bodirsky2025structures} we know that in this case there exists some $S_i\subseteq [{d_i}]$ such that $E_i=\{(\bar{u},\bar{v}): S_i \subseteq (\bar{u}=\bar{v})\}$. Let $e_i \coloneqq {|S_i|}$. For a tuple $\bar{u}\in \qup{d_i}$ let us write $\bar{u}/E_i$ for the tuple which lists all elements of $\{\pi_j(\bar{u}): j{\in} S_i\}$ in increasing order. Note that in this case $\{\bar{u}/E_i: \bar{u}\in \qup{d_i}\}=\qup{e_i}$. Finally, we define $J_i \colon \bar{u}\mapsto J_i'(\bar{u})/E_i$, and $J\coloneqq\bigcup_{i\in [\ell]} J_i$. By definition,  $J_i(\bar{u})=J_i(\bar{v})$ holds if and only if $I(\bar{u})=I(\bar{v})$. Therefore, $J\circ I^{-1}$ is well-defined and it defines a bijection from $A$ to the image of $J$, which is $\bigcup_{i=1}^{\ell}({\{i\}\times \qup{e(i)}})$. Moreover, the $J$-image of every relation of the full product $(\mathbb{Q},<)^{[d]}$ is definable from the relations $\pi^{-1}(i)$, $<_{ij}^*$, and $=_{ij}^*$. This implies that for every relation $R$ of $\fa$ the relation $(J\circ I^{-1})(R)$ is definable in 
    $\johnord{\bar{{e}}}$, where ${\bar e} := ({e_1,\dots,e_{\ell}})$. 
    Hence, $\fa$ is isomorphic to a first-order reduct of $\johnord{{\bar{e}}}$, which finishes the proof of the lemma.
\end{proof}

\begin{remark}\label{rem:ramsey-exp}
	Lemma~\ref{nice_reducts} together with Proposition~\ref{johnhom} provide an alternative proof of Corollary~\ref{reduct_hom}: 
    every structure in $\I(({\mathbb Q};<))$ has a homogeneous finitely bounded binary expansion which is Ramsey and NIP.
\end{remark} 

	\begin{remark} 
    A crucial property of $(\mathbb{Q};<)$ that makes the previous proof work is the fact that for every imaginary element $\bar{a}/\theta$ there exists a formula $\varphi(\bar{x},\bar{y})$ such that there exists a unique tuple $\bar{b}\in \mathbb{Q}^d$ such that $\varphi(\bar{c},\bar{b})\Leftrightarrow \bar{c}/\theta=\bar{a}/\theta$. This property is called \emph{elimination of imaginaries}~\cite{HodgesLong}. Indeed, in our setting $\bar{b}$ is defined to be the unique element in $\qup{d}$ which lists all elements that appear in all representations of $\bar{a}/\theta$. Notably, the proof above does not work for $\atoms$, 
    essentially because any finite subset of $\mathbb{Q}$ might have multiple representations which cannot be distinguished in a definable way.
    \end{remark}

\subsubsection{Model-complete cores of structures interpretable in $({\mathbb Q};<)$}\label{sect:in_q}
	In this subsection we show that $(\mathbb{Q};<)$ with the generic bipartition, the generic permutation, and the dense local order  are model companions of  structures
    interpretable in $(\mathbb{Q};<)$.

\begin{notation}\label{not:S2}
	We write 
    \begin{itemize}
        \item $(\mathbb{Q};<)*(\mathbb{Q};<)$ for the \emph{generic permutation}, i.e., the Fra\"{i}ss\'{e} limit of the class of all finite $\{<_1,<_2\}$-structures $\fa$ where both $<_1^{\fa}$ and $<_2^{\fa}$ are strict linear orders.
	\item $(\mathbb{Q};<,S,T)$ for \emph{$(\mathbb{Q};<)$ with a generic  partition}, i.e., the Fra\"{i}ss\'{e} limit of the class of all finite $\{<,S,T\}$-structures $\fa$ where $<^{\fa}$ is a strict linear order, and for each $a\in A$ exactly one of $a\in S^{\fa}$ and $a\in T^{\fa}$ holds.
	\item  $S(2)$ for the \emph{dense local order}, which is defined as follows. The domain set of $S(2)$ is a countable dense subset $\cc$ of a unit circle which does not contain opposite, (i.e., antipodal) points. Then $S(2) :=(\cc;\prec)$ where $a\prec b$ if the arc from $a$ to $b$ going in the positive direction is shorter than $\pi$. We know that these properties characterize $S(2)$ uniquely up to isomorphism, and that $S(2)$ is homogeneous~\cite{CameronOrbits,MacphersonSurvey}. 
    \end{itemize}
\end{notation}

	We first show that 
    $({\mathbb Q};<) * ({\mathbb Q}; <)$, $({\mathbb Q};<,S,T)$, and $S(2)$
    are model companions of structures in $\I((\mathbb{Q};<))$.
    


\begin{lemma}\label{companion_perm}
	For $i=1,2$ let $\prec_i$ denote the relation $<_i \cup \, (=_i\cap <_{3-i})$ on $\mathbb{Q}^2$. Then $(\mathbb{Q};<)*(\mathbb{Q};<)$ is the model companion of $(\mathbb{Q}^2;\prec_1,\prec_2)$.
\end{lemma}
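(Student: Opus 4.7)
The plan is to verify the two defining conditions of a model companion: that $(\mathbb{Q};<)*(\mathbb{Q};<)$ is model-complete, and that it has the same universal consequences as $(\mathbb{Q}^2;\prec_1,\prec_2)$. Unwrapping the definition, $(a_1,a_2) \prec_i (b_1,b_2)$ holds precisely when either $a_i < b_i$, or $a_i = b_i$ and $a_{3-i} < b_{3-i}$; hence each $\prec_i$ is the lexicographic order on $\mathbb{Q}^2$ with the $i$-th coordinate as primary, and in particular a strict linear order. Model-completeness of $(\mathbb{Q};<)*(\mathbb{Q};<)$ is standard: as the Fra\"iss\'e limit of a Fra\"iss\'e class in a finite relational signature it is homogeneous, and hence has quantifier elimination.

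The main step is to show that the age of $(\mathbb{Q}^2;\prec_1,\prec_2)$ coincides with the class $\mathcal{K}$ of all finite $\{<_1,<_2\}$-structures in which both $<_1$ and $<_2$ are strict linear orders; since $\mathcal{K}$ is the age of $(\mathbb{Q};<)*(\mathbb{Q};<)$, this yields equality of the universal theories. One direction is immediate from the previous paragraph. For the other direction, given $\fa \in \mathcal{K}$ with underlying set $\{a_1,\dots,a_n\}$, enumerate the elements so that $a_1 <_1^{\fa} \dots <_1^{\fa} a_n$, and let $\sigma$ be the permutation of $[n]$ with $a_{\sigma(1)} <_2^{\fa} \dots <_2^{\fa} a_{\sigma(n)}$. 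Then the map $a_i \mapsto (i, \sigma^{-1}(i)) \in \mathbb{Q}^2$ embeds $\fa$ into $(\mathbb{Q}^2;\prec_1,\prec_2)$: the image points are pairwise distinct in each coordinate slot, so $\prec_1$ and $\prec_2$ reduce to componentwise comparison, which matches $<_1^{\fa}$ and $<_2^{\fa}$ by construction.

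Combining the two steps gives the claim. The proof is largely routine; the only step requiring any thought is the embedding in the middle paragraph, and even there the construction is essentially forced, since one needs an image whose two coordinates are independently generic so that the tie-breaking clauses in the definition of $\prec_1$ and $\prec_2$ are never activated.
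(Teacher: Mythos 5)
Your proof is correct and takes essentially the same route as the paper: reduce to homogeneity of the Fra\"iss\'e limit and then show the two structures have the same age, with the embedding constructed by choosing a pair of rational coordinates that independently realize the two orders (so that the tie-breaking clauses in $\prec_1,\prec_2$ are never triggered). The paper picks the coordinates as arbitrary rationals rather than integers and permutation indices, but the idea is identical.
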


\begin{proof}
	The structure $(\mathbb{Q};<)*(\mathbb{Q};<)$ is  by definition homogeneous; thus, it is enough to show that $(\mathbb{Q};<)*(\mathbb{Q};<)$ and $(\mathbb{Q}^2;\prec_1,\prec_2)$ have the same age, i.e., for a finite structure $\fa = (A;\prec_1,\prec_2)$, we have $\fa \in \age(\mathbb{Q}^2;\prec_1,\prec_2)$ if and only if $\prec_1$ and $\prec_2$ are linear orders on $A$. The `only if' direction is clear. For the other direction let us assume that $A=\{c_1,\dots,c_n\}$, and that $\prec_1$ and $\prec_2$ are linear orders on $A$. Choose rational numbers $a_1,\dots,a_n,b_1,\dots,b_n$ such that $a_i<a_j$ iff $c_i\prec_1c_j$ and $b_i<b_j$ iff $c_i\prec_2c_j$. Then $c_i\mapsto (a_i,b_i)$ is an embedding from $(A;\prec_1,\prec_2)$ into $(\mathbb{Q}^2;<_1,<_2)$, and thus also into $(\mathbb{Q}^2;\prec_1,\prec_2)$.
\end{proof}

\begin{lemma}\label{mc_core_part}
	$(\mathbb{Q};<,S,T)$ is the model companion of the $\{<,S,T\}$-structure $\fa$ whose domain is  $A=\mathbb{Q}\times [2]$, and
\begin{itemize}
\item $<^{\fa}=\{((a,i),(b,j)): a<b\vee (a=b\wedge i<j) \}$, 
\item $S^{\fa}=\pi_2^{-1}(1)$, and 
\item $T^{\fa}=\pi_2^{-1}(2)$.
\end{itemize}
\end{lemma}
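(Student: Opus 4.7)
The plan is to adapt the argument of Lemma~\ref{companion_perm} essentially verbatim. Since $(\mathbb{Q};<,S,T)$ is by definition a Fra\"{i}ss\'{e} limit, it is homogeneous, so its theory has quantifier elimination and is in particular model-complete. It thus suffices to show that $(\mathbb{Q};<,S,T)$ and $\fa$ have the same age: as both signatures are relational, having the same age implies having the same universal consequences, so $(\mathbb{Q};<,S,T)$ is then a companion of $\fa$ and, being model-complete, is the model companion.

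The age of $(\mathbb{Q};<,S,T)$ is by construction the class of finite $\{<,S,T\}$-structures $\fc$ such that $<^{\fc}$ is a strict linear order on $C$ and $(S^{\fc},T^{\fc})$ is a partition of $C$. Every finite substructure of $\fa$ is of this form, since $<^{\fa}$ is the lexicographic order on $\mathbb{Q}\times[2]$ and $(S^{\fa},T^{\fa})$ partitions $A$; this gives one inclusion for free.

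For the reverse inclusion, given a finite structure $\fc$ as above, enumerated so that $c_1<^{\fc}\cdots<^{\fc}c_n$, I would pick rationals $a_1<\cdots<a_n$ and define $f \colon C\to A$ by $f(c_i):=(a_i,1)$ if $c_i\in S^{\fc}$ and $f(c_i):=(a_i,2)$ if $c_i\in T^{\fc}$. The images have pairwise distinct first coordinates, so the definition of $<^{\fa}$ yields $f(c_i)<^{\fa}f(c_j)$ exactly when $a_i<a_j$, that is, when $c_i<^{\fc}c_j$; preservation of the unary predicates is immediate from the construction. Hence $f$ embeds $\fc$ into $\fa$, completing the identification of the two ages.

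The argument is essentially routine, and there is no substantive obstacle: everything reduces to the observation that in the lexicographic order on $\mathbb{Q}\times[2]$, differing first coordinates already determine the order, so the second coordinate can be chosen freely to specify membership in $S^{\fa}$ or $T^{\fa}$.
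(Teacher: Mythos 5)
Your proof is correct, and it deliberately mirrors the age-equality argument of Lemma~\ref{companion_perm}. The paper's own proof of this lemma takes a slightly different (though closely related) route: it constructs an explicit \emph{global} embedding of $(\mathbb{Q};<,S,T)$ into $\fa$ by sending $r\in S$ to $(r,1)$ and $r\in T$ to $(r,2)$, and obtains the reverse embedding from $\age(\fa)\subseteq\age(\mathbb{Q};<,S,T)$ via universality of the Fra\"{i}ss\'{e} limit, i.e., it shows bi-embeddability rather than age equality. Both paths establish that the structures have the same universal consequences and therefore are companions; your version avoids constructing a global embedding in either direction and reasons purely at the level of finite substructures, which is a perfectly sound and arguably cleaner variant. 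One very minor point: when you say ``having the same age implies having the same universal consequences,'' it is worth remembering that this relies on the signature being relational (so that finite subsets are substructures), which you do flag — so no gap.
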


\begin{proof}
	We know that $(\mathbb{Q};<,S,T)$ is homogeneous, and hence model complete. We have to show that $(\mathbb{Q};<,S,T)$ and $\fa$ are bi-embeddable. Let us consider the map \[e\colon \mathbb{Q}\mapsto \mathbb{Q}\times [2], \text{ such that} r\mapsto
\begin{cases*}
(r,1) & if $r\in S$,\\
(r,2) & if $r\in T$.
\end{cases*}
\]
	This embeds $(\mathbb{Q};<,S,T)$ into $\fa$. The existence of an embedding in the other direction follows from the fact that $\age(\fa)\subseteq \age(\mathbb{Q};<,S,T)$.
\end{proof}

	Note that $(\mathbb{Q};<)$ interprets the structure $\fa$ with domain $A = \mathbb{Q}\times [2]$ from Lemma~\ref{mc_core_part}.
The following lemma establishes a connection between $(\mathbb{Q};<,S,T)$ and $S(2)$.
	
\begin{lemma}\label{s2_part}
	Let $c\in \cc$ be arbitrary. Then $(S(2);c)$ interprets $(\mathbb{Q};<,S,T)$, and $S(2)$ is isomorphic to a quantifier-free reduct of $(\mathbb{Q};<,S,T)$.
\end{lemma}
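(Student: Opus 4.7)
The plan is to exhibit explicit defining formulas in both directions and to identify the resulting structures by using the $\omega$-categorical uniqueness of the dense local order and of the generic bipartition.

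For the quantifier-free reduct claim, I would define on $(\mathbb{Q}; <, S, T)$ a binary relation $\prec$ by declaring that $x\prec y$ holds iff $x\neq y$ and $(x<y)\leftrightarrow (S(x)\leftrightarrow S(y))$; equivalently, $x\prec y$ iff either $x,y$ are in the same part and $x<y$, or they are in different parts and $y<x$. To identify the resulting reduct with $S(2)$, I would construct an explicit embedding of $(\mathbb{Q};\prec)$ into a circle model: by Cantor's back-and-forth pick an order-preserving bijection $g\colon \mathbb{Q}\to A$ onto a countable dense subset $A\subseteq (0,\pi)$ whose complement in $(0,\pi)$ is also dense, and send $q\in S$ to $e^{ig(q)}$ and $q\in T$ to $e^{i(g(q)+\pi)}$. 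The image is a countable dense subset of the unit circle with no antipodal pair (by injectivity of $g$ and the colour split), and a short case analysis on the colours of $q_1,q_2$ shows that $\prec$ corresponds to the condition that the counterclockwise arc from the image of $q_1$ to the image of $q_2$ has length less than $\pi$; the homogeneity of $S(2)$ then gives the isomorphism.

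For the interpretation, I would use the one-dimensional interpretation of $(\mathbb{Q};<,S,T)$ in $(S(2);c)$ with domain $\mathbb{S}\setminus\{c\}$, defining $S(x)$ as $c\prec x$, $T(x)$ as $x\prec c$, and declaring $x<y$ iff either ($x\prec y$ and $S(x)\leftrightarrow S(y)$) or ($y\prec x$ and $S(x)\not\leftrightarrow S(y)$); this is the same formula as in the reduct claim with $<$ and $\prec$ swapped. The goal is to show that this $<$ is a dense linear order without endpoints in which both $S$ and $T$ are dense, which by the $\omega$-categoricity of the generic bipartition forces the interpreted structure to be $(\mathbb{Q};<,S,T)$. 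Linearity of $\prec$ on the out-neighborhood $S$ and the in-neighborhood $T$ of $c$ (the defining local-order property of $S(2)$) makes $<$ transitive within each colour class, while the mixed-colour cases are cleanest in the circle picture, where reparameterizing $\mathbb{S}\setminus\{c\}$ by $(0,\pi)$ via $\theta\mapsto\theta$ on $S$ and $\theta\mapsto\theta-\pi$ on $T$ identifies $<$ with the usual order on the angle modulo $\pi$. Density of $\mathbb{S}$ on the circle together with the no-antipode property then yield density of $<$, absence of endpoints, and dense-codensity of $S,T$.

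The main obstacle is choosing the \emph{right} linear order on $\mathbb{S}\setminus\{c\}$. The naive candidates (the two cyclic orders on $\mathbb{S}$ starting from $c$ or from the antipode of $c$) place one semicircle entirely above the other and hence cannot produce a generic bipartition. The ``twisted'' $<$ defined above correctly glues the two semicircles along the angle modulo $\pi$, which is precisely what produces the densely interleaved pattern of $S$ and $T$; verifying transitivity of this $<$ in the four mixed-colour subcases, ideally via the angle picture, is one of the main technical verifications.
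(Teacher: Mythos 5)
Your proposal is correct and takes essentially the same route as the paper: you define the same ``twisted'' order $<$ (respectively $\prec$) that agrees with $\prec$ (respectively $<$) on same-colour pairs and reverses it on different-colour pairs, and you identify the resulting structure via its axiomatic characterization ($\omega$-categoricity of the generic bipartition on one side, the dense-with-no-antipodes characterization plus homogeneity of $S(2)$ on the other). The only difference is stylistic: where the paper verifies the claims with a brief verbal description (``identify each point in $T$ with its opposite point and use the order on the half-circle from $c$ to $-c$''), you make the same picture explicit via an angle parameterization and a concrete back-and-forth embedding, which is a perfectly acceptable way to carry out the same verification.
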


\begin{proof}
	Let us consider the structure $(\cc\setminus \{c\};<,S,T)$ where $S=\{a: c\prec a\}$, $T=\{a: c\succ a\}$ and 
\begin{align*}
<\, \coloneqq \{(a,b): \bigl((a\in S\Leftrightarrow b\in S)\wedge a\prec b\bigr)\vee \bigl((a\in S\Leftrightarrow b\in T)\wedge a\succ b\bigr)\}.
\end{align*}

	We can think of the relation $<$ in the following way. We identify each point in $T$ with its opposite point and then consider the restriction of the order defined on the half-circle containing $S$ going from $c$ to its opposite point. This implies immediately that $<$ linearly orders $S\cup T=\cc\setminus \{c\}$ and both $S$ and $T$ are dense according to $<$. Therefore, $(\cc\setminus \{c\};<,S,T)$ is isomorphic to $(\mathbb{Q};<,S,T)$, and thus $(\mathbb{Q};<,S,T)$ is interpretable in $(S(2);c)$.
	
	For the second claim we can reverse the construction, i.e., we can define $\prec$ as
\[
	\prec\, \coloneqq \{(a,b): \bigl(a<b\wedge (a\in S\Leftrightarrow b\in S)\bigr)\vee \bigl(a>b\wedge (a\in S\Leftrightarrow b\in T)\bigr)\}
\]
This defines $S(2)|_{\cc\setminus c}$ which is isomorphic to $S(2)$.
\end{proof}

\begin{remark}
	Our constructions given in the proof of Lemma~\ref{s2_part} come from the correspondence described in Proposition 6.1 in~\cite{CameronOrbits}. 
\end{remark}

\begin{corollary}\label{companion_s2}
    $S(2)$ is the model companion of a structure in $\I(({\mathbb Q};<))$.
\end{corollary}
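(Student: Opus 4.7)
The plan is to exhibit a concrete $\fb \in \I((\mathbb{Q};<))$ whose model companion is $S(2)$, obtained by applying the quantifier-free $\{\prec\}$-reduction from Lemma~\ref{s2_part} to the cover structure $\fa$ from Lemma~\ref{mc_core_part}. Explicitly, I would let $\fb$ be the $\{\prec\}$-structure with domain $A = \mathbb{Q} \times [2]$ whose single binary relation is defined by the same quantifier-free $\{<,S,T\}$-formula used in Lemma~\ref{s2_part}, applied to the relations $<^{\fa}$, $S^{\fa}$, $T^{\fa}$ of $\fa$.

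First I would check that $\fb \in \I((\mathbb{Q};<))$. This is immediate: $\fa$ is interpretable in $(\mathbb{Q};<)$ (as noted in the paragraph after Lemma~\ref{mc_core_part}), and $\prec^{\fb}$ is first-order definable in $\fa$, so composing any interpretation $I$ of $\fa$ in $(\mathbb{Q};<)$ with that defining formula produces an interpretation of $\fb$ in $(\mathbb{Q};<)$. I would also record that $S(2)$ is model complete, because it is homogeneous.

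The remaining task is to show that $\fb$ and $S(2)$ share the same universal theory (equivalently, in the relational setting, the same age). The key observation is that, because $\prec$ is defined by a quantifier-free formula in $\{<,S,T\}$, the operation $\operatorname{Red}$ of passing to the $\{\prec\}$-reduct commutes with taking induced substructures and preserves (and reflects) embeddings. By Lemma~\ref{mc_core_part}, $(\mathbb{Q};<,S,T)$ is the model companion of $\fa$, so the two have the same age; applying $\operatorname{Red}$ yields $\age(\fb) = \operatorname{Red}(\age(\fa)) = \operatorname{Red}(\age(\mathbb{Q};<,S,T)) = \age(\operatorname{Red}(\mathbb{Q};<,S,T))$. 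By Lemma~\ref{s2_part}, $\operatorname{Red}(\mathbb{Q};<,S,T)$ is isomorphic to $S(2)$. Hence $\age(\fb) = \age(S(2))$, and $\fb$ and $S(2)$ are bi-embeddable. Together with the model-completeness of $S(2)$ this gives that $S(2)$ is the model companion of $\fb$, establishing the corollary.

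I do not anticipate any serious obstacle: the corollary is essentially a direct splicing of Lemmas~\ref{mc_core_part} and~\ref{s2_part}. The only real insight is to apply exactly the same quantifier-free definition of $\prec$ both to $(\mathbb{Q};<,S,T)$ (where it produces $S(2)$) and to the pre-companion $\fa$ (where it produces a structure still visibly in $\I((\mathbb{Q};<))$), and then to exploit the fact that a quantifier-free reduction preserves the age.
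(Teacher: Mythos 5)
Your proposal is correct and takes essentially the same route as the paper: build the quantifier-free $\{\prec\}$-reduct $\fb$ of the pre-companion $\fa$ from Lemma~\ref{mc_core_part} using the defining formula from Lemma~\ref{s2_part}, observe that $\fb \in \I((\mathbb{Q};<))$, and use the fact that a quantifier-free reduct preserves ages (equivalently, that the embeddings witnessing companionship of $\fa$ and $({\mathbb Q};<,S,T)$ transfer) together with the homogeneity of $S(2)$ to conclude. The only cosmetic difference is that you phrase the companionship step as an identity of ages whereas the paper appeals directly to the same embeddings; the content is identical.
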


\begin{proof}
    By Lemma~\ref{mc_core_part} we know that $({\mathbb Q};<,S,T)$ is the model companion of a structure $\bA$ with an interpretation in $({\mathbb Q};<)$, and by Lemma~\ref{s2_part} we know that $S(2)$ is isomorphic to a quantifier-free  reduct of $({\mathbb Q};{<},S,T)$.  Hence, if we use the same formulas to form a quantifier-free reduct $\bB$
    of $\bA$, then 
    the same embeddings that show that $({\mathbb Q};<,S,T)$ is a companion of $\bA$ show that $S(2)$ is a companion of $\bB$. 
    Since the structure 
    $S(2)$ is homogeneous, it is the model companion of $\bB$, and hence a model companion of a structure with a first-order {interpretation} in $({\mathbb Q};<)$.
\end{proof}

\subsubsection{Non-interpretability results}
    Here 
    we show that none of the three structures $(\mathbb{Q};<)*(\mathbb{Q};<)$,  $(\mathbb{Q};<,S,T)$, and $S(2)$ is interpretable in $\order$. By Lemmas~\ref{companion_perm}, ~\ref{mc_core_part}, and Corollary~\ref{companion_s2} we know that 
    each of these structures is the model companion of a structure
    interpretable in $\order$, so 
    any of these three structures shows that $\I(\order)$ is not closed under taking model companions.
    
	We first show that every order which is definable in $\johnord{d}$ equals a lexicographic order up to a permutation of the coordinates and the reversal of the order of some of the coordinates. We say that an interval of $\mathbb{Q}$ is \emph{non-degenerate} if it contains at least 2 elements. 
	
\begin{lemma}\label{most_sign}
	Let $d\in \mathbb{N}$, let $I_1<I_2<\dots<I_d$ be some non-degenerate intervals in $\mathbb{Q}$,
    and let $\prec$ be an order which is first-order definable in $(\qup{d};<_1,\dots,<_d)$. Then there are $k\in [d]$ and $R\in \{\prec,\succ\}$ such that for all $\bar{a},\bar{b}\in \mathcal{I}\coloneqq \prod_{i=1}^dI_i$ the relation $a_k<b_k$ implies $\bar{a}R\bar{b}$.
\end{lemma}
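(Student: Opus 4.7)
The plan is to analyze $\prec$ restricted to $\mathcal I$ via its sign-pattern invariants. Since $(\mathbb Q;<)$ is homogeneous and the non-degenerate intervals $I_1<\dots<I_d$ are pairwise disjoint, the orbit of an ordered pair $(\bar a,\bar b)\in\mathcal I^2$ under $\Aut(\qup d;<_1,\dots,<_d)$ is determined exactly by the tuple $(\epsilon_1,\dots,\epsilon_d)\in\{-,0,+\}^d$ where $\epsilon_i$ records whether $a_i<b_i$, $a_i=b_i$, or $a_i>b_i$: any two pairs with matching patterns give rise to finite configurations in $\mathbb Q$ of the same order-type, and are therefore conjugate via a single $\alpha\in\Aut(\mathbb Q;<)$ acting diagonally. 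The definable linear order $\prec$, restricted to $\mathcal I$, is thus encoded by a set $P\subseteq\{-,0,+\}^d\setminus\{0^d\}$: $\bar a\prec\bar b$ iff the sign pattern of $(\bar a,\bar b)$ lies in $P$. The axioms of a strict linear order give two constraints on $P$: antisymmetry ($\sigma\in P$ iff $-\sigma\notin P$) and the following transitivity rule. Given $\sigma,\tau\in P$ and any realizable triple $\bar a,\bar b,\bar c\in\mathcal I$ with sign patterns $\sigma$ for $(\bar a,\bar b)$ and $\tau$ for $(\bar b,\bar c)$, the sign pattern $\rho$ of $(\bar a,\bar c)$, if nonzero, must belong to $P$. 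Coordinatewise, $\rho_i=\sigma_i$ when $\tau_i=0$, $\rho_i=\tau_i$ when $\sigma_i=0$, $\rho_i=\sigma_i=\tau_i$ when they agree, and $\rho_i$ can take any value in $\{-,0,+\}$ when $\{\sigma_i,\tau_i\}=\{-,+\}$.

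After possibly swapping $\prec$ with $\succ$, we may assume $(+,\dots,+)\in P$. For $k\in[d]$, write $\sigma^{(k)}$ for the pattern with $+$ in coordinate $k$ and $-$ elsewhere. The crux of the argument is to show that some $\sigma^{(k)}$ lies in $P$. For $d=1$ this is trivial, and for $d=2$ it follows from antisymmetry because $\sigma^{(1)}=-\sigma^{(2)}$. For $d\geq 3$, I argue by contradiction: assuming $\sigma^{(k)}\notin P$ for every $k$, the patterns $-\sigma^{(k)}$ (with $-$ in coordinate $k$ and $+$ elsewhere) all lie in $P$. I iteratively construct $\tau_j\in P$ for $j=2,\dots,d-1$, where $\tau_j$ denotes the pattern with $j$ leading $-$'s followed by $d-j$ trailing $+$'s. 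Base $j=2$: composing $-\sigma^{(1)}$ with $-\sigma^{(2)}$ produces every pattern with coordinates $1,2$ free and coordinates $3,\dots,d$ equal to $+$; the composition avoids $0^d$ since coordinate $3$ stays $+$ (as $d\geq 3$), so all such patterns lie in $P$, and in particular $\tau_2\in P$. Inductive step ($j\to j+1$, for $2\leq j\leq d-2$): composing $\tau_j$ with $-\sigma^{(j+1)}$ produces every pattern with coordinates $1,\dots,j+1$ free and coordinates $j+2,\dots,d$ equal to $+$, which again avoids $0^d$ because coordinate $d$ stays $+$; so $\tau_{j+1}\in P$. Hence $\tau_{d-1}=(-,\dots,-,+)=\sigma^{(d)}\in P$, contradicting the assumption.

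Fixing $k$ with $\sigma^{(k)}\in P$, the final step composes $\sigma^{(k)}$ with $(+,\dots,+)$: coordinate $k$ yields $+$ (both operands are $+$), while every other coordinate has operands $\{-,+\}$ and can therefore take any value in $\{-,0,+\}$. All resulting patterns have coordinate $k$ equal to $+$, hence are nonzero, and by transitivity they all lie in $P$. Equivalently, every pattern $\rho$ with $\rho_k=+$ belongs to $P$: $a_k<b_k$ alone forces $\bar a\prec\bar b$, regardless of the other coordinates, which establishes the lemma with $R=\prec$. The main subtlety throughout is to verify at each composition that $0^d$ does not arise as a resulting pattern, which in every instance is guaranteed by the persistence of some coordinate on which both composed patterns agree with a common nonzero sign.
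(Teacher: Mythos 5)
Your proof is correct but takes a genuinely different route from the paper's. The paper argues by induction on $m\leq d$, proving the stronger statement that a growing ``lexicographic signature'' $(j,R)$ works for all pairs in $\mathcal{I}$ agreeing on coordinates above $m$; the inductive step hinges on a delicate claim about two-dimensional slices $\gamma\colon I_j\times I_m\to\qup{d}$ followed by a case split, machinery that anticipates the full lexicographic characterisation in Lemma~\ref{lexicographic}. Your proof instead exploits that, since $I_1<\dots<I_d$ are disjoint, the restriction of $\prec$ to $\mathcal{I}$ is governed entirely by coordinatewise sign patterns in $\{-,0,+\}^d$ (via the diagonal action of $\Aut(\mathbb{Q};<)$), which converts the lemma into a finite combinatorial statement about a subset $P\subseteq\{-,0,+\}^d\setminus\{0^d\}$ that is complete and antisymmetric ($\sigma\in P\Leftrightarrow -\sigma\notin P$) and closed under the composition relation induced by transitivity of $\prec$. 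Chaining the patterns $-\sigma^{(1)},\dots,-\sigma^{(d-1)}$ then forces some $\sigma^{(k)}\in P$, and composing $\sigma^{(k)}$ with the all-$+$ pattern yields every pattern with $+$ in position $k$, i.e.\ coordinate $k$ is dominant. Your argument is essentially computation-free once the reduction to $P$ is set up and the realizability and nonzero-ness checks are made, as you do carefully; the paper's version is more intricate but builds a device reused in the next lemma. Both proofs implicitly require $\prec$ to be a \emph{total} order (you via completeness of $P$, the paper via the comparability assertion in its two-dimensional claim), which is indeed the case in every application.
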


\begin{proof}
	We first prove the following statement by induction on $m \geq 1$:
	
	\emph{There exists some $j\in [m]$ and $R\in \{\prec,\succ\}$ such that for all $\bar{a},\bar{b}\in {\mathcal{I}}$ such that $a_j<b_j$ and $a_i=b_i$ for all $i>m$ we have $\bar{a}R\bar{b}$. $(\ast)$}
	
	For $m=d$ we obtain the statement of the lemma.
	
	For $m=1$ the statement is obvious. For the induction step from $m-1$ to $m$, let us fix some tuple $\bar{c}\in {\mathcal{I}}$, and let $j$ be the index for which the induction hypothesis holds. We define
\begin{align*}
\gamma\colon&I_j\times I_m\rightarrow \mathbb{Q}^d\\
&(a_1,a_2)\mapsto(c_1,\dots,c_{j-1},a_1,c_{j+1},\dots,c_{m-1},a_2,c_{m+1},\dots,c_d).
\end{align*}
	
	{\bf Claim.} For all $(a_1,a_2),(b_1,b_2)\in I_j\times I_m$ either $\tp(a_1,b_1)$ or $\tp(a_2,b_2)$ determines the $\prec$-order of $(\gamma(a_1,a_2),\gamma(b_1,b_2))$.
	
	Pick some elements $s_1,t_1,u_1,v_1\in I_j$ and $s_2,t_2,u_2,v_2\in I_m$ with $s_1<t_1<u_1<v_1$ and $s_2<t_2<u_2<v_2$. Suppose first that the pairs $(\gamma(t_1,t_2),\gamma(v_1,v_2))$ and $(\gamma(t_1,t_2),\gamma(v_1,s_2))$ are ordered in the same way according to $\prec$. We assume that $\gamma(t_1,t_2)\prec\gamma(v_1,v_2)$, the other case is symmetric. Then $\tp(\gamma(t_1,t_2),\gamma(u_1,u_2))=\tp(\gamma(t_1,t_2),\gamma(v_1,v_2))$ and $\tp(\gamma(u_1,u_2),\gamma(v_1,t_2))=\tp(\gamma(t_1,t_2),\gamma(v_1,s_2))$. Therefore, $\gamma(t_1,t_2)\prec \gamma(u_1,u_2)\prec \gamma(v_1,t_2)$. This implies that for all $a_1,a_2,b_1,b_2$ as in our assumption, if $a_1<b_1$, then $\gamma(a_1,a_2)\prec \gamma(b_1,b_2)$. Similarly we are done in the case when the pairs $(\gamma(t_1,t_2),\gamma(v_1,v_2))$ and $(\gamma(t_1,t_2),\gamma(s_1,v_2))$ are ordered in the same way according to $\prec$. In the remaining case we get that the pairs $(\gamma(t_1,t_2),\gamma(s_1,v_2))$ and $(\gamma(t_1,t_2),\gamma(v_1,s_2))$ are ordered in the same way according to $\prec$. However, this is  impossible, because $\tp(\gamma(t_1,t_2),\gamma(v_1,s_2))=\tp(\gamma(s_1,v_2),\gamma(t_1,t_2))$.

	By reversing the order $\prec$ if necessary we can assume without loss of generality that either $a_1<b_1$ or $a_2<b_2$ implies $\gamma(a_1,a_2)\prec \gamma(b_1,b_2)$ for all $a_1,a_2,b_1,b_2$ as above. We handle the two cases separately.
	
	\emph{Case 1. $a_1<b_1$ implies $\gamma(a_1,a_2)\prec \gamma(b_1,b_2)$ for all $(a_1,a_2),(b_1,b_2)\in I_j\times I_m$.} First note that if $c'\in I_m$ and $a_1<b_1$ then for $\bar{u}=\gamma(a_1,c'), \bar{v}=\gamma({b_1},c')$ we have $\bar{u}\prec \bar{v}$ and $(\bar{u}=\bar{v})\supseteq [d]\setminus [m-1]$. Thus, by the induction hypothesis for all tuples $\bar{u},\bar{v}\in I$ with $u_j<v_j$ and $u_i=v_i$ for all $i\geq m$ we have $\bar{u}\prec \bar{v}$.

	Let us now consider some tuples $\bar{u},\bar{v}\in {\mathcal{I}}$ such that $u_j<v_j$ and $u_i=v_i$ for all $i\geq m+1$. We have to show that $\bar{u}\prec \bar{v}$. Let us pick a $\bar{w}\in {\mathcal{I}}$ such that $u_j<w_j<v_j, w_m=u_m$ and $w_i=v_i$ for $i\neq j,m$.	Then by our argument above it follows that $\bar{u}\prec \bar{w}$. On the other hand, if we pick some $(a_1,a_2),(b_1,b_2)\in I_j\times I_m$ such that $a_1<b_1$ and $\tp(a_2,b_2)=\tp(u_m,v_m)=\tp(w_m,v_m)$ then we have $\tp(\bar{w},\bar{v})=\tp(\gamma(a_1,a_2),\gamma(b_1,b_2))$. By our assumption we have $\gamma(a_1,a_2)\prec\gamma(b_1,b_2)$. Therefore $\bar{w}\prec \bar{v}$. Combining this with the previous inequality we obtain $\bar{u}\prec \bar{v}$.

	\emph{Case 2. $a_2<b_2$ implies $\gamma(a_1,a_2)\prec \gamma(b_1,b_2)$ for all $(a_1,a_2),(b_1,b_2)\in I_j\times I_m$.} By the induction hypothesis there exists some $R\in \{<,>\}$ such that for all tuples $\bar{u},\bar{v}\in {\mathcal{I}}$ if $u_jRv_j$ then $\bar{u}\prec \bar{v}$. Let $\bar{u},\bar{v}\in {\mathcal{I}}$ such that $u_m<v_m$ and $u_i=v_i$ for $i\geq m+1$. We have to show that $\bar{u}\prec \bar{v}$. Let $\bar{w}\in {\mathcal{I}}$ such that $w_m=u_m, u_jRw_j$ and $v_i=w_i$ for $i\neq j,m$. Then by the definition of $R$ we have $\bar{u}\prec \bar{w}$. On the other hand, if we pick some $(a_1,a_2),(b_1,b_2)\in I_j\times I_m$ such that {$a_2<b_2$ and $\tp(a_1,b_1)=\tp(w_j,v_j)$} then $\tp(\bar{w},\bar{v})=\tp(\gamma({a_1,a_2}),\gamma({b_1,b_2}))$. This implies $\bar{w}\prec \bar{v}$, and therefore $\bar{u}\prec \bar{v}$.
\end{proof}

\begin{lemma}\label{lexicographic}
	Let $d\in \mathbb{N}$ and let $\prec$ be an order which is first-order definable in $\johnord{d}$. Then there are  $R_1,\dots,R_d\in \{<,>\}$ and $\sigma \in \Sym([d])$ such that for all $\bar{a},\bar{b}\in \qup{d}$ we have $\bar{a}\prec \bar{b}$ if and only if there exists some $j\leq d$ such that 
    $\{\sigma(1),\dots,\sigma({j-1})\} \subseteq (\bar{a}=\bar{b})$, and $a_{\sigma(j)}R_jb_{\sigma(j)}$.
\end{lemma}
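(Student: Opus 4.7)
The plan is to induct on $d$, using Lemma~\ref{most_sign} to identify the ``most significant'' coordinate of $\prec$ and then reducing dimension. The base case $d = 1$ is immediate: $\johnord{1}$ is interdefinable with $(\mathbb{Q};<)$, whose only first-order definable linear orders are $<$ and $>$, so $\sigma = \mathrm{id}$ and $R_1 \in \{<,>\}$ works.

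For the inductive step, first apply Lemma~\ref{most_sign} with any choice of non-degenerate intervals $I_1 < \dots < I_d$ to obtain $k \in [d]$ and $R \in \{\prec, \succ\}$ such that $\bar{a}, \bar{b} \in \prod_i I_i$ with $a_k < b_k$ forces $\bar{a} R \bar{b}$. Since $\prec$ is preserved by $\aut(\mathbb{Q};<)$ acting coordinatewise, which can conjugate any product of non-degenerate intervals to any other, the pair $(k, R)$ is intrinsic to $\prec$ and independent of the chosen intervals. Set $\sigma(1) := k$ and $R_1 := {<}$ if $R = \prec$ and $R_1 := {>}$ otherwise.

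\textbf{The main obstacle is extending the implication of Lemma~\ref{most_sign} from $\prod_i I_i$ to all of $\qup{d}$:} for every $\bar{a}, \bar{b} \in \qup{d}$ with $a_k < b_k$, show that $\bar{a} R \bar{b}$. The key trick is linear interpolation. Define $\bar{d}(t) := (1-t)\bar{a} + t\bar{b}$ for rational $t \in [0,1]$. Since $\qup{d}$ is a convex subset of $\mathbb{Q}^d$ (strict monotonicity is preserved under convex combinations), each $\bar{d}(t)$ lies in $\qup{d}$; moreover $d_k(t)$ is strictly increasing in $t$. Because the coordinate trajectories $t \mapsto d_j(t)$ are continuous and never collide on $[0,1]$, a sufficiently fine rational partition $0 = t_0 < t_1 < \dots < t_m = 1$ has the property that each consecutive pair $(\bar{d}(t_i), \bar{d}(t_{i+1}))$ lies in some common product of non-degenerate intervals $J_1^{(i)} < \dots < J_d^{(i)}$. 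Applying Lemma~\ref{most_sign} to each consecutive pair together with transitivity of $\prec$ then gives $\bar{a} = \bar{d}(t_0) R \bar{d}(t_m) = \bar{b}$.

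To finish, reduce to dimension $d - 1$. For $\bar{a}, \bar{b} \in \qup{d}$ with $a_k = b_k$, the relation $\bar{a} \prec \bar{b}$ depends only on the tuples $\bar{a}', \bar{b}' \in \qup{d-1}$ obtained by deleting the $k$-th coordinate, because the rich transitivity of $\aut(\mathbb{Q};<)$ lets us move the common $k$-th value freely within the rational interval carved out by the other coordinates of $\bar{a}$ and $\bar{b}$ without changing the orbit of $(\bar{a}, \bar{b})$. This yields a first-order definable linear order $\prec'$ on $\qup{d-1}$; on pairs $\bar{a}', \bar{b}'$ for which no common lift of the $k$-th coordinate is possible, the interleaving structure of $\bar{a}' \cup \bar{b}'$ forces $a_k < b_k$ (or $a_k > b_k$) for every lift, so the relation is already determined by the extended first step. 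Applying the inductive hypothesis to $\prec'$ gives $\sigma' \in \Sym([d-1])$ and $R'_1, \dots, R'_{d-1} \in \{<, >\}$; setting $\sigma(i+1) := \sigma'(i)$ if $\sigma'(i) < k$ and $\sigma(i+1) := \sigma'(i) + 1$ otherwise, together with $R_{i+1} := R'_i$, produces the desired lex description of $\prec$.
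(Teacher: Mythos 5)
Your interpolation argument for promoting Lemma~\ref{most_sign} from a box $\prod_i I_i$ to all of $\qup{d}$ is correct and is genuinely cleaner than the paper's own extension step: the paper builds an explicit chain of tuples $\bar{u}^0,\dots,\bar{u}^d,\bar{w},\bar{v}^d,\dots,\bar{v}^0$ by moving one coordinate at a time, whereas your convexity-of-$\qup{d}$ plus fine-partition argument captures the same idea in one stroke, and the well-definedness of $(k,R)$ via transitivity of $\aut(\mathbb Q;<)$ is exactly the observation the paper also makes. That part is solid.

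The gap is in the dimension reduction. You write that deleting the $k$-th coordinate ``yields a first-order definable linear order $\prec'$ on $\qup{d-1}$,'' but it does not: $\prec'$ is only defined on pairs $(\bar a',\bar b')\in(\qup{d-1})^2$ that admit a common lift, i.e.\ on pairs with $\max(a'_{k-1},b'_{k-1})<\min(a'_k,b'_k)$. Already for $d=3$, $k=2$ the pair $\bar a'=(10,20)$, $\bar b'=(0,5)$ admits no such lift. You acknowledge these ``bad'' pairs and observe (correctly) that for any lift of a bad pair the $k$-th coordinates must compare in a fixed direction, so that $\prec$ on lifts of bad pairs is determined by the first step. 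But that observation does not by itself equip $\qup{d-1}$ with a total definable linear order: you would have to specify $\prec'$ on bad pairs, and then verify that the resulting binary relation is transitive and linear \emph{across} the good/bad boundary. This is not automatic — in particular, triples $\bar a',\bar b',\bar c'$ with all three pairs good need not admit a \emph{triple} lift with a common $k$-th value, so transitivity of $\prec'$ on good pairs alone is already not an immediate consequence of transitivity of $\prec$. Without a total definable linear order on $\qup{d-1}$ there is nothing to feed the inductive hypothesis, so the final step of your proof does not go through. The paper sidesteps this entirely by using a \emph{forward} induction on the number $m$ of already-determined coordinates (the statement $(\dag_m)$), always staying inside $\qup{d}$ and extending the Lemma~\ref{most_sign} conclusion over the whole fiber $D$ at each step, rather than by passing to a lower-dimensional $\qup{d-1}$. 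If you want to keep the dimension-reduction shape of your argument, you would need to formulate and prove a stronger inductive statement that applies to orders defined only on the appropriate definable subset of $(\qup{d-1})^2$; as written, the step ``Applying the inductive hypothesis to $\prec'$'' is unjustified.
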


\begin{proof}
	We show by induction on $m \in {\mathbb N}$, $m \leq d$, that we can choose some $R_1,\dots,R_m \in \{<,>\}$ and distinct $\sigma(1),\dots,\sigma(m) \in [d]$ such that for all $\bar{a},\bar{b}\in \qup{d}$, whenever there exists some $j\leq m$ such that $(\bar{a}=\bar{b})\supseteq \{\sigma(1),\dots,\sigma(j-1)\}$ and $a_{\sigma(j)} R_j b_{\sigma(j)}$, then $\bar{a}\prec \bar{b}$. $(\dag_m)$.
	
	For $m=d$ we obtain the statement of the lemma.

	We prove the statement by induction on $m$. For $m=0$ there is nothing to prove.
	
	For the induction hypothesis let us assume we already chose some $R_1,\dots,R_m$ and $\sigma(1),\dots,\sigma(m)$ such that $(\dag_m)$ holds. We introduce the following notation for ``shuffling'' tuples. 
    For $\bar{u}\in \mathbb{Q}^{d-m}$ and $\bar{c}\in \mathbb{Q}^m$ we write $\bar{u}\shuff_{\sigma} \bar{c}$ for the tuple whose $\sigma(i)$-th coordinate is $c_i$ for all $i \in [m]$, and such that the tuple formed by listing all the other coordinates in order is $\bar{u}$. Let us fix $\bar{c}\in \mathbb{Q}^m$ such that $$D\coloneqq \{\bar{u}: \bar{u}\shuff_{\sigma} \bar{c}\in \qup{d}\}\neq \emptyset.$$

	Note that in this case $\{\bar{u}\shuff_{\sigma} \bar{c}: \bar{u}\in \mathbb{Q}^{d-m}\}\cap \qup{d}$ is a $\prec$-interval by the induction hypothesis. Let us choose some non-degenerate intervals $I_1<\dots<I_{d-m}$ of $\mathbb{Q}$ such that $\mathcal{I}\coloneqq \prod_{i=1}^{d-m}I_i\subseteq D$. Note that for $\bar{u},\bar{v}\in D$ the $((<_{ij}))_{i,j\in [d]}$-type of the pair $(\bar{u}\shuff_{\sigma} \bar{c}, \bar{v}\shuff_{\sigma} \bar{c})$ is uniquely determined by the $((<_i))_{i\in [d]}$-type of the pair $(\bar{u},\bar{v})$. This implies that the order $$\prec^*\coloneqq \{(\bar{u},\bar{v}): \bar{u}\shuff_{\sigma} \bar{c}\prec \bar{v}\shuff_{\sigma} \bar{c})\}$$ is definable in $(\mathcal{I};<_1,\dots,<_d)$. Therefore, by Lemma~\ref{most_sign} we obtain that there exists some $k$ and some $R\in \{<,>\}$ such that $u_kRv_k$ implies $\bar{u}\prec^*\bar{v}$ for all $\bar{u},\bar{v}\in \mathcal{I}$. Since all sequences of intervals $(I_1,\dots,I_{d-m})$ as above can be mapped to each other by an automorphism of $\aut(\mathbb{Q})$ fixing $\bar{c}$, it follows that in fact the {relation $R$ and the} value $k$ does not depend on our particular choice of the intervals $I_i$.
	
	Now we show that in fact for all $\bar{u},\bar{v}\in D$ the relation $u_kRv_k$ implies $\bar{u}\prec^*\bar{v}$. For simplifying the notation we assume that $R$ is $<$; the proof of the other case is analogous. So suppose that $u_k < v_k$. We define the tuples $\bar{u}^0,\dots,\bar{u}^d,\bar{v}^0,\dots,\bar{v}^d$
    as follows. Let $J$
    be an inclusion-wise minimal non-degenerate interval {such that any of its existing endpoints belongs to $\bar{c}$.}
   Let $u_i<u_{i+1}<\dots<u_j$ be the elements of $\bar{u}$ which are contained in $J$. Note that in this case $v_i<v_{i+1}<\dots<v_j$ are also exactly the elements of $\bar{v}$ in $J$. Let us pick some elements $w_i<\dots<w_j$ {in $J$} such that
	
\begin{itemize}
\item if $\ell<k$ then $w_{\ell}<\min(u_i,v_i)$,
\item if $\ell>k$ then $w_{\ell}>\max(u_j,v_j)$,
\item $u_k<w_k<v_k$.
\end{itemize}
For $\ell\in \{i,\dots,j\} \setminus \{k\}$ we define
\[
u^n_{\ell}\coloneqq 
\begin{cases*}
w_{\ell} & if $\ell<\min(k,i+n)$ or $\ell>\max(k,j-n)$ \\
u_{\ell} & otherwise.
\end{cases*}
\]
	The element $v^n_{\ell}$ is defined analogously. Finally, we define the elements $u^n_k$ and $v^n_k$ in such a way that
\[
u_k=u^0_k<u^1_k<\dots<u^d_k<u_{k+1},w_k,
\]
	and
\[
v_k=v^0_k>v^1_k>\dots>v^d_k>v_{k-1},w_k
\]
	{where $u_{d-m+1}$ and $v_0$ are considered to be $\infty$ and $-\infty$, respectively.}

	Note that by definition all tuples $\bar{u}^n$, $\bar{v}^n$, and $\bar{w}$ are in $D$, because they all have the same type over $\bar{c}$. Moreover, it follows from our choice that for all $n\in \mathbb{N}$ the intervals $[[u^n_{\ell},u^{n+1}_{\ell}]]: \ell\in [d-m]$ are pairwise disjoint. By making these intervals slightly bigger {we can assume that} we are in the setting of Lemma~\ref{most_sign}. Since ${u^n_k<u^{n+1}_k}$, we have that $\bar{u}^n\prec^*\bar{u}^{n+1}$. A similar argument shows that $\bar{u}^d\prec^*\bar{w}$. Therefore, we get $\bar{u}\prec^*\bar{w}$. By repeating the argument for the $v$-tuples we obtain $\bar{w}\prec^*\bar{v}$. Therefore, $\bar{u}\prec^*\bar{v}$, as we claimed.

	Now we can finish the induction step. We pick $\sigma({m+1})$ to be the position of the $k$-th coordinate after applying the shuffling map $\bar{u}\mapsto \bar{u}\shuff_{\sigma}\bar{c}$, and we put $R_{m+1}\coloneqq R$. Then clearly $\sigma(m+1) \not\in \{\sigma(1),\dots,\sigma(m)\}$. Now let $\bar{a},\bar{b}\in \qup{d}$ be arbitrary. If $(\bar{a}=\bar{b})\not\supseteq \{\sigma(1),\dots,\sigma(m)\}$ then we are done by the induction hypothesis. Otherwise, there exist some $\bar{u},\bar{v}\in \qup{d-m}$ and $\bar{c}'\in \qup{m}$ such that $\bar{a}=\bar{u}\shuff \bar{c}'$ and $\bar{b}=\bar{v}\shuff \bar{c}'$. Let us assume that $a_{\sigma(m+1)}Rb_{\sigma(m+1)}$. We have to show that $\bar{a}\prec \bar{b}$. This is enough to show in the case where  $\bar{c}'=\bar{c}$. In this case $u_k<v_k$, and therefore by our claim above $\bar{u}\prec^*\bar{v}$, that is, $\bar{a}=\bar{u}\shuff \bar{c}\prec \bar{v}\shuff \bar{c}=\bar{b}$.
\end{proof}

	Our next goal is to show that $(\mathbb{Q};<,S,T)$ is not interpretable in $(\mathbb{Q};<)$. In our first step we want to use Lemma~\ref{nice_reducts} but we want to reduce the number $\ell$ as much as possible. In order to do this, assuming that $(\mathbb{Q};<,S,T)$ is a reduct of $\johnord{\bar{d}}$, we colour the elements of $\mathbb{Q}$ according to their first coordinates. We first need the following general observation about colourings of $\mathbb{Q}$.
	
\begin{lemma}\label{colours}
	Let $\lambda \colon \mathbb{Q}\rightarrow [n]$ be a colouring of $\mathbb{Q}$ such that $\lambda(S)\cap \lambda(T)=\emptyset$. Then there exists a non-degenerate interval $I$ and some colours $s\in \lambda(S)$ and $t\in \lambda(T)$ such that both $\lambda^{-1}(s)$ and $\lambda^{-1}(t)$ are dense in $I$.
\end{lemma}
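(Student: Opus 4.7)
The plan is to reduce the lemma to the following auxiliary statement: for every dense subset $D \subseteq \mathbb{Q}$ and every finite colouring $\lambda \colon \mathbb{Q} \to [n]$, there exist a non-degenerate interval $J$ and a colour $c \in \lambda(D)$ such that $\lambda^{-1}(c)$ is dense in $J$. Granting this, the argument proceeds in two steps. First, apply the auxiliary statement to $D = S$ to obtain a non-degenerate interval $J$ and some $s \in \lambda(S)$ with $\lambda^{-1}(s)$ dense in $J$. Since $T$ is dense in $\mathbb{Q}$, it is dense in $J$, so applying the auxiliary statement again (with $\mathbb{Q}$ replaced by $J$) to the restricted colouring and to $D = T \cap J$ yields a non-degenerate sub-interval $I \subseteq J$ and a colour $t \in \lambda(T)$ with $\lambda^{-1}(t)$ dense in $I$. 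Because $\lambda^{-1}(s)$ was dense in the larger interval $J$, it remains dense in $I \subseteq J$, and the hypothesis $\lambda(S) \cap \lambda(T) = \emptyset$ ensures $s \neq t$, giving the conclusion.

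To prove the auxiliary statement, I would argue by contradiction via a nested-interval shrinking argument. Enumerate $\lambda(D) = \{c_1, \dots, c_k\}$ (this is a finite set since $\lambda$ takes values in $[n]$). Suppose that for every non-degenerate interval and every $c \in \lambda(D)$, the set $\lambda^{-1}(c)$ fails to be dense in that interval. Starting from any non-degenerate interval $I_0$, the failure of density of $\lambda^{-1}(c_1)$ in $I_0$ produces a non-degenerate sub-interval $I_1 \subseteq I_0$ disjoint from $\lambda^{-1}(c_1)$. Iterating, one obtains a chain $I_0 \supseteq I_1 \supseteq \dots \supseteq I_k$ of non-degenerate intervals with $I_j \cap \lambda^{-1}(c_j) = \emptyset$ for each $j \in [k]$. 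Then $I_k$ is non-degenerate and disjoint from $\bigcup_{j\in[k]} \lambda^{-1}(c_j) \supseteq D$, contradicting the density of $D$ in $\mathbb{Q}$.

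I do not expect any real obstacle here: the key observation, which drives the whole argument, is just that density in an interval is inherited by all of its non-degenerate sub-intervals, so one can first pin down the $S$-colour over a large interval and then refine to accommodate a $T$-colour without losing the $S$-density.
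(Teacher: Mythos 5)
Your proof is correct, and it takes a genuinely different route from the paper's. The paper proceeds by induction on the number of colours $n$: after relabelling so that $\lambda(S)$ and $\lambda(T)$ occupy disjoint initial and final segments of $[n]$, it splits into two cases depending on whether $\lambda^{-1}(n)$ is dense in $\mathbb{Q}$ (if dense, it merges colours $n$ and $n-1$ and applies the induction hypothesis to the merged colouring, taking $t = n$; if not dense, it shrinks to a non-degenerate open interval avoiding colour $n$, uses the isomorphism of the restricted structure with $(\mathbb{Q};<,S,T)$, and applies the induction hypothesis to the restricted colouring). Your approach instead isolates a cleaner auxiliary statement — for any dense set $D$ and any finite colouring, some colour used by $D$ is dense on some non-degenerate interval — proved by a nested-interval pigeonhole argument, and applies it twice, once to $D = S$ on $\mathbb{Q}$ and once to $D = T \cap J$ on the resulting interval $J$. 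The key observation you identify (density in an interval passes to non-degenerate sub-intervals) is exactly what makes the two applications compose. Your argument is more modular, avoids the case analysis, and uses the hypothesis $\lambda(S)\cap\lambda(T)=\emptyset$ only at the very end to guarantee $s\neq t$; the paper's proof uses that hypothesis more structurally in the relabelling. Both are correct; yours is arguably the more transparent.
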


\begin{proof}
	We prove this by induction on $n$. For $n=2$ there is nothing to prove. Otherwise, by relabelling the colours we can assume that $\lambda(S)\subseteq [k]$ and $\lambda(T)\subseteq [n]\setminus [k]$ for some $k\in [n-1]$. Then either $k\geq 2$ or $n-k\geq 2$. Without loss of generality we may assume that the latter holds. 
	
	Let us first assume that $\lambda^{-1}(n)$ is dense in $\mathbb{Q}$. In this case, let $\lambda'\coloneqq a\mapsto \min(\lambda(a),n-1)$, and let us apply the induction hypothesis to the coloring $\lambda'$. We can conclude that there exists some non-degenerate interval $I$ and some $s\in {\lambda(S)}$ such that $\lambda^{-1}(s)$ is dense in $I$, and by our assumption $\lambda^{-1}(n)$ is dense in $I$.
	
	Now let us assume that $\lambda^{-1}(n)$ is not dense in $\mathbb{Q}$.
Then we can find some non-degenerate interval $I$ such that $\lambda^{-1}(n)\cap I=\emptyset$. By shrinking the interval $I$ further we can assume that it is open.
Then $(I;{<}|_I,S \cap I,T \cap I)$ is isomorphic to $(\mathbb{Q};<,S,T)$ and thus we can apply the induction hypothesis to the colouring $\lambda|_I$ which by our assumption avoids the colour $n$.
\end{proof}

	Now we are ready to show one of the main results of this section.

\begin{theorem}\label{not_interpretable_part}
	$(\mathbb{Q};<)$ does not interpret $(\mathbb{Q};<,S,T)$.
\end{theorem}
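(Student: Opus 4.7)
The plan is to argue by contradiction; suppose $(\mathbb{Q};<,S,T)$ has an interpretation in $(\mathbb{Q};<)$. By Lemma~\ref{nice_reducts}, we may identify it with a first-order reduct of $\johnord{\bar d}$ for some $\bar d=(d_1,\dots,d_\ell)$, via a bijection between $\mathbb{Q}$ and $\bigcup_{i\in [\ell]}\{i\}\times\qup{d_i}$; let $\lambda:=\pi_1$ be the induced colouring of $\mathbb{Q}$. Since each $\pi_1^{-1}(i)$ is a relation of $\johnord{\bar d}$ and hence first-order definable in the target, and since $S\cap T=\emptyset$, the sets $S$ and $T$ must be unions of fibres of $\lambda$; in particular $\lambda(S)\cap\lambda(T)=\emptyset$. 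Applying Lemma~\ref{colours} to $\lambda$ then yields a non-degenerate interval $I\subseteq\mathbb{Q}$ and colours $s\in\lambda(S)$, $t\in\lambda(T)$ such that both $\lambda^{-1}(s)$ and $\lambda^{-1}(t)$ are dense in $I$.

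By Lemma~\ref{lexicographic}, the restriction of $<$ to $\lambda^{-1}(s)\cong\qup{d_s}$ agrees with a lexicographic order for some permutation and signs; likewise for $\lambda^{-1}(t)$. After relabelling and reversing coordinates we may assume both are the standard lexicographic order, so that fibres of $\lambda^{-1}(s)\cap I$ obtained by fixing all but the least significant coordinate are order-isomorphic to open sub-intervals of $\mathbb{Q}$. The idea now is to play the density of $\lambda^{-1}(t)$ in such a fibre against the rigid cross-order structure. The restriction of $<$ to $\lambda^{-1}(s)\cup\lambda^{-1}(t)$ is definable in $\johnord{\bar d}$ using only the binary relations $<^*_{ij}$, $=^*_{ij}$ and the unary predicates, so the order between $\bar a\in\qup{d_s}$ and $\bar c\in\qup{d_t}$ is determined by the joint quantifier-free type of $(\bar a,\bar c)$, of which there are only finitely many.

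The final step is to show that no cross-order formula can be simultaneously compatible with the linearity of $<$ on $\lambda^{-1}(s)\cup\lambda^{-1}(t)$ and with the density of $\lambda^{-1}(t)$ inside each $s$-fibre described above. The phenomenon is already visible when $d_s=d_t=1$: then $\lambda^{-1}(s)$ and $\lambda^{-1}(t)$ are each identified with $\mathbb{Q}$, and every Boolean combination of $<^*_{11}$ and $=^*_{11}$ that yields a linear order on the union either separates all $s$-elements from all $t$-elements (contradicting density of $\lambda^{-1}(s)$ in $I$) or forces some pair $(t,a),(s,a)$ to be strictly consecutive --- as happens with $\phi_{st}(x,y):=(x<^*_{11}y)$, which gives $(t,a)<(s,a)$ with nothing in between --- violating the density of $<$ on the target $\mathbb{Q}$. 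For general $d_s,d_t$ one produces the adjacent pair by restricting to a small sub-interval of an $s$-fibre in which only the least significant lex coordinate of $\bar a$ varies, and tracking which coordinate of a putative intermediate $\bar c\in\qup{d_t}$ is pinned inside this controlling interval. The main obstacle is the combinatorial case analysis for arbitrary $d_s,d_t$: the joint types and candidate cross-order formulas proliferate quickly, and one must argue uniformly that each either fails linearity or produces adjacent distinct elements in the target order.
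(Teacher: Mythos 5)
Your setup is exactly the paper's: reduce via Lemma~\ref{nice_reducts} to a reduct of $\johnord{\bar d}$, observe that $\lambda(S)\cap\lambda(T)=\emptyset$, apply Lemma~\ref{colours} to get an interval $I$ with both $\lambda^{-1}(s)$ and $\lambda^{-1}(t)$ dense, and apply Lemma~\ref{lexicographic} to each colour class. Your intuition that the obstruction is the tension between density and the rigid, finitely-determined cross-order between $s$-elements and $t$-elements is also correct. But the final step is a genuine gap, and you acknowledge it yourself: you believe one must ``argue uniformly'' through a proliferating case analysis over joint types, and you do not do so. That case analysis is not what the paper does, and you are not in a position to declare the theorem proved.

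The paper's missing idea is a reduction to a two-case argument, and you already have all the ingredients. After applying Lemma~\ref{lexicographic} to $\prec|_S$ and $\prec|_T$ and shrinking to a common sub-interval $J$ of $I$, every $s$-element in $J$ varies in only a single coordinate $j_1$ (the remaining $d_1-1$ coordinates are constant on $J$), and likewise $t$-elements vary only in a coordinate $j_2$. Define $\alpha_1\colon J\cap S\to\mathbb{Q}$ and $\alpha_2\colon J\cap T\to\mathbb{Q}$ by projecting onto $j_1$ and $j_2$. Each $\alpha_i$ is an order-preserving or order-reversing bijection onto an interval of $\mathbb{Q}$, and after shrinking $J$ once more one can assume that the images of $\alpha_1,\alpha_2$ avoid the finitely many fixed coordinate values. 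Now the quantifier-free type (in $\johnord{d_1,d_2}$) of a pair $(u,v)\in (J\cap S)\times(J\cap T)$ is determined by the types of $u$, $v$, and the order relation between $\alpha_1(u)$ and $\alpha_2(v)$. Two cases: if the images of $\alpha_1$ and $\alpha_2$ are \emph{disjoint}, then all such pairs have the same type and hence the same $\prec$-relation, contradicting density of both $S$ and $T$ in $(J,\prec)$. If they \emph{intersect}, pick $r$ in the intersection, set $u=\alpha_1^{-1}(r)$, $v=\alpha_2^{-1}(r)$, assume $u\prec v$, and use density to find $w\in J\cap S$ with $u\prec w\prec v$; tracking whether $\alpha_1$ is order-preserving or order-reversing shows that \emph{every} $u'\in J\cap S$ then satisfies $u'\prec v$, and repeating the argument shows $J\cap S\prec J\cap T$ pointwise, again contradicting density. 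No combinatorial explosion over general $d_s,d_t$ occurs because Lemma~\ref{lexicographic} already collapses everything to the single-coordinate maps $\alpha_1,\alpha_2$; the case split is only disjoint-versus-intersecting images.
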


\begin{proof}
	Suppose for contradiction that $\mathbb{Q}$ interprets $(\mathbb{Q};<,S,T)$. By Lemma~\ref{nice_reducts} we know that there exists a $\bar{d}\in {\mathbb N}^{\ell}$, 
    a first-order reduct $\fc$ of  
    $\johnord{{\bar{d}}}$,
    and an isomorphism $\alpha \colon \fc \to (\mathbb{Q};<,S,T)$. 
    Let us now consider the colouring $\lambda := \pi_1 \alpha^{-1} \colon {\mathbb Q} \to [\ell]$. Note that for a given $i \in [\ell]$ all elements in $\pi_1^{-1}(i)$ lie in the same orbit in $\aut(\fc)$. This implies that $\lambda(S)\cap \lambda(T)=\emptyset$, and thus we can apply Lemma~\ref{colours}. Let $s\in {\lambda(S)},t\in {\lambda(T)}$ and $I'$ be the interval from the conclusion of Lemma~\ref{colours}. We can assume without loss of generality that $s=1$ and $t=2$. Let $\fb=\johnord{d_1,d_2}\subseteq \johnord{\bar{d}}$, and let $\prec\coloneqq \alpha_B^{-1}(<)$ and $I\coloneqq \alpha_B^{-1}(I')$. Then by our construction $\prec$ is definable in $\fb$, $I$ is a $\prec$-interval in $B$, and $\pi_1^{-1}(1)$ and $\pi_1^{-1}(2)$ are both dense in $(I;\prec)$. By abusing notation we also write $S=\pi_1^{-1}(1)$ and $T=\pi_1^{-1}(2)$.

	Let us now apply Lemma~\ref{lexicographic} to the order ${\prec}|_S$. 
    We can conclude that there exist some non-degenerate subinterval $J'$ of $I$, some $j_1\in [d_1]$, and a relation $R_1\in \{<,>\}$ such that for all $(1,a),(1,b)\in J'$ we have $(a=b)\supseteq [d_1]\setminus \{j_1\}$ and $a\prec b$ if and only if $a_{j_1}R_1b_{j_1}$. By repeating the argument for the order ${\prec}|_T$ we can find some non-degenerate subinterval $J$ of $J'$, some $j_2\in [d_2]$, and a relation $R_2\in \{<,>\}$ such that for all $(2,a),(2,b)\in J$ we have $(a=b)\supseteq [d_2]\setminus \{j_2\}$ and $a\prec b$ if and only if $a_{j_2}R_2b_{j_2}$. Define 
\[
\alpha_1 \colon J\cap S\rightarrow \mathbb{Q}, (1,a)\mapsto a_{j_1}, \alpha_2 \colon  J\cap T\rightarrow \mathbb{Q}, (2,a)\mapsto a_{j_2}, \alpha\coloneqq \alpha_1\cup \alpha_2.
\]

	Note that the maps $\alpha_1$ and $\alpha_2$ are both either order-preserving or order-reversing, and the images $\alpha_1$ and $\alpha_2$ are both intervals in $(\mathbb{Q};<)$. Let $$A\coloneqq \{a_1,\dots,a_{j_1-1},a_{j_1+1},\dots,a_{d_1},b_1,\dots,b_{j_2-1},b_{j_2+1},\dots,b_{d_2}\}$$ for either (or equivalently for all) $(1,a),(2,b)\in J$. By shrinking the interval $J$ even further we can assume without loss of generality that the image of $\alpha$ is disjoint from $A$. Note that in this case the type of any pair $(u,v)\in J^2$ is determined by the types of $u$ and $v$ and $(\alpha(u),\alpha(v))$. Now if the intervals $\alpha(J\cap S)=\alpha_1(J\cap S)$ and $\alpha(J\cap T)=\alpha_2(J\cap T)$ are disjoint then each pair $(u,v)\in (J\cap S,J\cap T)$ has the same type {with respect to} $\fb$ and thus also in $(I;\prec,S,T)$ which contradicts our assumption that both $S$ and $T$ are dense in $J$. So we can assume that this is not the case, {i.e., $\alpha(J\cap S)\cap \alpha(J\cap T)\neq \emptyset$. {In this case this intersection must contain some non-degenerate interval $L\subseteq (\mathbb{Q};<)$}. Let $r\in {L}$} and put $u\coloneqq \alpha_1^{-1}(r),v\coloneqq \alpha_2^{-1}(r)$. We can assume without loss of generality that $u\prec v$. Since {$S$ is dense} in $J$ we obtain that there exists some $w\in J\cap S$ such that $u\prec w\prec v$. Let us first assume that $\alpha_1(w)>\alpha_1(u)$, the other case is analogous. In this case $\alpha_1$ is order-preserving. Let $u'\in J\cap S$ be arbitrary. If $\alpha_1(u')\leq \alpha_1(u)$ then $u'\preceq u\prec v$. On the other hand, if $\alpha_1(u')>\alpha_1(u)$ then the pairs $(w,v)$ and $(u',v)$ have the same type with respect to $\fb$. In particular, $u'\prec v$. Since the same argument can be repeated for all ${r\in L}$ we obtain that for all $(u,v)\in {\alpha_1^{-1}(L)\times \alpha_2^{-1}(L)}$ the inequality $u\prec v$ holds. In this case however neither $S$ nor $T$ is dense in $J$, a contradiction.
\end{proof}

	Lemma~\ref{mc_core_part} and Theorem~\ref{not_interpretable_part} imply the following.
	
\begin{corollary}\label{not_closed_mc_q}
	$\I((\mathbb{Q};<))$ is not closed under taking model companions.
\end{corollary}

	We next show using Theorem~\ref{not_interpretable_part} that the generic permutation and the dense local order are also not interpretable in $(\mathbb{Q};<)$. 

\begin{lemma}\label{prem_inter_part}
	$(\mathbb{Q};<)*(\mathbb{Q};<)$ interprets $(\mathbb{Q};<,S,T)$.
\end{lemma}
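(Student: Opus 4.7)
Write $\fg \coloneqq (\mathbb{Q};<)*(\mathbb{Q};<)$, with its two linear orders $<_1$ and $<_2$ and domain $A$. My plan is to give a concrete two-dimensional interpretation of $(\mathbb{Q};<,S,T)$ in $\fg$. Since $\fg$ is transitive on its domain, no one-dimensional interpretation can produce two non-trivial unary predicates $S$ and $T$; however, once one passes to ordered pairs $(a,b)$ with $a <_1 b$, the binary type of the pair splits naturally into the ``order-preserving'' case $a<_2 b$ and the ``order-reversing'' case $a>_2 b$. These two cases will play the roles of $S$ and $T$.

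Concretely, the interpretation I would write down has domain
\[
D \coloneqq \{(a,b)\in A^2 : a <_1 b\},
\]
and identity partial coordinate map $I \colon D\to D$ (so the kernel of $I$, i.e., equality of pairs, is trivially definable in $\fg$). The relations are then interpreted as follows: the order $<$ by the lexicographic order $(a,b) <^I (c,d)$ iff $a <_1 c$ or $(a=c \wedge b <_1 d)$; the predicate $S$ by $S^I\coloneqq\{(a,b)\in D : a <_2 b\}$; and the predicate $T$ by $T^I\coloneqq\{(a,b)\in D : a >_2 b\}$. Every defining formula is quantifier-free in the signature of $\fg$, so this yields an interpretation in the sense of Section~\ref{sect:prelims}.

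To close the argument, I would verify that $(D;<^I,S^I,T^I)$ is isomorphic to $(\mathbb{Q};<,S,T)$ by invoking the standard back-and-forth characterization: a countable $\{<,S,T\}$-structure is isomorphic to $(\mathbb{Q};<,S,T)$ iff $<$ is a dense linear order without endpoints, $(S,T)$ partitions the domain, and both $S$ and $T$ are dense in every non-empty open $<$-interval. That $<^I$ is a linear order without endpoints on the countable set $D$, and that $(S^I,T^I)$ partitions $D$, are immediate from the definitions and from the fact that distinct elements of $\fg$ are comparable in both $<_1$ and $<_2$. The main obstacle—and really the only substantive step—is verifying that both $S^I$ and $T^I$ are dense in every non-empty $<^I$-interval. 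This splits into two cases: for an interval $((a_1,b_1),(a_2,b_2))$ with $a_1 <_1 a_2$, pick $a$ strictly $<_1$-between $a_1$ and $a_2$, and then $b >_1 a$ in the desired $<_2$-half-plane relative to $a$; for an interval with $a_1 = a_2$, pick $b$ strictly $<_1$-between $b_1$ and $b_2$ with the desired $<_2$-relation to $a_1$. Both cases reduce to the genericity of $\fg$: each of the four quadrants of any element is dense in every $<_1$-interval (and in every $<_2$-interval), because every finite $\{<_1,<_2\}$-extension of a given finite configuration embeds into $\fg$.
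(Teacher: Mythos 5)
Your proposal is correct and matches the paper's proof essentially line for line: same domain $D=\{(a,b):a<_1 b\}$, same lexicographic $<$, same definitions of $S$ and $T$ via the sign of $<_2$, and the same density argument from the genericity of $(\mathbb{Q};<)*(\mathbb{Q};<)$. The only cosmetic difference is that you carry out the density check directly by splitting on whether the two endpoints share a first coordinate, whereas the paper first reduces to intervals of the special form $[(a,b),(a,c)]$; both reductions are immediate and equivalent.
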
 

\begin{proof}
	We write $<_1$ and $<_2$ for the two orders of $(\mathbb{Q};<)*(\mathbb{Q};<)$. Let $I=\id_D$ where $$D=\{(a,b)\in ((\mathbb{Q};<)*(\mathbb{Q};<))^2: a<_1b\}.$$ We define the relations $<_1,S,T$ on $D$ as follows.
\begin{align*}
	(a,b)<(c,d)&\Leftrightarrow (a <_1 c\vee (a=c\wedge b <_1 d))\\
	(a,b)\in S&\Leftrightarrow a<_2b\\
	(a,b)\in T&\Leftrightarrow a>_2b.
\end{align*}

	Clearly, $I$ is an interpretation of $(D;<,S,T)$ in $(\mathbb{Q};<)*(\mathbb{Q};<)$. We now show that $(D;<,S,T)$ is isomorphic to $(\mathbb{Q};<,S,T)$. It is clear that $(D;<)$ is isomorphic to $(\mathbb{Q};<)$ and $(S,T)$ is a partition of $D$. It remains to show that $S$ and $T$ are dense in $D$. For this it is enough to show that $S$ and $T$ intersect all intervals of the form $J= [(a,b),(a,c)]: a<b<c$. By the genericity of $(\mathbb{Q};<)*(\mathbb{Q};<)$ we can find $d,e$ such that $a<_1b<_1d<_1e<_1c$ and $d<_2{a}<_2e$. Then by definition $(a,d),(a,e)\in J, (a,e)\in S, (a,d)\in T$, so $J\cap S,J\cap T\neq\emptyset$. This shows that $(D;<,S,T)$ is isomorphic to $(\mathbb{Q};<,S,T)$ which implies that $(\mathbb{Q};<)*(\mathbb{Q};<)$ interprets $(\mathbb{Q};<,S,T)$.
\end{proof}

\begin{corollary}\label{not_interpretable_perm}
	$(\mathbb{Q};<)$ does not interpret $(\mathbb{Q};<)*(\mathbb{Q};<)$.
\end{corollary}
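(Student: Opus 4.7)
The plan is to derive this corollary immediately from the two results just established: Theorem~\ref{not_interpretable_part}, which says that $(\mathbb{Q};<)$ does not interpret $(\mathbb{Q};<,S,T)$, and Lemma~\ref{prem_inter_part}, which says that $(\mathbb{Q};<)*(\mathbb{Q};<)$ does interpret $(\mathbb{Q};<,S,T)$. The key background tool is the transitivity of the interpretability relation, which was noted right after the definition of interpretation in Section~\ref{sect:prelims}.

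First, I would argue by contradiction: suppose $(\mathbb{Q};<)$ interprets $(\mathbb{Q};<)*(\mathbb{Q};<)$. Then by Lemma~\ref{prem_inter_part}, $(\mathbb{Q};<)*(\mathbb{Q};<)$ interprets $(\mathbb{Q};<,S,T)$. Since interpretability is transitive, composing these interpretations yields an interpretation of $(\mathbb{Q};<,S,T)$ in $(\mathbb{Q};<)$. This directly contradicts Theorem~\ref{not_interpretable_part}.

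There is essentially no obstacle here, since all the work has been done in Theorem~\ref{not_interpretable_part} (where the real combinatorial argument involving Lemma~\ref{lexicographic} and the colouring lemma~\ref{colours} is carried out) and in Lemma~\ref{prem_inter_part} (where the explicit interpretation is written down). The only thing worth double-checking is that the interpretation $I = \id_D$ constructed in the proof of Lemma~\ref{prem_inter_part} really is an interpretation in the technical sense of Section~\ref{sect:prelims} (i.e., that its domain, kernel, and preimages of atomic relations are all first-order definable in $(\mathbb{Q};<)*(\mathbb{Q};<)$), which is clear from the explicit formulas given there.
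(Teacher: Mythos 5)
Your proposal is correct and matches the paper's own proof exactly: both derive the corollary from Theorem~\ref{not_interpretable_part} and Lemma~\ref{prem_inter_part} via transitivity of interpretability.
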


\begin{proof}
	Follows from Theorem~\ref{not_interpretable_part} and Lemma~\ref{prem_inter_part}.
\end{proof}

\begin{corollary}\label{not_interpretable_s2}
	$(\mathbb{Q};<)$ does not interpret $S(2)$.
\end{corollary}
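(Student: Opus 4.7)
The plan is to reduce Corollary~\ref{not_interpretable_s2} to Theorem~\ref{not_interpretable_part} by going through the expansion of $S(2)$ by a single constant. Assume for contradiction that $(\mathbb{Q};<)$ interprets $S(2)$. Fix an arbitrary element $c \in \mathbb{S}$. By Corollary~\ref{q_add_constants2}, the class $\I((\mathbb{Q};<))$ is closed under expansions by finitely many constants, so $(\mathbb{Q};<)$ also interprets the one-constant expansion $(S(2);c)$.

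Next I would invoke the first part of Lemma~\ref{s2_part}, which says that $(S(2);c)$ interprets $(\mathbb{Q};<,S,T)$. Combining this with the previous step and the transitivity of interpretability (noted in Section~\ref{sect:prelims} just after the definition of interpretations), I conclude that $(\mathbb{Q};<)$ interprets $(\mathbb{Q};<,S,T)$. This directly contradicts Theorem~\ref{not_interpretable_part}, completing the proof.

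There is no real obstacle here: the heavy lifting has already been done in Theorem~\ref{not_interpretable_part} (through the lexicographic-order analysis of definable orders on $\johnord{\bar d}$ and the colouring argument via Lemma~\ref{colours}), and the two bridging facts we need are exactly Lemma~\ref{s2_part} and Corollary~\ref{q_add_constants2}. The only conceptual point worth emphasising is why the passage to the expansion by a constant is necessary: Lemma~\ref{s2_part} only gives an interpretation of $(\mathbb{Q};<,S,T)$ in $(S(2);c)$, not in $S(2)$ itself (this is unavoidable, since the partition into $S$ and $T$ is defined from the pair consisting of $c$ and its antipode). Corollary~\ref{q_add_constants2} is precisely the tool that allows us to absorb this constant back into the interpretation over $(\mathbb{Q};<)$.
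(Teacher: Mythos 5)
Your proof is correct and is essentially identical to the paper's own argument: both pass to the expansion $(S(2);c)$ via Corollary~\ref{q_add_constants2}, then apply the first part of Lemma~\ref{s2_part} and transitivity to contradict Theorem~\ref{not_interpretable_part}. The closing remark on why the constant is needed is a nice touch, but the reasoning matches the paper's proof step for step.
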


\begin{proof}
	Suppose for contradiction that $(\mathbb{Q};<)$ interprets $S(2)$. Let $c\in \cc$. Then by Corollary~\ref{q_add_constants2} we know that $(\mathbb{Q};<)$ also interprets $(S(2),c)$, and thus by the first statement of Lemma~\ref{s2_part} also $(\mathbb{Q};<,S,T)$. However, this is impossible by Theorem~\ref{not_interpretable_part}.
\end{proof}

	Finally, we show a strengthening of Corollary~\ref{not_interpretable_s2} which will be relevant in our discussion in Subsection~\ref{sect:slow}.
	By~\cite[Section 9]{Bennett-thesis}, we know that the structure $S(2)$ has five nontrivial first-order reducts. However, four of these are bidefinable with reducts of $(\mathbb{Q};<)$. The automorphism group of the remaining one can be written as the union of $\aut(S(2))$ and the set of anti-automorphism of $S(2)$, i.e., the bijections of $S(2)$ reversing $\prec$. On the relational side, this first-order reduct, similarly as in the case of $(\mathbb{Q};<)$, can be described by the betweenness relation $\betw$ defined as
$$\betw\coloneqq \{(x,y,z)\in \cc^3: x\prec y\prec z\vee z\prec y\prec x\}.$$
	For the sake of completeness we provide a proof for this correspondence, and then we show that $(\cc,\betw)$ is not interpretable in $(\mathbb{Q};<)$.
	
\begin{proposition}\label{s2betw}
	$\aut(\cc;\betw)$ is the union of the set of automorphisms and anti-automorphisms of $S(2)$.
\end{proposition}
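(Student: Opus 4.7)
The inclusion $\aut(S(2)) \cup \{\text{anti-automorphisms of }S(2)\} \subseteq \aut(\cc;\betw)$ is immediate: the defining disjunction $a \prec b \prec c \vee c \prec b \prec a$ is invariant under the involution $\prec \leftrightarrow \succ$, so any anti-automorphism preserves $\betw$, and automorphisms of $S(2)$ trivially do. For the reverse inclusion, let $\sigma \in \aut(\cc;\betw)$. Since $\cc$ contains no antipodal points, any two distinct points are $\prec$-comparable; I assign to each pair $(a,b)$ with $a \prec b$ a label $\mathrm{beh}(a,b) \in \{+,-\}$, being $+$ if $\sigma(a) \prec \sigma(b)$ and $-$ if $\sigma(b) \prec \sigma(a)$. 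It suffices to show $\mathrm{beh}$ is constant: if $\mathrm{beh} \equiv +$ then $\sigma \in \aut(S(2))$, and if $\mathrm{beh} \equiv -$ then $\sigma$ is an anti-automorphism of $S(2)$.

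Call a triple with $a \prec b$, $b \prec c$, and $a \prec c$ \emph{linear}. On such a triple, $\betw(a,b,c)$ holds, so preservation of $\betw$ by $\sigma$ forces either $\sigma(a) \prec \sigma(b) \prec \sigma(c)$ or $\sigma(c) \prec \sigma(b) \prec \sigma(a)$; in either case the three pairs $(a,b), (b,c), (a,c)$ receive a common value of $\mathrm{beh}$. I then chain linear triples using auxiliary elements supplied by density of $\cc$ in the circle. Given $a \prec b$ and $a \prec c$, both $b$ and $c$ lie in the forward half-circle $H_a$ from $a$, and the intersection $H_a \cap H_b \cap H_c$ of the three forward half-circles is a non-empty open arc (a direct computation in angle coordinates, using that $\cc$ has no antipodal points); by density we pick $d \in \cc$ in this arc, so that $(a,b,d)$ and $(a,c,d)$ are both linear and jointly yield $\mathrm{beh}(a,b) = \mathrm{beh}(a,d) = \mathrm{beh}(a,c)$. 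A symmetric argument using backward half-circles equates $\mathrm{beh}$ on all pairs sharing a common second element. Finally, to compare $(c,a)$ with $c \prec a$ to $(a,b)$ with $a \prec b$, pick $c' \in \cc$ in the (non-empty) intersection of the backward half-circles from $a$ and $b$ to produce a linear triple $c' \prec a \prec b$, giving $\mathrm{beh}(c',a) = \mathrm{beh}(a,b)$; combining with the previous case yields $\mathrm{beh}(c,a) = \mathrm{beh}(c',a) = \mathrm{beh}(a,b)$.

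Since any two elements of $\cc$ are $\prec$-comparable, the three cases above show $\mathrm{beh}$ is constant, completing the proof. The main delicacy is steering around \emph{cyclic} configurations (where $a \prec b$, $b \prec c$, $c \prec a$ and $\betw$ holds for all six permutations, thereby conveying no orientation information): the choice of auxiliary elements above is engineered precisely so that every triple used in the chain of equalities is linear rather than cyclic, which is the only regime in which $\betw$ determines $\prec$ up to reversal.
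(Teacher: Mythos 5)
Your strategy is a legitimate variant of the paper's: both arguments propagate an orientation through chains of $\betw$-triples, but where the paper connects two arbitrary pairs $a \prec b$ and $c \prec d$ by a single chain $a \prec b \prec e \prec f \prec c \prec d$ (interpolating $e,f$ via density and the no-antipodes assumption) and reads off the orientation of consecutive links, you instead introduce a $\{+,-\}$-valued function $\mathrm{beh}$ on ordered pairs and show it is constant via three overlapping cases. The paper's route is slightly leaner because it only ever compares \emph{consecutive} pairs of a $\betw$-triple.

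There is, however, a genuine gap in your central claim about linear triples. From $\betw(a,b,c)$ and preservation by $\sigma$ you may indeed conclude $\sigma(a) \prec \sigma(b) \prec \sigma(c)$ or $\sigma(c) \prec \sigma(b) \prec \sigma(a)$, and this gives $\mathrm{beh}(a,b) = \mathrm{beh}(b,c)$. But it does \emph{not} by itself give $\mathrm{beh}(a,c)$: in a circular order, $\sigma(a) \prec \sigma(b)$ and $\sigma(b) \prec \sigma(c)$ do not imply $\sigma(a) \prec \sigma(c)$, since the cyclic configuration $\sigma(a) \prec \sigma(b) \prec \sigma(c) \prec \sigma(a)$ is a priori possible. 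Your chaining in Step 1 (equating $\mathrm{beh}(a,b) = \mathrm{beh}(a,d)$, both pairs sharing the first coordinate) and in Step 3 (obtaining $\mathrm{beh}(c',a) = \mathrm{beh}(a,b)$ from the linear triple $c' \prec a \prec b$) each use precisely the $\mathrm{beh}$-value of the \emph{outer} pair of the triple, so the gap is load-bearing. The fix is short: since $\sigma$ is an automorphism it also preserves the complement of $\betw$, and on a linear triple $a \prec b$, $b \prec c$, $a \prec c$ one has $\neg\betw(a,c,b)$; if $\sigma(a) \prec \sigma(b) \prec \sigma(c)$ held together with $\sigma(c) \prec \sigma(a)$, then $\sigma(b) \prec \sigma(c) \prec \sigma(a)$ would give $\betw(\sigma(a),\sigma(c),\sigma(b))$, contradicting preservation of $\neg\betw$. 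With this observation inserted, the rest of your argument goes through; alternatively, the paper's single-chain approach avoids the issue entirely by never needing the outer pair of any triple.
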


\begin{proof}
	Clearly, every automorphism and anti-automorphism of $S(2)$ preserves $\betw$. 
    For the other direction, choose $\alpha\in \aut(\cc;\betw)$ and 
    $a,b,c,d \in \cc$ arbitrarily such that $a \prec b$ and $c \prec d$. We have to show that the pairs $(\alpha(a),\alpha(b))$ and $(\alpha(c),\alpha(d))$ are ordered in the same way. It follows from the definition of $\prec$ that we can find $e,f\in \cc$ such that $a\prec b\prec e\prec f\prec c\prec d$. Then $(a,b,e),(b,e,f),(e,f,c),(f,c,d)\in \betw$. Since $\alpha$ preserves $\betw$, this implies that either \begin{align*}
        \alpha(a) & \prec \alpha(b)\prec \alpha(e)\prec \alpha(f)\prec \alpha(c)\prec \alpha(d) \\
        \text{ or } \quad \alpha(a) & \succ \alpha(b)\succ \alpha(e)\succ \alpha(f)\succ \alpha(c)\succ \alpha(d),
    \end{align*} 
        which implies what we wanted to show.
\end{proof}

	Proposition~\ref{s2betw} immediately implies the following.
	
\begin{corollary}\label{local_easy}
	Let $c,d\in \cc, c\neq d$. Then $\aut(\cc;\betw)_{c,d}=\aut(\cc;\prec)_{c,d}$; in particular, $(\cc;\prec)$ is a first-order reduct of $(\cc;\betw,c,d)$.
\end{corollary}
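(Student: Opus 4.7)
The plan is to handle the two inclusions of the claimed equality of stabilizers separately. The inclusion $\aut(\cc;\prec)_{c,d} \subseteq \aut(\cc;\betw)_{c,d}$ is immediate, since $\betw$ is quantifier-free definable from $\prec$. For the reverse inclusion, I would invoke Proposition~\ref{s2betw}: any $\alpha \in \aut(\cc;\betw)_{c,d}$ is either an automorphism or an anti-automorphism of $(\cc;\prec)$. So the whole task reduces to ruling out that any anti-automorphism of $(\cc;\prec)$ can fix two distinct points.

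This last step is the heart of the argument but is very short. Since $c \neq d$, exactly one of $c \prec d$ or $d \prec c$ holds (recall that $\prec$ is defined on a set of circle points containing no antipodal pair, so $\prec$ is a total ``local'' order on any two distinct points). Without loss of generality assume $c \prec d$. If $\alpha$ were an anti-automorphism of $(\cc;\prec)$ fixing $c$ and $d$, then by definition $c \prec d$ would imply $\alpha(d) \prec \alpha(c)$, i.e., $d \prec c$, a contradiction. Hence $\alpha$ must be an automorphism of $(\cc;\prec)$, proving $\aut(\cc;\betw)_{c,d} \subseteq \aut(\cc;\prec)_{c,d}$.

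For the ``in particular'' part, I would appeal to $\omega$-categoricity. The structure $S(2)$ is homogeneous in a finite binary signature (see Notation~\ref{not:S2}) and hence $\omega$-categorical, and adding two constants preserves $\omega$-categoricity. By the theorem of Engeler, Svenonius, and Ryll-Nardzewski, a relation is first-order definable in an $\omega$-categorical structure if and only if it is preserved by its automorphism group. Since $\aut(\cc;\betw,c,d) = \aut(\cc;\betw)_{c,d} = \aut(\cc;\prec)_{c,d} \subseteq \aut(\cc;\prec)$, the relation $\prec$ is preserved by $\aut(\cc;\betw,c,d)$ and therefore first-order definable in $(\cc;\betw,c,d)$. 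This shows that $(\cc;\prec)$ is a first-order reduct of $(\cc;\betw,c,d)$. I do not anticipate any real obstacle here: the sole substantive step is the short symmetry argument above, and everything else is bookkeeping with the standard Galois correspondence for $\omega$-categorical structures.
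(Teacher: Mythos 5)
Your proof is correct and follows essentially the same route as the paper, which simply remarks that Proposition~\ref{s2betw} ``immediately implies'' the result; you have spelled out the ``immediate'' step (that an anti-automorphism of $(\cc;\prec)$ cannot fix two distinct points, since $\prec$ is asymmetric on any pair of distinct non-antipodal circle points) and supplied the standard Ryll-Nardzewski bookkeeping for the ``in particular'' clause. No gaps.
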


\begin{corollary}\label{not_interpretable_circular}
	$(\mathbb{Q};<)$ does not interpret $(\cc;\betw)$.
\end{corollary}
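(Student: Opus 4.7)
The plan is to mimic the proof of Corollary~\ref{not_interpretable_s2}, but going via the expansion by \emph{two} constants rather than one, using Corollary~\ref{local_easy} in place of the first statement of Lemma~\ref{s2_part}. Suppose for contradiction that $(\mathbb{Q};<)$ interprets $(\cc;\betw)$. Pick any two distinct points $c,d\in \cc$. By Corollary~\ref{q_add_constants2}, the class $\I(\order)$ is closed under expansions by finitely many constants, so $(\mathbb{Q};<)$ also interprets $(\cc;\betw,c,d)$.

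Next I would invoke Corollary~\ref{local_easy}, which tells us that $(\cc;\prec)=S(2)$ is a first-order reduct of $(\cc;\betw,c,d)$. Since the class of structures interpretable in a fixed structure is closed under first-order reducts (one simply composes the defining formulas with the interpretation in the obvious way, which is why we implicitly use this closure throughout the section), it follows that $(\mathbb{Q};<)$ interprets $S(2)$. This directly contradicts Corollary~\ref{not_interpretable_s2}, completing the proof.

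There is no real obstacle here beyond stringing together the previous corollaries in the right order; the substantive content (non-interpretability of $S(2)$ in $\order$, the equality of stabilizers $\aut(\cc;\betw)_{c,d}=\aut(\cc;\prec)_{c,d}$, and the closure of $\I(\order)$ under naming constants) has already been established. The key conceptual point the proof is packaging is that although $(\cc;\betw)$ has a strictly smaller automorphism group's complement than $(\cc;\prec)$ at the global level (anti-automorphisms are added), naming just two points breaks that extra symmetry and recovers $S(2)$ as a reduct; hence any interpretation of the betweenness structure would yield, after adding two constants, an interpretation of the forbidden $S(2)$.
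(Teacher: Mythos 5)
Your proof is correct and follows exactly the same route as the paper's: expand $(\cc;\betw)$ by two constants via Corollary~\ref{q_add_constants2}, recover $S(2)$ as a reduct via Corollary~\ref{local_easy}, and contradict Corollary~\ref{not_interpretable_s2}.
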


\begin{proof}
	Suppose for contradiction that $(\mathbb{Q};<)$ interprets $(\cc;\betw)$. Let $c,d\in \cc, c\neq d$. Then by Corollary~\ref{q_add_constants2} we know that $(\mathbb{Q};<)$ also interprets $(\cc;\betw,c,d)$, and thus also $S(2)$ by Corollary~\ref{local_easy}. This contradicts Corollary~\ref{not_interpretable_s2}.
\end{proof}

\subsubsection{Slow unlabelled growth and interpretability}\label{sect:slow}
Recall that we denote the number of orbits of the natural action of $\aut(\fa)$ on $n$-element subsets of $\fa$ by $u_n(\fa)$.
	By Theorem 10.4 in~\cite{bodor2025structures} we know (by stating the contrapositive) that if an $\omega$-categorical structure $\fa$ is not interpretable in $\order$ then there exists some polynomial $p$ such that $u_n(\fa)>2^n/p(n)$ for all $n$. On the other hand, let us observe that the unlabelled growth of $(\mathbb{Q};<,S,T)$ is $2^n$ meaning that outside $\I(\order)$ the unlabelled growth of $(\mathbb{Q};<,S,T)$ is close to being as slow as possible. A slightly slower unlabelled growth is attained by $S(2)$ and its betweenness reduct. In the case of $S(2)$ this is the Sloane sequence~\href{https://oeis.org/A000016}{A000016} which is asymptotically $2^{n-1}/n$~\cite{brouwer1980enumeration}, and thus $u_n((\cc;\betw))\sim 2^{n-2}/n$. As far as we know $(\cc;\betw)$ has the slowest possible unlabelled growth outside $\I(\order)$.
	
\begin{conjecture}\label{conj:slow1}
	Let $\fa$ be an $\omega$-categorical structure which is not interpretable in $\order$. Then $u_n(\fa)\geq u_n((\cc;\betw))$ if $n$ is large enough.
\end{conjecture}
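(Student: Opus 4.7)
My plan is to prove the contrapositive: if $\fa$ is an $\omega$-categorical structure with $u_n(\fa) < u_n((\cc;\betw))$ for all sufficiently large $n$, then $\fa\in \I(\order)$. Since $u_n((\cc;\betw))\sim 2^{n-2}/n$ and by the quoted Theorem~10.4 of~\cite{bodor2025structures} any structure outside $\I(\order)$ satisfies $u_n(\fa) > 2^n/p(n)$ for some polynomial $p$, the window of feasible growth rates for a potential counterexample is very narrow, which is the leverage to exploit.

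The first step is to show that any $\omega$-categorical $\fa$ with $u_n(\fa)=O(2^n/n)$ is monadically NIP. A direct route is to argue that if some expansion by unary predicates had IP, then the Sauer--Shelah lemma combined with the orbit-counting machinery of Macpherson~\cite{Macpherson-RapidGrowth} would force $u_n(\fa)\geq \binom{n}{\lfloor n/2\rfloor}/q(n)$ for a polynomial $q$, which exceeds $2^{n-2}/n$ for large $n$; alternatively one could aim to establish the relevant subexponential case of Conjecture~\ref{conj:sam} directly. Once monadic NIP is secured, Corollary~\ref{nip_stab_omega} together with the structure theory of $\omega$-categorical monadically NIP structures gives strong constraints: one can take $\fa$ to be a first-order reduct of a homogeneous finitely bounded NIP structure in a finite binary signature.

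The core of the argument would then combine Lemma~\ref{nice_reducts} (in a generalized, NIP-binary form) with a classification of the possible ``atomic pieces'' of such structures. The expected trichotomy is that on each piece the induced structure is (essentially) a linear order, a circular betweenness, an equivalence relation, or a pure set, possibly with finite covers. If only linearly ordered, equivalence and pure-set pieces appear, then using Lemma~\ref{nice_reducts} the structure is interpretable in $\order$ and we are done; if some piece induces a genuine circular structure, then it interprets a copy of $(\cc;\betw)$ with parameters, and a bounded-index orbit analysis using Corollary~\ref{q_add_constants2} forces $u_n(\fa)\geq u_n((\cc;\betw))$ up to a constant shift in $n$.

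The main obstacle will be making the last dichotomy tight. One needs additive-style lower bounds of the form $u_n(\fa)\geq u_n(\fa|_B)+u_n((\cc;\betw))$ whenever $\fa$ simultaneously carries a linearly ordered part $\fa|_B$ and a non-trivial circular reduct, in order to rule out ``hybrid'' constructions where non-interpretability over $\order$ is achieved at a cost strictly below $2^{n-2}/n$. This appears to require a refined group-theoretic analysis of $\aut(\fa)$ along its closed normal subgroups, analogous in spirit to the argument behind Theorem~10.4 of~\cite{bodor2025structures} but sensitive to constants and sub-polynomial factors. A complementary difficulty is handling finite covers (as in Subsection~\ref{sect:i_n}): a slow-growth finite cover of $(\cc;\betw)$ could in principle evade both cases above, and ruling this out is likely where the most delicate combinatorics lies.
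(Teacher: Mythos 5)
The statement you are trying to prove is labelled a \emph{conjecture} in the paper (Conjecture~\ref{conj:slow1}); the paper itself gives no proof, so there is no reference argument to compare against. Your text is a research plan rather than a proof, and you acknowledge parts of this yourself, but several of the steps you treat as available inputs are in fact open or contain a concrete logical flaw.

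First, the reduction to monadic NIP. Your primary route invokes Sauer--Shelah together with Macpherson's orbit-counting to argue that if an expansion $\fa'$ of $\fa$ by unary predicates has IP, then $u_n(\fa)$ is large. This does not go through: orbits of $\fa'$ refine orbits of $\fa$, so one has $u_n(\fa')\geq u_n(\fa)$ and not the reverse. A lower bound on the unlabelled growth of a monadic \emph{expansion} gives no lower bound at all on the unlabelled growth of $\fa$ itself. This is precisely why the connection between subexponential $u_n(\fa)$ and monadic NIP is Conjecture~\ref{conj:sam} (Braunfeld--Laskowski's Conjecture 2), which is open; your fallback ``establish the relevant subexponential case of Conjecture~\ref{conj:sam}'' is not a smaller step than the whole problem.

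Second, even granting monadic NIP, the claim that $\omega$-categorical monadically NIP structures are first-order reducts of homogeneous finitely bounded binary NIP structures is not a theorem in the literature, and it is not a consequence of Corollary~\ref{nip_stab_omega}, which is only a preservation statement under model-complete cores. Likewise the ``trichotomy of atomic pieces'' (linear order, circular betweenness, equivalence relation, pure set, together with finite covers) is a classification that currently does not exist; Lemma~\ref{nice_reducts} applies only to structures already known to be in $\I(\order)$, so you cannot use a ``generalized'' form of it to \emph{establish} membership in $\I(\order)$ without circularity. Finally, the additive bound $u_n(\fa)\geq u_n(\fa|_B)+u_n((\cc;\betw))$ and the exclusion of slow-growth finite covers of $(\cc;\betw)$ are exactly the hard combinatorial content of the conjecture, and you rightly flag them as open. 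As it stands, the proposal reduces an open conjecture to several other open conjectures plus a step that is incorrect as written.
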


    	We note that the theorem we cited above and Conjecture~\ref{conj:slow1} have analogs for primitive structures, as shown and discussed in~\cite{braunfeld2022monadic}. 
	
\begin{theorem}[\cite{braunfeld2022monadic}, Theorem 1.2]
	Let $\fa$ be a primitive $\omega$-categorical structure which is not isomorphic to a first-order reduct of $\order$. Then there exists some polynomial $p$ such that $u_n(\fa)>2^n/p(n)$ for all $n$.
\end{theorem}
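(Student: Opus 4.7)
The plan is to prove the contrapositive: if $u_n(\fa) \leq 2^n/p(n)$ for every polynomial $p$ (and infinitely many $n$), then $\fa$ is isomorphic to a first-order reduct of $\order$. Throughout, let $G \coloneqq \Aut(\fa)$ acting on $A$; by $\omega$-categoricity, $u_n(\fa)$ equals the number of $G$-orbits on $[A]^n$, and primitivity of $\fa$ is the statement that $G$ preserves no non-trivial equivalence relation on $A$.

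The first step is a group-theoretic reduction. Using primitivity and $\omega$-categoricity, one shows that $G$ is a primitive oligomorphic closed subgroup of $\Sym(A)$, so we are in the setting of Cameron's framework for oligomorphic groups. I would then invoke a growth dichotomy for such groups — specifically, a refinement of Macpherson's polynomial/exponential growth theorem in the vein of Falqu\'e--Thi\'ery and Simon — to conclude that if $u_n(G)$ grows more slowly than $2^n/q(n)$ for every polynomial $q$, then $G$ must preserve one of the four canonical ``order-like'' binary relations on $A$: a strict linear order, a betweenness relation, a circular order, or a separation relation. By Cameron's classification of the reducts of $\order$ (five closed supergroups of $\Aut(\order)$ in $\Sym({\mathbb Q})$ realised precisely by these four structures), any such $G$ forces $\fa$ to be a first-order reduct of $\order$.

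For the converse direction — that once $G$ preserves none of these four relations we can extract a lower bound of the strong form $2^n/p(n)$, rather than merely super-polynomial growth — the idea is to encode subsets of $[n]$ into distinct $G$-orbits on $n$-element subsets of $A$. Concretely, using primitivity and the failure of $G$ to preserve any order-like relation, one produces a $G$-invariant binary relation $R$ on $A$ together with an infinite ``independent'' sequence $(a_i)_{i \in \omega}$ in $A$ such that for all $S, S' \in \binom{\omega}{n}$, the sets $\{a_i : i \in S\}$ and $\{a_i : i \in S'\}$ lie in the same $G$-orbit only when $S$ and $S'$ agree on an initial segment of polynomial (in $n$) length. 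This coding is analogous in spirit to the pre-coding configurations of Subsection~\ref{sub:patterns}, but executed on subsets rather than on tuples; the loss of information from unordering the tuples (a factor of $n!$, absorbed into the polynomial $p$) is precisely what accounts for the $1/p(n)$ denominator in the bound.

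The hard part will be Step~2: extracting the dichotomy at the sharp threshold $2^n/\mathrm{poly}(n)$, as opposed to the coarser thresholds in the classical Macpherson growth theorem. This requires a careful inductive analysis of the $G$-invariant equivalence relations and partial orders generated on finite subsets, and is where the genuine novelty of~\cite{braunfeld2022monadic} lies — in the non-primitive version one has more room to maneuver (the statement becomes the quoted bound $2^n/p(n)$ allowing any $\omega$-categorical reduct of $\order$, including $\atoms$), whereas in the primitive case one must pin down exactly the five reducts of $\order$ and verify that no other primitive slow-growth example can occur.
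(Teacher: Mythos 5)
The theorem you are asked to prove is cited in the paper as~\cite[Theorem~1.2]{braunfeld2022monadic}; the present paper gives no proof of it and relies on the external reference. So there is no internal proof to compare against, and the proposal must be judged on its own terms.

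Your outline has a genuine circularity at its core. Step~2 asserts a ``growth dichotomy'': if $u_n(G)$ grows more slowly than $2^n/q(n)$ for every polynomial $q$, then $G$ preserves a linear order, betweenness, circular order, or separation relation, hence $\fa$ is a reduct of $\order$ by Cameron's classification. But this dichotomy \emph{is} the theorem: Cameron's classification of the five closed supergroups of $\Aut(\order)$ is a statement about which permutation groups arise, not a growth-rate theorem, and Macpherson's polynomial/exponential dichotomy for primitive oligomorphic groups lives at a much coarser threshold (polynomial versus $\geq c^n$ for some $c>1$) and does not come close to distinguishing $2^n/\mathrm{poly}(n)$ from faster exponential growth. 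Nothing in the literature you invoke delivers the sharp $2^n/\mathrm{poly}(n)$ cut-off; in your own words, ``the hard part will be Step~2'', and you give no argument for it. Similarly, Step~3 is stated as ``one produces a $G$-invariant binary relation $R$ together with an independent sequence such that\dots'' without constructing $R$, exhibiting the sequence, or proving the claimed collision bound. The analogy to pre-coding configurations is suggestive but not an argument: pre-coding configurations certify failure of monadic NIP, which is strictly weaker than (and not obviously tied to) the quantitative $2^n/p(n)$ lower bound on \emph{unordered} orbit counts that you need. In short, both of the non-trivial steps are placeholders for the theorem rather than a route to it.

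If you want to engage with the actual proof, you should read~\cite{braunfeld2022monadic} directly. The argument there does not pass through Macpherson's theorem or Cameron's reducts classification in the way you describe; it proceeds via a careful structure theory for $\omega$-categorical monadically stable/NIP structures and an explicit coding lemma that extracts exponentially many orbits on finite sets from the failure of the ``order-like'' alternative. That coding lemma is precisely the concrete content missing from your Step~3.
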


	Interestingly, $(\cc;\betw)$ also shows up as the primitive structure with the slowest known unlabelled growth apart from $\order$ and
    its reducts.

\begin{conjecture}[\cite{braunfeld2022monadic}, Conjecture 4]\label{conj:slow2}
	Let $\fa$ be a primitive $\omega$-categorical structure for which $2^n/p(n)<u_n(\fa)<2^n/q(n)$ for some polynomials $p$ and $q$. Then $\fa$ is bidefinable with $S(2)$ or its betweenness reduct.
\end{conjecture}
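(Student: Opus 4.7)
The plan is to combine the characterization of primitive $\omega$-categorical structures with near-$2^n$ orbit growth (which should force monadic NIP) with a structural classification of primitive monadically NIP structures in the relevant growth window.

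First, I would prove that the upper bound $u_n(\fa)<2^n/q(n)$ forces $\fa$ to be monadically NIP; this is one direction of Conjecture~\ref{conj:sam}. The idea is that Macpherson's growth-rate theorem~\cite{Macpherson-RapidGrowth} already gives NIP from sub-$2^n$ growth, and the monadic strengthening should follow by arguing that a pre-coding configuration in the sense of Fact~\ref{fact:monnipequivalent} encodes binary strings into orbits in a way that produces at least $2^{n-O(\log n)}$ orbits on $n$-element subsets, contradicting the upper bound for large $n$.

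Second, I would show that the lower bound $u_n(\fa)>2^n/p(n)$ excludes $\fa\in\I(\order)$. By Lemma~\ref{nice_reducts}, any structure in $\I(\order)$ is a first-order reduct of some $\johnord{\bar d}$. Using primitivity together with Lemma~\ref{lexicographic} and the classification of primitive reducts of $\order$ due to Cameron (i.e.\ $\order$, $(\mathbb{Q};\betw)$, the cyclic order, and the separation relation), one should argue that the induced orbit equivalence on $n$-element subsets is controlled by a polynomial amount of data, so primitive structures in $\I(\order)$ have at most polynomial unlabelled growth. This contradicts the hypothesis $u_n(\fa)>2^n/p(n)$ and sharpens Theorem 10.4 of~\cite{bodor2025structures}.

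Third, with $\fa$ now a primitive monadically NIP structure outside $\I(\order)$, I would invoke the classification program for primitive NIP homogeneous binary structures due to Simon~\cite{SimonRankOne} and~\cite{rosy}, which produces a manageable list of candidates closely related to $\order$, $S(2)$, and their reducts. For each candidate one computes $u_n$ and compares it with the known benchmarks $u_n(S(2))\sim 2^{n-1}/n$ and $u_n((\cc;\betw))\sim 2^{n-2}/n$; the hypothesis $u_n(\fa)<2^n/q(n)$ for some polynomial $q$ should cut the list down to exactly these two structures up to bidefinability. The main obstacle is this classification step: no complete catalogue of primitive monadically NIP structures with exponential-but-not-polynomial orbit growth is currently available, and assembling one requires substantial new input from the NIP classification program; step 1 also depends on the open direction of Conjecture~\ref{conj:sam}, and step 2 on a genuinely new analysis of orbit growth of primitive structures in $\I(\order)$.
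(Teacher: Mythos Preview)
The statement you are trying to prove is not proved in the paper at all: it is explicitly labelled a \emph{Conjecture} (attributed to \cite{braunfeld2022monadic}), and the only thing the paper says about it is that ``although Conjectures~\ref{conj:slow1} and~\ref{conj:slow2} seem to be connected, there does not seem to be any immediate implication between them in either direction.'' There is therefore no proof in the paper to compare your attempt against.

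As for your outline itself, you are candid that it is not a proof but a strategy resting on open problems, and that is accurate. Step~1 needs the currently open direction of Conjecture~\ref{conj:sam}; Step~3 needs a classification of primitive monadically NIP structures in this growth window that does not yet exist; and Step~2, the claim that primitive structures in $\I(\order)$ have at most polynomial unlabelled growth, is also unproved (Lemma~\ref{lexicographic} constrains definable orders on $\johnord{d}$, but deducing polynomial growth for all primitive first-order reducts of all $\johnord{\bar d}$ is a genuinely new statement, not something the paper supplies). So what you have written is a plausible research programme, not a proof, and the paper does not offer one either.
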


	We remark that although Conjectures~\ref{conj:slow1} and~\ref{conj:slow2} seem to be connected, there does not seem to be any immediate implication between them in either direction.



\section{Lachlan's Class}
\label{sect:lachlan}
As we have seen in the previous section, 
the class of structures interpretable in equality is not closed under taking model-complete cores. The closure of this class under taking model-complete cores is contained in an interesting class of structures that was introduced and studied by Lachlan; in fact, we conjecture that Lachlan's class \emph{equals} the 
{class of structures that are interdefinable with} model-complete cores of structures interpretable in equality (Conjecture~\ref{conj:D}).



Adapting terminology from Remark~\ref{rem:stronglymin}, we say that a structure $\fa$ is \emph{strongly minimal} if its theory is. Note that in a strongly minimal $\omega$-categorical structure $\fa$ all definable subsets (with parameters) are finite or cofinite. We say that $\fa$ is \emph{strictly minimal} if it is strongly minimal and has primitive automorphism group. 
The following is a consequence/reformulation of the Cherlin-Mills-Zilber theorem about strictly minimal sets (\cite{CherlinHarringtonLachlan}, Theorem 2.1).

\begin{theorem}\label{cml}
	Let $\fa$ be a strictly minimal $\omega$-categorical structure. Then one of the following holds.
\begin{enumerate}
\item $\fa$ is interdefinable with $(A;=)$. 
\item $\fa$ is a first-order reduct of some infinite affine space over a finite field.
\item $\fa$ is a first-order reduct of some infinite projective space over a finite field.
\end{enumerate}
\end{theorem}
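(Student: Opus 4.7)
The plan is to analyze $\fa$ through the pregeometry induced by its algebraic closure operator and then invoke the classical Cherlin--Mills--Zilber trichotomy, of which the stated theorem is essentially a reformulation in permutation-group/first-order-reduct language. Since $\fa$ is strongly minimal, the algebraic closure operator $\acl$ satisfies exchange and therefore endows $A$ with a pregeometry. Strict minimality, i.e.\ primitivity of $\aut(\fa)$, ensures that the induced pregeometry is homogeneous in the sense that any two points are interchangeable by an element of $\aut(\fa)$, so the ``type'' of the pregeometry is well-defined.

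The heart of the argument is the Cherlin--Mills--Zilber trichotomy for $\omega$-categorical strictly minimal sets, which asserts that the pregeometry $(A,\acl)$ falls into exactly one of three types: (i) \emph{trivial}, where $\acl(X)=X$ for every subset $X\subseteq A$; (ii) \emph{affine} over some division ring; or (iii) \emph{projective} over some division ring. Moreover, $\omega$-categoricity forces the division ring in cases (ii) and (iii) to be finite: an infinite division ring would produce infinitely many $\aut(\fa)$-orbits on pairs of non-parallel lines (or on lines through the origin), contradicting the Ryll--Nardzewski theorem. Once the trichotomy and this finiteness are in hand, each case yields the desired conclusion by a routine translation between closed permutation groups and first-order reducts.

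In case (i), trivial closure combined with primitivity implies that every bijection between finite subsets of $A$ extends to an automorphism of $\fa$; thus $\aut(\fa)=\sym(A)$, and by Ryll--Nardzewski every $\emptyset$-definable relation on $\fa$ is a finite Boolean combination of equalities, so $\fa$ is interdefinable with $(A;=)$. In cases (ii) and (iii), one identifies $A$ with the point set of an infinite affine (resp.\ projective) space $V$ over a finite field $\mathbb{F}_q$ in a way that embeds the canonical action of the affine group $\mathrm{AGL}(V)$ (resp.\ $\mathrm{PGL}(V)$) into $\aut(\fa)$, and such that $\aut(\fa)$ is a closed subgroup of $\aut(V)$. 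Every relation of $\fa$ is then a union of $\aut(\fa)$-orbits on tuples of $V$, which by Ryll--Nardzewski are first-order definable in $V$; hence $\fa$ is a first-order reduct of the corresponding affine or projective space.

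The main obstacle is of course the Cherlin--Mills--Zilber trichotomy itself, which is a deep classification theorem. Once it is granted, the remaining work is a bookkeeping exercise using $\omega$-categoricity and Ryll--Nardzewski to rule out infinite fields and to upgrade the permutation-group statement to the desired reduct statement.
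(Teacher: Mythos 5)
The paper states this theorem as a black box, attributing it verbatim to \cite{CherlinHarringtonLachlan}, Theorem~2.1, and provides no proof of its own; so there is nothing in the paper to compare against line by line. Your proposal is a reasonable attempt to derive the reduct formulation from the pregeometry trichotomy, and the overall plan (pregeometry trichotomy, then finiteness of the field via $\omega$-categoricity, then translation into the language of closed permutation groups and reducts) is the standard one. The handling of the degenerate case is fine: for a primitive strongly minimal $\omega$-categorical structure with trivial $\acl$, the unique non-algebraic type forces all tuples of pairwise distinct elements into a single orbit, so $\aut(\fa) = \sym(A)$.

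The last paragraph, however, has a direction confusion in the affine/projective cases. You assert both that $\mathrm{AGL}(V)$ (resp.\ $\mathrm{PGL}(V)$) embeds into $\aut(\fa)$ and that $\aut(\fa)$ is a closed \emph{subgroup} of $\aut(V)$, and then conclude via Ryll-Nardzewski that the $\aut(\fa)$-orbits are definable in $V$. But that conclusion requires $\aut(V) \leq \aut(\fa)$, not the reverse: an $\aut(\fa)$-orbit is $\aut(V)$-invariant (hence $V$-definable) precisely when $\aut(V)$ sits inside $\aut(\fa)$. The containment you actually need is exactly the first one you state, $\mathrm{AGL}(V)\hookrightarrow\aut(\fa)$, together with taking $V$ to be the affine space with named scalars (automorphism group $\mathrm{AGL}(V)$, not $\mathrm{A\Gamma L}(V)$); the extra clause ``$\aut(\fa) \leq \aut(V)$'' is either false (if $\aut(V)=\mathrm{AGL}(V)$, since $\aut(\fa)$ can properly contain it) or irrelevant to the reduct conclusion (if $\aut(V)=\mathrm{A\Gamma L}(V)$). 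Also worth flagging: the claim that the geometry being affine/projective forces $\mathrm{AGL}(V)\subseteq\aut(\fa)$ is itself a substantive part of the Cherlin--Harrington--Lachlan theorem, not something one reads off from the pregeometry type alone, so your proof sketch is still implicitly leaning on essentially the full strength of the cited theorem rather than merely a ``geometry-only'' version of it.
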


Following
Lachlan~\cite{LachlanIndiscernible}, we denote by $\mathcal{D}$ the class of $\omega$-categorical $\omega$-stable structures $\fa$ such that all strictly minimal structures which are interpretable in $\fa$ with parameters are bidefinable with a pure set, and we call it \emph{Lachlan's class}. This class can be characterized in many equivalent ways as shown by the following theorem, which largely follows from the results of~\cite{LachlanIndiscernible} {and~\cite{macpherson1991interpreting}}.

\begin{theorem}
\label{lachlan_extra}
	For any structure $\fa$, the following are equivalent.
\begin{enumerate}
\item\label{it:d} $\fa\in \mathcal{D}$.
\item\label{it:omega_stab} $\fa$ is $\omega$-stable and interpretable in  $({\mathbb Q};<)$.
\item\label{it:stab} $\fa$ is stable and interpretable in  $({\mathbb Q};<)$.
\item\label{it:nsop} $\fa$ is NSOP and interpretable in  $({\mathbb Q};<)$. 
\item\label{it:hom} $\fa$ is $\omega$-stable and a first-order reduct of a finitely homogeneous relational structure. 
\item\label{it:homramsey} $\fa$ is $\omega$-stable and a first-order reduct of a finitely bounded homogeneous Ramsey structure. 
\end{enumerate}
\end{theorem}

\begin{proof}
	The implications (\ref{it:d}) $\Rightarrow$ (\ref{it:omega_stab}) and (\ref{it:stab}) $\Rightarrow$ (\ref{it:d}) follow from~\cite{LachlanIndiscernible}, Theorem 4.6.
	
	The implication (\ref{it:omega_stab}) $\Rightarrow$ (\ref{it:stab}) is trivial.
	
	(\ref{it:stab}) $\Leftrightarrow$ (\ref{it:nsop}). It is well-known that a theory is stable if and only if it is NIP and NSOP (see, e.g., ~\cite[Theorem 2.67]{simon2015guide},~\cite{shelah1990classification}).

	The implication {(\ref{it:omega_stab})} $\Rightarrow$ (\ref{it:homramsey}) follows from 
Corollary~\ref{reduct_hom}.
	
	The implication (\ref{it:homramsey}) $\Rightarrow$ (\ref{it:hom}) is trivial.
	
	(\ref{it:hom}) $\Rightarrow$ (\ref{it:d}). This is essentially due to~\cite[Theorem 3.4]{macpherson1991interpreting}. We show the contrapositive. Let us assume that $\fa\not\in \mathcal{D}$. Then by Theorem~\ref{cml} we know that $\fa$ interprets with parameters a nontrivial first-order reduct of the affine space or the projective space. The automorphism group of these are contained in $\PGammaL(\omega,q)$ or $\AGammaL(\omega,q)$ for some $q$ (i.e., the group of semi-projective/semi-affine transformations), see~\cite{EvansIvanovMacpherson}, Section 1.6. Since $|\PGammaL(\omega,q):\PGL(\omega,q)|=|\AGammaL(\omega,q):\AGL(\omega,q)|$ is finite (it is the dimension of $\mathbb{F}_q$ over its prime field), by adding finitely many constants we obtain that $\fa$ interprets some infinite projective or affine space over a finite field. {By adding a constant to an affine space we can get a vector space. Thus, we obtain that $\fa$ interprets with parameters an infinite vector space or an infinite projective space. However, by Theorems 1.1 and 3.3 in~\cite{macpherson1991interpreting} we know that a finitely homogeneous relational structure cannot interpret (with parameters) any infinite group, or an infinite projective space, a contradiction.}
\end{proof}

\begin{corollary}\label{d_mc}
	Lachlan's class $\mathcal{D}$ is closed under taking model-complete cores.
\end{corollary}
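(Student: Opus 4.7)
The plan is to exploit the equivalent characterization of $\mathcal{D}$ given by item~(\ref{it:homramsey}) of Theorem~\ref{lachlan_extra}, namely that $\fa \in \mathcal{D}$ if and only if $\fa$ is $\omega$-stable and a reduct of a finitely bounded homogeneous Ramsey structure. The strategy reduces the statement to the conjunction of two closure properties that have already been established elsewhere in the paper: closure of $\omega$-stability under model-complete cores, and closure of the class of reducts of finitely bounded homogeneous Ramsey structures under model-complete cores.

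For the first ingredient, I would invoke the preservation of $\omega$-stability from Section~\ref{sec:preserved}. Concretely, Lemma~\ref{lem:MR} shows that Morley rank cannot increase when moving to the core companion, so total transcendence is preserved; in the countable, $\omega$-categorical setting this coincides with $\omega$-stability. Alternatively, Corollary~\ref{cor:trace} (which goes through trace definability, Lemma~\ref{lem:tracedef}) gives the same conclusion directly. For the second ingredient, I would invoke Theorem~\ref{mc_transfer} (\cite{MottetPinskerCores}), which is exactly the statement that the model-complete core of a reduct of a finitely bounded homogeneous Ramsey structure is again (a reduct of) a finitely bounded homogeneous Ramsey structure.

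Putting these together: suppose $\fa \in \mathcal{D}$ and let $\fc$ be the model-complete core of $\fa$. By Theorem~\ref{lachlan_extra}(\ref{it:homramsey}), $\fa$ is $\omega$-stable and a reduct of some finitely bounded homogeneous Ramsey structure $\fa'$. By the preservation of $\omega$-stability under model-complete cores, $\fc$ is $\omega$-stable. By Theorem~\ref{mc_transfer} applied to $\fa'$ and its reduct $\fa$, the structure $\fc$ admits a finitely bounded homogeneous Ramsey expansion, so it too is a reduct of a finitely bounded homogeneous Ramsey structure. Applying the equivalence of Theorem~\ref{lachlan_extra} in the other direction yields $\fc \in \mathcal{D}$.

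There is no serious obstacle here, since all the hard work has been done: the nontrivial inputs are the preservation results of Section~\ref{sec:preserved} (which required the positive-model-theoretic machinery developed there) and the Ramsey transfer theorem of~\cite{MottetPinskerCores}. The only minor point to be careful about is to verify that the chosen characterization of $\mathcal{D}$ really has both conjuncts preserved; $\omega$-stability and the ``reduct of finitely bounded homogeneous Ramsey'' property form the most convenient pair, since the simpler characterization~(\ref{it:hom}) would require a separate argument for preservation of finite homogenizability (which, in view of Question~\ref{quest:fin-hom}, is itself open in general and cannot be invoked as a black box).
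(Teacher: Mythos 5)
Your proof is correct and follows essentially the same route as the paper: apply the equivalence of items~(\ref{it:d}) and~(\ref{it:homramsey}) of Theorem~\ref{lachlan_extra}, preservation of $\omega$-stability under core companions, and the Ramsey transfer Theorem~\ref{mc_transfer}. The only superficial difference is that the paper cites Corollary~\ref{cor:supstabpres} (the type-counting result, with $\lambda=\aleph_0$) for the $\omega$-stability step, whereas you propose Lemma~\ref{lem:MR} or Corollary~\ref{cor:trace}; all three routes yield the needed conclusion, and your closing remark on why characterization~(\ref{it:hom}) would not suffice is accurate.
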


\begin{proof}
	Follows from the equivalence (\ref{it:d}) $\Leftrightarrow$ (\ref{it:homramsey}) in Theorem~\ref{lachlan_extra}, Corollary~\ref{cor:supstabpres} and Theorem~\ref{mc_transfer}.
\end{proof}

\begin{remark}\label{rem:lachlans-strict}
Note that our results show in particular that Lachlan's class is not equal to the class of structures interpretable in  equality.
{This was apparently folklore~\cite{Blog}. In fact, Lachlan's class is closed under taking model-complete cores (Corollary~\ref{d_mc}) while the class of structures interpretable in  equality is not (Corollary~\ref{not_closed_mc}).}

\end{remark} 

\begin{conjecture}\label{conj:D}
    Lachlan's class is equal to the class of structures that are interdefinable with a model-complete core of a structure interpretable in  equality. 
\end{conjecture}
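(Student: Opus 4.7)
The easy direction $\M(\I(\atoms)) \subseteq \mathcal{D}$ follows directly from results already in the excerpt. Given $\fb \in \I(\atoms)$, the structure $\atoms$ is finitely homogeneous, so by Lemma~\ref{hom_power} each full power $\atoms^{[[d]]}$ is finitely homogeneous, and the argument preceding Corollary~\ref{reduct_hom} then shows that $\fb$ has a finitely homogeneous expansion; hence $\fb$ is a first-order reduct of a finitely homogeneous relational structure. Moreover $\atoms$ is $\omega$-stable, and $\omega$-stability is preserved under interpretation (for instance by the trace-definability transfer of Corollary~\ref{cor:trace}), so $\fb$ is $\omega$-stable. The equivalence (\ref{it:hom})$\Leftrightarrow$(\ref{it:d}) of Theorem~\ref{lachlan_extra} therefore gives $\I(\atoms) \subseteq \mathcal{D}$, and Corollary~\ref{d_mc} yields $\M(\I(\atoms)) \subseteq \M(\mathcal{D}) \subseteq \mathcal{D}$.

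For the reverse inclusion $\mathcal{D} \subseteq \M(\I(\atoms))$, the plan is to exhibit, for every $\fa \in \mathcal{D}$, a structure $\fb \in \I(\atoms)$ whose model-complete core is interdefinable with $\fa$. By Theorem~\ref{lachlan_extra} together with Lemma~\ref{nice_reducts}, we may assume that $\fa$ is interdefinable with a first-order reduct of $\johnord{\bar d}$ for some $\bar d \in \mathbb{N}^{\ell}$. We would then construct $\fb$ by generalising the structure $\fx$ of Section~\ref{sect:i_n}: for each $i \in [\ell]$, take the set of tuples of pairwise distinct elements of $\mathbb{N}$ of length $d_i$, equipped with a finite cyclic tag; transport the relations of $\fa$ to this enlarged domain so that they become invariant under the diagonal action of $\Sym(\mathbb{N})$; and include a rotation relation $R$ incrementing the tag. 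By construction $\fb \in \I(\atoms)$, and we would then verify, via analogues of Lemmas~\ref{fin_cover}, \ref{invol}, and \ref{core2}, that the projection onto tuples whose entries appear in a fixed canonical ordering, with an appropriately reduced tag range, is a finite covering map whose image is a model-complete core interdefinable with $\fa$.

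The main obstacle will be the combinatorial design of this cover in full generality. In the $d=2$ case of Section~\ref{sect:i_n}, the group $\mathbb{Z}_4$ is chosen by hand so that every relation in the relevant reduct lifts to a $\Sym(\mathbb{N})$-invariant relation on the cover, and so that the kernel of the induced map of automorphism groups is precisely absorbed by the endomorphism monoid. For general $\bar{d}$, the ``ordering data'' of a tuple lies in the \emph{non-abelian} symmetric group $S_{d_i}$, and the challenge will be to package this data into a finite cyclic (or abelian) extension compatible with an arbitrary first-order reduct of $\johnord{\bar d}$; equivalently, one has to realise $\aut(\fa)$ as a topological quotient, by a finite abelian kernel, of a closed subgroup of $\Sym(\mathbb{N})$ acting on tuples. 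This is precisely the subject of the theory of finite covers of $\omega$-categorical permutation groups studied by Hrushovski, Evans and others, and a systematic application of that theory seems essential to complete the construction. A further subtlety, which the formulation of the conjecture glosses over, is that $\mathcal{D}$ contains structures that are not themselves model-complete cores (for instance the totally categorical example of Theorem~\ref{thm:total_mc}), so the conjecture should really be read as identifying the model-complete cores in $\mathcal{D}$ with $\M(\I(\atoms))$.
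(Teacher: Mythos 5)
The statement you are asked to prove is Conjecture~\ref{conj:D}, which the paper explicitly leaves open. There is no proof in the paper to compare against; the paper only establishes the inclusion $\MI(\atoms) \subseteq \mathcal{D}$ (as part of Theorem~\ref{chain}) and shows that Conjecture~\ref{conj:D} is equivalent to Conjecture~\ref{conj:D2} via Lemma~\ref{lem:MH}. Your proposal, accordingly, does not prove the conjecture either.

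Your argument for the inclusion $\MI(\atoms) \subseteq \mathcal{D}$ is correct and matches the paper's own: stability (or $\omega$-stability) passes through interpretations, Lemma~\ref{hom_power} and the surrounding Ramsey transfer material give the finitely-homogeneous expansion, Theorem~\ref{lachlan_extra} identifies the result with $\mathcal{D}$, and Corollary~\ref{d_mc} then closes $\mathcal{D}$ under model-complete cores. The converse inclusion $\mathcal{D} \subseteq \MI(\atoms)$ is where all the difficulty lies, and your treatment of it is explicitly a sketch: you generalise the $\fx$-construction of Section~\ref{sect:i_n} to arbitrary $\johnord{\bar d}$-reducts, but then acknowledge that the tag group $\mathbb{Z}_4$ was hand-tuned for $d=2$, that the ordering data for general $d$ lives in the non-abelian $S_{d}$, and that one would need the full theory of finite covers of $\omega$-categorical structures to make the construction go through. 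Those are precisely the obstacles that make this an open conjecture, so your proposal leaves the hard direction genuinely unresolved.

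Two further remarks. First, the paper's own progress toward the conjecture is the reformulation as Conjecture~\ref{conj:D2}: it is equivalent (via Lemma~\ref{lem:MH}, which uses that $\I(\atoms)$ is closed under first-order reducts) to the purely homomorphic statement that every structure in $\mathcal{D}$ is homomorphically equivalent to one in $\I(\atoms)$. That reformulation strips away the interdefinability bookkeeping and is likely to be the more tractable form; your approach does not use it. Second, your concluding ``subtlety'' is a confusion. The conjecture is precise as stated: every $\omega$-categorical structure is interdefinable with a model-complete core, namely its own expansion by all first-order definable relations (this is exactly the observation used in the paper's proof that Conjectures~\ref{conj:D} and~\ref{conj:D2} are equivalent). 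So membership of $\fa \in \mathcal{D}$ in $\MI(\atoms)$ does not require $\fa$ itself to be a model-complete core; the content of the conjecture is whether the model-complete core $\fa^*$ that $\fa$ is interdefinable with can be realised as the model-complete core of some structure in $\I(\atoms)$, which is exactly the cover-construction problem you were attempting in the hard direction.
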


	Conjecture~\ref{conj:D} 
can equivalently be phrased as follows. 

\begin{conjecture}\label{conj:D2}
	Every structure in Lachlan's class is homomorphically equivalent to a structure interpretable in  equality. 
\end{conjecture}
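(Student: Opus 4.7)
The plan is to show that for each $\fa \in \mathcal{D}$ there exists $\fb \in \I(\atoms)$ together with mutual homomorphisms $\fa \to \fb$ and $\fb \to \fa$. By Theorem~\ref{lachlan_extra} and Lemma~\ref{nice_reducts}, I would first realize $\fa$ as a first-order reduct of $\johnord{\bar{d}}$ for some $\bar{d} = (d_1, \ldots, d_\ell)$, so that the elements of $\fa$ are tagged sorted injective tuples over $\mathbb{Q}$. I would then seek $\fb$ as a suitable finite cover of $\fa$ in the sense of Definition~\ref{def:cover}, obtained by ``unsorting'' the tuples and augmenting them with finite twisting data. This generalises the counterexample from Section~\ref{sect:i_n}, where $\fx \in \I(\atoms)$ covers $\fy \in \mathcal{D}$ and the role of the twisting data is played by $\mathbb{Z}_4$.

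Concretely, let $N$ be countably infinite and let $N^{d_i}_{\neq}$ denote the set of injective $d_i$-tuples from $N$. For each $i \in [\ell]$, I would find a finite set $F_i$ equipped with an action of the symmetric group $S_{d_i}$ (possibly only a twisted action, as in the counterexample where the transposition acts on $\mathbb{Z}_4$ by $m \mapsto m+1$, so that its square rather than itself is the identity), and take $\dom(\fb) = \bigsqcup_i \{i\} \times N^{d_i}_{\neq} \times F_i$. The natural sorting map then gives a projection $\pi \colon \fb \to \fa$: the element $(i, \bar{a}, f)$ is sent to the tagged sorted tuple obtained by applying the sorting permutation of $\bar{a}$ to both $\bar{a}$ and $f$. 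The relations of $\fb$ are declared to be the $\pi$-preimages of the relations of $\fa$. The three goals are then: (i) these preimages are invariant under the coordinatewise action of $\sym(N)$, so that $\fb \in \I(\atoms)$; (ii) $\pi$ is a homomorphism $\fb \to \fa$; and (iii) the distinguished section of $\pi$ (sending each sorted tuple to the same tuple viewed as injective, together with a basepoint in $F_i$) is a homomorphism $\fa \to \fb$.

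Goal (ii) is immediate by construction, and goal (iii) reduces to verifying that the $\pi$-preimage relations restricted to the distinguished section agree with the original relations of $\fa$. The hard part is goal (i): selecting $F_i$ and the $S_{d_i}$-twist so that every definable relation of $\fa$ lifts to a $\sym(N)$-invariant relation on $\dom(\fb)$. This requires a structural understanding of $\omega$-stable first-order reducts of $\johnord{\bar{d}}$, and the $\omega$-stability of $\fa$ should play an essential role: in its absence, a definable order in $\fa$ would obstruct any $\sym(N)$-invariant lift, whereas by the defining property of $\mathcal{D}$ no nontrivial strictly minimal structure (and in particular no order) is interpretable in $\fa$. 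I would attempt to produce the $F_i$ via a cohomological vanishing argument, showing that the obstruction to $\sym(N)$-invariance, living in a suitable $H^2$ with finite abelian coefficients, vanishes for $\omega$-stable reducts of $\johnord{\bar{d}}$; this could be combined with the Cherlin--Hrushovski smooth approximability / Lie coordinatisation machinery to decompose $\fa$ into basic pieces and verify the vanishing on each piece. Establishing this uniformly is, I expect, the crux of the conjecture.
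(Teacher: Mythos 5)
The statement you are attempting is not a theorem of the paper but an open conjecture: it is precisely Conjecture~\ref{conj:D2}, which the authors explicitly leave unresolved. The only thing the paper proves about it is that it is equivalent to Conjecture~\ref{conj:D}, via Lemma~\ref{lem:MH} and the surrounding Proposition. So there is no proof in the paper to compare your argument against, and your text is in fact a strategy sketch rather than a proof --- as you concede in your last sentence.

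Taken on its own terms, the sketch has a genuine gap exactly where you locate it, and a couple of points deserve flagging. First, Lemma~\ref{nice_reducts} realizes $\fa$ as a first-order reduct of $\johnord{\bar{d}}$, a structure whose signature carries the linear order on $\mathbb{Q}$; the entire content of the conjecture is that this order can be ``unsorted'' to produce a $\sym(N)$-invariant lift, and you offer no argument that the requisite $F_i$ and the twisted $S_{d_i}$-actions exist in general. The counterexample of Section~\ref{sect:i_n} shows such a lift exists in one instance, but it does not indicate a construction, and the $\mathbb{Z}_4$ twist there was found by hand. Second, your remark that ``by the defining property of $\mathcal{D}$ no nontrivial strictly minimal structure (and in particular no order) is interpretable in $\fa$'' conflates two things: a linear order is not strongly minimal (indeed it is unstable), so its non-interpretability follows from $\omega$-stability of $\fa$ being preserved under interpretation, not from the strictly-minimal-structures clause in Lachlan's definition. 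In any case, the absence of a definable order is far from sufficient for the cohomological obstruction you propose to vanish; one would need to analyse the full obstruction theory for finite covers in the sense of Evans--Ivanov--Macpherson over the relevant base structures, and this is where the Lie coordinatisation / smooth approximability machinery you mention would need to do real work. As written, the proposal identifies a plausible line of attack but does not prove the conjecture.
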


To prove that Conjecture~\ref{conj:D2} is equivalent to Conjecture~\ref{conj:D}, we first prove the following lemma. 

\begin{lemma}\label{lem:MH}
    Let $\mathcal{C}$ be a class of 
$\omega$-categorical 
structures that is closed under taking first-order reducts. Then every structure in $\M({\mathcal C})$ is homomorphically equivalent to some structure in $\mathcal{C}$.
\end{lemma}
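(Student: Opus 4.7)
The plan: let $\fb \in \M(\mathcal{C})$. By definition, $\fb$ is interdefinable with the model-complete core companion $\fc$ of some $\fa \in \mathcal{C}$. In particular, $\fa$ and $\fc$ have the same signature $\tau_{\fa}$ and are homomorphically equivalent, giving maps $h \colon \fa \to \fc$ and $g \colon \fc \to \fa$. The idea is to pull back the relations of $\fb$ along $h$ via \emph{existential positive} defining formulas to obtain a structure $\fa' \in \mathcal{C}$ on the domain of $\fa$ that is homomorphically equivalent to $\fb$.

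First I would exploit that $\fc$ is a model-complete core, so every first-order $\tau_{\fa}$-formula is equivalent modulo $\Th(\fc)$ to an existential positive formula. Since $\fb$ is a first-order reduct of $\fc$, I can choose, for each relation symbol $R$ in the signature $\tau_{\fb}$ of $\fb$, an existential positive $\tau_{\fa}$-formula $\psi_R(\bar{x})$ such that $R^{\fb} = \psi_R(\fc)$. I then define $\fa'$ to be the $\tau_{\fb}$-structure with the same domain $A$ as $\fa$, whose relations are given by $R^{\fa'} := \psi_R(\fa)$. By construction, $\fa'$ is a first-order reduct of $\fa$, so $\fa' \in \mathcal{C}$ by the assumed closure property of $\mathcal{C}$.

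It remains to check that $h$ and $g$ are homomorphisms $\fa' \to \fb$ and $\fb \to \fa'$, respectively. This is immediate from preservation of existential positive formulas under homomorphisms: if $\bar{a} \in R^{\fa'}$, then $\fa \models \psi_R(\bar{a})$, whence $\fc \models \psi_R(h(\bar{a}))$, i.e., $h(\bar{a}) \in R^{\fb}$; the argument for $g$ is symmetric. I do not anticipate any genuine obstacle here: the crucial point is simply that we use existential positive defining formulas, which the model-complete core hypothesis on $\fc$ makes available, and which is precisely what makes the signature transport from $\fc$ back to $\fa$ compatible with the homomorphisms $h$ and $g$.
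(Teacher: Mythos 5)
Your proof is correct and follows essentially the same approach as the paper's: pull back the relations of the structure $\fb \in \M(\mathcal{C})$ to the original structure in $\mathcal{C}$ via existential positive defining formulas (available because the core is a model-complete core), and verify homomorphic equivalence by preservation of existential positive formulas. The only cosmetic difference is that the paper takes the model-complete core to be a substructure of the original structure, whereas you work with arbitrary homomorphisms in both directions, which changes nothing of substance.
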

\begin{proof}
    Let $\fa$ be a structure 
    which is interdefinable with the model-complete core $\fc$ of some structure $\fb \in \mathcal C$. 
We may assume that $\fc$ is a substructure of $\fb$. 
The definition of $\fa$ in $\fc$ can be made existential positive, because $\fc$ is a model-complete core. 
Let $\fd$ be the reduct of $\fb$ obtained by evaluating the existential positive formulas in $\fb$ rather than $\fc$. By assumption, $\fd \in {\mathcal C}$. 
Moreover, $\fa$ is a substructure of $\fd$,
and the homomorphism from $\fb$ to $\fc$ is also a homomorphism from $\fd$ to $\fa$. Hence, {$\fa$ and $\fd$ are homomorphically equivalent.}
\end{proof} 

\begin{proposition}
    Conjecture~\ref{conj:D} is equivalent to Conjecture~\ref{conj:D2}. 
\end{proposition}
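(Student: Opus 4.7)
The plan is to prove the two implications separately. The direction from Conjecture~\ref{conj:D} to Conjecture~\ref{conj:D2} is immediate from Lemma~\ref{lem:MH}: the class $\mathcal{C} := \I((\mathbb{N};=))$ is closed under first-order reducts by transitivity of interpretations, so every structure in $\M(\mathcal{C})$ is homomorphically equivalent to some structure in $\mathcal{C}$; if $\mathcal{D} = \M(\mathcal{C})$ this is exactly Conjecture~\ref{conj:D2}.

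For the converse, I first record the easy inclusion $\M(\mathcal{C}) \subseteq \mathcal{D}$: $\mathcal{C} \subseteq \mathcal{D}$ holds because every structure interpretable over $({\mathbb N};=)$ is $\omega$-stable and, by Lemma~\ref{hom_power}, a first-order reduct of the finitely homogeneous structure $({\mathbb N};=)^{[[d]]}$, hence lies in $\mathcal{D}$ by Theorem~\ref{lachlan_extra}(\ref{it:hom}); combined with Corollary~\ref{d_mc} this yields $\M(\mathcal{C}) \subseteq \M(\mathcal{D}) \subseteq \mathcal{D}$. The substantive inclusion is $\mathcal{D} \subseteq \M(\mathcal{C})$, and the crucial observation is the following general fact: every $\omega$-categorical structure $\fa$ is interdefinable (up to isomorphism of its domain) with a model-complete core, obtained by taking the expansion $\fa^*$ of $\fa$ by all first-order definable relations. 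Indeed, $\fa^*$ trivially has quantifier elimination, so every endomorphism of $\fa^*$ is an elementary self-embedding; since $\fa^*$ is $\omega$-categorical, every elementary self-embedding is a pointwise limit of automorphisms, so $\overline{\Aut(\fa^*)} = \End(\fa^*)$ and $\fa^*$ is a model-complete core.

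Given $\fa \in \mathcal{D}$, the expansion $\fa^*$ is again in $\mathcal{D}$, since it is $\omega$-stable and remains a first-order reduct of the same finitely homogeneous structure witnessing $\fa \in \mathcal{D}$ via Theorem~\ref{lachlan_extra}(\ref{it:hom}). Applying Conjecture~\ref{conj:D2} to $\fa^*$ yields some $\fb \in \mathcal{C}$ homomorphically equivalent to $\fa^*$; since $\fa^*$ is itself a model-complete core and such cores are unique up to isomorphism in the $\omega$-categorical setting, the model-complete core of $\fb$ is isomorphic to $\fa^*$, placing $\fa^*$ in $\M(\mathcal{C})$. Interdefinability of $\fa$ with $\fa^*$ then gives $\fa \in \M(\mathcal{C})$, completing the proof. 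The main technical point is thus the verification that the FO-expansion of any $\omega$-categorical structure is automatically a model-complete core, which is a general fact about $\omega$-categorical structures and uses nothing specific to Lachlan's class.
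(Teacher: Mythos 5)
Your proof is correct and follows essentially the same strategy as the paper's: both directions hinge on Lemma~\ref{lem:MH} for one implication, and for the other on the observation that the expansion $\fa^*$ by all first-order definable relations is a model-complete core belonging to $\mathcal{D}$, so that Conjecture~\ref{conj:D2} applied to $\fa^*$ exhibits $\fa^*$ as the model-complete core of a structure in $\I(\atoms)$. You spell out a few points the paper leaves implicit (why $\fa^*$ is a model-complete core via quantifier elimination plus closure of the signature under complementation, and the unconditional containment $\M(\I(\atoms))\subseteq\mathcal{D}$, which the paper relegates to Theorem~\ref{chain}), but the underlying argument is the same.
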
 
\begin{proof}
    Lemma~\ref{lem:MH} shows that Conjecture~\ref{conj:D} implies Conjecture~\ref{conj:D2}. 
    Now assume Conjecture~\ref{conj:D2}. 
    Let $\fa$ be in Lachlan's class.
    The expansion
    $\fa^*$ of $\fa$ by all first-order definable relations is also contained in Lachlan's class. 
    Moreover, $\fa^*$ is 
    a model-complete core, and by Conjecture~\ref{conj:D2} it is homomorphically equivalent to a structure interpretable in equality. Thus, $\fa^*$ is the model-complete core of a structure interpretable in  equality, and 
    $\fa^*$ is interdefinable with $\fa$, which proves Conjecture~\ref{conj:D}. 
\end{proof}

\subsection{Trace definability of structures in Lachlan's class}\label{sec:tracedeflach}
When introducing the notion of trace definability, Walsberg proposed the conjecture that there are only three binary finitely homogeneous structures up to trace equivalence: $(\mathbb{Q}; =)$, $(\mathbb{Q};<)$, and the Rado graph~\cite[Conjecture 2.11]{walsberg2026tracedefinabilityipreservation} (cf.~\cite[Conjecture 13.1]{Walsberglong}). 
A positive answer to such a conjecture would tell us, in a sense, that there are only two ``binary'' model theoretic tameness properties: stability (equivalently, not trace defining $(\mathbb{Q}; <)$) and $\mathrm{NIP}$ (equivalently, not trace defining the Rado graph). Since every binary homogeneous structure with $\mathrm{IP}$ is trace-equivalent to the Rado graph, Walsberg's conjecture breaks down into two conjectures, both of which are posed in~\cite{Walsbergshort, Walsberglong}:
\begin{conjecture}[{\cite[Section 9.4]{Walsbergshort}, \cite[p.157]{Walsberglong}}]\label{conj: answered}
    Every stable finitely homogeneous structure is \emph{trace minimal}, i.e., trace definable in $(\mathbb{Q}; =)$.
\end{conjecture}
\begin{conjecture}[{\cite[Section 9.4 (3)]{Walsbergshort}, cf. \cite[Conjecture 13.1]{Walsberglong}}]\label{conj: unanswered}
    Every binary $\mathrm{NIP}$ finitely homogeneous structure is trace definable in $(\mathbb{Q}; <)$. 
\end{conjecture}


In this section, we verify Conjecture~\ref{conj: answered}. In fact, we show that every structure in Lachlan's class is trace minimal; by the equivalence of items~\ref{it:d} and~\ref{it:hom} of Theorem~\ref{lachlan_extra} this indeed implies Conjecture~\ref{conj: answered}. We note that our result would also be implied by Conjecture~\ref{conj:D} using Lemma~\ref{lem:tracedef}. In a similar way, we also ask a stronger version of Conjecture~\ref{conj: unanswered} with respect to model complete cores in the conclusion to this paper.

	We start by defining the structures $\johnneq{d}$ and $\johnneq{\bar{d}}$ which are essentially the same as $\johnord{d}$ and $\johnord{\bar{d}}$, respectively, but we replace all inequality relations with the corresponding disequality relation.
	In the following definition we extend our notation from Notation~\ref{not:qup}.
    
    
	
\begin{notation} 
	{For $d \in {\mathbb N}$,} we define $\qneq{d}\coloneqq \{(a_1,\dots,a_d)\in \mathbb{Q}^d: |\{a_1,\dots,a_d\}|=d\}$, and we write $\johnneq{d}$ for the structure whose domain is $\qneq{d}$ and whose relations are $\neq_{ij}$ and $=_{ij}$ for $i,j\in [d]$ and for a tuple $\bar{d}=(d_1,\dots,d_{\ell})\in {\mathbb N}^{\ell}$ we denote by $\johnneq{\bar{d}}$ the structure whose domain is $\bigcup_{i\in \ell}{\{i\}\times \qneq{d_i}}$ and whose relations are 

\begin{itemize}
        \item for every $i\in [\ell]$ the unary relation $\pi_1^{-1}(i)$, and 
        \item for all $i,j\in \{1,\dots,\max(d_1,\dots,d_{\ell})\}$ the binary relations 
\begin{align*}
\neq^*_{ij}\, \coloneqq& \bigcup_{1\leq n,m\leq \ell}\{((n,\bar{a}),(m,\bar{b})): \bar{a}\neq_{ij}\bar{b}\} \\
\text{ and } \quad =^*_{ij}\, \coloneqq& \bigcup_{1\leq n,m\leq \ell}\{((n,\bar{a}),(m,\bar{b})): \bar{a}=_{ij}\bar{b}\}.
\end{align*}
    \end{itemize}
    
\end{notation}

	
\begin{proposition}\label{johnhomneq}
	For every tuple $\bar{d}\in \mathbb{N}^{\ell}$ the structure $\johnneq{\bar{d}}$ is homogeneous, finitely bounded, has a binary signature and $\omega$-stable. Moreover, {if $\johnneq{\bar{d}}$ is infinite then} the coordinatewise action of $\sym(\mathbb{Q})$ on $\dom(\johnneq{\bar{d}})$ defines a topological isomorphism from $\sym(\mathbb{Q})$ to $\aut(\johnneq{\bar{d}})$.
\end{proposition}

\begin{proof}
	Follows similarly to the proof of Proposition~\ref{johnhom}.
\end{proof}

\begin{corollary}\label{positive_hom}
	Let $\fa\in \{\johnord{\bar{d}},\johnneq{\bar{d}}\}$. Then every type in $\fa$ is isolated by a positive quantifier-free formula.
\end{corollary}

\begin{proof}
	It follows from $\omega$-categoricity of $\fa$ that every type is isolated by a single formula. By the homogeneity of $\fa$, which follows from either Proposition~\ref{johnhom} or Proposition~\ref{johnhomneq}, we can assume that this formula is quantifier-free. Finally, let us observe that in both structures $\johnord{\bar{d}}$ and $\johnneq{\bar{d}}$ the negation of the basic relations are definable by positive quantifier-free formulas, which also allows us to get rid of negations.
\end{proof}

\begin{notation}
 	Let $d\in {\mathbb N}$, and $\bar{d}\in \mathbb{N}^{\ell}$.
	For a tuple $\bar{u}\in \dom(\fa)^n$ where $\fa\in \{\johnord{d},\johnord{\bar{d}}\}$ we write $\typ_<(\bar{u})$ for its type in $\fa$ and for a tuple $\bar{v}\in \dom(\fb)$ where $\fb\in \{\johnneq{d},\johnneq{\bar{d}}\}$ we write $\typ_{\neq}(\bar{v})$ for its type in $\fb$.
	For a relation $S \subseteq \dom(\johnord{\bar{d}})^n$ we write $$S^{\neq}\coloneqq \{\bar{u}\in \dom(\johnneq{\bar{d}})^n: \exists \bar{v}\in S \text{ such that} \typ_{\neq}(\bar{u})=\typ_{\neq}(\bar{v})\}.$$
	For a $\tau$-structure $\fa$ with domain $\dom(\johnord{\bar{d}})$, we write $\fa^{\neq}$ for the $\tau$-structure with domain $\dom(\johnneq{\bar{d}})$ such that for each relation symbol $R \in \tau$ we have $R^{\fa^{\neq}}=(R^{\fa})^{\neq}$.
\end{notation}

\begin{lemma}\label{remove_order}
	If $S$ is a relation definable in $\johnord{\bar{d}}$, then $S^{\neq}$ is definable in $\johnneq{\bar{d}}$.
\end{lemma}

\begin{proof}
	We can assume without loss of generality that $S$ is a single $n$-orbit of $\aut(\johnord{\bar{d}})$ for some $n\in {\mathbb N}$. Let $p$ be the type corresponding to this orbit. By Corollary~\ref{positive_hom} we know that $p$ is isolated by some quantifier-free positive formula $\psi$. Let $\varphi$ be the conjunction of all atomic formulas contained in $p$. Then clearly $\varphi$ implies $\psi$, and thus $\varphi$ also isolates $p$. Let $\varphi^{\neq}$ be the formula obtained from $\varphi$ by replacing each occurrence of the relation symbol $<_{ij}^*$ by $\neq_{ij}^*$. Let $P$ be the set of those pairs $(s,i)$ such that $s\in [n]$ and $i\in [d_{\pi_1(u_s)}]$. Note that by definition for each pair $(s,i),(t,j)\in P$ the formula  $\varphi^{\neq}$ contains one of the following conjuncts: $x_s=_{ij}^*x_t$,  $x_s\neq_{ij}^*x_t$, $x_t\neq_{ij}^*x_s$. This implies that $\varphi^{\neq}$ also isolates some type $q$ in $\johnneq{\bar{d}}$. We claim that $\bar{u}\in S^{\neq}$ iff $\varphi^{\neq}(\bar{u})$ holds. This clearly implies the statement of the lemma.
	
	Let us first assume that $\bar{u}\in S^{\neq}$. Then we know that there exists some $\bar{v}\in S$ such that $\typ_{\neq}(\bar{u})=\typ_{\neq}(\bar{v})$. In particular, $\varphi^{\neq}(\bar{v})$ and thus also $\varphi^{\neq}(\bar{u})$ holds. For the other direction let us assume that $\varphi^{\neq}(\bar{u})$ holds, and let $\bar{v}$ be a tuple realizing $p$ in $\johnord{\bar{d}}$. Then we have $\typ_{\neq}(\bar{v})=q=\typ_{\neq}(\bar{u})$, and thus $\bar{u}\in S^{\neq}$.
\end{proof}

\begin{lemma}\label{remove_sub}
	Let $\fa$ be a reduct of
    $\johnord{\bar{d}}$ which is stable. Then $\fa$ is a substructure of $\fa^{\neq}$.
\end{lemma}

\begin{proof}
	Let $R$ be a relation of $\fa$ of arity $n$. We have to show that $R^{\neq}\cap \dom(\johnord{\bar{d}})^n=R$.

	The inclusion ``$\supseteq$'' is obvious. For the other direction let us assume for contradiction that $R^{\neq}\cap \dom(\johnord{\bar{d}})^n\supset R$, and let us pick some element $\bar{u}\in (R^{\neq}\cap \dom(\johnord{\bar{d}})^n)\setminus R$. By definition we can find some $\bar{w}\in R$ such that $\typ_{\neq}(\bar{u})=\typ_{\neq}(\bar{w})$. Let $m$ be the number of different entries in the tuples in ${\pi_2}(u_1),\dots,{\pi_2}(u_n)$. Since $\typ_{\neq}(\bar{u})=\typ_{\neq}(\bar{w})$ we know that $m$ is also equal to the same number defined analogously for the tuple $\bar{w}$. We can assume without loss of generality that these coordinates for both tuples are from $[m]$. In this case there exists some $\sigma\in \sym([m])$ such that $\sigma(\bar{w})=\bar{u}$. We can assume without loss of generality that $\bar{u}$ is chosen in a way that the number of inversions in $\sigma$ is minimal. Since $\sigma$ cannot be the identity this means that there exists some $i\in [m]$ such that $\sigma^{-1}(i)>\sigma^{-1}(i+1)$. Let $\tau\coloneqq (i,i+1)$ and $\bar{v}\coloneqq \tau(\bar{u})=(\tau\circ \sigma)(\bar{w})$. We claim that $\bar{v}\in \dom(\johnord{\bar{d}})^n$. Suppose that this is not the case. Then there is some $s\in [n]$ and $j_1<j_2$ such that $(\pi_2(v_s))_{j_1}=i+1$ and $(\pi_2(v_s))_{j_2}=i$ since otherwise $\bar{u}$ would already not be in $\dom(\johnord{\bar{d}})$. Note however that by definition $(\tau\circ \sigma)^{-1}(i)<(\tau\circ \sigma)^{-1}(i+1)$ meaning that $(\pi_2(w_s))_{j_1}{>}(\pi_2(w_s))_{j_2}$ which is impossible since $w_s\in \dom(\johnord{\bar{d}})$. 
	
	We have obtained that $\bar{v}\in \dom(\johnord{\bar{d}})^n$. By definition it is clear the $\typ_{\neq}(\bar{v})=\typ_{\neq}(\bar{w})$. Since the number of inversions in $\tau\circ \sigma$ is lower than in $\sigma$ it follows by the minimality of $\bar{u}$ that $\bar{v}$ must be in $R$. Let $\bar{u}'$ be the subtuple of $\bar{u}$ containing those coordinates $u_k$ for which one of the coordinates of $\pi_2(u_k)$ is $i$. 
    Note that $i+1$ cannot show up in the second coordinate in any of the entries of $\bar{u}'$. Indeed, if both $i$ and $i+1$ showed up as a second coordinate in some entry $u_k$ of $\bar{u}$ then $v_k=\tau(u_k)$ would not be in $\dom(\johnord{\bar{d}})$. By reordering the coordinates of $\bar{u}$ we can assume without loss of generality that $\bar{u}=\bar{u}'\bar{u}''$. Now let us pick some rational numbers $p_1<q_1<p_2<q_2<\dots$ in the open interval $(i-1,i+2)$, and let $\bar{a}_s\coloneqq (i,p_s)(\bar{u}'), \bar{b}_t\coloneqq (i+1,q_t)(\bar{u}'')$ {where $(j,k)( \cdot )$ denotes the natural action of the transposition $(j,k)$ on the corresponding element in $\dom(\johnord{\bar{d}})$}. Note that in this case all tuples $\bar{a}_s$ and $\bar{b}_t$ are from $\johnord{\bar{d}}$. Moreover, if $s\leq t$ then $\bar{a}_s\bar{b}_t=g_{st}(\bar{u})$ where $g_{st}$ is any automorphism of $(\mathbb{Q};<)$ which fixes $\{1,\dots,i-1,i+2,\dots,m\}$ elementwise, maps $i$ to $p_s$ and $i+1$ to $q_t$. Similarly, if $s>t$ then $\bar{a}_s\bar{b}_t=h_{st}(\bar{v})$ where $h_{st}$ is any automorphism of $(\mathbb{Q};<)$ which fixes $\{1,\dots,i-1,i+2,\dots,m\}$ elementwise, maps $i+1$ to $p_s$ and $i$ to $q_t$. Since $\bar{u}\not\in R$ and $\bar{v}\in R$, this implies that $\bar{a}_s\bar{b}_t\in R$ if and only if $s>t$. We have defined a half-graph on $\fa$ which contradicts our assumption that $\fa$ is stable.
\end{proof}
	
	Now, we are ready to prove the main result of this subsection.

\begin{theorem}\label{trace_def}
	Every structure in Lachlan's class is trace minimal.
\end{theorem}

\begin{proof}
	Let $\fa$ be a structure in Lachlan's class. By Lemma~\ref{nice_reducts} we may assume without loss of generality that $\fa$ is a reduct of $\johnord{\bar{d}}$ for some $\ell\in \mathbb{N}$ and $\bar{d}\in \mathbb{N}^{\ell}$. Let $\fa^*$ be the expansion of $\fa$ by all first-order definable relations. Then clearly $\fa^*$ is stable since $\fa$ is stable. By Lemma~\ref{remove_sub} we obtain that the identity map is an embedding from $\fa^*$ into $(\fa^*)^{\neq}$. Since $\fa^*$ is closed under first-order expansions, this embedding is a trace definition of $\fa^*$ in $(\fa^*)^{\neq}$.
    By Lemma~\ref{remove_order}  we know that $(\fa^*)^{\neq}$ is interpretable, in particular trace definable, in $(\mathbb{Q};=)$. Combining these observations we obtain that $\fa$ is trace definable in $(\mathbb{Q};=)$.
\end{proof}


\section{Conclusion}
	Theorem~\ref{chain} below summarises various results of this article.
	
	Let $\mathcal E$ be the class of all NIP structures that are first-order reducts of binary finitely bounded homogeneous Ramsey structures.

\begin{theorem}\label{chain}
We have the following inclusions and strict inclusions. 
    $$\I(\atoms)\subset \MI(\atoms)\subseteq \mathcal{D}\subset \I(\order)\subset \MI(\order) \subseteq {\mathcal E}$$
\end{theorem}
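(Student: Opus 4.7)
The plan is to verify each of the six links in the chain separately, drawing on material already developed in the paper.

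For the two inclusions of the form $\I(\mathcal C)\subseteq\MI(\mathcal C)$, with $\mathcal C\in\{\{\atoms\},\{\order\}\}$, I would argue uniformly. Given $\fa\in\I(\mathcal C)$, let $\fa^\star$ be the expansion of $\fa$ by all first-order definable relations. Since $\fa$ is $\omega$-categorical, these new relations pull back along any interpretation to first-order definable relations in $\mathcal C$, so $\fa^\star\in\I(\mathcal C)$. Every endomorphism of $\fa^\star$ preserves all first-order formulas, hence is an elementary self-embedding of $\fa$ and, by the Ryll--Nardzewski theorem, lies in $\overline{\Aut(\fa^\star)}$; thus $\fa^\star$ is a model-complete core, and since $\fa$ is interdefinable with $\fa^\star$, we get $\fa\in\MI(\mathcal C)$.

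The strict inclusions $\I(\atoms)\subsetneq\MI(\atoms)$ and $\I(\order)\subsetneq\MI(\order)$ are already witnessed in Sections~\ref{sect:i_n} and~\ref{sect:inter_q}: the structure $\fy$ is the model-complete core of $\fx\in\I(\atoms)$ (Corollary~\ref{mc_core}) but is not in $\I(\atoms)$ (Lemma~\ref{not_inter}); and $(\mathbb{Q};{<},S,T)$ is homogeneous and a model-complete core (its endomorphisms are monotone partition-preserving self-injections, which lie in $\overline{\Aut}$ by homogeneity), is homomorphically equivalent via bi-embeddability to the structure of Lemma~\ref{mc_core_part} lying in $\I(\order)$, yet is not itself in $\I(\order)$ by Theorem~\ref{not_interpretable_part}.

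For $\MI(\atoms)\subseteq\mathcal D$, I would first establish $\I(\atoms)\subseteq\mathcal D$: any $\fa\in\I(\atoms)$ is $\omega$-stable (since $\atoms$ is $\omega$-stable and interpretations preserve $\omega$-stability) and lies in $\I(\order)$ (since $\atoms$ is a reduct, hence an interpretation, of $\order$), so by Theorem~\ref{lachlan_extra}((\ref{it:stab})$\Rightarrow$(\ref{it:d})) we have $\fa\in\mathcal D$. Combining with Corollary~\ref{d_mc} (closure of $\mathcal D$ under model-complete cores) yields $\MI(\atoms)=\M(\I(\atoms))\subseteq\M(\mathcal D)\subseteq\mathcal D$. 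The strict inclusion $\mathcal D\subsetneq\I(\order)$ is then immediate from the same characterisation: every structure in $\mathcal D$ is interpretable in $\order$, while $\order\in\I(\order)$ itself is not stable (since $x<y$ has the order property) and so lies outside $\mathcal D$.

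The final inclusion $\MI(\order)\subseteq\mathcal E$ is the main challenge. Let $\fa\in\MI(\order)$ be the model-complete core of some $\fb\in\I(\order)$. By Lemma~\ref{nice_reducts}, $\fb$ is a first-order reduct of $\johnord{\bar d}$ for some $\bar d$, and by Proposition~\ref{johnhom} the latter is binary, finitely bounded, homogeneous, Ramsey, and NIP. Theorem~\ref{mc_transfer} applied to $\johnord{\bar d}$ produces a finitely bounded homogeneous Ramsey expansion of $\fa$, and NIP transfers from $\fb$ to $\fa$ by Theorem~\ref{thm:mc-pres}. The delicate point---where I expect most of the effort to go---is verifying that this Ramsey expansion can be chosen in a binary signature, so that $\fa$ is a first-order reduct of a \emph{binary} finitely bounded homogeneous NIP Ramsey structure and thus belongs to $\mathcal E$. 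This should follow by tracking binarity through the Mottet--Pinsker construction (using that the orbits of $\Aut(\johnord{\bar d})$ on tuples are determined by their restrictions to pairs, a property which should descend to the orbit structure of the model-complete core), but the careful bookkeeping is the main technical obstacle in the proof.
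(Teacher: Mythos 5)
Your treatment of the first five links matches the paper's proof almost exactly: expansion by all first-order definable relations for the trivial inclusions, the two counterexamples for strictness, stability plus $\I(\order)$-interpretability via Theorem~\ref{lachlan_extra} for $\MI(\atoms)\subseteq\mathcal D$, and $\order$'s instability for $\mathcal D\subsetneq\I(\order)$. One small point of bookkeeping in your version that is worth appreciating: for $\I(\order)\subsetneq\MI(\order)$ you argue directly that $(\mathbb{Q};<,S,T)$ is a model-complete core homomorphically equivalent to something in $\I(\order)$, which is cleaner than the paper's bare citation of Corollary~\ref{not_closed_mc}, whose statement is about model \emph{companions} rather than model-complete cores (the two notions coincide here because $(\mathbb{Q};<,S,T)$ is its own model-complete core, but this needs saying).

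For the final inclusion $\MI(\order)\subseteq\mathcal E$, however, you and the paper part ways in an informative fashion. The paper's proof simply says ``see Remark~\ref{rem:ramsey-exp},'' but that remark, via Lemma~\ref{nice_reducts} and Proposition~\ref{johnhom}, establishes only that $\I(\order)\subseteq\mathcal E$ --- every structure interpretable in $\order$ is a reduct of some $\johnord{\bar d}$. It says nothing by itself about the model-complete core of such a reduct. You instead invoke Theorem~\ref{mc_transfer} (Mottet--Pinsker) to get a finitely bounded homogeneous Ramsey expansion of the core, and then flag the two properties that do not obviously transfer: binarity of the signature and NIP of the Ramsey expansion (note that Theorem~\ref{thm:mc-pres} only gives NIP of the reduct $\fa$, not of the expansion produced by Theorem~\ref{mc_transfer}). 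Your diagnosis is exactly right --- these are the points where the paper's cited remark falls short of the claim --- and the ``tracking binarity through the Mottet--Pinsker construction'' plan you sketch, while not carried out, is the right obstacle to name. In short: your argument is sound and more explicit than the paper's where it matters, but you (honestly) leave the same technical gap open that the paper elides by citing a remark that covers only the weaker inclusion $\I(\order)\subseteq\mathcal E$.
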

\begin{proof}
    The inclusion $\I(\atoms) \subseteq \MI(\atoms)$ is trivial, since the expansion of every structure in $\I(\atoms)$ 
    by all first-order definable relations is
    a model-complete core. 
    The inclusion is strict by Corollary~\ref{not_closed_mc}.
    
   It is clear that interpretations preserve stability, in particular every structure in $\I(\atoms)$ 
    is stable (Theorem~\ref{thm:mc-pres}) and hence in ${\mathcal D}$ by Theorem~\ref{lachlan_extra}. This implies the inclusion $\MI(\atoms) \subseteq {\mathcal D}$, because ${\mathcal D}$ is closed under taking model-complete cores by Corollary~\ref{d_mc}.

    The inclusion ${\mathcal D} \subseteq \I(\order)$ holds by Theorem~\ref{lachlan_extra}, and the inclusion is strict, because $\order$ itself is not stable and therefore not in ${\mathcal D}$. 
    The inclusion $\I(\order) \subseteq \MI(\order)$ is again trivial, and strict by Corollary~\ref{not_closed_mc_q}.
    
    Finally, the inclusion $\I(\order) \subseteq {\mathcal E}$ follows from Remark~\ref{rem:ramsey-exp}, and Theorems~\ref{mc_transfer} and~\ref{thm:mc-pres} imply that ${\mathcal E}$ is preserved by taking model-complete cores. Therefore, we have $\MI(\order) \subseteq {\mathcal E}$.
\end{proof}

	Conjecture~\ref{conj:D} asks whether the containment of $\MI(\atoms)\subseteq \mathcal{D}$ could be an equality. Similarly, Question~\ref{quest:E} below asks whether this could be the case also for the final containment in Theorem~\ref{chain}.
   
\begin{question}\label{quest:E}
    Is it true that the class ${\mathcal E}$ equals the class of structures that are interdefinable with a model-complete core of a structure  interpretable in $\order$? 
\end{question}

This question is open even if we drop the requirements in the definition of ${\mathcal E}$ that the expansion is Ramsey and finitely bounded:

\begin{question}\label{quest:E2} Is every binary finitely homogeneous $\mathrm{NIP}$ structure 
interdefinable with a structure in $\MI(\order)$?
\end{question}
Note that a positive answer to Question~\ref{quest:E2} would confirm Walsberg's Conjecture~\ref{conj: unanswered}.
This, together with Theorem~\ref{trace_def} would imply that there are only three binary homogeneous structures up to trace definability, and so, in a sense, as mentioned in Section~\ref{sec:tracedeflach}, that stability and $\mathrm{NIP}$ are the only two ``binary'' model theoretic properties.




\subsection*{Acknowledgements}
We would like to thank Erik Walsberg for helpful discussions on trace definability, and Gabriel Day and Scott Mutchnik for noticing an issue in an earlier version of the arguments in Section 3. 
This work was supported by UKRI EP/X024431/1. Funded by the European Union (ERC, POCOCOP, 101071674). Views and opinions expressed are however those of the authors only and do not necessarily reflect those of the European Union or the European Research Council Executive Agency. Neither the European Union nor the granting authority can be held responsible for them.

\bibliographystyle{alpha}
\bibliography{local}

@STRING{LICS = {Proceedings of the Annual Symposium on Logic in Computer Science (LICS)} }

@preamble{"\def\cprime{$'$} "}

@inproceedings{BojanczykTorunczyk18,
  author    = {Miko{\l}aj Boja{\'{n}}czyk and
               Szymon Toru{\'{n}}czyk},
  title     = {On computability and tractability for infinite sets},
  booktitle = {Proceedings of the 33rd Annual {ACM/IEEE} Symposium on Logic in Computer
               Science ({LICS}), Oxford, UK, July 09-12, 2018},
  pages     = {145--154},
  year      = {2018}
}

@misc{rosy,
      title={Dependent finitely homogeneous rosy structures}, 
      author={Alf Onshuus and Pierre Simon},
      year={2021},
      eprint={2107.02727},
      archivePrefix={arXiv},
      primaryClass={math.LO},
      url={https://arxiv.org/abs/2107.02727}, 
}

@article{SimonRankOne,
	author = {Simon, Pierre},
	journal = {Proceedings of the London Mathematical Society},
	number = {6},
	pages = {1253-1331},
	title = {{NIP} $\omega$-categorical structures: The rank 1 case},
	volume = {125},
	year = {2022}}

@inbook{Anscombe2024,
	address = {Cham},
	author = {Anscombe, Sylvy and Karemaker, Valentijn and Kisak{\"u}rek, Zeynep and Mehmeti, Vler{\"e} and Pagano, Margherita and Paladino, Laura},
	editor = {Abdellatif, Ramla and Karemaker, Valentijn and Smajlovic, Lejla},
	pages = {29--61},
	publisher = {Springer International Publishing},
	title = {A Survey of Local--Global Methods for Hilbert's Tenth Problem},
	year = {2024}}

@article{OrbitFinDim,
    title      = {Orbit-Finite-Dimensional Vector Spaces and Weighted Register Automata},
    author     = {Mikołaj Bojańczyk and Joanna Fijalkow and Bartek Klin and Joshua Moerman},
    url        = {https://theoretics.episciences.org/11208},
    doi        = {10.46298/theoretics.24.13},
    journal    = {TheoretiCS},
    issn       = {2751-4838},
    volume     = {Volume 3},
    eid        = 13,
    year       = {2024},
    month      = {May},
    keywords   = {Computer Science - Formal Languages and Automata Theory, F.1.1, F.4.3},
}

@article{Orbit-finite-LP, author = {Ghosh, Arka and Hofman, Piotr and Lasota, S\l{}awomir}, title = {Orbit-finite Linear Programming}, year = {2025}, issue_date = {February 2025}, publisher = {Association for Computing Machinery}, address = {New York, NY, USA}, volume = {72}, number = {1}, issn = {0004-5411}, url = {https://doi.org/10.1145/3703909}, doi = {10.1145/3703909}, abstract = {An infinite set is orbit-finite if, up to permutations of atoms, it has only finitely many elements. We study a generalisation of linear programming where constraints are expressed by an orbit-finite system of linear inequalities. As our principal contribution we provide a decision procedure for checking if such a system has a real solution, and for computing the minimal/maximal value of a linear objective function over the solution set. We also show undecidability of these problems in case when only integer solutions are considered. Therefore orbit-finite linear programming is decidable, while orbit-finite integer linear programming is not.}, journal = {J. ACM}, month = jan, articleno = {1}, numpages = {39}, keywords = {Orbit-finite linear programming, linear programming, integer linear programming, sets with atoms, orbit-finite sets} }

@preamble{
   "\def\cprime{$'$} "
}

@article{DBLP:journals/corr/BojanczykKL14,
  author    = {Miko{\l}aj Boja{\'{n}}czyk and
               Bartek Klin and
               S\l awomir Lasota},
  title     = {Automata theory in nominal sets},
  journal   = {Logical Methods in Computer Science},
  volume    = {10},
  number    = {3},
  year      = {2014}
}

@inproceedings{DBLP:conf/lics/BojanczykKLT13,
  author    = {Miko{\l}aj Boja{\'{n}}czyk and
               Bartek Klin and
               S\l awomir Lasota and
               Szymon Toru{\'{n}}czyk},
  title     = {Turing Machines with Atoms},
  booktitle = {28th Annual {ACM/IEEE} Symposium on Logic in Computer Science, {LICS}
               2013, New Orleans, LA, USA},
  pages     = {183--192},
  year      = {2013}
}

@inproceedings{KlinLOT14-short,
  author    = {Bartek Klin and
               S\l awomir Lasota and
               Joanna Ochremiak and
               Szymon Toru{\'{n}}czyk},
  title     = {Turing machines with atoms, constraint satisfaction problems, and
               descriptive complexity},
  booktitle = {Proceedings of Computer
               Science Logic {(CSL)} and Symposium
               on Logic in Computer Science (LICS), 
               {CSL-LICS}'14, Vienna, Austria.},
  pages     = {58:1--58:10},
  year      = {2014}
}

@Article{Macpherson-RapidGrowth,
author = {Dugald Macpherson}, 
title = {Infinite permutation groups of rapid growth}, 
journal = {J. London Math. Soc.},
volume = {35},
number = {2},
year = {1987}, 
pages = {276-286}}

@Article{wonderland,
author={Libor Barto and Jakub Opr\v{s}al and Michael Pinsker},
title={The wonderland of reflections},
journal = {Israel Journal of Mathematics},
volume=223,
number=1,
year=2018,
pages={363-398}
}

@phdthesis{Bennett-thesis,
title={The reducts of some infinite homogeneous graphs and tournaments},
author={James H. Bennett},
school={Rutgers University},
year=1997
}

@Article{SchreierUlam,
author = {Joseph Schreier and Stanis{\l}aw Marcin Ulam},
title = {{\"U}ber die {P}ermutationsgruppe der nat\"urlichen {Z}ahlenfolge},
journal = {Studia Mathematica},
volume = {4},
pages = {134-141},
year = {1933}}

@Article{Saracino,
author = {Dan Saracino},
title = {Model companions for $\Aleph_0$-categorical theories},
journal = {Proceedings of the AMS},
volume = {39},
year = {1973},
pages = {591-598}}

@Article{CherlinHarringtonLachlan,
author={Gregory Cherlin and Leo Harrington and Alistair H. Lachlan},
title={$\Aleph_0$-categorical, $\Aleph_0$-stable Structures},
journal = {Annals of Pure and Applied Logic},
pages = {103-135},
volume = {28},
year = {1985}}

@Article{HrushovskiTotallyCategorical,
title = {Totally categorical structures},
author = {Ehud Hrushovski},
journal = {Transactions of the American Mathematical Society},
volume = {313},
year = {1989},
number = {1},
pages = {131-159}}

@Article{MatiyasevichDoklady,
author = {Yuri Matiyasevich},
title = {Enumerable sets are {D}iophantine},
journal = {Doklady Akademii Nauk SSSR},
volume = {191},
pages = {279-282}, 
year = {1970}}

@Article{MacphersonSurvey,
title = {A survey of homogeneous structures},
author = {Dugald Macpherson},
Volume = {311}, 
number = {15},
year = {2011},
pages = {1599-1634},
journal = {Discrete Mathematics}}

@Article{Topo-Dynamics,
author = {Alexander Kechris and Vladimir Pestov and Stevo Todor\v{c}evi\'c},
title = {Fraiss\'e Limits, {R}amsey Theory, and topological dynamics of automorphism groups},
journal = {Geometric and Functional Analysis},
volume = {15},
number = {1},
pages = {106-189},
year = {2005}}

@Article{EvansIvanovMacpherson,
author = {David Evans 
and Alexandre A. Ivanov 
and Dugald Macpherson},
title = {Finite Covers}, 
journal = {In `Model Theory of Groups and Automorphism Groups', LMS Lecture Note Series, 244},
publisher = {Oxford University Press},
pages = {1-72}, 
year = {1997}}

@article{cores,
author = {Pavol Hell and Jaroslav Ne\v{s}et\v{r}il},
title = {The core of a graph}, 
journal = {Discrete Mathematics},
volume = {109},
year = {1992},
pages = {117-126}}

@BOOK{Tent-Ziegler,
author = {Katrin Tent and Martin Ziegler}, 
title = {A course in model theory},
series = {Lecture Notes in Logic}, 
publisher = {Cambridge University Press}, 
year = {2012}}

@BOOK{HodgesLong,
author = {Wilfrid Hodges},
title = {Model theory},
publisher = {Cambridge University Press},
adress = "Cambridge",
year = {1993}}

@Article{CameronOrbits,
author = {Peter J. Cameron},
title = {Orbits of Permutation Groups on Unordered Sets, {II}},
journal = {Journal of the London Mathematical Society},
volume = {2},
year = {1981},
pages = {249--264}}

@Article{LachlanIndiscernible,
author = {Alistair H. Lachlan},
title = {Structures Coordinatized by Indiscernible Sets},
journal = {Annals of Pure and Applied Logic},
volume = {34},
pages = {245--273},
year = {1987}}

@InProceedings{LachlanSurvey,
author = {A. H. Lachlan},
title = {Stable finitely homogeneous structures: A Survey},
booktitle = {Algebraic Model Theory, NATO ASI Series},
editors = {T.T.Hart and Alistair H. Lachlan and Matthew A. Valeriote},
volume = {496},
pages = {145-159},
year = {1996}}

@Article{CherlinLachlan,
author = {G. Cherlin and A. H. Lachlan},
year = {1986},
title = {Stable finitely homogeneous structures},
journal = {TAMS},
volume = {296}, 
pages = {815-850}}

@Book{HNBook, 
author = {Pavol Hell and Jaroslav Ne\v{s}et\v{r}il},
title = {Graphs and Homomorphisms},
publisher = {Oxford University Press},
address = {Oxford},
year = 2004}

@article{BodHilsMartin-Journal,
  author = {Manuel Bodirsky and Martin Hils and Barnaby Martin},
  title = {On the scope of the universal-algebraic approach to constraint satisfaction},
  journal = {Logical Methods in Computer Science (LMCS)},
  note = {An extended abstract that announced some of the results appeared in the proceedings of Logic in Computer Science (LICS'10)},
  year = {2012},
  volume = {8},
  number = {3}
  }

@Article{Cores-journal,
   author =       "Manuel Bodirsky",
   title = "Cores of countably categorical structures",
   journal = "Logical Methods in Computer Science ({LMCS})",
   volume    = {3},
   pages = {1-16},
  number    = {1},
   year = {2007}}

@article{BodJunker,
  author    = {Manuel Bodirsky and
               Markus Junker},
  title     = {$\Aleph_0$-categorical structures: interpretations and endomorphisms},
  journal   = {Algebra Universalis},
  volume = {64}, 
  number = {3-4}, 
  pages = {403-417},
  year      = {2011}
}

@article{Bod-New-Ramsey-classes,
    AUTHOR = {Manuel Bodirsky},
     TITLE = {{New Ramsey} classes from old},
     journal = {Electronic Journal of Combinatorics}, 
     volume = {21},
     number = {2},
     year = {2014}
}

@article{braunfeld2022monadic,
  title={Monadic stability and growth rates of $\omega$-categorical structures},
  author={Braunfeld, Samuel},
  journal={Proceedings of the London Mathematical Society},
  volume={124},
  number={3},
  pages={373--386},
  year={2022},
  publisher={Wiley Online Library}
}

@article{baldwin1985second,
  title={Second-order quantifiers and the complexity of theories.},
  author={Baldwin, John T. and Shelah, Saharon},
  journal={Notre Dame Journal of Formal Logic},
  volume={26},
  number={3},
  pages={229--303},
  year={1985},
  publisher={University of Notre Dame}
}

@Unpublished{Book,
author = {Manuel Bodirsky},
title  = {Complexity of Infinite-Domain Constraint Satisfaction},
year   = {2020},
note = {Submitted for publication in the LNL Series, Cambridge
University Press}}

@article{bodirsky2021permutation,
  title={Permutation groups with small orbit growth},
  author={Bodirsky, Manuel and Bodor, Bertalan},
  journal={Journal of Group Theory},
  year={2021},
  publisher={De Gruyter}
}

@article{MottetPinskerCores,
  author    = {Antoine Mottet and
               Michael Pinsker},
  title     = {Cores over {Ramsey} structures},
  journal   = {Journal of Symbolic Logic},
  volume    =   86,
  number=1,
  pages={352-361},
  year=2021
}

@article{bodor2024classification,
  title={Classification of $\omega$-categorical monadically stable structures},
  author={Bodor, Bertalan},
  journal={The Journal of Symbolic Logic},
  volume={89},
  number={2},
  pages={460--495},
  year={2024},
  publisher={Cambridge University Press}
}

@Misc{Blog,
title = {$\omega$-categorical, $\omega$-stable structure with trivial geometry not definable in the pure set}, 
author = {Szymon Toru\'nczyk and James E. Hanson}, 
note ={Mathoverflow discussion. The example is apparently given by Ehud Hrushovski in a personal communication.
\url{https://mathoverflow.net/questions/231791}}
}

@article{macpherson1991interpreting,
  title={Interpreting groups in $\omega$-categorical structures},
  author={Macpherson, Dugald},
  journal={The Journal of Symbolic Logic},
  volume={56},
  number={4},
  pages={1317--1324},
  year={1991},
  publisher={Cambridge University Press}
}

@article{bodirsky2025structures,
  title={Structures preserved by primitive actions of {$S_{\omega}$}},
  author={Bodirsky, Manuel and Bodor, Bertalan},
  journal={arXiv preprint arXiv:2501.03789},
  year={2025}
}

@book{bodirsky2021complexity,
  title={Complexity of infinite-domain constraint satisfaction},
  author={Bodirsky, Manuel},
  volume={52},
  year={2021},
  publisher={Cambridge University Press}
}

@phdthesis{bodor2022csp,
  title={{CSP} dichotomy for $\omega$-categorical monadically stable structures},
  author={Bodor, Bertalan},
  year={2022},
  note={\url{https://nbn-resolving.org/urn:nbn:de:bsz:14-qucosa2-774379}},
school={Technische Universit\"{a}t Dresden, Dresden},
  publisher={Qucosa, TU Dresden}
}

@article{PosModT,
  title={Dividing lines between positive theories},
  author={Dmitrieva, Anna and Gallinaro, Francesco and Kamsma, Mark},
  journal={The Journal of Symbolic Logic},
  pages={1--25},
  year={2023},
  publisher={Cambridge University Press}
}

@book{shelah1990classification,
  title={Classification theory: and the number of non-isomorphic models},
  author={Shelah, Saharon},
  volume={92},
  year={1990},
  publisher={Elsevier}
}

@article{shelah1980simple,
  title={Simple unstable theories},
  author={Shelah, Saharon},
  journal={Annals of mathematical logic},
  volume={19},
  number={3},
  year={1980}
}

@article{dvzamonja2004maximality,
  title={On $\lhd_*$-maximality},
  author={D{\v{z}}amonja, Mirna and Shelah, Saharon},
  journal={Annals of Pure and Applied Logic},
  volume={125},
  number={1-3},
  pages={119--158},
  year={2004},
  publisher={Elsevier}
}

@article{shelah1995toward,
  title={Toward classifying unstable theories},
  author={Shelah, Saharon},
  journal={Annals of Pure and Applied Logic},
  volume={80},
  pages={229--255},
  year={1996}
}

@article{bailetti2024walk,
  title={A Walk on the Wild Side: Notions of maximality in first-order theories},
  author={Bailetti, Michele},
  journal={The Journal of Symbolic Logic},
  pages={1--30},
  year={2024},
  publisher={Cambridge University Press}
}

@article{keisler1976six,
  title={Six classes of theories},
  author={Keisler, Howard Jerome},
  journal={Journal of the Australian Mathematical Society},
  volume={21},
  number={3},
  pages={257--266},
  year={1976},
  publisher={Cambridge University Press}
}

@article{mutchnik2025textup,
  title={On $\mathrm{NSOP}_2$ theories},
  author={Mutchnik, Scott},
  journal={Journal of the European Mathematical Society},
  year={2025}
}

@article{ben2003positive,
  title={Positive model theory and compact abstract theories},
  author={Ben-Yaacov, Itay},
  journal={Journal of Mathematical Logic},
  volume={3},
  number={01},
  pages={85--118},
  year={2003},
  publisher={World Scientific}
}

@article{dobrowolski2022kim,
  title={Kim-independence in positive logic},
  author={Dobrowolski, Jan and Kamsma, Mark},
  journal={Model Theory},
  volume={1},
  number={1},
  pages={55--113},
  year={2022},
  publisher={Mathematical Sciences Publishers}
}

@article{haykazyan2019spaces,
  title={Spaces of types in positive model theory},
  author={Haykazyan, Levon},
  journal={The Journal of Symbolic Logic},
  volume={84},
  number={2},
  pages={833--848},
  year={2019},
  publisher={Cambridge University Press}
}

@article{braunfeld2021characterizations,
  title={Characterizations of monadic {NIP}},
  author={Braunfeld, Samuel and Laskowski, Michael},
  journal={Transactions of the American Mathematical Society, Series B},
  volume={8},
  number={30},
  pages={948--970},
  year={2021}
}

@article{braunfeld2024corrigenda,
  title={Corrigenda to “Characterizations of monadic {NIP}”},
  author={Braunfeld, Samuel and Laskowski, Michael},
  journal={Transactions of the American Mathematical Society, Series B},
  volume={11},
  number={34},
  pages={1226--1232},
  year={2024}
}

@article{bodor2025structures,
  title={Structures with not too fast unlabelled growth},
  author={Bodor, Bertalan},
  journal={arXiv preprint arXiv:2507.16985},
  year={2025}
}

@article{brouwer1980enumeration,
  title={The enumeration of locally transitive tournaments},
  author={Brouwer, Andries E.},
  journal={Math. Centr. Report ZW138, Amsterdam}, 
  year={1980}
}

@book{pillay1996geometric,
  title={Geometric stability theory},
  author={Pillay, Anand},
  year={1996},
  publisher={Oxford University Press}
}

@article{shelah1971stability,
  title={Stability, the f.c.p., and superstability; model theoretic properties of formulas in first order theory},
  author={Shelah, Saharon},
  journal={Annals of Mathematical Logic},
  volume={3},
  number={3},
  pages={271--362},
  year={1971},
  publisher={Elsevier}
}

@inproceedings{monadicallyNSOP,
  author    = {Eleftheriadis, Ioannis and Papadopoulos, Aristomenis-Dionysios},
  title     = {{NSOP} in Classes of Graphs},
  booktitle = {Proceedings of the 13th Panhellenic Logic Symposium},  
  pages     = {86--91}, 
  year      = {2022}
}

@article{bodor2025labelled,
  title={Labelled growth rates of $\omega$-categorical structures and applications in choiceless set theory},
  author={Bodor, Bertalan and Braunfeld, Samuel and Hanson, James E.},
  journal={arXiv preprint arXiv:2509.12656},
  year={2025}
}

@inproceedings{dreier2024first,
  title={First-order model checking on monadically stable graph classes},
  author={Dreier, Jan and Eleftheriadis, Ioannis and M{\"a}hlmann, Nikolas and McCarty, Rose and Pilipczuk, Micha{\l} and Toru{\'n}czyk, Szymon},
  booktitle={2024 IEEE 65th Annual Symposium on Foundations of Computer Science (FOCS)},
  pages={21--30},
  year={2024},
  organization={IEEE}
}

@book{simon2015guide,
  title={A guide to NIP theories},
  author={Simon, Pierre},
  year={2015},
  publisher={Cambridge University Press}
}

@book{wagner2000simple,
  title={Simple theories},
  author={Wagner, Frank Olaf},
  volume={260},
  year={2000},
  publisher={Springer}
}

@book{kim2013simplicity,
  title={Simplicity theory},
  author={Kim, Byunghan},
  volume={53},
  year={2013},
  publisher={OUP Oxford}
}

@misc{kamsma2025positivelogicintroductionmodel,
      title={Positive Logic: An Introduction for Model Theorists}, 
      author={Mark Kamsma},
      year={2025},
      eprint={2511.10167},
      archivePrefix={arXiv},
      primaryClass={math.LO},
      url={https://arxiv.org/abs/2511.10167}, 
}

@article{kamsma2023bilinear,
  title={Bilinear spaces over a fixed field are simple unstable},
  author={Kamsma, Mark},
  journal={Annals of Pure and Applied Logic},
  volume={174},
  number={6},
  pages={103268},
  year={2023},
  publisher={Elsevier}
}

@article{Walsbergshort,
  title={Notes on trace equivalence},
  author={Walsberg, Erik},
  journal={arXiv preprint arXiv:2101.12194},
  year={2021}
}

@article{Walsberglong,
  title={Trace Definability},
  author={Walsberg, Erik},
  journal={arXiv preprint arXiv:2504.05566v1},
  year={2025}
}

@article{shelah2014strongly,
  title={Strongly dependent theories},
  author={Shelah, Saharon},
  journal={Israel Journal of Mathematics},
  volume={204},
  number={1},
  pages={1--83},
  year={2014},
  publisher={Springer}
}

@article{aschenbrenner2016vapnik,
  title={{V}apnik-{C}hervonenkis density in some theories without the independence property, {I}},
  author={Aschenbrenner, Matthias and Dolich, Alf and Haskell, Deirdre and Macpherson, Dugald and Starchenko, Sergei},
  journal={Transactions of the American Mathematical Society},
  volume={368},
  number={8},
  pages={5889--5949},
  year={2016}
}

@inproceedings{basit2021zarankiewicz,
  title={Zarankiewicz’s problem for semilinear hypergraphs},
  author={Basit, Abdul and Chernikov, Artem and Starchenko, Sergei and Tao, Terence and Tran, Chieu-Minh},
  booktitle={Forum of Mathematics, Sigma},
  volume={9},
  pages={e59},
  year={2021},
  organization={Cambridge University Press}
}

@article{shelah2007definable,
  title={Definable groups for dependent and 2-dependent theories},
  author={Shelah, Saharon},
  journal={arXiv preprint math/0703045},
  year={2007}
}

@article{chernikov2019n,
  title={On $n$-Dependence},
  author={Chernikov, Artem and Palacin, Daniel and Takeuchi, Kota},
  journal={Notre Dame Journal of Formal Logic},
  volume={60},
  number={2},
  pages={195--214},
  year={2019}
}

@misc{Radowithconstant,
  title = {The {R}ado graph admits only one oligomorphic action of its automorphism group},
howpublished = {\url{https://www.mimuw.edu.pl/~atoms/?p=1240}},
url={https://www.mimuw.edu.pl/~atoms/?p=1240},
year={2016},
  note = {Blog post. Atompress.},
author={Toru\'{n}czyk, Szymon}
}

@article{saharon2000not,
  title={On what {I} do not understand (and have something to say), model theory},
  author={Shelah, Saharon},
  journal={Mathematica Japonica},
  volume={51},
  pages={329--377},
  year={2000}
}

@article{kruckman2024new,
  title={A new Kim’s lemma},
  author={Kruckman, Alex and Ramsey, Nicholas},
  journal={Model Theory},
  volume={3},
  number={3},
  pages={825--860},
  year={2024},
  publisher={Mathematical Sciences Publishers}
}

@article{walsberg2026tracedefinabilityipreservation,
      title={Trace definability I: preservation and characterizations}, 
      author={Erik Walsberg},
      year={2026},
      journal={arXiv preprint arXiv:2504.05566},
      eprint={2504.05566},
      archivePrefix={arXiv},
      primaryClass={math.LO},
      url={https://arxiv.org/abs/2504.05566}, 
}

@article{guingona2015common,
  title={On a common generalization of {Shelah's} 2-rank, dp-rank, and o-minimal dimension},
  author={Guingona, Vincent and Hill, Cameron Donnay},
  journal={Annals of Pure and Applied Logic},
  volume={166},
  number={4},
  pages={502--525},
  year={2015},
  publisher={Elsevier}
}

@article{walsberg2026tracedefinabilityiimodeltheoretic,
      title={Trace definability II: model-theoretic linearity}, 
      author={Erik Walsberg},
      year={2026},
      journal={arXiv preprint arXiv:2605.12323},
      eprint={2605.12323},
      archivePrefix={arXiv},
      primaryClass={math.LO},
      url={https://arxiv.org/abs/2605.12323}, 
}

@article{day2026notion,
  title={On the notion of a patterning property in model theory},
  author={Day, Gabriel and Mutchnik, Scott},
  journal={arXiv preprint arXiv:2606.18533},
  year={2026}
}

@article{day2025results,
  title={Results on Colored Tree Properties},
  author={Day, Gabriel},
  journal={arXiv preprint arXiv:2507.06977},
  year={2025}
}

@article{shelah2008more,
  title={More on $\mathrm{SOP}_1$ and $\mathrm{SOP}_2$},
  author={Shelah, Saharon and Usvyatsov, Alexander},
  journal={Annals of Pure and Applied Logic},
  volume={155},
  number={1},
  pages={16--31},
  year={2008},
  publisher={Elsevier}
}

@article{abd2025higher,
  title={Higher arity stability and the functional order property},
  author={Abd Aldaim, A and Conant, Gabriel and Terry, Caroline},
  journal={Selecta Mathematica},
  volume={31},
  number={3},
  pages={59},
  year={2025},
  publisher={Springer}
}

@article{terry2021higher,
  title={Higher-order generalizations of stability and arithmetic regularity},
  author={Terry, Caroline and Wolf, Julia},
  journal={arXiv preprint arXiv:2111.01739},
  year={2021}
}

\end{document}